\newcommand{\Z}{\mathbb{Z}}         
\newcommand{\C}{\mathbb{C}}  
\newcommand{\Sp}{\operatorname{Sp}}
\newcommand{\GL}{\operatorname{GL}}
\newcommand{\SO}{\operatorname{SO}}
\newcommand{\rank}{\operatorname{rank}}
\newcommand{\soc}{\operatorname{soc}}
\newcommand{\Ext}{\operatorname{Ext}}
\newcommand{\Hom}{\operatorname{Hom}}
\newcommand{\Ker}{\operatorname{Ker}}
\newcommand{\chr}{\operatorname{char}}
\newcommand{\Ad}{\operatorname{Ad}}
\newcommand{\ad}{\operatorname{ad}}
\newcommand{\D}{\mathrm{d}}
\newcommand{\g}{\mathfrak{g}}
\newcommand{\diag}{\operatorname{diag}}
\newtheorem{lause}{Theorem}[section]
\newtheorem{lemma}[lause]{Lemma}
\newtheorem{seur}[lause]{Corollary}
\newtheorem{prop}[lause]{Proposition}
\newtheorem{prob}[lause]{Problem}
\newtheorem*{lause*}{Theorem}
\theoremstyle{definition}
\newtheorem{maar}[lause]{Definition}
\theoremstyle{remark}
\newtheorem{remark}[lause]{Remark}
\newtheorem{esim}[lause]{Example} 
\newtheorem*{mot*}{Motivation}
\newtheorem*{acknow*}{Acknowledgements}
\numberwithin{equation}{section}
\begin{document}

\title[Adjoint Jordan blocks in characteristic two]{Adjoint Jordan blocks for simple algebraic groups of type $C_{\ell}$ in characteristic two}
\author{Mikko Korhonen}
\address{SUSTech International Center for Mathematics, Southern University of Science and Technology, \text{Shenzhen} 518055, Guangdong, P. R. China}
\email{korhonen\_mikko@hotmail.com}
\thanks{Support by Shenzhen Science and Technology Program (Grant No. RCBS20210609104420034).}
\dedicatory{In memory of Irina Suprunenko}
\date{\today}

\begin{abstract}
Let $G$ be a simple algebraic group over an algebraically closed field $K$ with Lie algebra $\g$. For unipotent elements $u \in G$ and nilpotent elements $e \in \g$, the Jordan block sizes of $\Ad(u)$ and $\ad(e)$ are known in most cases. In the cases that remain, the group $G$ is of classical type in bad characteristic, so $\chr K = 2$ and $G$ is of type $B_{\ell}$, $C_{\ell}$, or $D_{\ell}$. 

In this paper, we consider the case where $G$ is of type $C_{\ell}$ and $\chr K = 2$. As our main result, we determine the Jordan block sizes of $\Ad(u)$ and $\ad(e)$ for all unipotent $u \in G$ and nilpotent $e \in \g$. In the case where $G$ is of adjoint type, we will also describe the Jordan block sizes on $[\g, \g]$.\end{abstract}

\maketitle

\section{Introduction}

Let $G$ be a simple algebraic group over an algebraically closed field $K$, and denote the Lie algebra of $G$ by $\g$. Recall that an element $u \in G$ is \emph{unipotent}, if $f(u)$ is a unipotent linear map for every rational representation $f: G \rightarrow \GL(W)$. Similarly $e \in \g$ is said to be \emph{nilpotent}, if $\D f(e)$ is a nilpotent linear map for every rational representation $f: G \rightarrow \GL(W)$.

We denote the adjoint representations of $G$ and $\g$ by $\Ad: G \rightarrow \GL(\g)$ and $\ad: \g \rightarrow \mathfrak{gl}(V)$, respectively. In this paper, we will consider the following two problems.

	\begin{prob}\label{prob:unipAD}
		Let $u \in G$ be a unipotent element. What is the Jordan normal form of $\Ad(u)$?
	\end{prob}
	
	\begin{prob}\label{prob:nilAD}
		Let $e \in \g$ be a nilpotent element. What is the Jordan normal form of $\ad(e)$?
	\end{prob}

In most cases, the answer to both questions is known. When $G$ is simply connected of exceptional type, the Jordan block sizes were computed in the unipotent case by Lawther \cite{Lawther, LawtherCorrection} and in the nilpotent case by Stewart \cite{StewartNilpotentBlocks}. In the case where $G$ is exceptional of adjoint type, both questions are easily settled using the results of Lawther and Stewart, see \cite[Lemma 3.1]{KorhonenStewartThomas}.

When $G$ is of type $A_{\ell}$, solutions to both questions follow from the main results of \cite{KorhonenJordanGood} and \cite{KorhonenNilpotent}, see \cite[Remark 1.3]{KorhonenNilpotent}. For $G$ of type $B_{\ell}$, $C_{\ell}$, or $D_{\ell}$ in good characteristic, the Jordan block sizes are described by well-known results on decompositions of tensor products, symmetric squares, and exterior squares --- see for example \cite[Remark 3.5]{KorhonenNilpotent}. 

Thus it remains to solve Problem \ref{prob:unipAD} and Problem \ref{prob:nilAD} in the case where $G$ is of type $B_{\ell}$, $C_{\ell}$, or $D_{\ell}$ in characteristic $\chr K = 2$. In this paper, we consider the case where $G$ is of type $C_{\ell}$. We make the following assumption for the rest of this paper.
	\begin{center} \emph{Assume that $\chr K = 2$.}\end{center} 
	
Let $G$ be a simple algebraic group of type $C_{\ell}$ over $K$. As our main result, we determine the Jordan block sizes of $\Ad(u)$ and $\ad(e)$ for all unipotent $u \in G$ and nilpotent $e \in \g$. In type $C_{\ell}$ either $G$ is simply connected or $G$ is adjoint; we deal with these two possibilities as follows.

Let $G_{sc}$ be simply connected and simple of type $C_{\ell}$ with Lie algebra $\g_{sc}$, and let $G_{ad}$ be simple of adjoint type $C_{\ell}$ with Lie algebra $\g_{ad}$. There exists an isogeny $\varphi: G_{sc} \rightarrow G_{ad}$, which induces a bijection between the unipotent variety of $G_{sc}$ and $G_{ad}$. Similarly the differential $\D \varphi: \g_{sc} \rightarrow \g_{ad}$ induces a bijection between the nilpotent cone of $\g_{sc}$ and $\g_{ad}$.

Thus for the solution of Problem \ref{prob:unipAD} and Problem \ref{prob:nilAD} in type $C_{\ell}$, it will suffice to consider the Jordan block sizes of unipotent $u \in G_{sc}$ on $\g_{sc}$ and $\g_{ad}$, and the Jordan block sizes of nilpotent $e \in \g_{sc}$ on $\g_{sc}$ and $\g_{ad}$.

We can assume that $G_{sc} = \Sp(V)$ with Lie algebra $\g_{sc} = \mathfrak{sp}(V)$, where $\dim V = 2\ell$. It is well known (Lemma \ref{lemma:LieSpS2}) that $\g_{sc} \cong S^2(V)^*$ as $G_{sc}$-modules. The Jordan block sizes of the action of unipotent $u \in \GL(V)$ on $S^2(V)$ and nilpotent $e \in \mathfrak{gl}(V)$ on $S^2(V)$ are described in \cite{KorhonenSymExt2021}. Taking the dual does not change the Jordan block sizes, so the Jordan block sizes on $\g_{sc}$ are known by previous results.

In our main results, we will describe the Jordan block sizes on $\g_{ad}$ in terms of the Jordan block sizes on $\g_{sc}$. To state the result in the unipotent case, we first need describe the classification of unipotent conjugacy classes in $\Sp(V)$, which in characteristic two is due to Hesselink \cite{Hesselink}. We discuss this in some more detail in Section \ref{section:hesselinkunipotent}.

Let $C_q$ be a cyclic group of order $q$, where $q = 2^{\alpha}$ for some $\alpha \geq 0$. Then there are a total of $q$ indecomposable $K[C_q]$-modules $V_1$, $\ldots$, $V_q$ up to isomorphism, where $\dim V_i = i$ and a generator of $C_q$ acts on $V_i$ with a single $i \times i$ unipotent Jordan block. For convenience we will denote $V_0 = 0$. If $W$ is a vector space over $K$, we denote $W^0 = 0$, and $W^d = W \oplus \cdots \oplus W$ ($d$ summands) for an integer $d > 0$.

Suppose then that $u \in \GL(V)$ is unipotent, so $u$ has order $q$ for some $q = 2^{\alpha}$. We denote the group algebra of $\langle u \rangle$ by $K[u]$. Then $V \downarrow K[u] \cong V_{d_1}^{n_1} \oplus \cdots \oplus V_{d_t}^{n_t}$ for some integers $0 < d_1 < \cdots < d_t$, where $n_i > 0$ for all $1 \leq i \leq t$. Equivalently, the Jordan normal form of $u$ has Jordan blocks of sizes $d_1$, $\ldots$, $d_t$, and a block of size $d_i$ has multiplicity $n_i$.

For unipotent $u \in \Sp(V)$, we have a decomposition $V \downarrow K[u] = U_1 \perp \cdots \perp U_t$, where $U_i$ are \emph{orthogonally indecomposable} $K[u]$-modules. Here orthogonally indecomposable means that if $U_i = U' \perp U''$ as $K[u]$-modules, then $U' = 0$ or $U'' = 0$. The orthogonally indecomposable $K[u]$-modules fall into two types: one denoted by $V(m)$ (for $m$ even) and another denoted by $W(m)$ (for $m \geq 1$). We define these in Section \ref{section:hesselinkunipotent}, for now we only mention that $V(m) \cong V_m$ and $W(m) \cong V_m \oplus V_m$ as $K[u]$-modules.

Our first main result is the following. (Below we denote by $\nu_2$ the $2$-adic valuation on the integers, so $\nu_2(a)$ is the largest integer $k \geq 0$ such that $2^k$ divides $a$.)

\begin{lause}\label{thm:unipGSCtoGAD}
Let $u \in \Sp(V)$ be unipotent, with orthogonal decomposition $$V \downarrow K[u] = \sum_{1 \leq i \leq t} W(m_i) \perp \sum_{1 \leq j \leq s} V(2k_j).$$ Denote by $\alpha \geq 0$ the largest integer such that $2^{\alpha} \mid m_i,k_j$ for all $i$ and $j$. Then the following hold:
	\begin{enumerate}[\normalfont (i)]
		\item Suppose that $s = 0$. Then:
			\begin{enumerate}[\normalfont (a)]
				\item If $\alpha = 0$, then $\g_{sc} \cong \g_{ad}$ as $K[u]$-modules.
				\item If $\alpha > 0$, then \begin{align*}
		\g_{sc} &\cong V_{2^{\alpha}} \oplus V' \\
		\g_{ad} &\cong V_{1} \oplus V_{2^{\alpha}-1} \oplus V' \end{align*} for some $K[u]$-module $V'$.
			\end{enumerate}
		\item Suppose that $s > 0$, and let $\beta = \max_{1 \leq j \leq s} \nu_2(k_j)$. Then:
			\begin{enumerate}[\normalfont (a)]
				\item If $\beta = \nu_2(k_j)$ for all $1 \leq j \leq s$ and $\nu_2(m_i) > \beta$ for all $1 \leq i \leq t$, then $\g_{sc} \cong \g_{ad}$ as $K[u]$-modules.
				\item If $\beta > \nu_2(k_j)$ for some $1 \leq j \leq s$, or $\nu_2(m_i) \leq \beta$ for some $1 \leq i \leq t$, then  \begin{align*}
		\g_{sc} &\cong V_{2^{\alpha}} \oplus V_{2^{\beta}} \oplus V' \\
		\g_{ad} &\cong V_{2^{\alpha}-1} \oplus V_{2^{\beta}+1} \oplus V' \end{align*} for some $K[u]$-module $V'$.		
			\end{enumerate}
	\end{enumerate}
\end{lause}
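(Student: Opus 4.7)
The strategy is to exploit the central isogeny $\varphi : G_{sc} \to G_{ad}$. Since $\chr K = 2$, $\ker \varphi = \mu_2$ is non-reduced, so the differential $\D\varphi : \g_{sc} \to \g_{ad}$ has one-dimensional kernel and cokernel. The kernel is spanned by the identity $I_V \in \mathfrak{sp}(V)$, and since $G_{sc}$ has no non-trivial rational one-dimensional modules, both the kernel and cokernel are trivial as $K[u]$-modules. This produces a four-term exact sequence
\[ 0 \to K \to \g_{sc} \xrightarrow{\D\varphi} \g_{ad} \to K \to 0 \]
of $K[u]$-modules. The Jordan block decomposition of $\g_{sc}$ is already known from the isomorphism $\g_{sc} \cong S^2(V)^*$ (Lemma \ref{lemma:LieSpS2}) together with the results of \cite{KorhonenSymExt2021}, so the theorem reduces to two subproblems: (A) locate the Jordan block of $\g_{sc}$ containing the fixed vector $I_V$, that is, find the largest $d \geq 1$ with $I_V \in (\Ad(u)-1)^{d-1}\g_{sc}$; and (B) determine whether the extension $0 \to M \to \g_{ad} \to K \to 0$, where $M := \D\varphi(\g_{sc})$, splits as a $K[u]$-module, and if not identify the Jordan block of $M$ to which the new invariant attaches.

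Step (A) can be carried out by reducing to the orthogonally indecomposable summands. Write $V = U_1 \perp \cdots \perp U_r$ with each $U_i$ of type $W(m)$ or $V(2k)$; then $\mathfrak{sp}(V)$ splits into diagonal blocks $\mathfrak{sp}(U_i)$ together with cross-terms $\Hom(U_i,U_j)$ for $i \neq j$ in which $I_V$ does not appear. Since $I_V = \sum_i I_{U_i}$, the desired $d$ is the minimum over $i$ of the analogous quantity for $I_{U_i}$ acting on $\mathfrak{sp}(U_i)$. A direct calculation on the Hesselink bases of $W(m)$ and $V(2k)$ should show that each summand contributes a power of two depending on $\nu_2(m)$ or $\nu_2(k)$, with overall minimum equal to $2^\alpha$ by the definition of $\alpha$, except in the degenerate case $s = 0$ with $\alpha = 0$: in that situation some $U_i = W(m_i)$ with $m_i$ odd contributes $I_{U_i}$ as an independent $u$-fixed vector, so $d = 1$. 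This accounts for the predicted drop $V_{2^\alpha} \to V_{2^\alpha - 1}$ appearing in cases (i)(b) and (ii), and its vacuity in case (i)(a).

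Subproblem (B) is the main technical obstacle. The plan is to choose an explicit preimage $x \in \g_{ad} \setminus M$ by working inside a concrete faithful rational representation of $G_{ad}$, and then to compute the Jordan chain $x, (\Ad(u)-1)x, (\Ad(u)-1)^2 x, \ldots$; its length is $1$ if the extension splits and $e+1$ if $x$ attaches to a block of size $e$ in $M$. The expected outcome is that in case (i), where $s = 0$, the extension always splits, yielding $\g_{ad} \cong M \oplus K$; while in case (ii), where $s > 0$, the extension is non-split with chain length exactly $2^\beta + 1$, so that the new invariant attaches to a block of size $2^\beta$ in $M$. The case distinction within part (ii) then comes down to whether this attachment lands on the block of size $2^\beta - 1$ produced by removing $I_V$ (forcing $\alpha = \beta$ and returning $\g_{ad} \cong \g_{sc}$, case (ii)(a)) or on a distinct $V_{2^\beta}$ in $M$ (case (ii)(b)). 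Controlling this attachment systematically as a function of the $\nu_2$-valuations of the $m_i$ and $k_j$, and in particular proving that the presence of any $V(2k_j)$ summand forces the extension to be non-split, is the principal difficulty, and it requires a careful analysis of how the extension class interacts with the different orthogonally indecomposable summands of $V$.
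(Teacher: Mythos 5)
Your overall architecture is the same as the paper's. You pass through the intermediate object $M := \D\varphi(\g_{sc}) = [\g_{ad},\g_{ad}]$, which the paper identifies with $\g_{sc}/Z(\g_{sc})$ (Lemma \ref{lemma:gmodZisgadgad}), and split the problem into two codimension-one Jordan-form comparisons, each reduced to orthogonally indecomposable summands of $V$. Your Step (A) (locate the Jordan block of $\g_{sc}$ containing the central vector, your $I_V$) is precisely Proposition \ref{prop:unipKeraction}; the paper carries it out in the dual picture $\g_{sc}\cong S^2(V)^*$ with the functional $\varphi$ and the module $\Ker\varphi$, but your reduction to the pieces $U_i$ and the answer $d=2^\alpha$ match Lemmas \ref{lemma:unipS2decomp}--\ref{lemma:S2indecompWl}. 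Your Step (B) is the content of Section \ref{section:unipGAdZ}, done there via the Chevalley $\Z$-forms $L_{sc}\subset L_{ad}$ and the explicit lift $\delta$.

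However, the stated ``expected outcome'' of Step (B) is wrong in a way that matters. You assert that whenever $s>0$ the new block has chain length \emph{exactly} $2^\beta+1$, i.e.\ that $M\subset\g_{ad}$ always trades a $V_{2^\beta}$ of $M$ for a $V_{2^\beta+1}$. Combining this with Step (A) would give
$\g_{ad}\cong V_{2^\alpha-1}\oplus V_{2^\beta+1}\oplus V'''$ unconditionally, so $\g_{ad}\not\cong\g_{sc}$ whenever $s>0$, contradicting case (ii)(a) of the theorem (which is realized, e.g.\ by $V(2)\perp V(2)$, where $\g_{sc}\cong\g_{ad}$). In fact the minimum chain length is $2^\beta$ in case (ii)(a) and $2^\beta+1$ in case (ii)(b); this dichotomy is exactly Proposition \ref{prop:unipgadaction}(ii), which says $\Ker(\widetilde u-1)^{2^\beta}\not\subseteq[\g_{ad},\g_{ad}]$ if and only if $\nu_2(k_j)=\beta$ for all $j$ and $\nu_2(m_i)>\beta$ for all $i$. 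Your subsequent sentence --- ``whether the attachment lands on the block of size $2^\beta-1$ \dots or on a distinct $V_{2^\beta}$'' --- is not a well-posed secondary question one can ask after fixing the chain length: by Lemma \ref{jordanrestrictionNIL}, once the minimal chain length over all lifts $x\in\g_{ad}\setminus M$ is $m$, the Jordan decomposition is forced to be $\g_{ad}\cong V_m\oplus V'$, $M\cong V_{m-1}\oplus V'$, and a chain of length $2^\beta+1$ cannot sit over a block of size $2^\beta-1$. So the real work in Step (B) is exactly to decide between $m=2^\beta$ and $m=2^\beta+1$ as a function of the $\nu_2$-data, which the paper does via Lemma \ref{lemma:GADreduce} and explicit mod-$2$ calculations with $\delta_i$ on each summand (Lemmas \ref{lemma:deltaWLunip}, \ref{lemma:deltaV2Lunip}, \ref{lemma:GADindecompV2l}); you correctly flag this as the principal difficulty, but the quantity you would be trying to verify is misstated, so the plan as written cannot produce the theorem.
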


To describe our results in the nilpotent case, we recall the classification of nilpotent orbits in $\mathfrak{sp}(V)$, due to Hesselink \cite{Hesselink}. For more details we refer to Section \ref{section:hesselinknilpotent}.

Suppose that $q = 2^{\alpha}$ for some $\alpha \geq 0$. Let $\mathfrak{w}_q$ be the abelian $2$-Lie algebra over $K$ generated by a single nilpotent element $e \in \mathfrak{w}_q$ such that $e^{[2^{\alpha}]} = 0$ and $e^{[2^{\alpha-1}]} \neq 0$. There are a total of $q$ indecomposable $\mathfrak{w}_q$-modules $W_1$, $\ldots$, $W_q$ up to isomorphism, where $\dim W_i = i$ and $e$ acts on $W_i$ with a single $i \times i$ nilpotent Jordan block. Throughout we will denote $W_0 = 0$.

Consider then a nilpotent linear map $e \in \mathfrak{gl}(V)$, and denote the $2$-Lie subalgebra generated by $e$ with $K[e]$. Then $K[e] \cong \mathfrak{w}_q$, where $q$ is the smallest power of two such that $e^q = 0$. We have $V \downarrow K[e] \cong W_{d_1}^{n_1} \oplus \cdots \oplus W_{d_t}^{n_t}$ for some integers $0 < d_1 < \cdots < d_t$, where $n_i > 0$ for all $1 \leq i \leq t$. As in the unipotent case, this amounts to the statement that the Jordan normal form of $e$ has Jordan blocks of sizes $d_1$, $\ldots$, $d_t$, and a block of size $d_i$ has multiplicity $n_i$.

For nilpotent $e \in \mathfrak{sp}(V)$, we have an orthogonal decomposition $V \downarrow K[e] = U_1 \perp \cdots \perp U_t$, where the $U_i$ are orthogonally indecomposable $K[e]$-modules. Here orthogonally indecomposable is defined as in the unipotent case. For the orthogonally indecomposable $K[e]$-modules, there are several different types: $V(m)$ (for $m$ even), $W_k(m)$ (for $0 < k < m/2$ and $m > 2$), and $W(m)$ (for $m \geq 1$). We give the definitions and more details in Section \ref{section:hesselinknilpotent}, for now we just note that $V(m) \cong W_m$, $W_k(m) \cong W_m \oplus W_m$, and $W(m) \cong W_m \oplus W_m$ as $K[e]$-modules.

Our main result in the nilpotent case is the following.

\begin{lause}\label{thm:nilGSCtoGAD}
Let $e \in \mathfrak{sp}(V)$ be nilpotent, with orthogonal decomposition $$V \downarrow K[e] = \sum_{1 \leq i \leq t} W(m_i) \perp \sum_{1 \leq j \leq t'} W_{k_j}(\ell_j) \perp \sum_{1 \leq r \leq t''} V(2d_r).$$ Denote by $\alpha \geq 0$ the largest integer such that $2^{\alpha} \mid m_i,\ell_j,2d_r$ for all $i$, $j$, and $r$. Then the following statements hold:
	\begin{enumerate}[\normalfont (i)]
		\item Suppose that $V \downarrow K[e] = \sum_{1 \leq i \leq t} W(m_i)$. Then:	
			\begin{enumerate}[\normalfont (a)]
				\item If $\alpha = 0$, then $\g_{sc} \cong \g_{ad}$ as $K[e]$-modules.
				\item If $\alpha > 0$, then \begin{align*}
		\g_{sc} &\cong W_{2^{\alpha}} \oplus V' \\
		\g_{ad} &\cong W_{1} \oplus W_{2^{\alpha}-1} \oplus V' \end{align*} for some $K[e]$-module $V'$.
			\end{enumerate}
		\item Suppose that $V \downarrow K[e]$ is not of the form $\sum_{1 \leq i \leq t} W(m_i)$. Then:
			\begin{enumerate}[\normalfont (a)]
				\item If $\alpha = 1$, then $\g_{sc} \cong \g_{ad}$ as $K[e]$-modules.
				\item If $\alpha \neq 1$, then \begin{align*}
		\g_{sc} &\cong W_1 \oplus W_{2^{\alpha}} \oplus V' \\
		\g_{ad} &\cong W_{2} \oplus W_{2^{\alpha}-1} \oplus V' \end{align*} for some $K[e]$-module $V'$.
			\end{enumerate}
	\end{enumerate}
\end{lause}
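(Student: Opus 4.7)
The approach is to realize $\g_{ad}$ as a $K[e]$-module via the isogeny $\varphi: G_{sc} \to G_{ad}$. In characteristic $2$ for type $C_{\ell}$ this isogeny has infinitesimal kernel $\mu_2$, and $\D\varphi: \g_{sc} \to \g_{ad}$ has $1$-dimensional kernel $\mathfrak{z} = K \cdot \mathrm{id}_V$, the center of $\mathfrak{sp}(V)$. Since $\dim \g_{sc} = \dim \g_{ad}$, the cokernel is also a $1$-dimensional trivial module $\mathfrak{t}$, yielding short exact sequences of $K[e]$-modules
\[ 0 \to \mathfrak{z} \to \g_{sc} \to \g_{sc}/\mathfrak{z} \to 0, \qquad 0 \to \g_{sc}/\mathfrak{z} \to \g_{ad} \to \mathfrak{t} \to 0, \]
with $\mathfrak{z}, \mathfrak{t} \cong W_1$. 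The problem reduces to two computations: locate the Jordan block of $\g_{sc}$ containing $\mathrm{id}_V$ (this determines whether the first sequence splits and exactly which Jordan block of $\g_{sc}$ shrinks by one in $\g_{sc}/\mathfrak{z}$), and determine whether $\mathfrak{t}$ forms its own Jordan block in $\g_{ad}$ or glues onto one coming from $\g_{sc}/\mathfrak{z}$.

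For the first computation I would use the isomorphism $\g_{sc} \cong S^2(V)^*$ from Lemma \ref{lemma:LieSpS2} together with the explicit Jordan block description of $e$ on $S^2(V)$ from \cite{KorhonenSymExt2021}. Applied to the given orthogonal decomposition, this should show that $\mathrm{id}_V$ sits in the socle of a Jordan block of size $2^{\alpha}$, so that $\g_{sc}/\mathfrak{z}$ is obtained from $\g_{sc}$ by replacing $W_{2^{\alpha}}$ with $W_{2^{\alpha}-1}$. In case (i), where $V$ decomposes only into hyperbolic summands $W(m_i)$, I expect the second sequence to split, so combining with the first computation yields the modification $W_{2^{\alpha}} \to W_{2^{\alpha}-1} \oplus W_1$; this matches case (i)(b) and degenerates to case (i)(a) when $\alpha = 0$. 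In case (ii) the presence of a $V(2d_r)$ or $W_k(\ell_j)$ summand produces an additional trivial $W_1$ in $\g_{sc}$ beyond $\mathfrak{z}$, and I anticipate that the second sequence is non-split with $\mathfrak{t}$ attaching to this $W_1$ in the quotient $\g_{sc}/\mathfrak{z}$ to form a $W_2$; the resulting modification $W_1 \oplus W_{2^{\alpha}} \to W_2 \oplus W_{2^{\alpha}-1}$ is trivial precisely when $\alpha = 1$, giving case (ii)(a).

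The main obstacle is the explicit identification of the Jordan block of $\g_{sc} \cong S^2(V)^*$ containing $\mathrm{id}_V$, and the parallel analysis for the trivial quotient of $\g_{ad}$. This calls for working with concrete bases of the orthogonally indecomposable summands of $V \downarrow K[e]$, writing out the symplectic form as an element of $S^2(V)^*$ in these coordinates, and tracing its depth in the Jordan decomposition using \cite{KorhonenSymExt2021}. The split between cases (i) and (ii) should emerge from the different ways in which the hyperbolic summands $W(m_i)$ and the non-hyperbolic $V(2d_r)$, $W_k(\ell_j)$ contribute trivial $W_1$ pieces to $\g_{sc}$; a careful accounting of these contributions, and of the resulting (non-)split behavior of the second sequence, is where the bulk of the technical work will lie.
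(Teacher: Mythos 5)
Your overall strategy coincides with the paper's: factor the comparison through the two short exact sequences of $K[e]$-modules $0 \to Z(\g_{sc}) \to \g_{sc} \to \g_{sc}/Z(\g_{sc}) \to 0$ and $0 \to [\g_{ad},\g_{ad}] \to \g_{ad} \to K \to 0$, linked by the isomorphism $\g_{sc}/Z(\g_{sc}) \cong [\g_{ad},\g_{ad}]$ (Lemma \ref{lemma:gmodZisgadgad}). Your two ``computations'' are precisely Proposition \ref{prop:nilKeraction} and Proposition \ref{prop:nilgadaction}, and your anticipated answers are correct: $Z(\g_{sc}) = K\cdot\mathrm{id}_V$ sits at the bottom of a block of size $2^{\alpha}$ in $\g_{sc}$, and the trivial quotient of $\g_{ad}$ splits off as $W_1$ exactly when $V\downarrow K[e]$ is a sum of $W(m_i)$'s (Lemma \ref{lemma:NILgadsingular}), while otherwise it non-trivially extends a $W_1$ in $[\g_{ad},\g_{ad}]$ to a $W_2$ in $\g_{ad}$ (Lemmas \ref{lemma:nonsingularkernelNILGAD} and \ref{lemma:nonsingularkernel2NILGAD}). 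For the first computation, your plan to locate $\mathrm{id}_V$ in $\g_{sc} \cong S^2(V)^*$ by coordinates is the dual of what the paper does via the surjection $\varphi:S^2(V)\to K$, $xy\mapsto b(x,y)$, and Lemma \ref{lemma:kerphiisdual}; either viewpoint works.

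The genuine gap is that your proposed method does not extend to the second computation, and you do not supply an alternative. Locating $\mathrm{id}_V$ inside $S^2(V)^*$ is tractable because $\g_{sc}$ has a concrete model as a symmetric square; but $\g_{ad}$ is not a symmetric square (nor the dual of one), so ``writing out the symplectic form in these coordinates'' gives you no handle on it, and your expectation of a ``parallel analysis'' is misleading. The paper bridges this by realizing $\g_{ad}=K\otimes_{\Z}L_{ad}$ via the Chevalley construction, with $L_{ad}$ the adjoint lattice spanned over $\Z$ by $L_{sc}$ and $\delta=\tfrac12\sum_i v_iv_{n-i+1}$, lifting $e$ to an integral nilpotent $e_{\Z}\in\g_{\Z}$, and converting the question of whether $\Ker(\widetilde{e})\subseteq[\g_{ad},\g_{ad}]$ into the integrality condition of Lemma \ref{lemma:nilpinUZgen}: does $e_{\Z}\cdot(\delta+v)\in 2L_{ad}$ for some $v\in L_{sc}$? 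That reduction, and the orthogonal-summand-by-summand analysis it enables (Lemma \ref{lemma:GADreduceNIL}), is the missing machinery. A smaller point: your heuristic that a $V(2d_r)$ or $W_k(\ell_j)$ summand ``produces an additional trivial $W_1$ in $\g_{sc}$ beyond $\mathfrak{z}$'' is an a posteriori consequence of combining Propositions \ref{prop:nilKeraction} and \ref{prop:nilgadaction}, not an independent input; one cannot start from it.
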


\begin{esim}\label{example:tables}
In Table \ref{table:UNIPexamples} and Table \ref{table:NILexamples}, we illustrate Theorem \ref{thm:unipGSCtoGAD} and Theorem \ref{thm:nilGSCtoGAD} in the case where $G$ is of type $C_{\ell}$ for $2 \leq \ell \leq 4$. In the tables we have also included the Jordan block sizes on $[\g_{ad},\g_{ad}]$, which we prove as an intermediate result in Proposition \ref{prop:unipKeraction} and Proposition \ref{prop:nilKeraction}. Furthermore, the values of $\alpha$ and $\beta$ that appear in Theorem \ref{thm:unipGSCtoGAD} and Theorem \ref{thm:nilGSCtoGAD} are included.

In the tables, we use the notation $$d_1^{n_1}, \ldots, d_t^{n_t}$$ for $0 < d_1 < \cdots < d_t$ and $n_i > 0$ to denote that the Jordan block sizes are $d_1$, $\ldots$, $d_t$, and a block of size $d_i$ has multiplicity $n_i$. For the orthogonal decompositions in the first column, notation such as $V(m)^k$ denotes an orthogonal direct sum $V(m) \perp \cdots \perp V(m)$ ($k$ summands).\end{esim}

In particular, from Theorem \ref{thm:unipGSCtoGAD} and Theorem \ref{thm:nilGSCtoGAD} we get the number of Jordan blocks of $\Ad(u)$ and $\ad(e)$, which is equal to the dimension of the Lie algebra centralizer of $u$ and $e$, respectively.

\begin{seur}\label{cor:centralizerdimUNIP}
Let $u \in \Sp(V)$ be unipotent and denote $\alpha$, $s$ as in Theorem \ref{thm:unipGSCtoGAD}. Then the following hold:
	\begin{enumerate}[\normalfont (i)]
		\item Suppose that $s = 0$. Then $$\dim \g_{ad}^u = \begin{cases} \dim \g_{sc}^u,& \text{ if } \alpha = 0. \\ \dim \g_{sc}^u+1,& \text{ if } \alpha > 0.\end{cases}$$		
		\item Suppose that $s > 0$. Then $$\dim \g_{ad}^u = \begin{cases} 
		\dim \g_{sc}^u-1,& \text{ if } \alpha = 0 \text{ and } \nu_2(k_j) > 0 \text{ for some } j.  \\ 
		\dim \g_{sc}^u-1,& \text{ if } \alpha = 0 \text{ and } \nu_2(m_i) = 0 \text{ for some } i. \\ 
		\dim \g_{sc}^u,& \text{ if } \alpha = 0 \text{ and } \nu_2(k_j) = 0, \nu_2(m_i) > 0 \text{ for all } i \text{ and } j. \\
		\dim \g_{sc}^u,& \text{ if } \alpha > 0.\end{cases}$$			
	\end{enumerate}
\end{seur}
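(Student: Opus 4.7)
The plan is to invoke the elementary fact that for a unipotent linear map $u$ on a finite-dimensional $K$-vector space $W$, the dimension of $\Ker(u - 1_W)$ equals the number of Jordan blocks of $u$ on $W$, since each indecomposable summand $V_d$ contributes exactly one dimension to the kernel. Applied to the adjoint action, $\dim \g_{sc}^u$ and $\dim \g_{ad}^u$ are precisely the numbers of Jordan blocks of $\Ad(u)$ on $\g_{sc}$ and $\g_{ad}$, as already noted in the paragraph preceding the corollary. The proof therefore reduces to comparing the block counts furnished by Theorem \ref{thm:unipGSCtoGAD}.

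In case (i) the comparison is transparent: subcase (a) has $\g_{sc} \cong \g_{ad}$ so the counts agree, while subcase (b) replaces the block $V_{2^{\alpha}}$ with $V_1 \oplus V_{2^{\alpha}-1}$, where $\alpha > 0$ guarantees $V_{2^\alpha - 1} \neq 0$, producing exactly one extra block. In case (ii), subcase (a) again gives $\g_{sc} \cong \g_{ad}$; subcase (b) replaces $V_{2^{\alpha}} \oplus V_{2^{\beta}}$ by $V_{2^{\alpha}-1} \oplus V_{2^{\beta}+1}$, which preserves the block count when $\alpha > 0$ (all four summands are nonzero) and loses exactly one block when $\alpha = 0$ (since then $V_{2^{\alpha}-1} = 0$ while $V_{2^{\beta}+1}$ is still nonzero).

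The remaining task is to match the four hypotheses appearing in part (ii) of the corollary with cases (a) and (b) of the theorem. The condition \emph{$\alpha = 0$ and $\nu_2(k_j) > 0$ for some $j$} gives $\beta > 0$, and combined with $\alpha = 0$ it forces either $\beta > \nu_2(k_{j'})$ for some other $j'$ or $\nu_2(m_i) \leq \beta$ for some $i$, placing us in (b); the condition \emph{$\alpha = 0$ and $\nu_2(m_i) = 0$ for some $i$} gives $\nu_2(m_i) \leq \beta$ directly, again placing us in (b); the condition \emph{$\alpha = 0$, $\nu_2(k_j) = 0$ for all $j$, $\nu_2(m_i) > 0$ for all $i$} yields $\beta = 0$ and $\nu_2(m_i) > \beta$ for all $i$, which is precisely (a); and the condition $\alpha > 0$ falls either into (a) or into (b) with $\alpha > 0$, both of which preserve the block count. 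Only this mechanical bookkeeping requires any care; the real content of the corollary is already contained in Theorem \ref{thm:unipGSCtoGAD}.
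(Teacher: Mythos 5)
Your proof is correct and takes the same approach the paper intends: the paper dispatches this corollary with a one-line ``Immediate from Theorem \ref{thm:unipGSCtoGAD}'' and you have simply written out the block-counting bookkeeping that the word ``immediate'' suppresses, including the correct case analysis showing that when $\alpha = 0$ the conditions in parts (ii)(1) and (ii)(2) each force subcase (b) of the theorem while part (ii)(3) gives subcase (a).
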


\begin{seur}\label{cor:centralizerdimNIL}
Let $e \in \mathfrak{sp}(V)$ be nilpotent and denote $\alpha$ as in Theorem \ref{thm:unipGSCtoGAD}. Then the following hold:
	\begin{enumerate}[\normalfont (i)]
		\item Suppose that $V \downarrow K[e] = \sum_{1 \leq i \leq t} W(m_i)$. Then $$\dim \g_{ad}^e = \begin{cases} \dim \g_{sc}^e,& \text{ if } \alpha = 0. \\ \dim \g_{sc}^e+1,& \text{ if } \alpha > 0.\end{cases}$$
		\item Suppose that $V \downarrow K[e]$ is not of the form $\sum_{1 \leq i \leq t} W(m_i)$. Then $$\dim \g_{ad}^e = \begin{cases} \dim \g_{sc}^e-1,& \text{ if } \alpha = 0 .\\ \dim \g_{sc}^e,& \text{ if } \alpha > 0.\end{cases}$$
	\end{enumerate}
\end{seur}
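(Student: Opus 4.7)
The plan is to reduce the statement to a count of indecomposable $K[e]$-summands and then read off the answer directly from Theorem \ref{thm:nilGSCtoGAD}. The Lie algebra centralizer of $e$ in $\g$ is $\g^e = \{x \in \g : [e,x]=0\} = \Ker(\ad(e))$. Since $\ad(e)$ is nilpotent, its kernel has dimension equal to the number of Jordan blocks, which equals the total number of indecomposable summands in the $K[e]$-module decomposition of $\g$. So it suffices to compare the number of indecomposable summands of $\g_{sc}$ and $\g_{ad}$ as $K[e]$-modules.

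For part (i), when $\alpha = 0$ Theorem \ref{thm:nilGSCtoGAD}(i)(a) gives $\g_{sc} \cong \g_{ad}$, so $\dim \g_{ad}^e = \dim \g_{sc}^e$. When $\alpha > 0$, Theorem \ref{thm:nilGSCtoGAD}(i)(b) exhibits
\[ \g_{sc} \cong W_{2^{\alpha}} \oplus V', \qquad \g_{ad} \cong W_{1} \oplus W_{2^{\alpha}-1} \oplus V'. \]
Since $\alpha \geq 1$ implies $2^{\alpha}-1 \geq 1$, the summand $W_{2^{\alpha}-1}$ is nonzero, so $\g_{ad}$ has exactly one more indecomposable summand than $\g_{sc}$, giving $\dim \g_{ad}^e = \dim \g_{sc}^e + 1$.

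For part (ii), when $\alpha = 1$ Theorem \ref{thm:nilGSCtoGAD}(ii)(a) yields $\g_{sc} \cong \g_{ad}$ directly. When $\alpha \neq 1$, Theorem \ref{thm:nilGSCtoGAD}(ii)(b) gives
\[ \g_{sc} \cong W_{1} \oplus W_{2^{\alpha}} \oplus V', \qquad \g_{ad} \cong W_{2} \oplus W_{2^{\alpha}-1} \oplus V'. \]
The only subtle point, which is really the entire content of the verification, is the convention $W_0 = 0$. When $\alpha = 0$, the displayed summands reduce to $W_1 \oplus W_1$ on the $\g_{sc}$ side (two indecomposable summands) and $W_2 \oplus W_0 = W_2$ on the $\g_{ad}$ side (only one), so $\dim \g_{ad}^e = \dim \g_{sc}^e - 1$. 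When $\alpha \geq 2$, both $2^{\alpha}$ and $2^{\alpha}-1$ are at least $1$, and the two displayed summands on each side contribute two genuine indecomposable summands, so $\dim \g_{ad}^e = \dim \g_{sc}^e$.

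The proof is entirely bookkeeping once the reduction to block counts is made. There is no real obstacle; the one thing to be careful about is the $\alpha=0$ case in (ii), where the formula produces a $W_0$ summand that must be interpreted as zero. Since Theorem \ref{thm:nilGSCtoGAD} has already been established, no further structural input is needed.
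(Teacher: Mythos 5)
Your proof is correct and takes the same approach as the paper, which dismisses the corollary as immediate from Theorem \ref{thm:nilGSCtoGAD}; you have simply made the bookkeeping explicit, including the one genuine subtlety (the $W_0=0$ convention when $\alpha=0$ in part (ii)), and your translation of the $\alpha=1$ versus $\alpha\neq 1$ dichotomy of the theorem into the $\alpha=0$ versus $\alpha>0$ dichotomy of the corollary is handled correctly.
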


In the case of regular elements, this amounts to the following. (An element $x \in G$ or $x \in \g$ is \emph{regular}, if $\dim C_G(x) = \rank G$. In the case of $G = \Sp(V)$, a regular unipotent element is characterized by $V \downarrow K[u] = V(2\ell)$, and a regular nilpotent element is characterized by $V \downarrow K[e] = V(2\ell)$.)

\begin{seur}\label{cor:regularcentralizerdim}
Suppose that $G$ is of type $C_{\ell}$ in characteristic two (adjoint or simply connected). Let $u \in G$ be a regular unipotent element and $e \in \g$ a regular nilpotent element. Then $\dim \g^u = \ell+1$ and $\dim \g^e = 2\ell$.
\end{seur}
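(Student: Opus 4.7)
The plan is to first reduce, by Corollary \ref{cor:centralizerdimUNIP} and Corollary \ref{cor:centralizerdimNIL}, to the computation of $\dim \g_{sc}^u$ and $\dim \g_{sc}^e$ in $\mathfrak{sp}(V)$: the characterization $V \downarrow K[u] = V \downarrow K[e] = V(2\ell)$ of regular elements in $G_{sc} = \Sp(V)$ places us in subcases for which $\dim \g_{ad} = \dim \g_{sc}$ (in the unipotent case $t = 0$ rules out the two subcases of Corollary \ref{cor:centralizerdimUNIP}(ii) giving $\dim \g_{sc}^u - 1$; in the nilpotent case $\alpha = 1 + \nu_2(\ell) \geq 1$ rules out the $\alpha = 0$ subcase of Corollary \ref{cor:centralizerdimNIL}(ii)). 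Since $V$ is a single Jordan block of size $2\ell$, $C_{\mathfrak{gl}(V)}(u) = K[u]$ and $C_{\mathfrak{gl}(V)}(e) = K[e]$ are both $2\ell$-dimensional, and $\g_{sc}^u$ (resp.\ $\g_{sc}^e$) is the intersection of this with $\mathfrak{sp}(V)$.

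For the nilpotent case, the identity $e^T = J e J^{-1}$ (a restatement, in characteristic two, of $e \in \mathfrak{sp}(V)$) extends to $p(e)^T = J p(e) J^{-1}$ for every polynomial $p$, so $p(e)^T J + J p(e) = 2 J p(e) = 0$; hence $K[e] \subseteq \mathfrak{sp}(V)$ and $\dim \g^e = 2\ell$.

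For the unipotent case, the analogous computation with $u^T = J u^{-1} J^{-1}$ shows that $p(u) \in \mathfrak{sp}(V)$ if and only if $p(u) = p(u^{-1})$; after the substitution $y = u - 1$, this becomes the condition that $p \in R := K[y]/(y^{2\ell})$ be fixed by the algebra involution $\sigma \colon y \mapsto y + y^2 + \cdots + y^{2\ell - 1}$. The key step is to introduce $z := y + \sigma(y) = y \sigma(y) \in R$; a direct check shows $z = y^2 (1-y)^{-1}$ in $R$, so $z$ is $y^2$ times a unit, giving $z^\ell = 0$ and a $K$-basis $\{z^k, y z^k : 0 \leq k \leq \ell - 1\}$ of $R$ (its elements have distinct $y$-adic valuations $2k$ and $2k+1$). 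In this basis $\sigma$ fixes each $z^k$ and sends $y z^k$ to $y z^k + z^{k+1}$, and the relation $z^\ell = 0$ makes $y z^{\ell - 1}$ also $\sigma$-fixed; the fixed space is exactly $\mathrm{span}(1, z, z^2, \ldots, z^{\ell - 1}, y z^{\ell - 1})$, of dimension $\ell + 1$. The main obstacle is recognizing this basis---once it is in place, $\sigma$ acts triangularly and the invariant count is immediate.
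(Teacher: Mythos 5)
Your proof is correct, and the overall plan — reduce from $\g_{ad}$ to $\g_{sc}$ via Corollaries \ref{cor:centralizerdimUNIP} and \ref{cor:centralizerdimNIL} using $V \downarrow K[u] = V \downarrow K[e] = V(2\ell)$, then compute $\dim\g_{sc}^u$ and $\dim\g_{sc}^e$ — matches the paper. Where you diverge is in that last computation. The paper invokes $\g_{sc} \cong S^2(V)^*$ (Lemma \ref{lemma:LieSpS2}), then reads off $\dim S^2(V_{2\ell})^u = \ell+1$ from Lemma \ref{lemma:fixpdimsymwedge}(ii) and cites an external result of \cite{KorhonenSymExt2021} for $\dim S^2(V_{2\ell})^e = 2\ell$. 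You instead compute $\g_{sc}^u = C_{\mathfrak{sp}(V)}(u)$ and $\g_{sc}^e = C_{\mathfrak{sp}(V)}(e)$ directly inside the centralizer algebra $C_{\mathfrak{gl}(V)}(\cdot) \cong K[y]/(y^{2\ell})$, using that the symplectic condition on $p(u)$ amounts to $\sigma$-invariance for the involution $\sigma\colon y \mapsto y + y^2 + \cdots$, and that $p(e)\in\mathfrak{sp}(V)$ is automatic in characteristic two. Your identification of the $\sigma$-fixed subspace via the element $z = y + \sigma(y) = y^2(1-y)^{-1}$ and the basis $\{z^k, yz^k\}$ is clean and gives a self-contained elementary argument specific to regular elements, whereas the paper's route piggybacks on machinery (symmetric-square decompositions) that it needs anyway for the general case. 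Both are valid; yours buys independence from Section \ref{section:tensor} at the cost of a bespoke calculation that does not generalize beyond the single-block situation. One small imprecision worth tightening: you attribute the elimination of both $\dim\g_{sc}^u - 1$ subcases of Corollary \ref{cor:centralizerdimUNIP}(ii) to $t=0$, but the first of those subcases ($\alpha=0$ and $\nu_2(k_j)>0$ for some $j$) is actually ruled out because with $s=1$ one has $\alpha = \nu_2(k_1)$, so $\alpha = 0$ forces $\nu_2(k_1)=0$; only the second subcase is vacuous by $t=0$. The conclusion is unaffected.
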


For the proofs of our main results, our basic approach is as follows. We will first describe the Jordan block sizes of unipotent $u \in G_{sc}$ and nilpotent $e \in \g_{sc}$ on $\g_{sc}/Z(\g_{sc})$, in terms of the Jordan block sizes on $\g_{sc}$. This is attained in Section \ref{section:unipgmodZ} and Section \ref{section:nilgmodZ}.

It is known that $\g_{sc}/Z(\g_{sc}) \cong [\g_{ad}, \g_{ad}]$ as $G_{sc}$-modules (Lemma \ref{lemma:gmodZisgadgad}), so we then have the Jordan block sizes on $[\g_{ad}, \g_{ad}]$ as well. In Section \ref{section:unipGAdZ} and Section \ref{section:nilGAdZ} we will describe the Jordan block sizes of unipotent $u \in G_{sc}$ and nilpotent $e \in \g_{sc}$ on $\g_{ad}$, in terms of the Jordan block sizes on $[\g_{ad}, \g_{ad}]$. Combining these results, we get the Jordan block sizes of $u$ and $e$ on $\g_{ad}$, in terms of the Jordan block sizes on $\g_{sc}$ (Theorem \ref{thm:unipGSCtoGAD}, Theorem \ref{thm:nilGSCtoGAD}).

The other sections of this paper are organized as follows. The notation, terminology, and some preliminary results are stated in Section \ref{section:notation} and Section \ref{section:prelim}. In Section \ref{section:tensor}, we state results on Jordan block sizes of unipotent and nilpotent elements on tensor products, symmetric squares, and exterior squares. The classification of unipotent classes in $\Sp(V)$ and nilpotent orbits in $\mathfrak{sp}(V)$ is described in Section \ref{section:hesselinkunipotent} and Section \ref{section:hesselinknilpotent}, respectively.

In Section \ref{section:chevalley}, we discuss the Chevalley construction of simple algebraic groups, and in particular the action of $G_{sc} = \Sp(V)$ on $\g_{ad}$. We then make some observations about the structure of $\g_{sc}$ and $\g_{ad}$ as a $G_{sc}$-module in Section \ref{section:typeClie}. 

As mentioned earlier, in Section \ref{section:unipgmodZ} and Section \ref{section:nilgmodZ} we describe the Jordan block sizes of unipotent and nilpotent elements on $\g_{sc}/Z(\g_{sc})$, in terms of the Jordan block sizes on $\g_{sc}$. In Section \ref{section:unipGAdZ} and Section \ref{section:nilGAdZ} we similarly describe the Jordan block sizes on $[\g_{ad}, \g_{ad}]$, in terms of Jordan block sizes on $\g_{ad}$. These results allow us to prove our main results, and the proofs of the results stated in this introduction are given in Section \ref{section:final}.

\begin{table}[!htbp]
\centering

\caption{For $G_{sc} = \Sp(V)$ simply connected of type $C_{\ell}$ with $2 \leq \ell \leq 4$ and $\chr K = 2$, Jordan block sizes of unipotent $u \in G_{sc}$ on $\g_{sc}$, $[\g_{ad}, \g_{ad}]$, and $\g_{ad}$. See Example \ref{example:tables}.}\label{table:UNIPexamples}
\begin{tabular}{lllllll}
\hline

&&&&&&\\[-10pt]
$\ell$ & $V \downarrow K[u]$          & $\g_{sc} \downarrow K[u]$ & $[\g_{ad}, \g_{ad}] \downarrow K[u]$ & $\g_{ad} \downarrow K[u]$ & $\alpha$ & $\beta$ \\ \hline
&&&&&&\\[-5pt]
$2$   & $V(4)$ &        $ 2, 4^{2} $ &         $ 1, 4^{2} $ &         $ 2, 4^{2} $      & $1$ & $1$ \\
& $V(2)^2$ &      $ 1^{2}, 2^{4} $ &     $ 1, 2^{4} $ &     $ 1^{2}, 2^{4} $            & $0$ & $0$ \\
& $W(1) \perp V(2)$ & $ 1^{4}, 2^{3} $ &     $ 1^{3}, 2^{3} $ &     $ 1^{2}, 2^{4} $    & $0$ & $0$ \\
& $W(2)$ &        $ 1^{2}, 2^{4} $ &     $ 1^{3}, 2^{3} $ &     $ 1^{4}, 2^{3} $        & $1$ & $-$ \\
& $W(1)^2$ &      $ 1^{10} $ &           $ 1^{9} $ &           $ 1^{10} $               & $0$ & $-$ \\
\\
&&&&&&\\		
$3$	   & $V(6)$ &          $ 1, 4, 8^{2} $ &      $ 4, 8^{2} $ &      $ 1, 4, 8^{2} $      & $0$ & $0$ \\
& $V(2) \perp V(4)$ &   $ 1, 2^{2}, 4^{4} $ &  $ 2^{2}, 4^{4} $ &  $ 2, 3, 4^{4} $         & $0$ & $1$ \\
& $V(2)^3$ &        $ 1^{3}, 2^{9} $ &     $ 1^{2}, 2^{9} $ &     $ 1^{3}, 2^{9} $         & $0$ & $0$ \\
& $W(1)^2 \perp V(2)$ & $ 1^{11}, 2^{5} $ &    $ 1^{10}, 2^{5} $ &    $ 1^{9}, 2^{6} $     & $0$ & $0$ \\
& $W(1) \perp V(4)$ &   $ 1^{3}, 2, 4^{4} $ &  $ 1^{2}, 2, 4^{4} $ &  $ 1^{2}, 3, 4^{4} $  & $0$ & $1$ \\
& $W(1) \perp V(2)^2$ & $ 1^{5}, 2^{8} $ &     $ 1^{4}, 2^{8} $ &     $ 1^{3}, 2^{9} $     & $0$ & $0$ \\
& $W(3)$ &          $ 1, 2^{2}, 4^{4} $ &  $ 2^{2}, 4^{4} $ &  $ 1, 2^{2}, 4^{4} $         & $0$ & $-$ \\
& $W(1) \perp W(2)$ &   $ 1^{5}, 2^{8} $ &     $ 1^{4}, 2^{8} $ &     $ 1^{5}, 2^{8} $     & $0$ & $-$ \\
& $W(1)^3$ &        $ 1^{21} $ &           $ 1^{20} $ &           $ 1^{21} $               & $0$ & $-$ \\
&&&&&&\\
$4$  & $V(8)$ &               $ 4, 8^{4} $ &         $ 3, 8^{4} $ &         $ 4, 8^{4} $                                      & $2$ & $2$ \\
& $V(2) \perp V(6)$ &        $ 1^{2}, 2, 4, 6^{2}, 8^{2} $ & $ 1, 2, 4, 6^{2}, 8^{2} $ & $ 1^{2}, 2, 4, 6^{2}, 8^{2} $        & $0$ & $0$ \\
& $V(4)^2$ &             $ 2^{2}, 4^{8} $ &     $ 1, 2, 4^{8} $ &     $ 2^{2}, 4^{8} $                                        & $1$ & $1$ \\
& $V(2)^2 \perp V(4)$ &      $ 1^{2}, 2^{5}, 4^{6} $ & $ 1, 2^{5}, 4^{6} $ & $ 1, 2^{4}, 3, 4^{6} $                           & $0$ & $1$ \\
& $V(2) \perp W(3)$ &        $ 1^{2}, 2^{5}, 4^{6} $ & $ 1, 2^{5}, 4^{6} $ & $ 2^{6}, 4^{6} $                                 & $0$ & $0$ \\
& $W(1) \perp V(2)^3$ &      $ 1^{6}, 2^{15} $ &    $ 1^{5}, 2^{15} $ &    $ 1^{4}, 2^{16} $                                  & $0$ & $0$ \\
& $W(1)^3 \perp V(2)$ &      $ 1^{22}, 2^{7} $ &    $ 1^{21}, 2^{7} $ &    $ 1^{20}, 2^{8} $                                  & $0$ & $0$ \\
& $W(2) \perp V(4)$ &        $ 1^{2}, 2^{5}, 4^{6} $ & $ 1^{3}, 2^{4}, 4^{6} $ & $ 1^{3}, 2^{3}, 3, 4^{6} $                   & $1$ & $1$ \\
& $V(2)^4$ &             $ 1^{4}, 2^{16} $ &    $ 1^{3}, 2^{16} $ &    $ 1^{4}, 2^{16} $                                      & $0$ & $0$ \\
& $W(1)^2 \perp V(4)$ &      $ 1^{10}, 2, 4^{6} $ & $ 1^{9}, 2, 4^{6} $ & $ 1^{9}, 3, 4^{6} $                                 & $0$ & $1$ \\
& $W(1)^2 \perp V(2)^2$ &    $ 1^{12}, 2^{12} $ &   $ 1^{11}, 2^{12} $ &   $ 1^{10}, 2^{13} $                                 & $0$ & $0$ \\
& $W(1) \perp V(6)$ &        $ 1^{4}, 4, 6^{2}, 8^{2} $ & $ 1^{3}, 4, 6^{2}, 8^{2} $ & $ 1^{2}, 2, 4, 6^{2}, 8^{2} $          & $0$ & $0$ \\
& $W(1) \perp V(2) \perp V(4)$ & $ 1^{4}, 2^{4}, 4^{6} $ & $ 1^{3}, 2^{4}, 4^{6} $ & $ 1^{3}, 2^{3}, 3, 4^{6} $               & $0$ & $1$ \\
& $W(4)$ &               $ 2^{2}, 4^{8} $ &     $ 2^{2}, 3, 4^{7} $ &     $ 1, 2^{2}, 3, 4^{7} $                              & $2$ & $-$ \\
& $W(1) \perp W(3)$ &        $ 1^{4}, 2^{2}, 3^{4}, 4^{4} $ & $ 1^{3}, 2^{2}, 3^{4}, 4^{4} $ & $ 1^{4}, 2^{2}, 3^{4}, 4^{4} $ & $0$ & $-$ \\
& $W(2)^2$ &             $ 1^{4}, 2^{16} $ &    $ 1^{5}, 2^{15} $ &    $ 1^{6}, 2^{15} $                                      & $1$ & $-$ \\
& $W(1)^2 \perp W(2)$ &      $ 1^{12}, 2^{12} $ &   $ 1^{11}, 2^{12} $ &   $ 1^{12}, 2^{12} $                                 & $0$ & $-$ \\
& $W(1)^4$ &             $ 1^{36} $ &           $ 1^{35} $ &           $ 1^{36} $                                             & $0$ & $-$ \\
&&&&&&\\ \hline
\end{tabular}
\end{table}

\begin{table}[!htbp]
\centering
\caption{For $G_{sc} = \Sp(V)$ simply connected of type $C_{\ell}$ with $2 \leq \ell \leq 4$ and $\chr K = 2$, Jordan block sizes of nilpotent $e \in \g_{sc}$ on $\g_{sc}$, $[\g_{ad}, \g_{ad}]$, and $\g_{ad}$. See Example \ref{example:tables}.}\label{table:NILexamples}
\begin{tabular}{llllll}
\hline
&&&&&\\[-10pt]
$\ell$ & $V \downarrow K[e]$          & $\g_{sc} \downarrow K[e]$ & $[\g_{ad}, \g_{ad}] \downarrow K[e]$ & $\g_{ad} \downarrow K[e]$ & $\alpha$ \\ \hline
&&&&&\\[-5pt]
$2$    & $V(4)$ &        $ 1^{2}, 4^{2} $ &     $ 1^{2}, 3, 4 $ &     $ 1, 2, 3, 4 $          & $2$ \\
       & $V(2)^2$ &      $ 1^{2}, 2^{4} $ &     $ 1^{3}, 2^{3} $ &     $ 1^{2}, 2^{4} $       & $1$ \\
       & $W(1)  \perp  V(2)$ & $ 1^{4}, 2^{3} $ &     $ 1^{3}, 2^{3} $ &     $ 1^{2}, 2^{4} $ & $0$ \\
       & $W(2)$ &        $ 1^{2}, 2^{4} $ &     $ 1^{3}, 2^{3} $ &     $ 1^{4}, 2^{3} $       & $1$ \\
       & $W(1)^2$ &      $ 1^{10} $ &           $ 1^{9} $ &           $ 1^{10} $              & $0$ \\
&&&&&\\	
	
$3$	   & $V(6)$ &        $ 1^{3}, 2, 8^{2} $ &  $ 1^{4}, 8^{2} $ &     $ 1^{3}, 2, 8^{2} $    & $1$ \\
& $W_1(3)$ &      $ 1^{5}, 4^{4} $ &     $ 1^{4}, 4^{4} $ &     $ 1^{3}, 2, 4^{4} $           & $0$ \\
& $V(2) \perp V(4)$ &   $ 1^{3}, 2, 4^{4} $ &  $ 1^{4}, 4^{4} $ &     $ 1^{3}, 2, 4^{4} $     & $1$ \\
& $W(1)^2 \perp V(2)$ & $ 1^{11}, 2^{5} $ &    $ 1^{10}, 2^{5} $ &    $ 1^{9}, 2^{6} $        & $0$ \\
& $W(1) \perp V(2)^2$ & $ 1^{5}, 2^{8} $ &     $ 1^{4}, 2^{8} $ &     $ 1^{3}, 2^{9} $        & $0$ \\
& $W(1) \perp V(4)$ &   $ 1^{5}, 4^{4} $ &     $ 1^{4}, 4^{4} $ &     $ 1^{3}, 2, 4^{4} $     & $0$ \\
& $V(2)^3$ &      $ 1^{3}, 2^{9} $ &     $ 1^{4}, 2^{8} $ &     $ 1^{3}, 2^{9} $              & $1$ \\
& $W(1) \perp W(2)$ &   $ 1^{5}, 2^{8} $ &     $ 1^{4}, 2^{8} $ &     $ 1^{5}, 2^{8} $        & $0$ \\
& $W(3)$ &        $ 1^{5}, 4^{4} $ &     $ 1^{4}, 4^{4} $ &     $ 1^{5}, 4^{4} $              & $0$ \\
& $W(1)^3$ &      $ 1^{21} $ &           $ 1^{20} $ &           $ 1^{21} $                    & $0$ \\
&&&&&\\

$4$  & $V(8)$ &           $ 1^{4}, 8^{4} $ &     $ 1^{4}, 7, 8^{3} $ &     $ 1^{3}, 2, 7, 8^{3} $                             & $3$ \\
     & $W_1(4)$ &         $ 1^{4}, 4^{8} $ &     $ 1^{4}, 3, 4^{7} $ &     $ 1^{3}, 2, 3, 4^{7} $                             & $2$ \\
     & $W(1) \perp W_1(3)$ &    $ 1^{6}, 2^{3}, 4^{6} $ & $ 1^{5}, 2^{3}, 4^{6} $ & $ 1^{4}, 2^{4}, 4^{6} $                   & $0$ \\
     & $V(2) \perp V(6)$ &      $ 1^{4}, 2^{2}, 6^{2}, 8^{2} $ & $ 1^{5}, 2, 6^{2}, 8^{2} $ & $ 1^{4}, 2^{2}, 6^{2}, 8^{2} $  & $1$ \\
     & $V(4)^2$ &         $ 1^{4}, 4^{8} $ &     $ 1^{4}, 3, 4^{7} $ &     $ 1^{3}, 2, 3, 4^{7} $                             & $2$ \\
     & $V(2)^2 \perp V(4)$ &    $ 1^{4}, 2^{4}, 4^{6} $ & $ 1^{5}, 2^{3}, 4^{6} $ & $ 1^{4}, 2^{4}, 4^{6} $                   & $1$ \\
     & $W(1) \perp V(6)$ &      $ 1^{6}, 2, 6^{2}, 8^{2} $ & $ 1^{5}, 2, 6^{2}, 8^{2} $ & $ 1^{4}, 2^{2}, 6^{2}, 8^{2} $      & $0$ \\
     & $W(1) \perp V(2) \perp V(4)$ & $ 1^{6}, 2^{3}, 4^{6} $ & $ 1^{5}, 2^{3}, 4^{6} $ & $ 1^{4}, 2^{4}, 4^{6} $             & $0$ \\
     & $W(1)^2 \perp V(4)$ &    $ 1^{12}, 4^{6} $ &    $ 1^{11}, 4^{6} $ &    $ 1^{10}, 2, 4^{6} $                            & $0$ \\
     & $W(1)^2 \perp V(2)^2$ &  $ 1^{12}, 2^{12} $ &   $ 1^{11}, 2^{12} $ &   $ 1^{10}, 2^{13} $                              & $0$ \\
     & $W(1)^3 \perp V(2)$ &    $ 1^{22}, 2^{7} $ &    $ 1^{21}, 2^{7} $ &    $ 1^{20}, 2^{8} $                               & $0$ \\
     & $W(2) \perp V(4)$ &      $ 1^{4}, 2^{4}, 4^{6} $ & $ 1^{5}, 2^{3}, 4^{6} $ & $ 1^{4}, 2^{4}, 4^{6} $                   & $1$ \\
     & $V(2)^4$ &         $ 1^{4}, 2^{16} $ &    $ 1^{5}, 2^{15} $ &    $ 1^{4}, 2^{16} $                                     & $1$ \\
     & $W(1) \perp V(2)^3$ &    $ 1^{6}, 2^{15} $ &    $ 1^{5}, 2^{15} $ &    $ 1^{4}, 2^{16} $                               & $0$ \\
		 & $V(2) \perp W(3)$ &      $ 1^{6}, 2^{3}, 4^{6} $ & $ 1^{5}, 2^{3}, 4^{6} $ & $ 1^{4}, 2^{4}, 4^{6} $                   & $0$ \\
     & $W(1)^2 \perp W(2)$ &    $ 1^{12}, 2^{12} $ &   $ 1^{11}, 2^{12} $ &   $ 1^{12}, 2^{12} $                              & $0$ \\
     & $W(2)^2$ &         $ 1^{4}, 2^{16} $ &    $ 1^{5}, 2^{15} $ &    $ 1^{6}, 2^{15} $                                     & $1$ \\
     & $W(1) \perp W(3)$ &      $ 1^{8}, 3^{4}, 4^{4} $ & $ 1^{7}, 3^{4}, 4^{4} $ & $ 1^{8}, 3^{4}, 4^{4} $                   & $0$ \\
     & $W(4)$ &           $ 1^{4}, 4^{8} $ &     $ 1^{4}, 3, 4^{7} $ &     $ 1^{5}, 3, 4^{7} $                                & $2$ \\
     & $W(1)^4$ &         $ 1^{36} $ &           $ 1^{35} $ &           $ 1^{36} $                                            & $0$ \\
&&&&&\\ \hline
\end{tabular}
\end{table}

\section{Notation and terminology}\label{section:notation}

 We will use the following notation and terminology throughout the paper.

\subsection{Generalities} We will always assume that $K$ is an algebraically closed field of characteristic two.

For a $K$-vector space $V$ and integer $n > 0$, we denote $V^n = V \oplus \cdots \oplus V$, where $V$ occurs $n$ times. Furthermore, we denote $V^0 = 0$.

We will always use $G$ to denote a group, and all the $K[G]$-modules that we consider will be finite-dimensional. Suppose that $V = U_1 \supset U_2 \supset \cdots \supset U_t \supset U_{t+1} = 0$ is the socle filtration of a $K[G]$-module $V$, so $Z_i := U_i/U_{i+1} = \soc(V/U_{i+1})$ for all $1 \leq i \leq t$. Then we will denote this by $V = Z_1|Z_2|\cdots|Z_t$, and $V$ is said to be \emph{uniserial} if $Z_i$ is irreducible for all $1 \leq i \leq t$.

We denote by $\nu_2$ the $2$-adic valuation on the integers, so $\nu_2(a)$ is the largest integer $k \geq 0$ such that $2^k$ divides $a$.

\subsection{Algebraic groups} Suppose that $G$ is a simple linear algebraic group over $K$. In the context of algebraic groups, the notation that we use will be as in \cite{JantzenBook}. We will denote the Lie algebra of $G$ by $\g$.

When $G$ is an algebraic group, by a $G$-module we will always mean a finite-dimensional rational $K[G]$-module. Fix a maximal torus $T$ of $G$ with character group $X(T)$, and a base $\Delta = \{ \alpha_1, \ldots, \alpha_{\ell} \}$ for the root system of $G$, where $\ell = \operatorname{rank} G$. We will always use the standard Bourbaki labeling of the simple roots $\alpha_i$, as given in \cite[11.4, p. 58]{Humphreys}. We denote the dominant weights with respect to $\Delta$ by $X(T)^+$, and the fundamental dominant weight corresponding to $\alpha_i$ is denoted by $\varpi_i$. For a dominant weight $\lambda \in X(T)^+$, we denote the rational irreducible $K[G]$-module with highest weight $\lambda$ by $L_G(\lambda)$.

An element $u \in G$ is \emph{unipotent}, if $f(u)$ is a unipotent linear map for every rational representation $f: G \rightarrow \GL(W)$. Since $\chr K = 2$, equivalently $u$ is unipotent if and only if $u$ has order $2^{\alpha}$ for some $\alpha \geq 0$.

Similarly $e \in \g$ is said to be \emph{nilpotent}, if $\D f(e)$ is a nilpotent linear map for every rational representation $f: G \rightarrow \GL(W)$. Equivalently $e$ is nilpotent if and only if $e^{[2^\alpha]} = 0$ for some $\alpha > 0$, where $x \mapsto x^{[2]}$ is the canonical $p$-mapping on $\g$.

We will mostly be concerned with the case where $G = \Sp(V)$, in which case $G$ is of type $C_{\ell}$, where $\dim V = 2\ell$. In this case $u \in \Sp(V)$ is unipotent if and only if $u$ is a unipotent linear map on $V$. Furthermore $\g = \mathfrak{sp}(V)$, and $e \in \g$ is nilpotent if and only if $e$ is a nilpotent linear map on $V$.

\subsection{Actions of unipotent elements}\label{section:defunipotent} 

Let $q = 2^{\alpha}$ for some $\alpha \geq 0$. Then a cyclic group $C_q = \langle u \rangle$ has $q$ indecomposable $K[C_q]$-modules, up to isomorphism. We denote them by $V_1$, $\ldots$, $V_q$, where $\dim V_i = i$ and $u$ acts on $V_i$ as a single $i \times i$ unipotent Jordan block. For convenience we denote $V_0 = 0$.

Let $u$ be an element of a group $G$ and let $V$ be a finite-dimensional $K[G]$-module on which $u$ acts as a unipotent linear map. We denote $V^u := \{v \in V : uv = v\}$, which is the fixed point space of $u$ on $V$. Note that $\dim V^u$ is the number of Jordan blocks of $u$ on $V$. 

If $u \in \GL(V)$ is unipotent, we denote by $K[u]$ the group algebra of $\langle u \rangle$. Then $V \downarrow K[u] = V_{d_1}^{n_1} \oplus \cdots \oplus V_{d_t}^{n_t}$, where $0 < d_1 < \cdots < d_t$ are the Jordan block sizes of $u$ on $V$, and $n_i$ is the multiplicity of a Jordan block of size $d_i$.

\subsection{Actions of nilpotent elements}\label{section:defnilpotent} Let $q = 2^{\alpha}$ for some $\alpha \geq 0$. We denote by $\mathfrak{w}_q$ the abelian $2$-Lie algebra over $K$ generated by a nilpotent element $e$ with $e^{[2^{\alpha}]} = 0$ and $e^{[2^{\alpha-1}]} \neq 0$. Then as a $K$-vector space $$\mathfrak{w}_q = \bigoplus_{0 \leq i < \alpha} \langle e^{[2^i]} \rangle.$$ There are a total of $q$ indecomposable $\mathfrak{w}_q$-modules, up to isomorphism. We denote them by $W_1$, $\ldots$, $W_q$, where $\dim W_i = i$ and $e$ acts on $W_i$ as a single $i \times i$ nilpotent Jordan block. For convenience we denote $W_0 = 0$.

If $V$ is a finite-dimensional module for a Lie algebra $\g$ and $e \in \g$ acts on $V$ as a nilpotent element, we denote $V^e := \{v \in V : ev = 0\}$. Then $\dim V^e$ is the number of Jordan blocks of $e$ on $V$.

If $e \in \mathfrak{gl}(V)$ is nilpotent, we denote by $K[e]$ the $2$-Lie algebra generated by $e$. Then $K[e] \cong \mathfrak{w}_q$, where $q$ is the smallest power of two such that $e^q = 0$. Furthermore $V \downarrow K[e] = W_{d_1}^{n_1} \oplus \cdots \oplus W_{d_t}^{n_t}$, where $0 < d_1 < \cdots < d_t$ are the Jordan block sizes of $e$ on $V$, and $n_i$ is the multiplicity of a Jordan block of size $d_i$.

\section{Jordan normal forms on subspaces and quotients}\label{section:prelim}

In the proofs of our main results, the basic tool that we use to describe Jordan block sizes are the following two lemmas.

\begin{lemma}[{\cite[Lemma 3.3]{KorhonenJordanGood}}]\label{jordanrestrictionNIL}
Let $e \in \mathfrak{gl}(V)$ be a nilpotent linear map. Suppose that $W \subseteq V$ is a subspace invariant under $e$ such that $\dim V/W = 1$. Let $m \geq 1$ be such that $\operatorname{Ker} e^{m-1} \subseteq W$ and $\operatorname{Ker} e^{m} \not\subseteq W$. Then \begin{align*}
		V &\cong W_{m} \oplus V' \\
		W &\cong W_{m-1} \oplus V' \end{align*} for some $K[e]$-module $V'$.\end{lemma}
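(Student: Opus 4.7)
The plan is to exhibit a Jordan decomposition of $V$ in which one cyclic summand of size $m$ is ``responsible'' for the codimension-one gap, while every other summand lies inside $W$. To set up, observe that since $\dim V/W = 1$ and $e$ acts nilpotently on the one-dimensional quotient, the induced map is zero, so $e(V) \subseteq W$. Fix a linear functional $\phi : V \to K$ with kernel $W$.

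Choose any Jordan decomposition $V = U_1 \oplus \cdots \oplus U_r$ with $U_i = K[e] v_i$ and $\dim U_i = n_i$, and set $c_i = \phi(v_i)$. For $k \geq 1$ we have $e^k v_i \in e(V) \subseteq W$, so the restriction of $\phi$ to $U_i$ is determined entirely by $c_i$. The hypothesis $\ker e^{m-1} \subseteq W$ forces $c_i = 0$ for every $i$ with $n_i \leq m-1$ (as then $v_i \in \ker e^{m-1}$), while $\ker e^m \not\subseteq W$ yields some $i$ with $n_i \leq m$ and $c_i \neq 0$; combined, there must exist a block of size exactly $m$ with $c_i \neq 0$. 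Relabel so that $n_1 = m$ and $c_1 \neq 0$.

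For each $i \geq 2$, replace $v_i$ by $v_i' := v_i - (c_i/c_1) v_1$ and set $U_i' := K[e] v_i'$. The key verification is that $U_i' \cong W_{n_i}$ as $K[e]$-modules: if $n_i \leq m-1$ then $c_i = 0$ and $v_i' = v_i$, while if $n_i \geq m$ then $e^{n_i} v_1 = 0$ forces $e^{n_i} v_i' = 0$, and $e^{n_i-1} v_i'$ has nonzero $U_i$-component in the original direct sum, hence is nonzero. Since $\phi(v_i') = 0$ and $e U_i' \subseteq e(V) \subseteq W$, we get $U_i' \subseteq W$, and the change of basis $v_i \mapsto v_i'$ is unit upper-triangular, so $V = U_1 \oplus U_2' \oplus \cdots \oplus U_r'$. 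Setting $V' := \bigoplus_{i \geq 2} U_i'$ gives $V = U_1 \oplus V' \cong W_m \oplus V'$. Intersecting with $W$: since $V' \subseteq W$ and $\phi(a_0 v_1 + a_1 e v_1 + \cdots + a_{m-1} e^{m-1} v_1) = c_1 a_0$ with $c_1 \neq 0$ gives $U_1 \cap W = eU_1$, we conclude $W = eU_1 \oplus V' \cong W_{m-1} \oplus V'$.

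The only delicate point is the Jordan-block verification in the modification step, which relies crucially on the hypothesis $\ker e^{m-1} \subseteq W$: without it, a block of size $n_i < m$ could have $c_i \neq 0$, and then $v_i'$ would have nilpotence order $m$ rather than $n_i$, breaking the decomposition. With that hypothesis in hand, the subtraction only perturbs generators of blocks of size $\geq m$, where it affects no leading term, and the argument goes through cleanly.
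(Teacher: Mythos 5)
Your proof is correct. Note that the paper does not prove this lemma; it cites \cite[Lemma 3.3]{KorhonenJordanGood}, so there is no in-paper argument to compare against. Your route is a constructive Jordan-basis adjustment: fix a Jordan decomposition $V = U_1 \oplus \cdots \oplus U_r$, observe that the defining functional $\phi$ of $W$ must be nonzero on some cyclic generator of a block of size exactly $m$ (and on no generator of a smaller block), and then subtract multiples of that generator $v_1$ from the remaining generators to push all the other blocks into $W$ while preserving their sizes. This is more explicit than the other common argument, which compares the kernel dimension sequences $\dim \Ker_V e^k$ and $\dim \Ker_W e^k$ to read off that the partition of $e$ on $W$ is obtained from that on $V$ by lowering a single part from $m$ to $m-1$; that approach gives only the isomorphism types, whereas yours also exhibits an explicit common complement $V'$ inside $W$. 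Two steps are stated a little tersely and would benefit from a line each: the existence of $i$ with $n_i \leq m$ and $c_i \neq 0$ follows because $\Ker e^m = \bigoplus_i (U_i \cap \Ker e^m)$ and $U_i \cap \Ker e^m \subseteq e(V) \subseteq W$ whenever $n_i > m$, so $c_i = 0$ for all $i$ with $n_i \leq m$ would force $\Ker e^m \subseteq W$; and the ``unit upper-triangular'' remark can be justified by projecting a putative linear relation among the new basis vectors onto the original summands $U_i$ for $i \geq 2$, which kills the $U_i'$-coefficients and then the $U_1$-coefficients in turn.
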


\begin{lemma}[{\cite[Lemma 3.4]{Korhonen2020Hesselink}}]\label{jordanquotientNIL}
Let $e \in \mathfrak{gl}(V)$ be a nilpotent linear map. Suppose that $W \subseteq V$ is a subspace invariant under $e$ such that $\dim W = 1$. Let $m \geq 1$ be such that $\operatorname{Im} e^{m-1} \supseteq W$ and $\operatorname{Im} e^{m} \not\supseteq W$. Then \begin{align*}
		V &\cong W_m \oplus V' \\
		V/W &\cong W_{m-1} \oplus V' \end{align*} for some $K[e]$-module $V'$.\end{lemma}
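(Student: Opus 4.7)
The plan is to reduce the lemma to its companion Lemma~\ref{jordanrestrictionNIL} by passing to the $K$-linear dual, exploiting the fact that the roles of ``kernel'' and ``image'', and of ``subspace'' and ``quotient'', are exchanged under duality. Throughout, write $V^{*}$ for the dual space equipped with the transpose nilpotent map $e^{*}$, and for any subspace $U \subseteq V$ write $U^{\perp} = \{f \in V^{*} : f|_{U} = 0\}$.

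First I would verify that the data translates correctly. Since $W$ is $e$-invariant, $W^{\perp}$ is $e^{*}$-invariant, and $\dim V^{*}/W^{\perp} = \dim W = 1$. The standard identity $(\operatorname{Im} e^{k})^{\perp} = \Ker (e^{*})^{k}$, combined with the order-reversing bijection $U \mapsto U^{\perp}$, shows that $\operatorname{Im} e^{m-1} \supseteq W$ is equivalent to $\Ker (e^{*})^{m-1} \subseteq W^{\perp}$, and $\operatorname{Im} e^{m} \not\supseteq W$ is equivalent to $\Ker (e^{*})^{m} \not\subseteq W^{\perp}$. Hence the hypotheses of Lemma~\ref{jordanrestrictionNIL} are satisfied by the triple $(V^{*}, e^{*}, W^{\perp})$, producing a $K[e^{*}]$-module $V''$ such that $V^{*} \cong W_{m} \oplus V''$ and $W^{\perp} \cong W_{m-1} \oplus V''$ as $K[e^{*}]$-modules.

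The final step is to dualize back. Applying $(-)^{*}$ to these two isomorphisms and using the canonical identifications $V^{**} \cong V$ and $(W^{\perp})^{*} \cong V/W$, together with the fact that transposition preserves Jordan block sizes (so the $K[e]$-module dual of the $K[e^{*}]$-module $W_{k}$ is again $W_{k}$), yields $V \cong W_{m} \oplus V'$ and $V/W \cong W_{m-1} \oplus V'$ with $V' := (V'')^{*}$. I expect the only point demanding real care to be the bookkeeping of annihilators together with the observation that the \emph{same} $V'$ appears in both decompositions; this is automatic because both come from dualizing the single module $V''$ furnished by Lemma~\ref{jordanrestrictionNIL}. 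A more direct approach, choosing $v \in V$ with $e^{m-1}v$ spanning $W$ and showing that the cyclic submodule $\langle v \rangle \cong W_{m}$ admits an $e$-invariant complement, would also work but would require one to verify by hand the purity/splitting property that duality handles for free.
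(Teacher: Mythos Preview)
Your duality argument is correct: the annihilator exchange $(\operatorname{Im} e^{k})^{\perp} = \Ker (e^{*})^{k}$ translates the hypotheses precisely into those of Lemma~\ref{jordanrestrictionNIL} for $(V^{*},e^{*},W^{\perp})$, and dualizing the resulting decompositions gives the claim with $V' = (V'')^{*}$.

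The paper itself does not prove this lemma; it merely quotes it from \cite[Lemma 3.4]{Korhonen2020Hesselink}. Your reduction to Lemma~\ref{jordanrestrictionNIL} via $(-)^{*}$ is exactly the natural way to see the two lemmas as formally dual, and is in fact how one would expect the cited source to proceed. The only cosmetic point worth tightening is the phrase ``the $K[e]$-module dual of the $K[e^{*}]$-module $W_{k}$'': what you mean is that under the canonical identification $(e^{*})^{*}=e$, the dual of an indecomposable $K[e^{*}]$-module of dimension $k$ is an indecomposable $K[e]$-module of dimension $k$; this is immediate since a nilpotent Jordan block is conjugate to its transpose.
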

		
\begin{remark}
Recall that we define $W_0 = 0$. Thus if $m = 1$ in Lemma \ref{jordanrestrictionNIL}, we have $$V \cong W_1 \oplus W,$$ and so the Jordan normal form of $e$ on $W$ is given by removing a Jordan block of size $1$ from the Jordan normal form of $e$ on $V$. (Similar remarks apply to Lemma \ref{jordanquotientNIL}.)
\end{remark}

\section{Decomposition of tensor products and symmetric squares}\label{section:tensor}

Let $q = 2^{\alpha}$ for some $\alpha \geq 0$. In this section, we state some basic results about the decomposition of tensor products, exterior squares, and symmetric squares of $K[C_q]$-modules and $\mathfrak{w}_q$-modules.

\subsection{Unipotent case} Let $u$ be a generator for the cyclic group $C_q$. To describe the indecomposable summands of tensor products $V \otimes W$ of $K[C_q]$-modules, it is clear that it suffices to do so in the case where $V$ and $W$ are indecomposable. 

The decomposition of $V_m \otimes V_n$ into indecomposable summands has been studied extensively in all characteristics, see for example (in chronological order) \cite{Srinivasan}, \cite{Ralley}, \cite{McFall}, \cite{Renaud}, \cite{Norman}, \cite{Hou}, \cite{Norman2}, \cite{IimaIwamatsu2009}, and \cite{Barry}. Although there is no closed formula in general, there are various recursive formulae which can be used to determine the indecomposable summands efficiently. We are assuming $\chr K = 2$, in which case we have the following result.

\begin{lause}[{\cite[(2.5a)]{GreenModular}, \cite[Lemma 1, Corollary 3]{GowLaffey}}]\label{thm:tensordecompchar2}
Let $0 < m \leq n \leq q$ and suppose that $q/2 < n \leq q$. Then the following statements hold:
\begin{enumerate}[\normalfont (i)]
\item If $n = q$, then $V_m \otimes V_n \cong V_q^m$ as $K[C_q]$-modules.
\item If $m+n > q$, then $V_m \otimes V_n \cong V_q^{n+m-q} \oplus (V_{q-n} \otimes V_{q-m})$ as $K[C_q]$-modules.
\item If $m+n \leq q$, then $V_m \otimes V_n \cong V_{q-d_t} \oplus \cdots \oplus V_{q-d_1}$ as $K[C_q]$-modules, where $V_m \otimes V_{q-n} \cong V_{d_1} \oplus \cdots \oplus V_{d_t}$.
\end{enumerate}
\end{lause}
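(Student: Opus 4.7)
The plan is to exploit the Hopf-algebra structure of $K[C_q]$ together with the fact that the regular module $V_q \cong K[C_q]$ is both projective and injective. The key observation is that for $0 < k < q$, the indecomposable $V_k$ fits into two short exact sequences
\begin{equation*}
0 \to V_{q-k} \to V_q \to V_k \to 0 \quad\text{and}\quad 0 \to V_k \to V_q \to V_{q-k} \to 0,
\end{equation*}
giving its projective cover and its injective hull respectively. Tensoring either sequence with $V_m$ preserves exactness over $K$, and by part (i) replaces the middle term with a direct sum of copies of $V_q$. In the stable module category this means that $V_m \otimes V_k$ and $V_m \otimes V_{q-k}$ are Heller shifts $\Omega^{\pm 1}$ of one another, and one checks that this shift acts on non-projective indecomposables by $V_d \mapsto V_{q-d}$.

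Part (i) is immediate: $V_q$ is projective of dimension $q$, and since $K[C_q]$ is a cocommutative Hopf algebra, $V_m \otimes V_q$ is also projective, hence a sum of copies of the unique projective indecomposable $V_q$, giving $V_m \otimes V_q \cong V_q^m$ by dimension count. For part (iii), with $m + n \leq q$ and $n > q/2$, I would tensor the injective hull sequence (for $k = n$) by $V_m$ to obtain
\begin{equation*}
0 \to V_m \otimes V_n \to V_q^m \to V_m \otimes V_{q-n} \to 0.
\end{equation*}
Since $m, q - n \leq q/2$, the next paragraph's argument shows that $V_m \otimes V_{q-n}$ has no $V_q$ summand. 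The above sequence is therefore the injective hull of $V_m \otimes V_n$, and a decomposition $V_m \otimes V_{q-n} \cong V_{d_1} \oplus \cdots \oplus V_{d_t}$ lifts under $\Omega$ to $V_m \otimes V_n \cong V_{q-d_1} \oplus \cdots \oplus V_{q-d_t}$. Part (ii), where $m + n > q$, is handled symmetrically: tensor the projective cover sequence by $V_m$, and the non-projective stable part of $V_m \otimes V_n$ is $V_{q-m} \otimes V_{q-n}$ (now $(q-m) + (q-n) < q$ and both factors are $\leq q/2$, so again there is no $V_q$ summand). The dimension identity $mn = q(m+n-q) + (q-m)(q-n)$ then pins down the number of projective summands as $m + n - q$.

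The main obstacle is the claim that $V_a \otimes V_b$ has no $V_q$ summand whenever $a, b \leq q/2$. The crux is that the multiplicative order of $u$ on $V_k$ is the smallest power of two $\geq k$: in characteristic two, Frobenius gives $u^{2^s} - 1 = (u-1)^{2^s}$, which vanishes iff $2^s \geq k$. For $a, b \leq q/2$, the order of $u$ on both $V_a$ and $V_b$ divides $q/2$, and since $u$ acts diagonally on the tensor product via the comultiplication, $(u \otimes u)^{q/2} = u^{q/2} \otimes u^{q/2} = 1$. Therefore every Jordan block of $u \otimes u$ on $V_a \otimes V_b$ has size at most $q/2 < q$, ruling out any $V_q$ summand. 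Granted this, the Heller-shift bookkeeping in parts (ii) and (iii) is routine, and the dimension identity handles the rest.
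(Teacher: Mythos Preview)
The paper does not prove this theorem; it is quoted from the literature (Green, Gow--Laffey) and used as a black box, so there is no ``paper's own proof'' to compare against. Your Heller-translate approach is the standard one and is essentially correct, but two steps need tightening.

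In part (iii), the implication ``$V_m\otimes V_{q-n}$ has no projective summand, therefore $0\to V_m\otimes V_n\to V_q^m\to V_m\otimes V_{q-n}\to 0$ is an injective hull'' is not valid as stated: an exact sequence $0\to A\to I\to B\to 0$ with $I$ injective and $B$ free of injective summands need not have $A\hookrightarrow I$ essential (take $A=V_q\oplus V_{q-2}$, $I=V_q^2$, $B=V_2$). What you actually need is that $V_m\otimes V_n$ itself has no $V_q$ summand when $m+n\le q$. This follows from the elementary observation that $(u\otimes u-1)$ strictly lowers the grading $\deg(e_i\otimes f_j)=i+j$ on a Jordan basis, so $(u\otimes u-1)^{m+n-1}=0$ and every Jordan block has size at most $m+n-1<q$. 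Equivalently, use $\dim(V_a\otimes V_b)^u=\dim\Hom_{K[C_q]}(V_a,V_b)=\min(a,b)$ and a dimension count to see that the projective part vanishes.

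In part (ii) your claim ``both factors are $\le q/2$'' fails when $m<q/2$: then $q-m>q/2$, and your order argument (that $u^{q/2}$ acts trivially on each factor) does not apply to $V_{q-m}$. The conclusion you need --- that $V_{q-m}\otimes V_{q-n}$ has no $V_q$ summand --- nevertheless holds by the same grading argument, since $(q-m)+(q-n)=2q-(m+n)<q$. With these two fixes the proof goes through.
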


With Theorem \ref{thm:tensordecompchar2}, every tensor product $V_m \otimes V_n$ is either described explicitly (case (i)), or in terms of another tensor product $V_{m'} \otimes V_{n'}$ with $0 < m' \leq n' < n$.  Thus by repeated applications of Theorem \ref{thm:tensordecompchar2}, we can rapidly find the indecomposable summands of $V_m \otimes V_n$ for any given $m$ and $n$.

For exterior squares and symmetric squares, similarly it suffices to consider the indecomposable case. This follows from the fact that for all $K[C_q]$-modules $V$ and $W$, we have isomorphisms \begin{align*} \wedge^2(V \oplus W) &\cong \wedge^2(V) \oplus \wedge^2(W) \oplus V \otimes W \\ S^2(V \oplus W) &\cong S^2(V) \oplus S^2(W) \oplus V \otimes W\end{align*} as $K[C_q]$-modules. For the decomposition of $\wedge^2(V_n)$ and $S^2(V_n)$, we have the following results.

\begin{lause}[{\cite[Theorem 2]{GowLaffey}}]\label{thm:unipext1}
Suppose that $q/2 < n \leq q$. Then we have $$\wedge^2(V_n) \cong \wedge^2(V_{q-n}) \oplus V_q^{n-q/2-1} \oplus V_{3q/2-n}$$ as $K[C_q]$-modules.
\end{lause}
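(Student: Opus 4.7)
The approach I would take is to exploit the characteristic-two short exact sequence
$$0 \to \wedge^2 V_n \to V_n \otimes V_n \to S^2 V_n \to 0$$
where the injection sends $v \wedge w$ to $v \otimes w + w \otimes v$. This map is well-defined because $v \wedge v = 0$ in the source and $2(v \otimes v) = 0$ in the target, it is $K[C_q]$-equivariant, and (as a direct basis check shows) injective with image the span of the tensors $v \otimes w + w \otimes v$. The decomposition of the middle term $V_n \otimes V_n$ is given explicitly by Theorem \ref{thm:tensordecompchar2}: for $q/2 < n \leq q$ one has $V_n \otimes V_n \cong V_q^{2n-q} \oplus (V_{q-n} \otimes V_{q-n})$ (reducing to case (i) when $n=q$, where the second factor is trivial), and the inner tensor $V_{q-n} \otimes V_{q-n}$ can itself be resolved by iterating Theorem \ref{thm:tensordecompchar2}.

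From here I would proceed by induction on $\alpha$ (where $q = 2^\alpha$), with the base case $q = 2$ handled by direct inspection. For the inductive step, the strategy is to identify $\wedge^2 V_n$ inside $V_n \otimes V_n$ using explicit generators. Let $e_1, \ldots, e_n$ be a basis with $Ne_i = e_{i-1}$ (where $N = u-1$ and $e_0 = 0$). I would exhibit a cyclic vector $\omega \in \wedge^2 V_n$ whose $K[u]$-span realises the summand $V_{3q/2-n}$, identify a complementary collection of cyclic vectors accounting for the $V_q^{n-q/2-1}$ summands of maximal length, and then argue that the quotient of $\wedge^2 V_n$ by the sum of these spans is isomorphic to $\wedge^2 V_{q-n}$, invoking the induction hypothesis. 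A key technical lemma is Lemma \ref{jordanrestrictionNIL}, which lets me peel off top Jordan blocks one at a time once I identify $N$-chains inside $\wedge^2 V_n$. As a sanity check, the dimension $\binom{n}{2}$ is easily verified against the right-hand side, and the number of Jordan blocks (i.e., $\dim (\wedge^2 V_n)^u$) can be computed directly from the formula $N(e_i \wedge e_j) = e_{i-1} \wedge e_j + e_i \wedge e_{j-1} + e_{i-1} \wedge e_{j-1}$, giving an independent verification.

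The main obstacle will be controlling the extensions. A priori the exact sequence above need not split, and even within $\wedge^2 V_n$ itself the existence of non-trivial $\operatorname{Ext}^1(V_a, V_b)$ over $K[C_q]$ means that writing down a chain of cyclic vectors only gives an upper bound on the Jordan block sizes unless the chains are shown to be \emph{independent}. I would handle this by pinning down the image of each candidate generator under the successive powers $N^k$ in terms of the explicit basis $\{e_i \wedge e_j\}$, and then using a linear-independence argument on the resulting vectors in $\wedge^2 V_n$; because the claimed dimensions sum to exactly $\binom{n}{2}$, once the independence is established the direct-sum decomposition follows automatically. The cleanest formulation of this last step is probably to induct simultaneously on $\alpha$ and on the pair $(n, q-n)$, so that the smaller exterior square $\wedge^2 V_{q-n}$ is already known at the stage where it is needed.
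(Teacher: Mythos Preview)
The paper does not give its own proof of this statement: Theorem~\ref{thm:unipext1} is quoted verbatim as \cite[Theorem 2]{GowLaffey} and used as a black box, so there is no argument in the paper to compare your proposal against.

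That said, a brief remark on your strategy. Your short exact sequence $0 \to \wedge^2 V_n \to V_n \otimes V_n \to S^2 V_n \to 0$ is correct in characteristic two, and combined with Theorem~\ref{thm:tensordecompchar2} it does pin down $\wedge^2 V_n$ up to an extension problem. However, the plan as written is rather vague at the crucial step: you say you will ``exhibit a cyclic vector $\omega$ whose $K[u]$-span realises the summand $V_{3q/2-n}$'' and then identify complementary generators, but you give no indication of what $\omega$ should be or why its $N$-chain has length exactly $3q/2 - n$. This is where all the content lies, and it is not obvious how Lemma~\ref{jordanrestrictionNIL} alone gets you there, since that lemma only peels off a single codimension-one step. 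The original proof of Gow and Laffey proceeds by an explicit analysis of the action of $N = u-1$ on the basis $\{e_i \wedge e_j\}$ and a careful bookkeeping of chain lengths; your proposal gestures at this but does not yet contain the key computation.
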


\begin{lause}[{\cite[Theorem 1.3]{KorhonenSymExt2021}}]\label{thm:unipsym1}
Suppose that $q/2 < n \leq q$. Then we have $$S^2(V_n) \cong \wedge^2(V_{q-n}) \oplus V_q^{n-q/2} \oplus V_{q/2}$$ as $K[C_q]$-modules.
\end{lause}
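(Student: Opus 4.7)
The plan is to use the characteristic-two short exact sequence
\[
0 \to V_n^{(1)} \to S^2(V_n) \to \wedge^2(V_n) \to 0,
\]
in which the left submodule is the image of the squaring map $v \mapsto v^2$ and the right map sends $vw$ to $v \wedge w$. Since a Jordan block matrix is fixed by the Frobenius on matrix entries, $V_n^{(1)} \cong V_n$ as $K[C_q]$-modules; concretely, the subspace $\langle e_0^2, \ldots, e_{n-1}^2 \rangle$ of $S^2(V_n)$ is stable under $u$ and carries the action $u \cdot e_i^2 = e_i^2 + e_{i-1}^2$. Combined with Theorem \ref{thm:unipext1}, which gives $\wedge^2(V_n) \cong \wedge^2(V_{q-n}) \oplus V_q^{n-q/2-1} \oplus V_{3q/2-n}$, this reduces the problem to identifying which extension of $\wedge^2(V_n)$ by $V_n$ is realized by $S^2(V_n)$. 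A quick dimension check confirms that both sides of the claimed isomorphism have dimension $n(n+1)/2$.

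The boundary case $n = q$ falls out immediately: here $V_q$ is the regular $K[C_q]$-module, hence projective and injective, so $\Ext^1(-, V_q) = 0$ and the sequence above splits. Combined with $\wedge^2(V_q) \cong V_q^{q/2-1} \oplus V_{q/2}$ from Theorem \ref{thm:unipext1}, this yields $S^2(V_q) \cong V_q^{q/2} \oplus V_{q/2}$, which matches the claim with $\wedge^2(V_{q-n}) = 0$.

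For $q/2 < n < q$, I would exploit the canonical surjection $V_q \twoheadrightarrow V_q/V_{q-n} \cong V_n$ to induce a surjection $S^2(V_q) \twoheadrightarrow S^2(V_n)$, whose kernel is the submodule $V_{q-n} \cdot V_q$ inside $S^2(V_q) \cong V_q^{q/2} \oplus V_{q/2}$. Decomposing this kernel explicitly and then applying Lemma \ref{jordanquotientNIL} iteratively passes the decomposition to the quotient. A more structural alternative is to first split off a summand isomorphic to $\wedge^2(V_{q-n})$: since $V_{q-n} \subset V_n$ (as $q - n < n$), the submodule $S^2(V_{q-n})$ sits inside $S^2(V_n)$, and one can use the inductive version of the claim (applied to $V_{q-n}$, now viewed as a $C_{q/2}$-module since $q-n < q/2$) to exhibit a direct summand projecting isomorphically onto the corresponding summand of $\wedge^2(V_n)$. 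The complementary summand would then be an extension of $V_q^{n-q/2-1} \oplus V_{3q/2-n}$ by $V_n$ that must still be identified.

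The main obstacle is to show that this residual extension collapses to exactly $V_q^{n-q/2} \oplus V_{q/2}$: the Frobenius-twist summand $V_n$ must merge with $V_{3q/2-n}$ into one additional $V_q$ together with a single $V_{q/2}$, while the $V_q^{n-q/2-1}$ part stays intact because its blocks are already of maximal possible size under $u$. The cleanest way to verify the non-split gluing is to exhibit explicitly a Jordan chain of length $q$ in $S^2(V_n)$ whose top lies in the Frobenius submodule, by starting from a suitable square such as $e_{n-1}^2$ and applying $X = u - 1$ repeatedly, tracking when the chain leaves the Frobenius part and enters the portion lifting $V_{3q/2-n}$ from the quotient. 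Combined with a final dimension count, this forces the leftover summand to be $V_{q/2}$ and completes the identification.
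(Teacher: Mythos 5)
The paper does not actually prove this statement; it is imported as a citation of \cite[Theorem 1.3]{KorhonenSymExt2021}, so there is no in-paper argument to compare against. I therefore assess your attempt on its own merits.

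The skeleton is sound: the exact sequence $0 \to V_n^{(1)} \to S^2(V_n) \to \wedge^2(V_n) \to 0$, the identification $V_n^{(1)} \cong V_n$, the immediate splitting when $n = q$ by projectivity of $V_q$, and the lifting of $V_q$-summands from the quotient (again by projectivity of $V_q$) are all correct ingredients. However, the one concrete step you propose for establishing the crucial gluing does not work. The Frobenius submodule $V_n^{(1)}$ is spanned by the squares $e_i^2$ and is $u$-stable, since $u\cdot e_i^2 = (ue_i)^2$; consequently the chain $e_{n-1}^2,\ (u-1)e_{n-1}^2,\ (u-1)^2 e_{n-1}^2,\ \ldots$ lies entirely inside $V_n^{(1)}$, which has dimension $n < q$, so such a chain can never have length $q$ and cannot exhibit the $V_q$-block that merges the sub with the $V_{3q/2-n}$ part of the quotient. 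The chain you need must \emph{start} from a vector of $S^2(V_n)$ that projects onto a generator of the $V_{3q/2-n}$ summand of $\wedge^2(V_n)$, with only its tail landing inside $V_n^{(1)}$; you have the orientation reversed. In addition, the treatment of the $\wedge^2(V_{q-n})$ piece is only a sketch: the inclusion $S^2(V_{q-n}) \subset S^2(V_n)$ surjects onto $\wedge^2(V_{q-n})$ in the quotient but is not automatically a direct summand, and the inductive appeal to a smaller group $C_{q/2}$ needs to be set up carefully. As written, the two steps that actually carry the theorem are not carried out, and the one explicit detail you give for the first of them would fail.
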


Similarly to Theorem \ref{thm:tensordecompchar2}, with Theorem \ref{thm:unipext1} and Theorem \ref{thm:unipsym1} we can quickly decompose $\wedge^2(V_n)$ and $S^2(V_n)$ for any given $n$.

\begin{lemma}\label{lemma:fixpdimsymwedge}
Let $n > 0$ be an integer. Then the following hold:
	\begin{enumerate}[\normalfont (i)]
		\item $\dim \wedge^2(V_n)^u = \lfloor \frac{n}{2} \rfloor$.
		\item $\dim S^2(V_n)^u = \lfloor \frac{n}{2} \rfloor + 1$.
	\end{enumerate}
\end{lemma}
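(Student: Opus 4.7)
The plan is to induct on $n$, using Theorem~\ref{thm:unipext1} and Theorem~\ref{thm:unipsym1} as the main tools, together with the observation (from Section~\ref{section:defunipotent}) that $\dim V^u$ equals the number of indecomposable summands $V_m$ with $m > 0$ appearing in the $K[u]$-decomposition of $V$.

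The base case $n = 1$ is immediate: $\wedge^2(V_1) = 0$ and $S^2(V_1) = V_1$, giving the claimed dimensions $0$ and $1$. For the inductive step with $n \geq 2$, I would let $q = 2^{\alpha}$ be the unique power of $2$ with $q/2 < n \leq q$ and apply Theorem~\ref{thm:unipext1} to write
\[
\wedge^2(V_n) \cong \wedge^2(V_{q-n}) \oplus V_q^{n - q/2 - 1} \oplus V_{3q/2 - n}.
\]
Since $q/2 \leq 3q/2 - n < q$, the summand $V_{3q/2 - n}$ is a single nonzero indecomposable. Counting summands reduces $\dim \wedge^2(V_n)^u$ to $\dim \wedge^2(V_{q-n})^u + (n - q/2)$, and the inductive hypothesis applies because $q - n < n$. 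A short arithmetic step, using that $q$ is even to rewrite $\lfloor (q - n)/2 \rfloor = q/2 - \lceil n/2 \rceil$, then yields the target value $\lfloor n/2 \rfloor$ for (i). The edge case $n = q$ requires separate attention only because $V_{q-n} = 0$, which kills the recursive term and directly gives $q/2$.

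For (ii), I plan to deduce it from (i) by comparing the two decomposition formulas: Theorem~\ref{thm:unipsym1} differs from Theorem~\ref{thm:unipext1} by one additional copy of $V_q$ and by replacing $V_{3q/2-n}$ with $V_{q/2}$. Both $V_{q/2}$ and $V_{3q/2-n}$ contribute exactly one indecomposable summand each, so $\dim S^2(V_n)^u = \dim \wedge^2(V_n)^u + 1$, and (ii) follows.

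The only real obstacle is careful arithmetic bookkeeping and correctly handling the boundary case $n = q$ where the recursion degenerates; there is no substantive conceptual difficulty.
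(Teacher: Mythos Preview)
Your proposal is correct and follows essentially the same approach as the paper: induction on $n$ via Theorem~\ref{thm:unipext1} for (i), with the case $n=q$ treated separately, and then deducing (ii) from (i) using Theorem~\ref{thm:unipsym1}. The only cosmetic difference is that the paper computes $\dim S^2(V_n)^u$ directly from the decomposition in Theorem~\ref{thm:unipsym1} and part (i), rather than explicitly phrasing it as ``one extra summand compared to $\wedge^2(V_n)$'', but the content is identical.
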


\begin{proof}
We first consider (i). If $n = 1$, then $\wedge^2(V_1) = 0$ has dimension $0$ and thus (i) holds. Suppose then that $n > 1$ and proceed by induction on $n$. Let $q$ be a power of two such that $q/2  < n \leq q$. If $n = q$, it follows from Theorem \ref{thm:unipext1} that $\wedge^2(V_n) \cong V_q^{n-q/2-1} \oplus V_{q/2}$, so $\dim \wedge^2(V_n)^u = n - q/2 = n/2$. If $q/2 < n < q$, we have $$\wedge^2(V_n) \cong \wedge^2(V_{q-n}) \oplus V_{q}^{n-q/2-1} \oplus V_{3q/2-n}$$ by Theorem \ref{thm:unipext1}. Then by induction $$\dim \wedge^2(V_n)^u = \left\lfloor \frac{q-n}{2} \right\rfloor + n-q/2 = \left\lfloor \frac{n}{2} \right\rfloor,$$ as claimed by (i).

Next we will prove (ii). If $n = 1$, then $S^2(V_1) = V_1$ so (ii) holds. Suppose then that $n > 1$, and let $q$ be a power of two such that $q/2 < n \leq q$. If $n = q$, then $S^2(V_n) \cong V_q^{n-q/2} \oplus V_{q/2}$, and thus $\dim S^2(V_n)^u = n-q/2+1 = n/2+1$. Suppose then that $q/2 < n < q$. It follows from Theorem \ref{thm:unipsym1} and (i) that $$\dim S^2(V_n)^u = \left\lfloor \frac{q-n}{2} \right\rfloor + n-q/2+1 = \left\lfloor \frac{n}{2} \right\rfloor + 1,$$ as claimed by (ii).\end{proof}

\begin{lemma}\label{lemma:smallesblockunipS2}
Let $\ell > 0$ and define $\alpha = \nu_2(\ell)$. Then the smallest Jordan block size in $S^2(V_{2\ell})$ is $2^{\alpha}$, occurring with multiplicity one.
\end{lemma}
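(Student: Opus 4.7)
The plan is to prove this by strong induction on $\ell$, carrying along the analogous statement for exterior squares as a companion: \emph{for every $m \geq 1$, the smallest Jordan block of $\wedge^2(V_{2m})$ has size $2^{\nu_2(m)}$ and occurs with multiplicity one.} This companion is needed because the recursion in Theorem \ref{thm:unipsym1} expresses $S^2(V_n)$ in terms of an exterior square, so an induction purely on symmetric squares cannot close on itself. The base case $\ell = m = 1$ follows directly from Theorem \ref{thm:unipsym1} and Theorem \ref{thm:unipext1}, which give $S^2(V_2) \cong V_2 \oplus V_1$ and $\wedge^2(V_2) \cong V_1$.

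For the inductive step, fix $\ell \geq 2$ and let $q$ be the smallest power of two with $2\ell \leq q$, so that $q/2 < 2\ell \leq q$. If $2\ell = q$, equivalently $\ell$ is itself a power of two, then Theorem \ref{thm:unipsym1} gives $S^2(V_{2\ell}) \cong V_q^{q/2} \oplus V_{q/2}$, so the smallest block is $V_\ell = V_{2^{\nu_2(\ell)}}$ with multiplicity one. Otherwise $2\ell < q$, and writing $q - 2\ell = 2m$ with $1 \leq m < \ell$ we have $\ell + m = q/2$, a power of two. The key arithmetic observation is that this forces $\nu_2(\ell) = \nu_2(m)$: if $\nu_2(\ell) < \nu_2(m)$, writing $\ell = 2^a \ell'$ and $m = 2^b m'$ with $\ell', m'$ odd and $a < b$ gives $\ell + m = 2^a(\ell' + 2^{b-a} m')$ whose odd cofactor is at least $1 + 2 = 3$, contradicting that $\ell + m$ is a power of two; the reverse case is symmetric. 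Set $\alpha := \nu_2(\ell) = \nu_2(m)$.

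By Theorem \ref{thm:unipsym1} we then have $S^2(V_{2\ell}) \cong \wedge^2(V_{2m}) \oplus V_q^{2\ell - q/2} \oplus V_{q/2}$. Since $m < \ell$, the companion induction hypothesis applied to $\wedge^2(V_{2m})$ supplies a unique smallest block $V_{2^\alpha}$; the remaining summands contribute only blocks of sizes $q$ and $q/2 = \ell + m \geq 2^{\alpha+1} > 2^\alpha$, so they introduce no block of size $\leq 2^\alpha$. Hence the smallest Jordan block of $S^2(V_{2\ell})$ is $V_{2^\alpha}$ with multiplicity one. The inductive step for the exterior-square companion is entirely parallel, using Theorem \ref{thm:unipext1} in place of Theorem \ref{thm:unipsym1}: when $2m = q$ one finds $\wedge^2(V_{2m}) \cong V_q^{q/2-1} \oplus V_{q/2}$, and when $2m < q$ the extra summands $V_q^{2m - q/2 - 1} \oplus V_{3q/2 - 2m}$ have sizes $q$ and $3q/2 - 2m \geq q/2 + 2$, both strictly larger than $2^{\nu_2(m)}$. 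The only real subtlety is the $2$-adic argument producing $\nu_2(\ell) = \nu_2(m)$; everything else is straightforward bookkeeping with the recursions of Theorem \ref{thm:unipsym1} and Theorem \ref{thm:unipext1}.
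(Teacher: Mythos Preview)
Your proof is correct and follows essentially the same route as the paper: reduce via Theorem~\ref{thm:unipsym1} to the analogous statement for $\wedge^2(V_{q-2\ell})$, use the $2$-adic observation that $\nu_2(q-2\ell) = \alpha+1$ (equivalently $\nu_2(m)=\alpha$ in your notation), and check that the remaining summands $V_q$ and $V_{q/2}$ are strictly larger than $2^\alpha$. The only difference is that the paper invokes \cite[Lemma 4.12]{Korhonen2020Hesselink} for the exterior-square companion, whereas you prove it inline by a parallel induction using Theorem~\ref{thm:unipext1}; this makes your argument self-contained at the cost of a little extra bookkeeping.
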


\begin{proof} (cf. \cite[Lemma 4.12]{Korhonen2020Hesselink}) If $\ell = 2^{\alpha}$, it follows from Theorem \ref{thm:unipsym1} that $S^2(V_{2\ell}) = V_{2^{\alpha+1}}^{2^{\alpha}} \oplus V_{2^{\alpha}}$, so the claim holds. If $\ell \neq 2^{\alpha}$, we have $q/2 < 2\ell < q$ for some $q = 2^{\beta}$. Then \begin{equation}\label{eq:s2decompinlemma}S^2(V_{2\ell}) = \wedge^2(V_{q-2\ell}) \oplus V_q^{2\ell-q/2} \oplus V_{q/2}\end{equation} by Theorem \ref{thm:unipsym1}. 

Now $\nu_2(q-2\ell) = 2^{\alpha+1}$ since $q > 2^{\alpha+1}$, so by \cite[Lemma 4.12]{Korhonen2020Hesselink} the smallest Jordan block size in $\wedge^2(V_{q-2\ell})$ is $2^{\alpha}$, occurring with multiplicity one. Furthermore $2^{\alpha} < q/2$ because $q/2 < 2\ell < q$, so the lemma follows from~\eqref{eq:s2decompinlemma}.\end{proof}

\subsection{Nilpotent case} As in the previous section, to determine the indecomposable summands of tensor products, exterior squares, and symmetric squares of $\mathfrak{w}_q$-modules, it suffices to do so in the indecomposable case.

For the indecomposable summands of $W_m \otimes W_n$, it turns out that we get the same decomposition as for $V_m \otimes V_n$ in the unipotent case.

\begin{prop}[{\cite[Section III]{Fossum}, \cite[Corollary 5 (a)]{NormanTwoRelated}}]\label{prop:uninilsim}
Let $0 < n,m \leq q$ and suppose that we have $V_m \otimes V_n \cong V_{r_1} \oplus \cdots \oplus V_{r_t}$ as $K[C_q]$-modules for some $r_1, \ldots, r_t > 0$. Then $W_m \otimes W_n \cong W_{r_1} \oplus \cdots \oplus W_{r_t}$ as $\mathfrak{w}_q$-modules.
\end{prop}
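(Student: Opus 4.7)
The plan is to realize both $V_m \otimes V_n$ (with its unipotent action) and $W_m \otimes W_n$ (with its nilpotent action) concretely as the same truncated polynomial algebra $R = K[x,y]/(x^m, y^n)$, and then to intertwine the two actions by a suitable ring automorphism of $R$. First, identify $V_m$ with $K[x]/(x^m)$ as a $K[C_q]$-module by letting a generator $u$ act as multiplication by $1+x$; this realizes the standard $m$-dimensional indecomposable of $C_q$, since $u - 1$ then acts on $K[x]/(x^m)$ as multiplication by $x$, a single Jordan block of size $m$. Analogously, identify $W_m$ with $K[x]/(x^m)$ as a $\mathfrak{w}_q$-module by letting $e$ act as multiplication by $x$.

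Under these identifications, on both sides the tensor product becomes $R = K[x,y]/(x^m,y^n)$. Since $e$ is a primitive element of the restricted enveloping Hopf algebra $u(\mathfrak{w}_q)$, the Leibniz rule says that $e$ acts on $W_m \otimes W_n = R$ as multiplication by $x + y$. On the unipotent side, $u$ is group-like in $K[C_q]$, so $u$ acts on $V_m \otimes V_n = R$ as multiplication by $(1+x)(1+y)$; equivalently, the nilpotent operator $u - 1$ acts as multiplication by $x + y + xy$.

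The key point is that multiplication by $x+y$ and multiplication by $x+y+xy$ are conjugate endomorphisms of $R$. Indeed, $1+x$ is a unit in $R$ (with inverse $\sum_{i=0}^{m-1} x^i$), so the assignment $x \mapsto x$, $y \mapsto y(1+x)$ extends to a $K$-algebra automorphism $\sigma$ of $R$, and $\sigma(x+y) = x + y(1+x) = x + y + xy$. Viewing $\sigma$ as a $K$-linear isomorphism of $R$, it intertwines multiplication by $x+y$ with multiplication by $x+y+xy$; hence these two nilpotent operators share the same Jordan normal form. Since the Jordan block sizes of $u - 1$ on $V_m \otimes V_n$ determine its decomposition into summands $V_{r_i}$, and likewise the Jordan block sizes of $e$ on $W_m \otimes W_n$ determine its decomposition into summands $W_{r_i}$, the proposition follows.

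I do not anticipate a genuine obstacle: the whole argument rests on spotting the substitution $y \mapsto y(1+x)$, and the remaining verifications (that the identifications of $V_m$ and $W_m$ with $K[x]/(x^m)$ are correct and that $\sigma$ is genuinely a ring automorphism of $R$) are routine and follow from invertibility of $1+x$ modulo $x^m$. As a side benefit, the argument never uses $\chr K = 2$, so it yields the analogous statement in any characteristic.
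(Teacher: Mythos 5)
Your proof is correct and complete. The identifications $V_m \cong K[x]/(x^m)$ (with $u$ acting by multiplication by $1+x$) and $W_m \cong K[x]/(x^m)$ (with $e$ acting by multiplication by $x$) are the standard models of the indecomposables, and the computation that $u-1$ acts on $R = K[x,y]/(x^m,y^n)$ by multiplication by $x+y+xy$ while $e$ acts by multiplication by $x+y$ follows exactly as you say from the Hopf-algebra structures (group-like vs.\ primitive). The substitution $\sigma\colon x \mapsto x$, $y \mapsto y(1+x)$ is a genuine $K$-algebra automorphism of $R$ since $\sigma(y^n) = y^n(1+x)^n = 0$ and $1+x$ is a unit, and for any $a\in R$ one has $\sigma \circ M_a = M_{\sigma(a)} \circ \sigma$, so $M_{x+y}$ and $M_{x+y+xy}$ are conjugate linear maps; equal Jordan types of $u-1$ and $e$ then force the asserted isomorphism of module decompositions, because the indecomposables on both sides are classified by Jordan block size.

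The paper itself gives no proof of Proposition 4.4 --- it simply cites Fossum and Norman --- so there is no internal argument to compare against. Your proof is a clean, self-contained substitute, and your parenthetical observation is worth keeping: nothing here uses $\chr K = 2$, and the same substitution proves the analogue in characteristic $p$ with $q = p^\alpha$ (the needed facts are $(1+x)^q = 1 + x^q = 1$, $(x+y)^q = x^q + y^q = 0$, and the classification of indecomposables for $C_q$ and $\mathfrak{w}_q$ by Jordan block size, all of which hold verbatim). If you wanted, you could shorten the write-up by stating directly that a ring automorphism conjugates multiplication operators, rather than saying $\sigma$ ``intertwines'' them, but the argument as given is sound.
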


Thus we can apply Theorem \ref{thm:tensordecompchar2} to find the decomposition of $W_m \otimes W_n$ into indecomposable summands.

Following \cite[p. 231]{GlasbyPraegerXiapart}, we call the \emph{consecutive-ones binary expansion} of an integer $n > 0$ the alternating sum $n = \sum_{1 \leq i \leq r} (-1)^{i+1} 2^{\beta_i}$ such that $\beta_1 > \cdots > \beta_r \geq 0$ and $r$ is minimal. (Here $\beta_{r-1} > \beta_r + 1$ if $r > 1$.)

The decomposition of $V_n \otimes V_n$ into indecomposable summands can be given explicitly in terms of the consecutive-ones binary expansion of $n$ \cite[Theorem 15]{GlasbyPraegerXiapart}. Such descriptions can also be given for $\wedge^2(V_n)$ and $S^2(V_n)$, by using Theorem \ref{thm:unipext1} and Theorem \ref{thm:unipsym1}. 

For the decomposition of $W_n \otimes W_n$, $\wedge^2(W_n)$, and $S^2(W_n)$ we have the following result.

\begin{lause}[{\cite[Theorem 1.6, Theorem 1.7, Theorem 3.7]{KorhonenSymExt2021}}]\label{thm:extsymnilpotent}
Let $n > 0$ be an integer, with consecutive-ones binary expansion $n = \sum_{1 \leq i \leq r} (-1)^{i+1} 2^{\beta_i}$, where $\beta_1 > \cdots > \beta_r \geq 0$. For $1 \leq k \leq r$ , define $d_k := 2^{\beta_k} + \sum_{k < i \leq r} (-1)^{k+i} 2^{\beta_i+1}$. Then \begin{align*}W_n \otimes W_n &\cong \bigoplus_{1 \leq k \leq r} W_{2^{\beta_k}}^{d_k} \\ \wedge^2(W_n) &\cong \bigoplus_{\substack{1 \leq k \leq r \\ \beta_k > 0}} W_{2^{\beta_k}-1}^{d_k/2} \\ S^2(W_n) &\cong W_1^{\lceil n/2 \rceil} \oplus \bigoplus_{\substack{1 \leq k \leq r \\ \beta_k > 0}} W_{2^{\beta_k}}^{d_k/2}\end{align*} as $\mathfrak{w}_q$-modules.\end{lause}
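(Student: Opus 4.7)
The plan is to handle the three parts of the theorem separately.

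For the tensor-square formula, I would use Proposition \ref{prop:uninilsim} to reduce to the corresponding statement about $V_n \otimes V_n$ as a $K[C_q]$-module, and then induct on $r$, the length of the consecutive-ones binary expansion of $n$. The base case $r = 1$ is immediate: then $n = 2^{\beta_1}$, and choosing $q = n$ in Theorem \ref{thm:tensordecompchar2}(i) yields $V_n \otimes V_n \cong V_n^n$, matching $d_1 = n$. For $r \geq 2$, the minimality of the consecutive-ones expansion forces $2^{\beta_1-1} < n < 2^{\beta_1}$, so with $q = 2^{\beta_1}$ we have $q/2 < n < q$, and Theorem \ref{thm:tensordecompchar2}(ii) gives
\[
V_n \otimes V_n \;\cong\; V_q^{2n - q} \oplus (V_{q-n} \otimes V_{q-n}).
\]
A short verification shows that $q - n = 2^{\beta_2} - 2^{\beta_3} + \cdots + (-1)^r 2^{\beta_r}$ has consecutive-ones expansion with exponents $\beta_2 > \cdots > \beta_r$ (length $r-1$), so by induction the second summand equals $\bigoplus_{k=2}^r V_{2^{\beta_k}}^{d_k}$, and a direct arithmetic check confirms $2n - q = d_1$.

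For $\wedge^2(W_n)$ and $S^2(W_n)$ the situation is essentially different, because the $\mathfrak{w}_q$-module structure on $W_n \otimes W_n$ given by the Leibniz rule $e \cdot (v \otimes w) = ev \otimes w + v \otimes ew$ does not agree with the $K[C_q]$-structure on $V_n \otimes V_n$ under any natural identification; in particular the block sizes $2^{\beta_k} - 1$ appearing in $\wedge^2(W_n)$ do not occur in $\wedge^2(V_n)$. My plan is to establish nilpotent analogues of Theorems \ref{thm:unipext1} and \ref{thm:unipsym1}: recursive formulas of the shape
\[
\wedge^2(W_n) \;\cong\; \wedge^2(W_{q-n}) \oplus W_q^{a} \oplus W_{q-1}^{b}, \qquad S^2(W_n) \;\cong\; \wedge^2(W_{q-n}) \oplus W_q^{a'} \oplus W_1^{c},
\]
for $q/2 < n \leq q$ with $q = 2^{\beta_1}$, with explicit multiplicities $a, b, a', c$. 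The base case $n = q = 2^\alpha$ is handled by direct computation with the standard basis $v_0, \ldots, v_{q-1}$ of $W_q$ (where $ev_i = v_{i-1}$), using the induced bases $\{v_i \wedge v_j : i < j\}$ and $\{v_i v_j : i \leq j\}$ together with the action $e(v_i \otimes v_j) = v_{i-1} \otimes v_j + v_i \otimes v_{j-1}$. The crucial observation in characteristic two is that $e(v_i v_i) = 2 v_{i-1} v_i = 0$, so every square $v_i^2 \in S^2(W_n)$ is fixed by $e$; this accounts for the $W_1^{\lceil n/2 \rceil}$ summand, and the remaining Jordan structure is controlled by applying Lemmas \ref{jordanrestrictionNIL} and \ref{jordanquotientNIL} to suitable filtrations.

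With both recursions established, a straightforward induction on $r$ yields the claimed closed form: the block sizes $2^{\beta_k}$ (for $S^2$) and $2^{\beta_k} - 1$ (for $\wedge^2$), together with multiplicities $d_k/2$, accumulate one term at a time exactly as in the tensor-square induction above. The main obstacle, and the most technical step, is establishing the two recursions themselves: one must produce an explicit $\mathfrak{w}_q$-submodule of $\wedge^2(W_n)$ (respectively $S^2(W_n)$) isomorphic to $\wedge^2(W_{q-n})$ and identify the complement with the predicted large-block summand. This requires careful bookkeeping of how the Leibniz action permutes the basis of $W_n \otimes W_n$ and descends to the two quotients, and is where the genuine difference between the unipotent and nilpotent settings is resolved.
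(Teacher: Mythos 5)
Your plan for the tensor square is correct: Proposition~\ref{prop:uninilsim} transfers the problem to $V_n\otimes V_n$, Theorem~\ref{thm:tensordecompchar2}(ii) gives the recursion, and the arithmetic ($2n-q = d_1$; the consecutive-ones expansion of $q-n$ has exponents $\beta_2>\cdots>\beta_r$ with the multiplicities shifting by one index) checks out. Your key observation that the exterior and symmetric squares behave differently in the nilpotent case than in the unipotent case is right, and the reason is exactly the one you identify: in characteristic two the Frobenius twist $V^{[2]}\subset S^2(V)$ is permuted by a unipotent element (Frobenius preserves conjugacy classes) but annihilated by a nilpotent element ($e\cdot v^2 = 2(ev)v = 0$). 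Your conjectured recursion for $\wedge^2(W_n)$ is also essentially correct: one finds $\wedge^2(W_n)\cong\wedge^2(W_{q-n})\oplus W_{q-1}^{\,n-q/2}$, so $a=0$ and $b=n-q/2$ in your notation.

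However, the recursion you write down for $S^2(W_n)$ has the wrong shape and cannot be salvaged by choosing constants. The right-hand side should carry $S^2(W_{q-n})$, not $\wedge^2(W_{q-n})$: by the target formula, the ``inherited'' part of $S^2(W_n)$ consists of blocks of size $2^{\beta_k}$ for $k\geq 2$, while $\wedge^2(W_{q-n})$ consists of blocks of size $2^{\beta_k}-1$ for $k\geq 2$, and these sizes never agree. A concrete counterexample: take $n=5$, $q=8$, so $q-n=3$ and $\wedge^2(W_3)\cong W_3$; then your proposed form predicts a $W_3$ block in $S^2(W_5)$, but the statement gives $S^2(W_5)\cong W_1^3\oplus W_4\oplus W_8$, which has a $W_4$ and no $W_3$. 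The recursion that is actually compatible with the closed form is
\[
S^2(W_n)\;\cong\;S^2(W_{q-n})\oplus W_1^{\,n-q/2}\oplus W_q^{\,n-q/2},
\]
consistent with the identity $\lceil (q-n)/2\rceil + (n-q/2) = \lceil n/2\rceil$. The use of $\wedge^2(V_{q-n})$ on the right-hand side of Theorem~\ref{thm:unipsym1} is a special feature of the unipotent case (where $V^{[2]}$ and $V$ have the same Jordan type), and the analogy does not carry over. With the recursion corrected, your inductive framework does go through, but as written the $S^2$ part of your argument would fail at the first inductive step.
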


\section{Unipotent elements in \texorpdfstring{$\Sp(V)$}{Sp(V)}}\label{section:hesselinkunipotent}

Consider $\Sp(V)$ with $\dim V = 2\ell$. In this section, we will recall the description of unipotent conjugacy classes in $\Sp(V)$, due to Hesselink \cite{Hesselink}.  For more details, see for example \cite{Hesselink}, \cite[Chapter 4, Chapter 6]{LiebeckSeitzClass}, or \cite[Section 6]{Korhonen2020Hesselink}.

For a group $G$, a bilinear $K[G]$-module $(W, \beta)$ is a finite-dimensional $K[G]$-module $W$ equipped with a $G$-invariant bilinear form $\beta$. Two bilinear $K[G]$-modules $(W,\beta)$ and $(W', \beta')$ are said to be isomorphic, if there exists an isomorphism $W \rightarrow W'$ of $K[G]$-modules which is also an isometry.

Let $u,u' \in \Sp(V)$ be unipotent. Let $q$ be a power of two such that $u^q = (u')^q = 1$, so that $V \downarrow K[u]$ and $V \downarrow K[u']$ are $K[C_q]$-modules. Then $u$ and $u'$ are conjugate in $\Sp(V)$ if and only if $V \downarrow K[u] \cong V \downarrow K[u']$ as bilinear $K[C_q]$-modules. 

For $u \in \Sp(V)$ unipotent, it is clear that we can write $V \downarrow K[u] = U_1 \perp \cdots \perp U_t$, where $U_i$ are \emph{orthogonally indecomposable} $K[u]$-modules. Here orthogonally indecomposable means that if $U_i = U' \perp U''$ as $K[u]$-modules, then $U' = 0$ or $U'' = 0$. There are two basic types of orthogonally indecomposable $K[u]$-modules, which we can define as follows. (Similar definitions are given in \cite[Section 6.1]{LiebeckSeitzClass}.)

\begin{maar}\label{def:V2lUNIP}
For $\ell \geq 1$, we define the module $V(2\ell)$ as follows. Let $n = 2\ell$, and suppose that $V$ has basis $v_1$, $\ldots$, $v_{n}$ with $b(v_i,v_j) = 1$ if $i+j = n+1$ and $0$ otherwise. Define $u: V \rightarrow V$ by \begin{align*} u v_1 &= v_1 \\ u v_i &= v_{i} + v_{i-1} + \cdots + v_1 \text{ for all } 1 < i \leq \ell + 1 \\ u v_i &= v_i + v_{i-1} \text{ for all } \ell+1 < i \leq n.\end{align*} Then $u \in \Sp(V)$, and we define $V(2\ell)$ as the bilinear $K[u]$-module $V \downarrow K[u]$.
\end{maar}

\begin{maar}\label{def:WlUNIP}
For $\ell \geq 1$, we define the module $W(\ell)$ as follows. Let $n = 2\ell$, and suppose that $V$ has basis $v_1$, $\ldots$, $v_{n}$ with $b(v_i,v_j) = 1$ if $i+j = n+1$ and $0$ otherwise. Define $u: V \rightarrow V$ by \begin{align*} u v_1 &= v_1 \\ u v_i &= v_i + v_{i-1} + \cdots + v_1 \text{ for all } 1 < i \leq \ell \\ u v_{\ell+1} &= v_{\ell+1} \\ u v_i &= v_i + v_{i-1} \text{ for all } \ell+1 < i \leq n.\end{align*} Then $u \in \Sp(V)$, and we define $W(\ell)$ as the bilinear $K[u]$-module $V \downarrow K[u]$.
\end{maar}

The fact that the modules are isomorphic to those described by Hesselink \cite[Proposition 3.5]{Hesselink} is seen as follows.

	\begin{itemize}
		\item For the module $V(2\ell)$ in Definition \ref{def:V2lUNIP} we have $V(2\ell) \downarrow K[u] = V_{2\ell}$, so this agrees with \cite[Proposition 3.5]{Hesselink}.
		\item In Definition \ref{def:WlUNIP}, we have totally singular decomposition $V = W \oplus Z$, where $W = \langle v_1, \ldots, v_{\ell} \rangle$ and $Z = \langle v_{\ell+1}, \ldots, v_n \rangle$ are $K[u]$-modules with $W \cong Z \cong V_{\ell}$. From this it follows that $V$ is isomorphic to the module $W(\ell)$ defined by Hesselink, see for example \cite[Lemma 6.12]{Korhonen2020Hesselink}.
	\end{itemize}

The classification of unipotent conjugacy classes in $\Sp(V)$ is based on the following result.

\begin{lause}[{\cite[Proposition 3.5]{Hesselink}}]\label{thm:hesselinkUNIPindecomp}
Let $u \in \Sp(V)$ be unipotent such that $V \downarrow K[u]$ is orthogonally indecomposable. Then $V \downarrow K[u]$ is isomorphic to $V(2\ell)$ or $W(\ell)$, where $\dim V = 2\ell$.
\end{lause}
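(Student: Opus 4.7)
The plan is to prove the classification in two reductions: first reduce to the case where $u$ acts on $V$ with a single Jordan block size, then classify the orthogonally indecomposable symplectic $K[u]$-modules of that form.

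For the first reduction, assume $V \downarrow K[u] = V_{d_1}^{n_1} \oplus \cdots \oplus V_{d_t}^{n_t}$ with $d_1 < \cdots < d_t$ and $t \geq 2$. I would show that the part carrying the blocks of maximal size $d_t$ splits off orthogonally. The key tool is the filtration by $\ker(u-1)^k$: the subspace $\ker(u-1)^{d_t-1}$ is a $K[u]$-submodule that contains every Jordan block of size strictly less than $d_t$ together with the $(u-1)$-image of each size-$d_t$ block. Its orthogonal complement $(\ker(u-1)^{d_t-1})^{\perp}$ is also $K[u]$-invariant, and analysing the non-degeneracy of the induced pairings on the graded quotients of this filtration should force a $K[u]$-invariant orthogonal decomposition separating the size-$d_t$ part from the remaining blocks. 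Iterating, we may assume $V \downarrow K[u] \cong V_m^n$ for some $m \geq 1$ and $n \geq 1$.

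Now I would handle the single-block-size case. If $n = 1$ then $V = V_m$, and a non-degenerate $u$-invariant alternating form on $V_m$ exists only when $m$ is even: indeed $\operatorname{Im}(u-1)^{m-1} \subseteq \ker(u-1)$ is one-dimensional, so the form must pair it non-trivially with some element outside $\ker(u-1)$, and a short computation rules out $m$ odd. A computation of $\Hom_{K[u]}(V_m, V_m^{*})$ combined with a Witt-style extension argument shows that any two non-degenerate $u$-invariant symplectic forms on $V_m$ are equivalent under a $K[u]$-automorphism, so $V \cong V(2\ell)$ for $m = 2\ell$. If $n \geq 2$, I would split off a non-degenerate submodule isomorphic to $W(m)$ by choosing cyclic generators of two $V_m$-summands and adjusting them so that their $K[u]$-span carries the hyperbolic bilinear structure defining $W(m)$; when this fails, one splits off a non-degenerate $V(m)$ instead, which forces $m$ even. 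Recursing on the orthogonal complement (again of the form $V_m^{n'}$) exhibits $V$ as an orthogonal sum of $V(m)$'s and $W(m)$'s, so an orthogonally indecomposable $V$ must be $V(2\ell)$ or $W(\ell)$.

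I expect the main obstacle to be the first reduction step. In characteristic two, the alternating form often degenerates on natural $K[u]$-submodules such as $\ker(u-1)^k$, so taking orthogonal complements and applying Witt decomposition naively does not work. In particular, one must rule out the a priori possibility of an orthogonally indecomposable bilinear $K[u]$-module that mixes two distinct Jordan block sizes; the cleanest route is to study the non-degeneracy of the induced bilinear pairings on the successive quotients of the filtration $\ker(u-1)^k$ and use this to build an explicit $K[u]$-invariant orthogonal splitting.
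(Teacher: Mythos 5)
The paper does not give a proof of this result; it is quoted from Hesselink's Proposition 3.5 and treated as known, so there is no internal argument to compare against. Judged on its own, your sketch has the right overall shape but contains gaps large enough that it does not yet constitute a proof, and the first of them concerns the step you yourself flag as the main obstacle.

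The main gap is the reduction to a single Jordan block size. You propose to study the filtration by $\ker(u-1)^k$ and the ``induced pairings on the graded quotients,'' but you never say what those pairings are or state the lemma that would force a splitting, and this is exactly where characteristic two bites. In fact, you do not need this reduction at all, and a more direct route is cleaner: take $m$ the largest block size, pick $v$ with $(u-1)^{m-1}v\neq 0$, and look at the scalar $b(v,(u-1)^{m-1}v)$. Using $(u-1)^{*}=-u^{-1}(u-1)$, one checks that $b((u-1)^{m-1-k}v,(u-1)^{k}v)=b(v,(u-1)^{m-1}v)$ for all $k$, so if this scalar is nonzero the cyclic module $K[u]v\cong V_m$ is nondegenerate and splits off (forcing $m$ even, giving $V(m)$). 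If instead $b(x,(u-1)^{m-1}x)=0$ for every $x$, nondegeneracy of $b$ together with $\operatorname{Im}(u-1)^{m-1}=(\ker(u-1)^{m-1})^{\perp}$ produces $w$ with $b(v,(u-1)^{m-1}w)=1$, and one shows $K[u]v\cap K[u]w=0$ and that $K[u]v\oplus K[u]w$ is nondegenerate of dimension $2m$. Iterating on the orthogonal complement reduces dimension each time, so the reduction to one block size is subsumed. By contrast, ``the part carrying the blocks of maximal size'' is not a canonical subspace, so trying to split it off all at once is not the right formulation.

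The second gap is in identifying the $2m$-dimensional summand $K[u]v\oplus K[u]w$ with $W(m)$. Nondegeneracy and Jordan type $V_m\oplus V_m$ do not by themselves pin down the isometry type, since for $m$ even $V(m)\perp V(m)$ has the same Jordan type; the distinguishing invariant is the vanishing of $x\mapsto b(x,(u-1)^{m-1}x)$. But to conclude ``so it is $W(m)$'' without circularity you must directly produce a totally singular $K[u]$-decomposition, which requires an inductive Gram--Schmidt-style modification of $v$ and $w$ using the nondegenerate pairing $K[u]v\times K[u]w\to K$ to kill the coefficients $b(v,(u-1)^{s}v)$ one by one. Your phrase ``adjusting them so that their $K[u]$-span carries the hyperbolic bilinear structure'' is exactly this step, but it is asserted rather than done, and it is where the real work lies. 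Finally, for $n=1$ the evenness of $m$ is immediate (a nondegenerate alternating form forces even dimension, no computation needed), while the uniqueness of the form on $V_m$ up to $K[u]$-isometry, which you attribute to a ``Witt-style extension argument,'' actually requires showing that the relevant unit in $K[u-1]/((u-1)^m)$ is of the form $\psi^{*}\psi$; this too is left unproved.
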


By Theorem \ref{thm:hesselinkUNIPindecomp}, for every unipotent $u \in \Sp(V)$ we have an orthogonal decomposition $$V \downarrow K[u] = U_1 \perp \cdots \perp U_t,$$ where for all $1 \leq i \leq t$ we have $U_i \cong V(2\ell_i)$ or $U_i \cong W(\ell_i)$ for some integer $\ell_i \geq 1$.

In general there can be several different ways to decompose $V \downarrow K[u]$ into orthogonally indecomposable summands, and even the number of summands is not uniquely determined. This is due to the fact that for even $m$, we have an isomorphism $$W(m) \perp V(m) \cong V(m) \perp V(m) \perp V(m)$$ of bilinear $K[u]$-modules.  However, there are normal forms which are uniquely determined, such as the \emph{Hesselink normal form} \cite[3.7]{Hesselink} \cite[Theorem 6.4]{Korhonen2020Hesselink} or the \emph{distinguished normal form} defined by Liebeck and Seitz in \cite[p. 61]{LiebeckSeitzClass}.

\section{Nilpotent elements in \texorpdfstring{$\mathfrak{sp}(V)$}{sp(V)}}\label{section:hesselinknilpotent}

We consider $G = \Sp(V)$ with Lie algebra $\mathfrak{sp}(V)$, where $\dim V = 2\ell$. We recall the classification of nilpotent orbits in $\mathfrak{sp}(V)$ due to Hesselink \cite{Hesselink}. For more details, we refer to \cite{Hesselink} and \cite[Chapter 4, Chapter 5]{LiebeckSeitzClass}.

For a nilpotent element $e \in \mathfrak{sp}(V)$, define the \emph{index function} $\chi_V: \Z_{\geq 0} \rightarrow \Z_{\geq 0}$ corresponding to $e$ by $$\chi_V(m) := \operatorname{min} \{n \geq 0 : b(e^{n+1}v, e^{n}v) = 0 \text{ for all } v \in \Ker e^m \}.$$ Let $0 < d_1 < \cdots < d_t$ be the Jordan block sizes of $e$, and let $n_i$ be the multiplicity of Jordan block size $d_i$ for $e$. By a result of Hesselink \cite[Theorem 3.8]{Hesselink}, the nilpotent orbit of $e$ is determined by the integers $d_i$, $n_i$, and the function $\chi_V$. 

Hesselink also proved that it suffices to only consider the values of $\chi_V$ on the Jordan block sizes $d_1$, $\ldots$, $d_t$.

\begin{lause}[{\cite[3.9]{Hesselink}}]\label{thm:hesselinkmainthm}
Let $e \in \mathfrak{sp}(V)$ be nilpotent with index function $\chi = \chi_V$ on $V$. Let $0 < d_1 < \cdots < d_t$ be the Jordan block sizes of $e$, and let $n_i$ be the multiplicity of a Jordan block of size $d_i$ for $e$. Then the following statements hold:
	\begin{enumerate}[\normalfont (i)]
		\item The nilpotent orbit of $e$ is determined by the symbol $({d_1}_{\chi(d_1)}^{n_1}, \ldots, {d_t}_{\chi(d_t)}^{n_t})$.
		\item $\chi(d_1) \leq \cdots \leq \chi(d_t)$ and $d_1 - \chi(d_1) \leq \cdots \leq d_t - \chi(d_t)$.
		\item $0 \leq \chi(d_i) \leq d_i/2$ for all $1 \leq i \leq t$.
		\item $\chi(d_i) = d_i/2$ if $n_i$ is odd.
	\end{enumerate}
\end{lause}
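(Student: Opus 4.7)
The plan is to prove the four claims by exploiting a single cornerstone identity and then building up to the orbit classification in (i). Since $e \in \mathfrak{sp}(V)$ satisfies $b(ex,y) = b(x,ey)$ in characteristic two, one has
\begin{equation*}
b(e^{n+1}v, e^n v) = b(v, e^{2n+1}v)
\end{equation*}
for every $v \in V$; moreover $b(v, e^{2k}v) = b(e^k v, e^k v) = 0$ by the alternating property of $b$. Thus $\chi_V(m)$ records the vanishing threshold of the odd-moment function $v \mapsto b(v, e^{2n+1}v)$ on $\Ker e^m$. With this setup, $\Sp(V)$-invariance of $\chi_V$ is immediate from $\Ker(geg^{-1})^m = g\Ker e^m$ together with $g$-invariance of $b$, so the symbol is an orbit invariant.

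Parts (ii) and (iii) are then relatively quick. For (iii), if $v \in \Ker e^m$ and $2n+1 \geq m$ then $e^{2n+1}v = 0$, so $\chi(m) \leq \lceil (m-1)/2 \rceil \leq m/2$. The first inequality in (ii) follows from $\Ker e^{d_i} \subseteq \Ker e^{d_{i+1}}$. For the second, given $v \in \Ker e^{d_{i+1}}$ set $w = e^{d_{i+1}-d_i}v \in \Ker e^{d_i}$; moving copies of $e$ across $b$ yields $b(v, e^{2n+1}v) = b(w, e^{2n+1-2(d_{i+1}-d_i)}w)$, and the choice $n = \chi(d_i) + (d_{i+1}-d_i)$ forces the right-hand side to vanish for all such $w$, giving $\chi(d_{i+1}) \leq \chi(d_i) + (d_{i+1}-d_i)$, which is exactly $d_{i+1} - \chi(d_{i+1}) \geq d_i - \chi(d_i)$.

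For (iv), I would analyze the cyclic subspace $\langle v, ev, \ldots, e^{d_i-1}v \rangle$ for a vector $v$ of height exactly $d_i$. In this basis the Gram matrix of $b$ is the Hankel matrix with entries $M_{a,b} = c_{a+b-2}$ where $c_k := b(v, e^k v)$, and the cornerstone identity forces $c_k = 0$ for $k$ even. This ``checkerboard'' pattern has rank at most $d_i - 1$ when $d_i$ is odd, so the restriction of $b$ to any cyclic submodule of odd height is degenerate. Assembling this over the $n_i$ Jordan blocks of size $d_i$ and pairing with the nondegeneracy of $b$ on $V$, a parity count shows that $n_i$ must be even whenever $d_i$ is odd; equivalently, $n_i$ odd forces $d_i$ even. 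In the remaining case ($d_i$ even, $n_i$ odd) the same pairing shows that the top antidiagonal Hankel entry cannot vanish identically on vectors of height $d_i$, which is precisely the equality $\chi(d_i) = d_i/2$.

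The main obstacle, and the bulk of the work, is part (i): the symbol determines the orbit. I would prove this by induction on $\dim V$, at each step splitting off an orthogonally indecomposable summand of type $V(d_t)$, $W_k(d_t)$, or $W(d_t)$ whose type is dictated by the triple $(d_t, \chi(d_t), n_t)$. Concretely, pick a vector $v \in \Ker e^{d_t}$ of maximal height realizing a prescribed value of the Hankel form determined by $\chi(d_t)$, together with a partner vector when a $W_k(d_t)$ or $W(d_t)$ type is forced by the parity of $n_t$; use the checkerboard Hankel analysis to normalize $b$ on the resulting $e$-cyclic subspace into one of the explicit normal forms; then verify that this subspace admits an $e$-invariant orthogonal complement, to which the inductive hypothesis with the truncated symbol applies. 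The delicate point, where the Hankel analysis is used essentially and where the non-uniqueness of orthogonal decompositions into indecomposables must be absorbed into the normalization, is ensuring that the chosen $e$-cyclic subspace genuinely splits off orthogonally, rather than merely as a $K[e]$-submodule.
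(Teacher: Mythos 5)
This statement is not proved in the paper; it is quoted directly from Hesselink \cite[3.9]{Hesselink}, so there is no internal proof to compare your attempt against. Evaluating your proposal on its own merits: parts (ii) and (iii) are handled correctly. The identity $b(e^{n+1}v, e^n v) = b(v, e^{2n+1}v)$, together with the observation that $\{ n \geq 0 : b(e^{n+1}v, e^n v) = 0 \text{ for all } v \in \Ker e^m \}$ is an upper set in $n$ (replace $v$ by $ev \in \Ker e^{m-1} \subseteq \Ker e^m$), gives both inequalities in (ii) and the bound in (iii) exactly as you describe. Part (iv) needs more than the ``parity count'' you gesture at: the clean argument introduces the induced symmetric pairing $B_i(\bar v, \bar w) = b(v, e^{d_i-1}w)$ on $\Ker e^{d_i}/(\Ker e^{d_i-1} + e\Ker e^{d_i+1})$, shows this space has dimension $n_i$ and that $B_i$ is nondegenerate there; when $d_i$ is odd $B_i$ is alternating, forcing $n_i$ even, and when $n_i$ is odd $B_i$ cannot be alternating, so some $v$ has $b(v,e^{d_i-1}v)\neq 0$, giving $\chi(d_i)=d_i/2$. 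Your degenerate-Hankel remark about a single cyclic submodule does not by itself yield the global parity statement.

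The genuine gap is in (i). Your outline correctly names the decisive step --- producing an $e$-stable subspace isomorphic to $V(d_t)$, $W(d_t)$, or $W_k(d_t)$ on which $b$ restricts nondegenerately, so that it splits off orthogonally --- but then labels it ``delicate'' and moves on without constructing it. That step \emph{is} Hesselink's theorem: it is where one establishes the classification of orthogonally indecomposable triples $(V,b,e)$ (Theorem \ref{thm:hesselinkNILindecomp}), that every triple decomposes as a perpendicular sum of such, and that the symbol still determines the orbit despite the non-uniqueness of that decomposition (recall $W(d)\perp V(d)\cong V(d)\perp V(d)\perp V(d)$ and $W(d)\perp V(d')\cong W_{d'/2}(d)\perp V(d')$, noted after Theorem \ref{thm:hesselinkNILindecomp}). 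Your claim that the type of summand to split off is ``dictated by the triple $(d_t,\chi(d_t),n_t)$'' is also false once $n_t>2$: the choice is not canonical, and the induction has to be arranged to be insensitive to it. As written, (i) is a plan, not a proof.
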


\begin{remark}
Conversely, consider integers $d_i$, $n_i$, $\chi(d_i)$ with $0 < d_1 < \cdots < d_t$ and $\sum_{i = 1}^t n_i d_i = \dim V$, such that conditions (ii) -- (iv) of Theorem \ref{thm:hesselinkmainthm} hold. Then there exists a nilpotent element $e \in \mathfrak{sp}(V)$ with corresponding symbol $({d_1}_{\chi(d_1)}^{n_1}, \ldots, {d_t}_{\chi(d_t)}^{n_t})$ \cite[3.9]{Hesselink}.
\end{remark}

Similarly to the unipotent case, we can phrase the classification in terms of bilinear modules. For a Lie algebra $\mathfrak{w}$, a \emph{bilinear $\mathfrak{w}$-module} $(W, \beta)$ is a finite-dimensional $\mathfrak{w}$-module $W$ equipped with a $\mathfrak{w}$-invariant bilinear form $\beta$, so $\beta(Xv,w) + \beta(v,Xw) = 0$ for all $X \in \mathfrak{w}$ and $v,w \in W$. Two bilinear $\mathfrak{w}$-modules $(W,\beta)$ and $(W',\beta')$ are said to be isomorphic, if there exists an isomorphism $W \rightarrow W'$ of $\mathfrak{w}$-modules which is also an isometry.

Let $e, e' \in \mathfrak{sp}(V)$ be nilpotent. Choose a power of two $q$ such that $e^q = (e')^q = 0$, so that $V \downarrow K[e]$ and $V \downarrow K[e']$ are $\mathfrak{w}_q$-modules. Then $e$ and $e'$ are conjugate under the action of $\Sp(V)$ if and only if $V \downarrow K[e] \cong V \downarrow K[e']$ as bilinear $\mathfrak{w}_q$-modules.

For nilpotent $e \in \mathfrak{sp}(V)$, it is clear that we can write $V \downarrow K[e] = V_1 \perp \cdots \perp V_t$, where $V_i$ are \emph{orthogonally indecomposable} $K[e]$-modules. (Here orthogonally indecomposable is defined similarly to the group case.) By the next lemma, the index function $\chi_V$ is determined by its restriction to the orthogonally indecomposable summands of $V \downarrow K[e]$.

\begin{lemma}[{\cite[Lemma 5.2]{LiebeckSeitzClass}}]\label{lemma:maxval}
Let $e \in \mathfrak{sp}(V)$ be nilpotent and assume $V \downarrow K[e] = W_1 \perp W_2$ as $K[e]$-modules. Then $\chi_V(m) = \max \{ \chi_{W_1}(m), \chi_{W_2}(m) \}$ for all $m \geq 0$.
\end{lemma}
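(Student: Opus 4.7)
The plan is to prove the two inequalities $\chi_V(m) \leq \max\{\chi_{W_1}(m), \chi_{W_2}(m)\}$ and $\chi_V(m) \geq \max\{\chi_{W_1}(m), \chi_{W_2}(m)\}$ separately, using the definition of the index function directly. The orthogonal decomposition $V = W_1 \perp W_2$ gives us two things to exploit: the $K[e]$-invariance of each $W_i$ (so $e^k W_i \subseteq W_i$ for all $k \geq 0$), and the orthogonality condition $b(W_1, W_2) = 0$.

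For the upper bound, set $N := \max\{\chi_{W_1}(m), \chi_{W_2}(m)\}$ and take any $v \in \Ker e^m$. Write $v = w_1 + w_2$ uniquely with $w_i \in W_i$; since each $W_i$ is $e$-invariant and the sum is direct, $e^m v = 0$ forces $e^m w_i = 0$, so $w_i \in \Ker e^m \cap W_i$. Expanding
\begin{equation*}
b(e^{N+1} v, e^N v) = \sum_{i,j \in \{1,2\}} b(e^{N+1} w_i, e^N w_j),
\end{equation*}
the two cross terms ($i \neq j$) vanish by orthogonality of the decomposition, while the two diagonal terms vanish because $N \geq \chi_{W_i}(m)$. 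Hence $\chi_V(m) \leq N$.

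For the lower bound, without loss of generality assume $\chi_{W_1}(m) \geq \chi_{W_2}(m)$, and suppose $\chi_{W_1}(m) \geq 1$ (the case $\chi_{W_1}(m) = 0$ is trivial). By minimality in the definition of $\chi_{W_1}(m)$, there exists $w_1 \in \Ker e^m \cap W_1$ with $b(e^{\chi_{W_1}(m)} w_1, e^{\chi_{W_1}(m)-1} w_1) \neq 0$. Viewing $w_1$ as an element of $V$, it lies in $\Ker e^m$, which shows $\chi_V(m) \geq \chi_{W_1}(m)$. Combining the two inequalities gives the lemma.

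There is no real obstacle here: the result is essentially a direct consequence of unpacking the definition of $\chi_V$ and using orthogonality to kill cross terms. The only minor subtlety is verifying that the components $w_i$ of an element of $\Ker e^m$ lie in $\Ker e^m \cap W_i$, which is immediate from the directness of the sum $V = W_1 \oplus W_2$ and the $e$-invariance of the $W_i$.
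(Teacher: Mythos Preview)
Your proof is correct. The paper does not actually supply its own argument for this lemma; it simply cites \cite[Lemma 5.2]{LiebeckSeitzClass} and moves on. Your direct verification from the definition of the index function---killing the cross terms by orthogonality and the diagonal terms by the defining property of $\chi_{W_i}(m)$, then exhibiting a witness from the larger $W_i$ for the reverse inequality---is exactly the natural proof and matches what one finds in the cited reference.
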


The orthogonally indecomposable modules were classified by Hesselink. In the case of $\mathfrak{sp}(V)$, there are three types of orthogonally indecomposable modules, defined as follows. (Similar definitions are given in \cite[Section 5.1]{LiebeckSeitzClass}.)

\begin{maar}\label{def:V2l}
For $\ell \geq 1$, we define the module $V(2\ell)$ as follows. Let $n = 2\ell$, and suppose that $V$ has basis $v_1$, $\ldots$, $v_{n}$ with $b(v_i,v_j) = 1$ if $i+j = n+1$ and $0$ otherwise. Define $e: V \rightarrow V$ by \begin{align*} e v_1 &= 0 \\ e v_i &= v_{i-1} \text{ for all } 1 < i \leq n.\end{align*} Then $e \in \mathfrak{sp}(V)$, and we define $V(2\ell)$ as the bilinear $K[e]$-module $V \downarrow K[e]$.
\end{maar}

\begin{maar}\label{def:Wl}
For $\ell \geq 1$, we define the module $W(\ell)$ as follows. Let $n = 2\ell$, and suppose that $V$ has basis $v_1$, $\ldots$, $v_{n}$ with $b(v_i,v_j) = 1$ if $i+j = n+1$ and $0$ otherwise. Define $e: V \rightarrow V$ by \begin{align*} e v_1 &= 0, &  e v_i &= v_{i-1} \text{ for all } 1 < i \leq \ell. \\
e v_{\ell+1} &= 0, &  e v_i &= v_{i-1} \text{ for all } \ell+1 < i \leq n.\end{align*} Then $e \in \mathfrak{sp}(V)$, and we define $W(\ell)$ as the bilinear $K[e]$-module $V \downarrow K[e]$.
\end{maar}

\begin{maar}\label{def:Wkl}
For $\ell \geq 1$ and $0 < k < \ell/2$ we define the module $W_k(\ell)$ as follows. Let $n = 2\ell$, and suppose that $V$ has basis $v_1$, $\ldots$, $v_{n}$ with $b(v_i,v_j) = 1$ if $i+j = n+1$ and $0$ otherwise. Define $e: V \rightarrow V$ by \begin{align*} e v_1 &= 0, & e v_{\ell+1} &= 0 \\ e v_{n-k+1} &= v_{n-k} + v_k & e v_i &= v_{i-1} \text{ for all } i \not\in \{ 1,\ell+1,n-k+1 \}  \end{align*} Then $e \in \mathfrak{sp}(V)$, and we define $W_k(\ell)$ as the bilinear $K[e]$-module $V \downarrow K[e]$.
\end{maar}

The fact that the modules in Definition \ref{def:V2l} -- \ref{def:Wkl} agree with those described by Hesselink in \cite[Proposition 3.5]{Hesselink} is seen as follows. 

	\begin{itemize}
		\item In Definition \ref{def:V2l}, this is clear from the fact that $V \downarrow K[e] = W_{2\ell}$, so $V \downarrow K[e] = V(2\ell)$ as defined by Hesselink. 
		\item In Definition \ref{def:Wl}, we have $V \downarrow K[e] = W_{\ell} \oplus W_{\ell}$. It is easy to see that $b(ev,v) = 0$ for all $v \in V$, so $\chi_V(m) = 0$ for all $m \geq 0$. It follows from \cite[Proposition 3.5, Theorem 3.8]{Hesselink} that $V \downarrow K[e]$ is isomorphic to the module $W(\ell)$ defined by Hesselink.
		\item In Definition \ref{def:Wkl} we have used the representative given in \cite{KorhonenStewartThomas}, which gives $W_k(\ell)$ by \cite[Lemma 3.4]{KorhonenStewartThomas}.
	\end{itemize}

\begin{lause}[{\cite[Proposition 3.5]{Hesselink}}]\label{thm:hesselinkNILindecomp}
Let $e \in \mathfrak{sp}(V)$ be nilpotent such that $V \downarrow K[e]$ is orthogonally indecomposable. Then $V \downarrow K[e]$ is isomorphic to $V(2\ell)$, $W(\ell)$, or $W_k(\ell)$ for some $0 < k < \ell/2$. These modules are characterized by the following properties:

\begin{center}
	\begin{tabular}{l|l|l}
	  \multicolumn{1}{c|}{$V \downarrow K[e]$} & \multicolumn{1}{c|}{Jordan normal form on $V$} & \multicolumn{1}{c}{$\chi_V$} \\ \hline
	  $V(2\ell)$  & $W_{2\ell}$ & $\chi_V(2\ell) = \ell$ \\
		$W(\ell)$   & $W_{\ell} \oplus W_{\ell}$ & $\chi_V(\ell) = 0$ \\
		$W_k(\ell)$ \normalfont{($0 < k < \ell/2$)} & $W_{\ell} \oplus W_{\ell}$ & $\chi_V(\ell) = k$
	\end{tabular}
\end{center}

\end{lause}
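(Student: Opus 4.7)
The plan is to establish two things: that each of $V(2\ell)$, $W(\ell)$, $W_k(\ell)$ from Definitions \ref{def:V2l}--\ref{def:Wkl} really does have the stated Jordan type and index function, and that every orthogonally indecomposable bilinear $K[e]$-module arises in this way.

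The characterization part is a direct calculation from the explicit descriptions. The Jordan normal forms are immediate from how $e$ acts on the chosen basis. For the index function, the key identity is that $e$-invariance of $b$ combined with $\chr K = 2$ yields $b(e^{n+1} w, e^n w) = b(e^{2n+1} w, w)$ for any $w$, so only odd-indexed pairings can matter; moreover $b(e^{2j} w, w) = b(e^j w, e^j w) = 0$ by the alternating property. Expanding an arbitrary $v \in \Ker e^m$ in the given basis, the formula $b(v_i, v_j) = \delta_{i+j, n+1}$ combined with the cancellation of cross-terms $c_i c_j + c_j c_i = 0$ in characteristic two leaves only squared contributions $c_i^2$, which one reads off directly. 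For $V(2\ell)$ the surviving term is $c_{\ell+n+1}^2$, forcing $\chi_V(m) = \max(0, m - \ell)$ and in particular $\chi_V(2\ell) = \ell$. For $W(\ell)$ the involution $i \mapsto 2\ell + 2 - i$ on basis indices has its unique fixed point at $i = \ell + 1$, but $ev_{\ell+1} = 0$ excludes that index from the sum, so every candidate square is killed and $\chi_V \equiv 0$. For $W_k(\ell)$ the extra relation $ev_{n-k+1} = v_{n-k} + v_k$ breaks this cancellation at exactly one level, producing $\chi_V(\ell) = k$ by the same bookkeeping.

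For the classification I would pick $v \in V$ generating a Jordan block of maximal size $m$ and study the cyclic submodule $W_v = \langle v, ev, \ldots, e^{m-1} v \rangle \cong W_m$. Setting $a_j := b(e^j v, v)$, $e$-invariance gives $b(e^i v, e^j v) = a_{i+j}$ (with $a_j = 0$ for $j \geq m$), and the alternating property forces $a_{2j} = 0$. If some $a_{2j+1} \neq 0$, then after normalization $b|_{W_v}$ is non-degenerate; orthogonal indecomposability forces $V = W_v$, hence $m = 2\ell$, and a change of basis puts $V \downarrow K[e]$ into the form $V(2\ell)$. If instead all $a_j$ vanish, then $W_v$ is totally singular, and one selects a second cyclic generator $v' \notin W_v$ with $e^{m-1} v' \neq 0$ such that $W_v$ and $W_{v'}$ are paired non-degenerately by $b$; orthogonal indecomposability then forces $V = W_v \oplus W_{v'}$, so $\dim V = 2m$, and $V \downarrow K[e]$ is either $W(\ell)$ or $W_k(\ell)$ depending on the residual cross-pairing data.

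The main obstacle is the totally singular case: one must separate $W(\ell)$ from $W_k(\ell)$ and show that the parameter $k$ is well-defined. The numbers $b_j := b(e^j v, v')$ together with $b(e^j v', v')$ can be partially eliminated by replacements $v' \mapsto v' + p(e) v$ and $v \mapsto v + q(e) v'$ for suitable polynomials $p, q \in K[t]$, but in general a single obstruction survives — the smallest index $k$ at which $b_k$ is forced to be nonzero — and this is precisely the parameter appearing in $W_k(\ell)$. When the obstruction can be removed entirely by normalization one lands in $W(\ell)$; the constraint $0 < k < \ell/2$ emerges from an analysis of which residual values of $k$ are not eliminated by further substitutions (values with $k \geq \ell/2$ can be reduced back to a smaller $k$ by interchanging the roles of the two chains). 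Finally, the values of $\chi_V$ computed in the first paragraph are pairwise distinct across the three types, confirming that the stated triple (Jordan form, $\chi_V$) characterizes each type uniquely among orthogonally indecomposable bilinear $K[e]$-modules.
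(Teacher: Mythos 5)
The paper does not actually prove this statement: it cites Hesselink's Proposition 3.5 for the classification of orthogonally indecomposable bilinear $K[e]$-modules, and in the bullet points preceding the theorem it only verifies that the explicit representatives in Definitions \ref{def:V2l}--\ref{def:Wkl} realize Hesselink's types (the first two by inspection and a direct observation about $\chi_V$, the third by citing \cite[Lemma 3.4]{KorhonenStewartThomas}). Your proposal takes a genuinely different and much more ambitious route, namely a self-contained reproof of Hesselink's classification. The verification-of-invariants half is in the same spirit as what the paper records, and your $\chi_V$ computation for $V(2\ell)$ (via the parity/cancellation bookkeeping) is in fact more explicit than anything the paper writes down.

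The classification half, however, contains a genuine gap. You assert that if some $a_{2j+1} = b(e^{2j+1}v,v) \neq 0$, then after normalization $b\vert_{W_v}$ is non-degenerate, so that orthogonal indecomposability forces $V = W_v$. That inference is false: non-degeneracy of $b$ on $W_v = \langle v, ev, \ldots, e^{m-1}v\rangle$ requires $a_{m-1}\neq 0$, not merely that some odd $a_j$ be nonzero, since when $a_{m-1}=0$ the vector $e^{m-1}v$ lies in the radical of $b\vert_{W_v}$. A concrete witness is $W_k(\ell)$ with maximal generator $v=v_n$: one computes $e^{2k-1}v_n = v_{n-2k+1}+v_1$, so $a_{2k-1}=1$, while $e^{\ell-1}v_n = v_{\ell+1}$ gives $a_{\ell-1}=0$; thus $W_v$ is degenerate but not totally singular, and your dichotomy ``non-degenerate vs.\ totally singular'' misses exactly the $W_k(\ell)$ case. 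To repair the argument you would need to either replace $v$ by a better maximal generator (in $W_k(\ell)$, the cyclic module on $v_\ell$ is totally singular), or split instead on whether $a_{m-1}$ can be made nonzero by such a replacement; this extra normalization step is the real content of Hesselink's proof and is what your sketch is missing.
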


By Theorem \ref{thm:hesselinkNILindecomp}, for every nilpotent $e \in \mathfrak{sp}(V)$ we have an orthogonal decomposition $$V \downarrow K[e] = U_1 \perp \cdots \perp U_t,$$ where for all $1 \leq i \leq t$ we have $U_i \cong V(2\ell_i)$, $U_i \cong W(\ell_i)$, or $U_i \cong W_{k_i}(\ell_i)$ ($0 < k_i < \ell_i/2$) for some integer $\ell_i \geq 1$. 

As in the unipotent case, the orthogonally indecomposable summands and their number is not uniquely determined. For example, by Lemma \ref{lemma:maxval} and Theorem \ref{thm:hesselinkmainthm} (i), we have isomorphisms \begin{align*} W(d) \perp V(d') &\cong W_{d'/2}(d) \perp V(d') & \text{ for } d > d' > 0 \text{ even}; \\ W(d) \perp V(d) &\cong V(d) \perp V(d) \perp V(d) & \text{ for } d > 0 \text{ even};\end{align*} of bilinear $K[e]$-modules.

We end this section with two observations about orthogonally indecomposable modules of the form $\sum_{1 \leq i \leq t} W(m_i)$.

\begin{lemma}\label{lemma:nilsingularcharac}
Let $e \in \mathfrak{sp}(V)$ be nilpotent. Then the following statements are equivalent:
	\begin{enumerate}[\normalfont (i)]
		\item There is an orthogonal decomposition $V \downarrow K[e] = \sum_{1 \leq i \leq t} W(m_i)$ for some integers $m_1$, $\ldots$, $m_t$.
		\item There is a totally singular decomposition $V = W \oplus Z$, where $W$ and $Z$ are $K[e]$-submodules of $V$.
		\item $b(ev,v) = 0$ for all $v \in V$.
	\end{enumerate}
\end{lemma}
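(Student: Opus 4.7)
The plan is to prove the equivalences cyclically: (i) $\Rightarrow$ (ii) $\Rightarrow$ (iii) $\Rightarrow$ (i). The first two implications are direct from the definitions, while the third relies on Hesselink's classification.

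For (i) $\Rightarrow$ (ii), I would use the explicit basis in Definition \ref{def:Wl}. Each summand $W(m_i)$ is spanned by a basis $v_1^{(i)}, \ldots, v_{2m_i}^{(i)}$, and the two subspaces $W_i := \langle v_1^{(i)}, \ldots, v_{m_i}^{(i)}\rangle$ and $Z_i := \langle v_{m_i+1}^{(i)}, \ldots, v_{2m_i}^{(i)}\rangle$ are $K[e]$-submodules that are totally singular for $b$. Setting $W := \bigoplus_i W_i$ and $Z := \bigoplus_i Z_i$ gives the required totally singular decomposition, since the orthogonal summands $W(m_i)$ pair to $0$ with each other.

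For (ii) $\Rightarrow$ (iii), write $v = w + z$ with $w \in W$, $z \in Z$. Since $e \in \mathfrak{sp}(V)$, we have $b(ex,y) = b(x,ey)$ (using $\chr K = 2$). Since $W$ and $Z$ are totally singular and $K[e]$-invariant, $b(ew,w) = b(ez,z) = 0$, so
\[
b(ev,v) = b(ew,z) + b(ez,w) = b(w,ez) + b(ez,w) = 2\, b(ez,w) = 0.
\]

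The main obstacle is (iii) $\Rightarrow$ (i), which I would settle using Hesselink's classification rather than attempting to construct a decomposition by hand. The hypothesis $b(ev,v) = 0$ for all $v \in V$ says precisely that $\chi_V(m) = 0$ for all $m \geq 0$. Let $0 < d_1 < \cdots < d_t$ be the Jordan block sizes of $e$ on $V$ with multiplicities $n_i$. By Theorem \ref{thm:hesselinkmainthm}(iv), having $\chi_V(d_i) = 0 < d_i/2$ forces each $n_i$ to be even, say $n_i = 2 m_i$. Now form the bilinear $K[e]$-module $V' := \bigperp_{1 \le i \le t} W(d_i)^{m_i}$. Its Jordan block structure matches that of $V$, and by Lemma \ref{lemma:maxval} together with the $\chi$-values listed in Theorem \ref{thm:hesselinkNILindecomp}, one has $\chi_{V'} \equiv 0$. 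Hence $V$ and $V'$ have the same Hesselink symbol, so by Theorem \ref{thm:hesselinkmainthm}(i) they are isomorphic as bilinear $\mathfrak{w}_q$-modules, giving (i).

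The parity argument from Theorem \ref{thm:hesselinkmainthm}(iv) is the conceptual crux: it converts the geometric vanishing hypothesis into a combinatorial condition on the Jordan type which is then matched against a $W(m)$-only template by the uniqueness part of Hesselink's theorem.
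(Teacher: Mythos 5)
Your proof is correct, and the implications (i) $\Rightarrow$ (ii) $\Rightarrow$ (iii) match the paper's argument almost verbatim. The real difference is in (iii) $\Rightarrow$ (i). The paper argues directly: take any orthogonal decomposition of $V \downarrow K[e]$ into orthogonally indecomposable summands $U_1 \perp \cdots \perp U_s$, apply Lemma \ref{lemma:maxval} to get $\chi_{U_i}(m) \leq \chi_V(m) = 0$ for each summand, and then Theorem \ref{thm:hesselinkNILindecomp} rules out $V(2\ell)$ and $W_k(\ell)$ since those types have $\chi_{U_i}(d) = \ell$ or $k > 0$ at the relevant block size. That is a two-step argument. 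You instead route through the symbol classification: you use the parity constraint (Theorem \ref{thm:hesselinkmainthm}(iv)) to conclude each multiplicity $n_i$ is even, build a candidate module $V' = \perp_i W(d_i)^{m_i}$ with the same underlying Jordan type, check $\chi_{V'} \equiv 0$ via Lemma \ref{lemma:maxval} and Theorem \ref{thm:hesselinkNILindecomp}, and then invoke Theorem \ref{thm:hesselinkmainthm}(i) (together with the remark that every admissible symbol is realized, and the fact that nilpotent $\Sp(V)$-orbits correspond to isomorphism classes of bilinear $\mathfrak{w}_q$-modules) to conclude $V \cong V'$. This is valid, and it makes the parity phenomenon explicit, but it requires more machinery: you need both the uniqueness and the existence parts of Hesselink's theorem, whereas the paper uses only the classification of orthogonally indecomposable summands plus the max-formula. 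The paper's argument is shorter because it never needs to name a target module — it just constrains the summands of whatever decomposition already exists. One small LaTeX note: \verb|\bigperp| is not defined in the paper's preamble; the paper writes orthogonal sums with $\perp$ and $\sum$.
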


\begin{proof}(cf. \cite[Lemma 6.12]{Korhonen2020Hesselink}) We prove that (i) $\Rightarrow$ (ii) $\Rightarrow$ (iii) $\Rightarrow$ (i).

(i) $\Rightarrow$ (ii): It is clear from Definition \ref{def:Wl} that each $W(m_i)$ has a decomposition $W(m_i) = W_i \oplus Z_i$, where $W_i$ and $Z_i$ are totally singular $K[e]$-submodules with $W_i \cong W_{m_i} \cong Z_i$. Therefore (i) implies (ii).

(ii) $\Rightarrow$ (iii): We have $b(ew,w) = b(ez,z) = 0$ for all $w \in W$ and $z \in Z$ since $W$ and $Z$ are $e$-invariant and totally singular. Thus $b(e(w+z),w+z) = b(ew,z) + b(ez,w) = 0$ for all $w \in W$ and $z \in Z$, since $e \in \mathfrak{sp}(V)$.

(iii) $\Rightarrow$ (i): Suppose that $b(ev,v) = 0$ for all $v \in V$. Then for the index function of $e$ we have $\chi_V(m) = 0$ for all $m \geq 1$. Therefore every orthogonally indecomposable summand of $V \downarrow K[e]$ must be of the form $W(m)$ for some $m \geq 1$ (Theorem \ref{thm:hesselinkNILindecomp} and Lemma \ref{lemma:maxval}), so (i) holds.\end{proof}

\begin{lemma}\label{lemma:nilsquaredecomp}
Let $e \in \mathfrak{sp}(V)$ be nilpotent. Then $V \downarrow K[e^2] = \sum_{1 \leq i \leq t} W(m_i)$ for some integers $m_1$, $\ldots$, $m_t$. 
\end{lemma}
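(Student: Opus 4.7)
The plan is to reduce Lemma \ref{lemma:nilsquaredecomp} directly to Lemma \ref{lemma:nilsingularcharac} applied to $e^2$ in place of $e$. The two things to check are that $e^2$ is a nilpotent element of $\mathfrak{sp}(V)$ (so the hypotheses of Lemma \ref{lemma:nilsingularcharac} apply), and that the ``totally isotropic'' condition (iii) of that lemma is satisfied by $e^2$.

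First I would observe that $e^2 \in \mathfrak{sp}(V)$: since $e \in \mathfrak{sp}(V)$, in characteristic two we have $b(ev,w) = b(v,ew)$ for all $v,w \in V$, and iterating gives $b(e^2 v, w) = b(ev,ew) = b(v, e^2 w)$, so $e^2$ preserves the alternating form in the Lie algebra sense. Clearly $e^2$ is still nilpotent, so $e^2 \in \mathfrak{sp}(V)$ is a nilpotent element and $V \downarrow K[e^2]$ has an orthogonal decomposition into indecomposables as in Theorem \ref{thm:hesselinkNILindecomp}.

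Next I would verify condition (iii) of Lemma \ref{lemma:nilsingularcharac} for $e^2$. The one-line computation is
\[
b(e^2 v, v) \;=\; b(ev, ev) \;=\; 0
\]
for all $v \in V$, where the first equality uses $e \in \mathfrak{sp}(V)$ as above, and the second uses that the form $b$ is alternating (since $\mathfrak{sp}(V)$ is defined relative to a symplectic, hence alternating, form).

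Given this, Lemma \ref{lemma:nilsingularcharac} applied to the nilpotent element $e^2 \in \mathfrak{sp}(V)$ yields the desired orthogonal decomposition $V \downarrow K[e^2] = \sum_{1 \leq i \leq t} W(m_i)$. There is no real obstacle here; the only subtlety is noticing that passing from $e$ to $e^2$ automatically kills the index function because of the interplay between $e \in \mathfrak{sp}(V)$ (which gives $b(ev,w)=b(v,ew)$ in characteristic two) and the alternating property of $b$.
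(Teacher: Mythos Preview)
Your proof is correct and follows exactly the same route as the paper: compute $b(e^2v,v)=b(ev,ev)=0$ and invoke Lemma~\ref{lemma:nilsingularcharac} for $e^2$. You are in fact slightly more careful than the paper, which does not explicitly check that $e^2\in\mathfrak{sp}(V)$ before applying that lemma.
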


\begin{proof}
We have $b(e^2v,v) + b(ev,ev) = 0$ for all $v \in V$ since $e \in \mathfrak{sp}(V)$, so $b(e^2v,v) = 0$ for all $v \in V$. Now the claim follows from Lemma \ref{lemma:nilsingularcharac}.\end{proof}

\section{Chevalley construction}\label{section:chevalley}

In this section, we will recall basics of the Chevalley construction of Lie algebras and simple algebraic groups, and in particular how it applies for groups of type $C_{\ell}$ in characteristic two. For more details, see for example \cite{SteinbergNotesAMS} or \cite[Chapter VII]{Humphreys}. We will also make some preliminary observations about the actions of unipotent and nilpotent elements on the adjoint Lie algebra $\g_{ad}$ of type $C_{\ell}$. 

\subsection{Chevalley construction}  Let $\mathfrak{g}$ be a finite-dimensional simple Lie algebra over $\C$. Fix a Cartan subalgebra $\mathfrak{h}$ of $\mathfrak{g}$, and let $\Phi$ be the corresponding root system, so $$\mathfrak{g} = \mathfrak{h} \oplus \bigoplus_{\alpha \in \Phi} \mathfrak{g}_\alpha,$$ where $\mathfrak{g}_\alpha = \{ X \in \mathfrak{g} : [H, X] = \alpha(H)X \text{ for all } H \in \mathfrak{h}. \}.$ The Killing form $\kappa$ is non-degenerate on $\mathfrak{h}$, so for all $\alpha \in \Phi$ there exists $H_\alpha' \in \mathfrak{h}$ such that $\kappa(H, H_\alpha') = \alpha(H)$ for all $H \in \mathfrak{h}$. We define $H_\alpha := \frac{2}{\kappa(H_\alpha', H_\alpha')} H_\alpha'$ for all $\alpha \in \Phi$. 

It was shown by Chevalley \cite[Th\'eor\`eme 1]{ChevalleyTohoku} that one can choose $X_\alpha \in \mathfrak{g}_\alpha$ such that the following properties hold:		

\begin{enumerate}[(a)]
\item $[X_\alpha, X_{-\alpha}] = H_{\alpha}$ for all $\alpha \in \Phi$,
\item If $\alpha, \beta \in \Phi$ and $\alpha+\beta \in \Phi$, then $[X_\alpha, X_\beta] = \pm(r+1) X_{\alpha+\beta}$, where $r \geq 0$ is the largest integer such that $\beta-r\alpha$ is a root.
\item If $\alpha, \beta \in \Phi$ and $\alpha+\beta \not\in \Phi$, then $[X_\alpha, X_\beta] = 0$.
\end{enumerate}

For a choice of root vectors $X_\alpha$ satisfying (a) -- (c) above, we define $\mathfrak{g}_\Z$ to be the $\Z$-span of all $X_\alpha$ and $H_\alpha$ for $\alpha \in \Phi$. Let $\Delta$ be a base for $\Phi$ and let $\Phi^+$ be the corresponding system of positive roots. Then $\{X_\alpha : \alpha \in \Phi\} \cup \{H_\alpha : \alpha \in \Delta\}$ is a $\Z$-basis of $\mathfrak{g}_\Z$, called a \emph{Chevalley basis} for $\mathfrak{g}$.

Fix a Chevalley basis for $\mathfrak{g}$. Let $\mathscr{U}_{\Z}$ be the corresponding Kostant $\Z$-form, which is the subring of the universal enveloping algebra generated by $1$ and $\frac{X_{\alpha}^k}{k!}$ for all $\alpha \in \Phi$ and $k \geq 1$.

Let $V$ be a faithful finite-dimensional $\mathfrak{g}$-module over $\C$. We denote the set of weights of $\mathfrak{h}$ in $V$ by $\Lambda(V)$. Then $\Z \Phi \subseteq \Z \Lambda(V) \subseteq \Lambda$, where $\Lambda$ is the weight lattice.

A \emph{lattice} in $V$ is the $\Z$-span of a basis of $V$. We say that a lattice $L \subseteq V$ is \emph{admissible}, if $L$ is $\mathscr{U}_{\Z}$-invariant. 

Let $F$ be an algebraically closed field of characteristic $p > 0$, and $L$ an admissible lattice in $V$. Set $V_F := F \otimes_{\Z} L$. Then for all $X \in \mathscr{U}_{\Z}$, the \emph{reduction modulo $p$} of $X$ is the $F$-linear map defined by $1 \otimes X' : V_F \rightarrow V_F$, where $X': L \rightarrow L$ is the action of $X$ on $L$.

In particular, for all $\alpha \in \Phi$ and $k \geq 0$ we have a linear map $X_{\alpha,k} : V_F \rightarrow V_F$ which is the reduction modulo $p$ of $X_{\alpha}^k/k!$ on $V_F$. Since $X_{\alpha}$ acts nilpotently on $V$, we can define for all $t \in F$ the root element $x_{\alpha}(t)$ as the exponential $x_{\alpha}(t) := \sum_{k \geq 0} t^k X_{\alpha,k}$. We have $x_{\alpha}(t) \in \GL(V_F)$, and the \emph{Chevalley group} (over $F$) corresponding to $V$ and $L$ is defined as $$G(V,L) := \langle x_{\alpha}(t) : \alpha \in \Phi, t \in F \rangle.$$ 

Then $G(V,L)$ is a simple algebraic group over $F$ with root system $\Phi$ \cite[Theorem 6]{SteinbergNotesAMS}. Furthermore, we have a maximal torus $$T = \langle h_{\alpha}(t) : \alpha \in \Delta, t \in K^\times \rangle,$$ where $h_{\alpha}(t)$ is defined as in \cite[Lemma 19, p. 22]{SteinbergNotesAMS}. Then the weights of $T$ on $V$ can be identified with $\Lambda(V)$, and the character group $X(T)$ can be identified with $\Z \Lambda(V)$ \cite[p. 39]{SteinbergNotesAMS}.

We say that $G(V,L)$ is \emph{simply connected} if $\Z \Lambda(V) = \Lambda$, and \emph{adjoint} if $\Z \Lambda(V) = \Z \Phi$.

\begin{lemma}[{\cite[Corollary 1, p. 41]{SteinbergNotesAMS}}]\label{lemma:GSCtoGAD}
Suppose that $V'$ is another faithful finite-dimensional $\mathfrak{g}$-module with admissible lattice $L'$, and denote the corresponding root elements in $G(V',L')$ by $x_{\alpha}'(t)$. Then:
	\begin{enumerate}[\normalfont (i)]
		\item If $G(V',L')$ is simply connected, there exists a morphism $\varphi: G(V',L') \rightarrow G(V,L)$ of algebraic groups with $x_{\alpha}'(t) \mapsto x_{\alpha}(t)$ for all $\alpha \in \Phi$ and $t \in F$.
		\item If $G(V',L')$ is adjoint, there exists a morphism $\varphi: G(V,L) \rightarrow G(V',L')$ of algebraic groups with $x_{\alpha}(t) \mapsto x_{\alpha}'(t)$ for all $\alpha \in \Phi$ and $t \in F$.
	\end{enumerate}
\end{lemma}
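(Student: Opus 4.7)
The plan is to exploit Steinberg's description of Chevalley groups by generators and relations. In any Chevalley group, the root elements $x_\alpha(t)$ satisfy three sorts of relations: (R1) additivity $x_\alpha(t)x_\alpha(u) = x_\alpha(t+u)$; (R2) the Chevalley commutator formula
\[ [x_\alpha(t), x_\beta(u)] = \prod_{\substack{i,j>0 \\ i\alpha+j\beta \in \Phi}} x_{i\alpha+j\beta}(c_{ij\alpha\beta}\, t^i u^j) \]
for $\alpha+\beta \neq 0$; and (R3) multiplicative relations among the torus elements $h_\alpha(t)$. The structure constants $c_{ij\alpha\beta}$ are determined by the Chevalley basis of $\mathfrak g_\Z$ and are independent of the module $V$, so (R1) and (R2) take identical symbolic form in $G(V,L)$ and $G(V',L')$. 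The only distinction between the two Chevalley groups lies in the torus relations (R3), which are controlled by the sublattice $\Z\Lambda(V) \subseteq \Lambda$: a relation $\prod_i h_{\alpha_i}(t_i) = 1$ in $T$ corresponds to a character identity in $\Z\Lambda(V)$.

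For part (i), I would define $\varphi$ on generators by $x'_\alpha(t) \mapsto x_\alpha(t)$ and check that it respects every defining relation of $G(V',L')$. Relations (R1) and (R2) are preserved tautologically. For (R3), the hypothesis $\Z\Lambda(V') = \Lambda$ means $G(V',L')$ has the fewest torus relations among Chevalley groups with root system $\Phi$; any such relation is a consequence of an identity in the coroot lattice $\Lambda^\vee$, which in turn forces the analogous identity for characters in $\Z\Lambda(V)$ and hence in $G(V,L)$. This yields an abstract group homomorphism $\varphi: G(V',L') \to G(V,L)$. To upgrade $\varphi$ to a morphism of algebraic groups, restrict to the big Bruhat cell $U^- T' U'$ of $G(V',L')$; on each factor the formula for $\varphi$ is a polynomial in the coordinate parameters of $F$, so $\varphi$ is regular on an open dense subset, and translation by root elements extends regularity to all of $G(V',L')$.

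For part (ii), the argument is dual. Defining $\varphi(x_\alpha(t)) = x'_\alpha(t)$, relations (R1)--(R2) again transfer trivially. For (R3), the hypothesis $\Z\Lambda(V') = \Z\Phi$ means $G(V',L')$ has the most torus relations: any identity in $\Z\Lambda(V)^\vee$ (which underlies a relation of $G(V,L)$) maps via the inclusion $\Z\Phi^\vee \supseteq \Z\Lambda(V)^\vee$ (up to identification) to an identity that already holds in the adjoint torus, so relations of $G(V,L)$ are automatically respected. Regularity is verified exactly as in (i) via the Bruhat decomposition.

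The principal obstacle is the careful bookkeeping for (R3): one must pin down the precise correspondence between multiplicative identities among the $h_\alpha(t)$ in $T$ and elements of the lattice quotient $\Lambda/\Z\Lambda(V)$, and confirm that this quotient controls the kernel of the natural surjection from the simply connected group to $G(V,L)$. This is the content of \cite[Lemmas~28, 29]{SteinbergNotesAMS}, and once it is in hand both (i) and (ii) reduce to the routine verifications sketched above.
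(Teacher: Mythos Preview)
The paper does not supply its own proof of this lemma; it is quoted verbatim as a citation from Steinberg's lecture notes. Your sketch is essentially the argument Steinberg gives there (generators and relations, with the lattice $\Z\Lambda(V)$ governing the torus relations and hence the central kernel), so there is nothing to compare against beyond noting that your outline matches the cited source. One minor quibble: in part (ii) your phrasing of the lattice inclusion via duals is a bit garbled---the clean statement is that $\Z\Phi \subseteq \Z\Lambda(V)$ induces a surjection $\Hom(\Z\Lambda(V),F^\times) \to \Hom(\Z\Phi,F^\times)$ of tori, so every relation in $T$ is automatically a relation in $T_{ad}$---but this is exactly what you intend and not a genuine gap.
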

                                                                            
The Lie algebra of $G(V,L)$ is identified as follows. The stabilizer of $L$ in $\g$ and $\mathfrak{h}$ is given by \cite[Corollary 2, p. 16]{SteinbergNotesAMS} \begin{align*} \mathfrak{h}^L &= \{ H \in \mathfrak{h} : \mu(H) \in \Z \text{ for all } \mu \in \Lambda(V) \}, \\ \g^L &= \mathfrak{h}^L \oplus \bigoplus_{\alpha \in \Phi} \Z X_{\alpha}.\end{align*} In particular $\g^L$ and $\mathfrak{h}^L$ only depend on the weights in $V$, not the choice of the admissible lattice $L$. Furthermore, under the adjoint action $\g^L$ is an admissible lattice in $\g$ \cite[Proposition 27.2]{Humphreys}.

Then $\g^L$ is a $\Z$-Lie algebra which acts on $L$. The \emph{Chevalley algebra} $$\g(V,L) := F \otimes_{\Z} \g^L$$ is a Lie algebra over $F$ which acts faithfully on $V_F$, and $\g(V,L) \subseteq \mathfrak{gl}(V_F)$ is precisely the Lie algebra of $G(V,L)$. The adjoint action of $G(V,L)$ can be realized by applying the Chevalley construction with $V' = \g$ and $L' = \g^L$, and then taking the morphism $G(V,L) \rightarrow G(V',L')$ of Lemma \ref{lemma:GSCtoGAD} (ii).

Note that then the Lie algebra $\g(V,L)$ is a $G_{sc}$-module for a simply connected Chevalley group $G_{sc}$ with root system $\Phi$ (Lemma \ref{lemma:GSCtoGAD} (i)). In all cases, the structure of $\g(V,L)$ as a Lie algebra and as a $G_{sc}$-module is described by Hogeweij in \cite{Hogeweij}. Here the composition factors of $\g(V, L)$ are completely determined by $V$, but the submodule structure depends on the choice of the admissible lattice $L$.

In the simply connected case we have $\g^L = \g_{\Z}$. In the adjoint case we denote $\g_{\Z}^{ad} := \g^L$, so $\g_{\Z}^{ad}$ is the $\Z$-span of $\g_{\Z}$ and $\mathfrak{h}^{ad} := \{ H \in \mathfrak{h} : \alpha(H) \in \Z \text{ for all } \alpha \in \Phi \}$.

\subsection{Chevalley construction for type $C_{\ell}$}\label{subsection:chevalleyC}

We will now setup the Chevalley construction for groups of type $C_{\ell}$ over $K$. (Recall that by $K$ we always denote an algebraically closed field of characteristic two.) 

Let $V_{\C}$ be a $\C$-vector space of dimension $n = 2\ell$, with basis $v_1$, $\ldots$, $v_n$. We define a non-degenerate alternating bilinear form $(-,-)$ on $V_{\C}$ by $(v_i,v_{n-i+1}) = 1 = -(v_{n-i+1},v_i)$ for all $1 \leq i \leq \ell$, and $(v_i,v_j) = 0$ if $i+j \neq n+1$.

Let $\mathfrak{sp}(V_\C) = \{X \in \mathfrak{gl}(V_\C) : (Xv, w) + (v, Xw) = 0 \text{ for all } v,w \in V_{\C} \}$, so $\mathfrak{sp}(V_\C)$ is a simple Lie algebra of type $C_{\ell}$. Let $\mathfrak{h}_{\C}$ be the Cartan subalgebra formed by the diagonal matrices in $\mathfrak{sp}(V_\C)$. Then $\mathfrak{h}_{\C} = \{ \operatorname{diag}(h_1, \ldots, h_{\ell}, -h_{\ell}, \ldots, -h_1) : h_i \in \C \}$.

For $1 \leq i \leq \ell$, define linear maps $\varepsilon_i : \mathfrak{h}_{\C} \rightarrow \C$ by $\varepsilon_i(h) = h_i$ where $h$ is a diagonal matrix with diagonal entries $(h_1, \ldots, h_{\ell}, -h_{\ell}, \ldots, -h_1)$. Then $$\Phi = \{ \pm(\varepsilon_i \pm \varepsilon_j) : 1 \leq i < j \leq \ell \} \cup \{ \pm 2\varepsilon_i : 1 \leq i \leq \ell \}$$ is the root system for $\mathfrak{sp}(V_\C)$, and $$\Phi^+ = \{ \varepsilon_i \pm \varepsilon_j : 1 \leq i < j \leq \ell \} \cup \{ 2 \varepsilon_i : 1 \leq i \leq \ell \}$$ is a system of positive roots. The set of simple roots corresponding to $\Phi^+$ is $\Delta = \{\alpha_1, \ldots, \alpha_{\ell}\}$, where $\alpha_i = \varepsilon_i - \varepsilon_{i+1}$ for $1 \leq i < {\ell}$ and $\alpha_{\ell} = 2 \varepsilon_{\ell}$.

For all $i,j$ let $E_{i,j}$ be the linear endomorphism on $V_{\C}$ such that $E_{i,j}(v_j) = v_i$ and $E_{i,j}(v_k) = 0$ for $k \neq j$. Throughout we will use the following Chevalley basis of $\mathfrak{sp}(V_\C)$, which is taken from \cite[Section 11, p. 38]{JantzenThesis}. \begin{align*} 
X_{\varepsilon_i - \varepsilon_j} &= E_{i,j} - E_{n-j+1,n-i+1} & \text{ for all } i \neq j, \\
X_{\varepsilon_i + \varepsilon_j} &= E_{j, n-i+1} + E_{i, n-j+1} & \text{ for all } i \neq j, \\
X_{-(\varepsilon_i + \varepsilon_j)} &= E_{n-j+1, i} + E_{n-i+1, j} & \text{ for all } i \neq j, \\
X_{2 \varepsilon_i} &= E_{i, n-i+1} & \text{ for all } i, \\
X_{-2 \varepsilon_i} &= E_{n-i+1, i} & \text{ for all } i. \\
H_{\alpha_i} &= [X_{\alpha_i}, X_{-\alpha_i}] & \text{ for all } 1 \leq i \leq \ell.
\end{align*}

Let $\Lambda \subset \mathfrak{h}_{\C}^*$ be the weight lattice. The maps $\varepsilon_1$, $\ldots$, $\varepsilon_{\ell}$ form $\Z$-basis for $\Lambda$, and for $1 \leq i \leq \ell$, the $i$th fundamental highest weight is equal to $\varpi_i := \varepsilon_1 + \cdots + \varepsilon_i$.

As in the previous subsection, we denote by $\g_{\Z}$ be the $\Z$-span of the Chevalley basis above. Then $\g_{\Z}^{ad}$ is the $\Z$-span of $\g_{\Z}$ and $H$, where $H \in \mathfrak{h}_{\C}$ is the diagonal matrix $H = \diag(1/2, \ldots, 1/2, -1/2, \ldots, -1/2)$. In terms of the Chevalley basis, we have $$H = \frac{1}{2} \left(H_{\alpha_1} + 2 H_{\alpha_2} + \cdots + \ell H_{\alpha_{\ell}} \right).$$ 

\subsection{Simply connected groups of type $C_{\ell}$}\label{subsection:chevalleySP} Let $V_{\Z}$ be the $\Z$-span of the basis $v_1$, $\ldots$, $v_n$ of $V_{\C}$. It is clear that $V_{\Z}$ is an admissible lattice. Denote $V := K \otimes_{\Z} V_{\Z}$. By abuse of notation we denote $v_i := 1 \otimes v_i$ for all $1 \leq i \leq n$, so $v_1$, $\ldots$, $v_n$ is a basis of $V$. The alternating bilinear form $(-,-)$ induces an alternating bilinear form $b: V \times V \rightarrow K$ on $V$, with $b(v_i,v_j) = 1$ if $i+j = n+1$ and $b(v_i,v_j) = 0$ otherwise.

Then the Chevalley group corresponding to $V_{\C}$ and $V_{\Z}$ is simply connected of type $C_{\ell}$, and it is precisely the symplectic group $\Sp(V)$ corresponding to $b$ \cite[5]{Ree}. We denote $G_{sc} = \Sp(V)$. Then the Lie algebra of $G_{sc}$ is $$\g_{sc} = \mathfrak{sp}(V) = \{X \in \mathfrak{gl}(V) : b(Xv,w) + b(v,Xw) = 0 \text{ for all } v,w \in V\}.$$

We denote by $\g_{ad} := K \otimes_{\Z} \g_{\Z}^{\ad}$ the adjoint Lie algebra of type $C_{\ell}$. For any $\mathfrak{sp}(V_{\C})$-module $W$ with admissible lattice $L$, we will consider $K \otimes_{\Z} L$ as a $G_{sc}$-module via the action provided by Lemma \ref{lemma:GSCtoGAD} (i).

\subsection{Lie algebra of adjoint type $C_{\ell}$}\label{subsection:symsquareC} To compute with the action of $G_{sc} = \Sp(V)$ on $\g_{ad}$, it will be convenient to use the following well-known identification of $\mathfrak{sp}(V_{\C})$ with the symmetric square $S^2(V_{\C})$.

\begin{lemma}\label{lemma:overCisomorphism}
For $x,y \in V_{\C}$, define a linear map $\psi_{x,y}: V_{\C} \rightarrow V_{\C}$ by $v \mapsto (y,v)x + (x,v)y$. Then we have an isomorphism $S^2(V_{\C}) \rightarrow \mathfrak{sp}(V_{\C})$ of $\mathfrak{sp}(V_{\C})$-modules, defined by $xy \mapsto \psi_{x,y}$ for all $x,y \in V_{\C}$.
\end{lemma}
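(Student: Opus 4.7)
The plan is to verify the three things that make $\psi_{x,y}$ a well-defined equivariant isomorphism, then conclude by a dimension/irreducibility count.

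First, I would check that the formula makes sense. The assignment $(x,y) \mapsto \psi_{x,y}$ is manifestly bilinear in $x$ and $y$ and, since $\psi_{x,y} = \psi_{y,x}$ by inspection, it factors through the symmetric square to give a well-defined linear map $\Psi : S^2(V_{\C}) \to \mathfrak{gl}(V_{\C})$. Next, I would show $\Psi(xy) = \psi_{x,y}$ actually lands in $\mathfrak{sp}(V_{\C})$. This is a direct unwinding: for all $v,w \in V_{\C}$, compute
\begin{align*}
(\psi_{x,y}v,w) + (v,\psi_{x,y}w)
&= (y,v)(x,w) + (x,v)(y,w) + (y,w)(v,x) + (x,w)(v,y),
\end{align*}
and use that the form is alternating, i.e.\ $(v,x) = -(x,v)$, to see that the four terms cancel in pairs.

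Second, I would verify that $\Psi$ is a homomorphism of $\mathfrak{sp}(V_{\C})$-modules. On $S^2(V_{\C})$ the action is $X \cdot (xy) = (Xx)y + x(Xy)$, and on $\mathfrak{sp}(V_{\C})$ it is the adjoint action $Y \mapsto [X,Y]$. So I would check the identity
\[
[X,\psi_{x,y}] \;=\; \psi_{Xx,y} + \psi_{x,Xy}.
\]
Evaluating both sides on an arbitrary $v \in V_{\C}$, the left-hand side gives
\[
(y,v)Xx + (x,v)Xy - (y,Xv)x - (x,Xv)y,
\]
and replacing $(y,Xv)$ by $-(Xy,v)$ and $(x,Xv)$ by $-(Xx,v)$ via $X \in \mathfrak{sp}(V_{\C})$ turns this into exactly $\psi_{Xx,y}(v) + \psi_{x,Xy}(v)$.

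Finally, I would conclude $\Psi$ is an isomorphism. A dimension count gives $\dim S^2(V_{\C}) = \binom{2\ell+1}{2} = \ell(2\ell+1) = \dim \mathfrak{sp}(V_{\C})$, so it suffices to show $\Psi$ is nonzero; for instance, $\Psi(v_1 v_1) = \psi_{v_1,v_1}$ sends $v_n$ to $2v_1 \neq 0$, so $\Psi$ is not the zero map. Because $\mathfrak{sp}(V_{\C})$ is simple over $\C$, the adjoint module is irreducible, so any nonzero $\mathfrak{sp}(V_{\C})$-module map into it is surjective and hence, by equality of dimensions, an isomorphism. No real obstacle arises; the only point requiring care is the sign bookkeeping, since the form is alternating rather than symmetric, and this is exactly where the $\mathfrak{sp}$-invariance (as opposed to $\mathfrak{so}$-invariance) of $\psi_{x,y}$ gets used.
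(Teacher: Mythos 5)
Your proof is correct, but it takes a different route from the paper's. The paper builds $\psi$ by composing two explicit module isomorphisms — the natural map $\tau: V_{\C} \otimes V_{\C}^* \to \mathfrak{gl}(V_{\C})$ and the map $\tau': V_{\C} \otimes V_{\C} \to V_{\C} \otimes V_{\C}^*$ induced by the nondegenerate form — and then restricts to $S^2(V_{\C}) \subseteq V_{\C} \otimes V_{\C}$. Since $\tau\tau'$ is already an isomorphism of $\mathfrak{sp}(V_{\C})$-modules, injectivity and equivariance come for free, and only the dimension count remains. You instead verify well-definedness, containment of $\psi_{x,y}$ in $\mathfrak{sp}(V_{\C})$, and equivariance by direct computation, and then invoke irreducibility of the adjoint module (i.e.\ simplicity of $\mathfrak{sp}(V_{\C})$) together with nonvanishing of $\psi$ to get surjectivity; the dimension count then gives injectivity. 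Both approaches are valid and about equally short. The paper's factorization makes the sign bookkeeping invisible, while yours carries out exactly the calculation the factorization hides; conversely, you rely on irreducibility of the adjoint module, which the paper does not need. Your computations check out: $\psi_{x,y}=\psi_{y,x}$, $(\psi_{x,y}v,w)+(v,\psi_{x,y}w)=0$, $[X,\psi_{x,y}]=\psi_{Xx,y}+\psi_{x,Xy}$, $\dim S^2(V_{\C}) = \ell(2\ell+1) = \dim \mathfrak{sp}(V_{\C})$, and $\psi_{v_1,v_1}(v_n)=2v_1 \neq 0$.
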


\begin{proof}We have an isomorphism $\tau: V_{\C} \otimes V_{\C}^* \rightarrow \mathfrak{gl}(V_{\C})$ of $\mathfrak{gl}(V_{\C})$-modules, where $\tau(v \otimes f)$ is the linear map $w \mapsto f(w)v$ for all $v \in V_{\C}$ and $f \in V_{\C}^*$.  Moreover, we have an isomorphism $\tau': V_{\C} \otimes V_{\C} \rightarrow V_{\C} \otimes V_{\C}^*$ of $\mathfrak{sp}(V_{\C})$-modules defined by $x \otimes y \mapsto x \otimes f_y$, where $f_y(v) = (y,v)$ for all $v \in V$. 

Now identifying $S^2(V_{\C}) \subseteq V_{\C} \otimes V_{\C}$ via $xy \mapsto x \otimes y + y \otimes x$, the restriction of $\tau\tau'$ to $S^2(V_{\C})$ is a map $\psi: S^2(V_{\C}) \rightarrow \mathfrak{gl}(V_{\C})$ defined by $xy \mapsto \psi_{x,y}$. Then $\psi$ is an injective morphism of $\mathfrak{sp}(V_{\C})$-modules, and it is clear that $\psi_{x,y} \in \mathfrak{sp}(V_{\C})$ for all $x,y \in V_{\C}$. Since $S^2(V_{\C})$ and $\mathfrak{sp}(V_{\C})$ have the same dimension, we conclude that $\psi$ defines an isomorphism $S^2(V_{\C}) \rightarrow \mathfrak{sp}(V_{\C})$ of $\mathfrak{sp}(V_{\C})$-modules.\end{proof}

We denote by $\psi: S^2(V_{\C}) \rightarrow \mathfrak{sp}(V_{\C})$ the isomorphism $\psi(xy) = \psi_{x,y}$ as in Lemma \ref{lemma:overCisomorphism}. Let $1 \leq i, j \leq \ell$ with $i \neq j$. Then under the map $\psi$, the root vectors $X_{\pm \varepsilon_i \pm \varepsilon_j}$ in the Chevalley basis correspond to elements of $S^2(V_{\C})$ as follows. \begin{align*}
		\psi\left(v_i v_{n-j+1}\right) &= - X_{\varepsilon_i - \varepsilon_j} \\
		\psi\left(v_i v_j\right) &= X_{\varepsilon_i + \varepsilon_j} \\
		\psi\left(v_{n-i+1} v_{n-j+1}\right) &= -X_{-(\varepsilon_i + \varepsilon_j)} \\
		\psi\left(\frac{1}{2} v_i^2\right) &= X_{2\varepsilon_i} \\
		\psi\left(\frac{1}{2} v_{n-i+1}^2\right) &= -X_{-2\varepsilon_i}
	\end{align*} Furthermore, define $$\delta := \frac{1}{2} \sum_{i = 1}^{\ell} v_i v_{n-i+1}.$$ Then $\psi(\delta) = -H$.
	
We define \begin{align*} L_{sc} &:= \Z \text{-span of } v_iv_j \text{ and } \frac{1}{2}v_i^2 \text{ for all } 1 \leq i,j \leq n; \\ L_{ad} &:= \Z \text{-span of } L_{sc} \text{ and } \delta;\end{align*} so that $\psi(L_{sc}) = \g_{\Z}$ and $\psi(L_{ad}) = \g_{\Z}^{ad}$.

Then $L_{sc} \subset L_{ad}$ are admissible lattices in $S^2(V_{\C})$, since $\g_{\Z}$ and $\g_{\Z}^{ad}$ are admissible lattices in $\mathfrak{sp}(V_{\C})$. 

From the Chevalley construction $K \otimes_{\Z} \g_{\Z} = \g_{sc}$ and $K \otimes_{\Z} \g_{\Z}^{ad} = \g_{ad}$ as $G_{sc}$-modules, so $\psi$ induces isomorphisms \begin{equation}\label{eq:psiprimemorph}\begin{aligned} &\psi': \g_{sc} \rightarrow K \otimes_{\Z} L_{sc} \\ &\psi'': \g_{ad} \rightarrow K \otimes_{\Z} L_{ad}\end{aligned}\end{equation} of $G_{sc}$-modules. 

By \cite[Lemma 2.2]{Hogeweij} we have $\dim Z(\g_{sc}) = 1$ and $Z(\g_{sc})$ is spanned by $1 \otimes 2H$. Similarly by \cite[Table 1]{Hogeweij}, we have $\dim \g_{ad}/[\g_{ad},\g_{ad}] = 1$ and $[\g_{ad},\g_{ad}]$ is spanned by $1 \otimes v$ with $v \in \g_{\Z}$. Thus under the isomorphisms in~\eqref{eq:psiprimemorph}, we get \begin{equation}\label{eq:psiprimemorph2}\begin{aligned} \psi'(Z(\g_{sc})) &= \langle 1 \otimes 2\delta \rangle, \\ \psi''([\g_{ad},\g_{ad}]) &= \langle 1 \otimes v : v \in L_{sc} \rangle.\end{aligned}\end{equation}

Note that \begin{equation}\label{eq:ladlscmod2}L_{ad}/L_{sc} = \{L_{sc}, \delta + L_{sc}\} \cong \Z/2\Z,\end{equation} so for the linear map $\pi: K \otimes_{\Z} L_{sc} \rightarrow K \otimes_{\Z} L_{ad}$ induced by the inclusion $L_{sc} \subset L_{ad}$, we have $\Ker \pi = \langle 1 \otimes (2\delta) \rangle$ and $\operatorname{Im} \pi = \langle 1 \otimes v : v \in L_{sc} \rangle$.

\subsection{Action of unipotent elements on $\g_{ad}$, orthogonally indecomposable case}\label{subsection:unipotentroots} Let $G_{sc} = \Sp(V)$ as in the previous sections, and let $u \in \Sp(V)$ be a unipotent element. 

We will describe how to construct a representative for the conjugacy class of $u$ in terms of root elements, and how to compute the action of $u$ on $\g_{ad} \cong K \otimes_{\Z} L_{ad}$. We first do this in the case where $V \downarrow K[u]$ is orthogonally indecomposable, so $V \downarrow K[u] = V(2\ell)$ or $V \downarrow K[u] = W(\ell)$ (Theorem \ref{thm:hesselinkUNIPindecomp}). In this case it is well known that by replacing $u$ with a conjugate, we can take $$u = \begin{cases} x_{\alpha_1}(1) \cdots x_{\alpha_{\ell}}(1),& \text{ if } V \downarrow K[u] = V(2\ell). \\ x_{\alpha_1}(1) \cdots x_{\alpha_{\ell-1}}(1),& \text{ if } V \downarrow K[u] = W(\ell).\end{cases}$$ 

For $\alpha \in \Phi$, let $$x_{\Z}^{(\alpha)} := 1 + X_{\alpha} + \frac{X_{\alpha}^2}{2!} \in \mathscr{U}_{\Z}.$$ Then $x_{\alpha}(1)$ is the reduction modulo $p$ of the action of $x_{\Z}^{(\alpha)}$ on $V_{\Z}$. Furthermore, since $X_{\alpha}^3 \cdot S^2(V_{\C}) = 0$, the action of $x_{\alpha}(1)$ on $\g_{ad} \cong K \otimes_{\Z} L_{ad}$ is the reduction modulo $p$ of the action of $x_{\Z}^{(\alpha)}$ on $L_{ad}$. 

Therefore we define $$u_{\Z} := \begin{cases} x_{\Z}^{(\alpha_1)} \cdots x_{\Z}^{(\alpha_\ell)},& \text{ if } V \downarrow K[u] = V(2\ell). \\ x_{\Z}^{(\alpha_1)} \cdots x_{\Z}^{(\alpha_{\ell-1})},& \text{ if } V \downarrow K[u] = W(\ell).\end{cases}$$ If $V \downarrow K[u] = V(2\ell)$, we get \begin{align*} u_{\Z} \cdot v_1 &= v_1, \\ u_{\Z} \cdot v_i &= v_i + v_{i-1} + \cdots + v_1 \text{ if } 1 < i \leq \ell+1, \\ u_{\Z} \cdot v_i &= v_i - v_{i-1} \text{ if } \ell+1 < i \leq 2\ell.\end{align*} Similarly for $V \downarrow K[u] = W(\ell)$, we get \begin{align*} u_{\Z} \cdot v_1 &= v_1, \\ u_{\Z} \cdot v_i &= v_i + v_{i-1} + \cdots + v_1 \text{ if } 1 < i \leq \ell, \\ u_{\Z} \cdot v_{\ell+1} &= v_{\ell+1} \\ u_{\Z} \cdot v_i &= v_i - v_{i-1} \text{ if } \ell+1 < i \leq 2\ell.\end{align*} Thus the action of $u$ on $V$ is exactly as described in Definition \ref{def:V2lUNIP} or Definition \ref{def:WlUNIP}. 

Since $X_{\alpha_i}^3 \cdot S^2(V_{\C}) = 0$, for the action on $S^2(V_{\C})$ we have $$x_{\Z}^{(\alpha)} \cdot (vw) = (x_{\Z}^{(\alpha)}v)(x_{\Z}^{(\alpha)}w)$$ for all $v,w \in V_{\C}$. Thus $$u_{\Z} \cdot (vw) = (u_{\Z}v)(u_{\Z}w)$$ for all $v,w \in V_{\C}$.

\subsection{Action of unipotent elements on $\g_{ad}$, general case}\label{subsection:unipotentrootsGEN} For all unipotent elements $u \in G_{sc}$, in general we can proceed as follows. We have an orthogonal decomposition $V \downarrow K[u] = U_1 \perp \cdots \perp U_t$, where $\dim U_i > 0$ and $U_i$ is an orthogonally indecomposable $K[u]$-module for all $1 \leq i \leq t$. Write $\dim U_i = n_i = 2\ell_i$ for all $1 \leq i \leq t$.

Then $u = u_1 \cdots u_t$, where $u_i \in \Sp(U_i)$ is the action of $u$ on $U_i$. For all $1 \leq i \leq t$, we have $U_i \downarrow K[u_i] \cong W(\ell_i)$ or $U_i \downarrow K[u_i] \cong V(2\ell_i)$ (Theorem \ref{thm:hesselinkUNIPindecomp}). 

Relabel the basis $v_1$, $\ldots$, $v_n$ of $V_{\C}$ as $$v_1^{(1)}, \ldots, v_{n_1}^{(1)}, \ldots, v_1^{(t)}, \ldots, v_{n_t}^{(t)},$$ where $(v_i^{(j)}, v_{i'}^{(j')}) = 1$ if $j = j'$ and $i+i' = n_j$, and $0$ otherwise. Then the Cartan subalgebra $\mathfrak{h}$ consists of diagonal matrices of the form $$h = \operatorname{diag}(a_1^{(1)}, \ldots, a_{\ell_1}^{(1)}, -a_{\ell_1}^{(1)}, \ldots, -a_{1}^{(1)},\ \ldots,\ a_1^{(t)},\ldots, a_{\ell_t}^{(t)}, -a_{\ell_t}^{(t)}, \ldots, -a_{1}^{(t)}).$$ We define then for $1 \leq i \leq t$ and $1 \leq k \leq \ell_i$ the linear map $\varepsilon_k^{(i)} : \mathfrak{h} \rightarrow \C$ by $h \mapsto a_k^{(i)}$.

Denote by $V_{\Z}^{(i)}$ (resp. $V_{\C}^{(i)}$) the $\Z$-span (resp. $\C$-span) of $v_1^{(i)}, \ldots, v_{n_i}^{(i)}$, so we have \begin{align*}V_{\Z} &= V_{\Z}^{(1)} \oplus \cdots \oplus V_{\Z}^{(t)}, \\ V_{\C} &= V_{\C}^{(1)} \perp \cdots \perp V_{\C}^{(t)}.\end{align*} Since $V = K \otimes_{\Z} V_{\Z}$, we can assume that $U_i = K \otimes_{\Z} V_{\Z}^{(i)}$ for all $1 \leq i \leq t$. 

We have $\mathfrak{sp}(V_{\C}^{(i)}) \subseteq \mathfrak{sp}(V_{\C})$, and $\mathfrak{sp}(V_{\C}^{(i)})$ has a root system $\Phi^{(i)}$ of type $C_{\ell_i}$ with simple roots $\Delta^{(i)} = \{\beta_1, \ldots, \beta_{\ell_i} \}$, where $$\beta_j = \begin{cases} \varepsilon_j^{(i)} - \varepsilon_{j+1}^{(i)},& \text{ if } 1 \leq j < \ell_i, \\ 2 \varepsilon_{\ell_i}^{(i)},& \text{ if } j = \ell_i.\end{cases}$$ Note that $\{X_{\alpha} : \alpha \in \Phi^{(i)} \} \cup \{ H_{\alpha} : \alpha \in \Delta^{(i)} \}$ is a Chevalley basis of $\mathfrak{sp}(V_{\C}^{(i)})$, and the corresponding Kostant $\Z$-form $\mathscr{U}_{\Z}^{(i)}$ is a subring of $\mathscr{U}_{\Z}$. Then $V_{\Z}^{(i)}$ is an admissible lattice for $\mathfrak{sp}(V_{\C}^{(i)})$, and applying the Chevalley construction we get $\Sp(U_i)$.

As in the orthogonally indecomposable case, we define $$u_{\Z,i} := \begin{cases} x_{\Z}^{(\beta_1)} \cdots x_{\Z}^{(\beta_{\ell_i})},& \text{ if } U_i \downarrow K[u_i] = V(2\ell_i). \\ x_{\Z}^{(\beta_1)} \cdots x_{\Z}^{(\beta_{\ell_i-1})},& \text{ if } U_i \downarrow K[u_i] = W(\ell_i).\end{cases}$$ Then $u_{\Z,i} \in \mathscr{U}_{\Z}^{(i)}$, and the reduction modulo $p$ of the action of $u_{\Z,i}$ on $V_{\Z}^{(i)}$ is precisely $u_i \in \Sp(U_i)$.

Thus we can define $$u_{\Z} := u_{\Z,1} \cdots u_{\Z,t} \in \mathscr{U}_{\Z},$$ so that $u$ is the reduction modulo $p$ of the action of $u_{\Z}$ on $V_{\Z}$. Furthermore, the action of $u$ on $\g_{ad}$ is the reduction modulo $p$ of the action of $u_{\Z}$ on $L_{ad}$.

\subsection{Action of nilpotent elements on $\g_{ad}$}\label{subsection:nilpotentroots} Let $e \in \mathfrak{sp}(V)$ be nilpotent. Similarly to the unipotent case, we can find $e_{\Z} \in \g_{\Z}$ such that $e$ is the reduction modulo $p$ of the action of $e_{\Z}$ on $V_{\Z}$.

We first consider the orthogonally indecomposable case, so $V \downarrow K[e] = V(2\ell)$, $V \downarrow K[e] = W(\ell)$, or $V \downarrow K[e] = W_k(\ell)$ for some $0 < k < \ell/2$ (Theorem \ref{thm:hesselinkNILindecomp}). In this case we define $$e_{\Z} := \begin{cases} X_{\alpha_1} + \cdots + X_{\alpha_{\ell}}, & \text{ if } V \downarrow K[e] = V(2\ell). \\ X_{\alpha_1} + \cdots + X_{\alpha_{\ell-1}}, & \text{ if } V \downarrow K[e] = W(\ell). \\ X_{\alpha_1} + \cdots + X_{\alpha_{\ell-1}} + X_{2\varepsilon_k}, & \text{ if } V \downarrow K[e] = W_k(\ell).\end{cases}$$ Then for the reduction modulo $p$ of $e_{\Z}$, the action of $V = K \otimes_{\Z} V_{\Z}$ is exactly as in Definition \ref{def:V2l} -- \ref{def:Wkl}. (Here the expression for $W_k(\ell)$ is taken from \cite[Lemma 3.4]{KorhonenStewartThomas}.)

In the general case, we have an orthogonal decomposition $V \downarrow K[e] = U_1 \perp \cdots \perp U_t$, where $U_i$ is an orthogonally indecomposable $K[e]$-module, with $\dim U_i = 2\ell_i$. We have $e = e_1 + \cdots + e_t$, where $e_i \in \mathfrak{sp}(U_i)$ is the action of $e$ on $U_i$. In the notation of the previous section, we take $U_i = K \otimes_{\Z} V_{\Z}^{(i)}$. We can then define $$e_{\Z} := e_{\Z,1} + \cdots + e_{\Z,t},$$ where $e_{\Z,i} \in \g_{\Z} \cap \mathfrak{sp}(V_{\C}^{(i)})$ is defined as in the orthogonally indecomposable case, such that $e_i$ is the reduction modulo $p$ of $e_{\Z,i}$. 

Then $e$ is the reduction modulo $p$ of $e_{\Z}$. Moreover, given any irreducible $\mathfrak{sp}(V_{\C})$-module $W_{\C}$ and admissible lattice $L \subset W_{\C}$, the action of $e$ on $K \otimes_{\Z} L$ is precisely the reduction modulo $p$ of the action of $e_{\Z}$ on $L$.
  
\subsection{Elements of $\mathscr{U}_{\Z}$ acting nilpotently} In this section, we consider $X \in \mathscr{U}_{\Z}$ which act nilpotently on $S^2(V_{\C})$. Then by reduction modulo $p$, we get an action $\widehat{X}$ on $\g_{sc}$, and $\widetilde{X}$ on $\g_{ad}$. We will describe when $\operatorname{Im}(\widehat{X}) \supseteq Z(\g_{sc})$ and when $\operatorname{Ker}(\widetilde{X}) \subseteq [\g_{ad}, \g_{ad}]$. For our purposes this is mostly relevant for $X = (u_{\Z} - 1)^m$ in the unipotent case, and $X = e_{\Z}^m$ in the nilpotent case, for integer $m \geq 1$.

\begin{lemma}\label{lemma:nilpinUZ2}
Let $X \in \mathscr{U}_{\Z}$ be such that $X$ acts nilpotently on $S^2(V_{\C})$, and denote the action of $X$ on $\g_{sc}$ by $\widehat{X}$. Then $\operatorname{Im}(\widehat{X}) \supseteq Z(\g_{sc})$ if and only if there exists $v \in L_{sc}$ such that $X \cdot v \equiv 2 \delta \mod 2L_{sc}$.
\end{lemma}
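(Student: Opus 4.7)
The plan is to transport the statement along the $G_{sc}$-equivariant isomorphism $\psi': \g_{sc} \to K \otimes_{\Z} L_{sc}$ from~\eqref{eq:psiprimemorph}, and then read off the answer from the reduction of $X$ modulo $2$ on $L_{sc}$. Because $\psi'$ is equivariant and $L_{sc}$ is $\mathscr{U}_{\Z}$-stable (being an admissible lattice), $\widehat{X}$ corresponds under $\psi'$ to the $K$-linear map $1 \otimes X \colon K \otimes_{\Z} L_{sc} \to K \otimes_{\Z} L_{sc}$ induced from the $\Z$-linear map $X \colon L_{sc} \to L_{sc}$. Combining this with the identification $\psi'(Z(\g_{sc})) = \langle 1 \otimes 2\delta \rangle$ recorded in~\eqref{eq:psiprimemorph2}, the claim becomes the statement that $1 \otimes 2\delta \in \operatorname{Im}(1 \otimes X)$ if and only if there exists $v \in L_{sc}$ with $X \cdot v \equiv 2\delta \bmod 2 L_{sc}$.

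The direction $(\Leftarrow)$ is then immediate: if $X \cdot v - 2\delta \in 2 L_{sc}$, then $1 \otimes (X \cdot v - 2\delta) = 0$ in $K \otimes_{\Z} L_{sc}$, since $\chr K = 2$ annihilates $2 L_{sc}$ in the tensor product; consequently $(1 \otimes X)(1 \otimes v) = 1 \otimes 2\delta$.

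For $(\Rightarrow)$, I will fix a $\Z$-basis $e_1, \ldots, e_n$ of $L_{sc}$. In the corresponding $K$-basis $\{1 \otimes e_i\}$ of $K \otimes_{\Z} L_{sc}$, the matrix of $1 \otimes X$ is the mod-$2$ reduction $\bar{M}$ of the integer matrix $M$ representing $X$ on $L_{sc}$, and the coordinate vector of $1 \otimes 2\delta$ also has entries in $\F_2 \subseteq K$. The hypothesis $1 \otimes 2\delta \in \operatorname{Im}(1 \otimes X)$ thus says that an inhomogeneous linear system defined over $\F_2$ has a solution over $K$; consequently it already has an $\F_2$-valued solution, which I lift to coefficients in $\{0,1\} \subseteq \Z$ to obtain $v = \sum_i \lambda_i e_i \in L_{sc}$ with $X \cdot v \equiv 2\delta \bmod 2 L_{sc}$.

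The argument is essentially a bookkeeping exercise and I do not anticipate any real obstacle; the single non-trivial ingredient is the standard fact that an $\F_2$-defined inhomogeneous linear system is consistent over a field extension if and only if it is consistent over $\F_2$ itself.
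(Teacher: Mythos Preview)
Your proof is correct and follows essentially the same approach as the paper: both transport the statement along $\psi'$ to $K \otimes_{\Z} L_{sc}$, use the identification $Z(\g_{sc}) = \langle 1 \otimes 2\delta \rangle$, and then argue that a preimage can be taken in $1 \otimes L_{sc}$. The paper's version of the $(\Rightarrow)$ direction is terser --- it simply says that since $\widehat{X} = 1 \otimes X'$ with $X'$ acting on $L_{sc}$, ``we can assume that $w = 1 \otimes v$ with $v \in L_{sc}$'' --- whereas you spell out explicitly the underlying $\F_2$-rationality argument (an $\F_2$-defined inhomogeneous linear system consistent over $K$ is already consistent over $\F_2$), which is exactly what justifies that step.
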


\begin{proof}
Identifying $\g_{sc} = K \otimes_{\Z} L_{sc}$ via~\eqref{eq:psiprimemorph}, we have $Z(\g_{sc}) = \langle 1 \otimes 2 \delta \rangle$ by~\eqref{eq:psiprimemorph2}. Thus if $v \in L_{sc}$ is such that $X \cdot v \equiv 2 \delta \mod 2L_{sc}$, we have $\widehat{X} \cdot (1 \otimes v) = 1 \otimes 2 \delta$.

Conversely, suppose that $\operatorname{Im}(\widehat{X}) \supseteq Z(\g_{sc})$. Then there exists $w \in \g_{sc}$ with $\widehat{X} \cdot w = 1 \otimes 2\delta$. Since $\widehat{X} = 1 \otimes X'$ where $X'$ is the action of $X$ on $L_{sc}$, we can assume that $w = 1 \otimes v$ with $v \in L_{sc}$. Then $$1 \otimes 2\delta = \widehat{X} \cdot w = 1 \otimes (X \cdot v)$$ implies that $X \cdot v \equiv 2 \delta \mod 2L_{sc}$.\end{proof}

For actions of unipotent elements, Lemma \ref{lemma:nilpinUZ2} gives the following.

\begin{lemma}\label{lemma:imZforUZ}
Let $u \in G_{sc}$ be unipotent, and suppose that $u$ is the reduction modulo $p$ of $u_{\Z} \in \mathscr{U}_{\Z}$. Denote the action of $u$ on $\g_{sc}$ by $\widehat{u}$. Let $m \geq 1$ be an integer. Then $\operatorname{Im}(\widehat{u} - 1)^m \supseteq Z(\g_{sc})$ if and only if there exists $v \in L_{sc}$ such that $(u_{\Z} - 1)^m \cdot (v) \equiv 2 \delta \mod 2L_{sc}$.
\end{lemma}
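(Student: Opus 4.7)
The plan is to apply Lemma \ref{lemma:nilpinUZ2} with $X := (u_{\Z} - 1)^m \in \mathscr{U}_{\Z}$. Granting (i) that $X$ acts nilpotently on $S^2(V_{\C})$ and (ii) that the reduction modulo $2$ of the action of $X$ on $L_{sc}$ is $(\widehat{u} - 1)^m$ under the identification $\g_{sc} \cong K \otimes_{\Z} L_{sc}$ of~\eqref{eq:psiprimemorph}, the stated equivalence is exactly Lemma \ref{lemma:nilpinUZ2} applied to this $X$.

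Verification of (ii) is formal: reduction modulo $2$ is a ring homomorphism $\End_{\Z}(L_{sc}) \to \End_K(\g_{sc})$, so it sends $(u_{\Z} - 1)^m$ to the $m$-th power of the reduction of $u_{\Z} - 1$. By the construction of $u_{\Z}$ in Section \ref{subsection:unipotentrootsGEN}, the reduction of $u_{\Z}$ on $L_{sc}$ is the action $\widehat{u}$ of $u$ on $\g_{sc}$, so this $m$-th power is $(\widehat{u} - 1)^m$.

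For (i), the main step of the proof, recall that $u_{\Z}$ is by construction a product of elements of the form $x_{\Z}^{(\alpha)} = 1 + X_{\alpha} + X_{\alpha}^2/2!$, where each $\alpha$ used is a positive root of $\Phi$ (even when $\alpha$ is only a simple root of a subsystem, the explicit form in Section \ref{subsection:unipotentrootsGEN} shows $\alpha \in \Phi^+$). A direct check using the Chevalley basis in Section \ref{subsection:chevalleyC} shows that $X_{\alpha}^2 = 0$ on $V_{\C}$ for every $\alpha \in \Phi$, and consequently $X_{\alpha}^3 = 0$ on $S^2(V_{\C})$. Hence each $x_{\Z}^{(\alpha)}$ acts on $S^2(V_{\C})$ as $\exp(X_{\alpha})$, and $u_{\Z}$ acts on $V_{\C}$ as a product of root elements in the unipotent radical of the standard Borel of $\Sp(V_{\C})$; in particular, $u_{\Z}$ is a unipotent element of $\Sp(V_{\C})$. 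Since $S^2(V_{\C})$ is a rational $\Sp(V_{\C})$-module, $u_{\Z}$ acts unipotently on it, so $(u_{\Z} - 1)^m$ is nilpotent on $S^2(V_{\C})$, giving (i). This is the only step that is not purely bookkeeping, and I do not anticipate a serious obstacle.
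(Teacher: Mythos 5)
Your proof is correct and matches the paper's exactly: the published proof is precisely the one-line observation "Follows from Lemma \ref{lemma:nilpinUZ2}, with $X = (u_{\Z}-1)^m$." You have simply spelled out the two hypotheses that need checking — that the reduction of $(u_{\Z}-1)^m$ on $L_{sc}$ is $(\widehat{u}-1)^m$ (a ring-homomorphism bookkeeping step resting on the identification from Sections \ref{subsection:chevalleySP}--\ref{subsection:unipotentrootsGEN}), and that $(u_{\Z}-1)^m$ is nilpotent on $S^2(V_{\C})$ (since $u_{\Z}$ is a product of $\exp(X_{\alpha})$ with $\alpha\in\Phi^+$, hence a unipotent element of $\Sp(V_{\C})$ acting unipotently on any rational module) — both of which are sound.
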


\begin{proof}
Follows from Lemma \ref{lemma:nilpinUZ2}, with $X = (u_{\Z} - 1)^m$.
\end{proof}

\begin{lemma}\label{lemma:nilpinUZgen}
Let $X \in \mathscr{U}_{\Z}$ be such that $X$ acts nilpotently on $S^2(V_{\C})$, and denote the action of $X$ on $\g_{ad}$ by $\widetilde{X}$. Then the following statements are equivalent:
	\begin{enumerate}[\normalfont (i)]
		\item $\Ker(\widetilde{X}) \not\subseteq [\g_{ad}, \g_{ad}]$ .
		\item There exists $v \in L_{sc}$ such that $X \cdot (\delta + v) \in 2 L_{ad}$.
		\item There exists $v \in L_{sc}$ such that one of the following holds:
			\begin{enumerate}[\normalfont (a)] 
				\item $X \cdot (\delta + v) \equiv 0 \mod{2 L_{sc}}$.
				\item $X \cdot (\delta + v) \equiv 2\delta \mod{2 L_{sc}}$.
			\end{enumerate}
	\end{enumerate}
\end{lemma}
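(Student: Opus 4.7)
The plan is to interpret the condition in (i) as a statement about the reduction of $X$ modulo $2$ acting on $L_{ad}/2L_{ad}$, and then to relate this to the integral lattice data in (ii) and (iii) using the chain of inclusions $2 L_{sc} \subset 2 L_{ad} \subset L_{sc} \subset L_{ad}$.

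First, I would observe that since $X \in \mathscr{U}_{\Z}$ preserves both $L_{sc}$ and $L_{ad}$, reduction modulo~$2$ yields an $\F_2$-linear map $\bar{X} : L_{ad}/2L_{ad} \to L_{ad}/2L_{ad}$, and $\widetilde{X}$ is the base change $\bar{X} \otimes_{\F_2} K$. Since $K$ is flat over $\F_2$, we have $\Ker(\widetilde{X}) = K \otimes_{\F_2} \Ker(\bar{X})$. By~\eqref{eq:psiprimemorph2} and the description of $\pi$ given just after it, the subspace $[\g_{ad},\g_{ad}]$ corresponds, under $\psi''$, to the $K$-span of the image of $L_{sc}/2L_{sc}$ in $L_{ad}/2L_{ad}$. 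Therefore (i) is equivalent to the existence of an element $\bar{w} \in L_{ad}/2L_{ad}$ with $\bar{X}(\bar{w}) = 0$ whose class in $L_{ad}/(L_{sc} + 2L_{ad})$ is nonzero.

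Next, by~\eqref{eq:ladlscmod2}, the quotient $L_{ad}/L_{sc}$ has order two and is generated by the class of $\delta$, so every lift to $L_{ad}$ of the unique nonzero element of $L_{ad}/L_{sc}$ has the form $\delta + v$ with $v \in L_{sc}$. Thus the nonzero-coset condition on $\bar{w}$ is equivalent to $\bar{w}$ being the class of some $\delta + v$ with $v \in L_{sc}$, and $\bar{X}(\bar{w}) = 0$ then translates to $X \cdot (\delta + v) \in 2 L_{ad}$. This yields (i)~$\Leftrightarrow$~(ii).

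For (ii)~$\Leftrightarrow$~(iii), I would use that $2\delta = \sum_{i=1}^{\ell} v_i v_{n-i+1} \in L_{sc}$, whence $2 L_{ad} = 2 L_{sc} + \Z \cdot 2\delta \subseteq L_{sc}$, and the further quotient $2L_{ad}/2L_{sc}$ has order two, generated by the class of $2\delta$. Consequently an element of $2 L_{ad}$ is congruent mod $2 L_{sc}$ either to $0$ or to $2\delta$, which is precisely the dichotomy in~(iii). The only real obstacle is bookkeeping: one must keep straight the four-step chain $2 L_{sc} \subset 2 L_{ad} \subset L_{sc} \subset L_{ad}$, each step being an inclusion of index at most two, and the corresponding $\F_2$-identifications at each stage. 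Once this is organized, the lemma follows formally from flatness of $K/\F_2$ and the fact that $L_{sc}$ has index two in $L_{ad}$ with $\delta$ representing the nontrivial coset.
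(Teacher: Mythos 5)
Your proposal is correct and follows essentially the same approach as the paper. The paper argues directly with elements of the form $1 \otimes (\delta + v)$ in $K \otimes_{\Z} L_{ad}$ and observes that any element of $\Ker(\widetilde{X})$ can be replaced by one of this form since $\widetilde{X}$ is obtained by base change from the integral action; you make this $\F_2$-rationality explicit by reducing to $\bar{X}$ on $L_{ad}/2L_{ad}$ and invoking flatness, but the underlying ideas (the index-two chain $2L_{sc} \subset 2L_{ad} \subset L_{sc} \subset L_{ad}$, the isomorphisms~\eqref{eq:psiprimemorph2} and~\eqref{eq:ladlscmod2}, and the fact that $\delta$ generates $L_{ad}/L_{sc}$) are identical to those used in the paper.
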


\begin{proof}
We first prove that (i) and (ii) are equivalent. We identify $\g_{ad} = K \otimes_{\Z} L_{ad}$ via~\eqref{eq:psiprimemorph}, in which case  $\g_{ad} = \langle 1 \otimes \delta, [\g_{ad}, \g_{ad}] \rangle$ as $K$-vector spaces. Here $[\g_{ad}, \g_{ad}]$ is the subspace spanned by $1 \otimes v$ with $v \in L_{sc}$, as seen in~\eqref{eq:psiprimemorph2}.

If there exists $v \in L_{sc}$ such that $X \cdot (\delta + v) \in 2 L_{ad}$, we have $$\widetilde{X} \cdot (1 \otimes \delta + 1 \otimes v) = 1 \otimes (X \cdot (\delta + v)) = 0,$$ so $\Ker(\widetilde{X}) \not\subseteq [\g_{ad}, \g_{ad}]$. 

Conversely, suppose that $\Ker(\widetilde{X}) \not\subseteq [\g_{ad}, \g_{ad}]$. Then there exists $w \in [\g_{ad}, \g_{ad}]$ such that $\widetilde{X} \cdot (1 \otimes \delta + w) = 0$. We have $\widetilde{X} = 1 \otimes X'$ where $X'$ is the action of $X$ on $L_{ad}$, so we can assume that $w = 1 \otimes v$ for some $v \in L_{sc}$. Then $$0 = \widetilde{X} \cdot (1 \otimes \delta + 1 \otimes v) = 1 \otimes (X \cdot (\delta+v))$$ implies that $X \cdot (\delta+v) \in 2L_{ad}$.

Next we show that (ii) and (iii) are equivalent. It follows from~\eqref{eq:ladlscmod2} that $2L_{ad}/2L_{sc} \cong L_{ad}/L_{sc} \cong \Z/2\Z$ contains only two elements, $2L_{sc}$ and $2\delta + 2L_{sc}$. Therefore $X \cdot (\delta + v) \in 2L_{ad}$ if and only if (iii)(a) or (iii)(b) holds.
\end{proof}

Next we make some observations that will allow us to reduce the proofs of our main results to the orthogonally indecomposable case. Continue with the notation as in Section \ref{subsection:unipotentrootsGEN}, so $V = U_1 \perp \cdots \perp U_t$ with $U_i = K \otimes_{\Z} V_{\Z}^{(i)}$, and $\mathscr{U}_{\Z}^{(i)}$ is the Kostant $\Z$-form for $\mathfrak{sp}(V_{\C}^{(i)})$. Define $$\delta_i = \frac{1}{2} \sum_{1 \leq j \leq \ell_i} v_j^{(i)}v_{2\ell_i-j+1}^{(i)}$$ for all $1 \leq i \leq t$. (Note that $\delta = \delta_1 + \cdots + \delta_t$.) Furthermore, for $1 \leq i \leq t$ we define \begin{align*} L_{sc}^{(i)} &:= \Z \text{-span of } v_j^{(i)}v_k^{(i)}\text{ and } \frac{1}{2}\left(v_j^{(i)}\right)^2 \text{ for all } 1 \leq j,k \leq n_i. \\ L_{ad}^{(i)} &:= \Z \text{-span of } L_{sc}^{(i)} \text{ and } \delta_i.\end{align*} Then \begin{equation}\label{eq:S2overZdecomp}L_{sc} = L_{sc}^{(1)} \oplus \cdots \oplus L_{sc}^{(t)} \oplus \bigoplus_{i \neq i'} V_{\Z}^{(i)} V_{\Z}^{(i')}\end{equation} as $\Z$-modules.

Moreover $L_{ad}^{(i)} \subset S^2(V_{\C}^{(i)})$ is an admissible lattice for the action of $\mathfrak{sp}(V_{\C}^{(i)})$, and $$\g_{ad}^{(i)} := K \otimes_{\Z} L_{ad}^{(i)}$$ is an adjoint Lie algebra of type $C_{\ell_i}$. Here $\Sp(U_i)$ acts on $\g_{ad}^{(i)}$ via the Chevalley construction.

\begin{lemma}\label{lemma:GADreduceNIL}
Let $X = X_1 + \cdots + X_t$, where $X_i \in \mathscr{U}_{\Z}^{(i)}$ acts nilpotently on $S^2(V_{\C})$ for all $1 \leq i \leq t$. Denote by $\widetilde{X}$ the linear map acting on $\g_{ad}$, given by reducing the action of $X$ on $L_{ad}$ modulo $p$. For $1 \leq i \leq t$, let $\widetilde{X_i}$ be the linear map acting on $\g_{ad}^{(i)}$, given by reducing the action of $X_i$ on $L_{ad}^{(i)}$ modulo $p$. 

Then the following statements hold.

	\begin{enumerate}[\normalfont (i)]
		\item $\Ker(\widetilde{X}) \not\subseteq [\g_{ad}, \g_{ad}]$ if and only if there exist $w_1$, $\ldots$, $w_t$ with $w_i \in L_{sc}^{(i)}$ such that one of the following holds:
			\begin{enumerate}[\normalfont (a)]
				\item $X_i \cdot (\delta_i + w_i) \equiv 0 \mod{2 L_{sc}^{(i)}}$ for all $1 \leq i \leq t$.
				\item $X_i \cdot (\delta_i + w_i) \equiv 2 \delta_i \mod{2 L_{sc}^{(i)}}$ for all $1 \leq i \leq t$.
			\end{enumerate}
		\item If $\Ker(\widetilde{X}) \not\subseteq [\g_{ad}, \g_{ad}]$, then $\Ker(\widetilde{X_i}) \not\subseteq [\g_{ad}^{(i)}, \g_{ad}^{(i)}]$ for all $1 \leq i \leq t$.
	\end{enumerate}
\end{lemma}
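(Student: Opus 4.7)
The plan is to reduce everything to Lemma~\ref{lemma:nilpinUZgen} applied in the ambient $\mathfrak{sp}(V_\C)$ and in each factor $\mathfrak{sp}(V_\C^{(i)})$, using the orthogonal decomposition $S^2(V_\C) = \bigoplus_i S^2(V_\C^{(i)}) \oplus \bigoplus_{i < j} V_\C^{(i)} \otimes V_\C^{(j)}$ to decompose the lattice action. Specifically, with $M := \bigoplus_{i \neq j} V_\Z^{(i)} V_\Z^{(j)}$ and $\tilde L := L_{ad}^{(1)} \oplus \cdots \oplus L_{ad}^{(t)} \oplus M$, we have $L_{sc} \subseteq L_{ad} \subseteq \tilde L$: the first inclusion is~\eqref{eq:S2overZdecomp}, and $L_{ad}$ is generated over $L_{sc}$ by the single diagonal vector $\delta = \delta_1 + \cdots + \delta_t$.

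The structural step is to verify that each $X_i \in \mathscr{U}_\Z^{(i)}$ acts compatibly with this decomposition. Since every root vector of $\mathfrak{sp}(V_\C^{(i)})$ annihilates $V_\C^{(j)}$ for $j \neq i$, the Leibniz rule for divided powers
$$\frac{X_\alpha^k}{k!}(vw) = \sum_{a+b=k} \frac{X_\alpha^a v}{a!} \cdot \frac{X_\alpha^b w}{b!}$$
shows that $X_i$ annihilates $L_{ad}^{(j)}$ and $L_{sc}^{(j)}$ for $j \neq i$, preserves $L_{ad}^{(i)}$ and $L_{sc}^{(i)}$ (which are admissible lattices for $\mathfrak{sp}(V_\C^{(i)})$), and sends $M$ into $M$. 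I expect this bookkeeping to be the main obstacle, but it is essentially a direct computation.

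Given $v = v^{(1)} + \cdots + v^{(t)} + v^M$ with $v^{(i)} \in L_{sc}^{(i)}$ and $v^M \in M$, the previous step yields the clean decomposition
$$X \cdot (\delta + v) = \sum_{i=1}^t X_i \cdot (\delta_i + v^{(i)}) + \sum_{i=1}^t X_i \cdot v^M \in \bigoplus_i L_{ad}^{(i)} \oplus M.$$
Since $2L_{sc} = \bigoplus_i 2L_{sc}^{(i)} \oplus 2M$ and $2\delta = 2\delta_1 + \cdots + 2\delta_t$, the criterion $X \cdot (\delta + v) \equiv 0$ or $\equiv 2\delta \pmod{2L_{sc}}$ from Lemma~\ref{lemma:nilpinUZgen} then splits \emph{uniformly in $i$}: either (a) $X_i(\delta_i + v^{(i)}) \equiv 0 \pmod{2L_{sc}^{(i)}}$ for all $i$, or (b) $X_i(\delta_i + v^{(i)}) \equiv 2\delta_i \pmod{2L_{sc}^{(i)}}$ for all $i$, together with the mixed condition $\sum_i X_i v^M \equiv 0 \pmod{2M}$. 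For part~(i), I would read this equivalence both ways with $w_i := v^{(i)}$: the "only if" direction is immediate, and for "if" I would simply take $v := w_1 + \cdots + w_t$ (i.e.\ $v^M = 0$), which makes the extra mixed condition trivially satisfied.

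Part~(ii) is then immediate from part~(i) by applying Lemma~\ref{lemma:nilpinUZgen} with $\mathfrak{sp}(V_\C^{(i)})$, $L_{sc}^{(i)}$, $L_{ad}^{(i)}$, $X_i$ in place of the ambient data: any $w_i$ furnished by (a) or (b) of part~(i) is itself a witness for the single-factor criterion, so $\Ker(\widetilde{X_i}) \not\subseteq [\g_{ad}^{(i)}, \g_{ad}^{(i)}]$ for every $i$.
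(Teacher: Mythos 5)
Your proposal is correct and takes essentially the same route as the paper's proof: both reduce to Lemma~\ref{lemma:nilpinUZgen} in the ambient group, decompose $L_{sc} = \bigoplus_i L_{sc}^{(i)} \oplus \bigoplus_{i\neq i'} V_{\Z}^{(i)}V_{\Z}^{(i')}$, observe that $X\cdot(\delta+v)$ splits component-wise so the global congruence forces case (a) or (b) uniformly across $i$, take $v^M = 0$ for the converse, and derive (ii) by applying Lemma~\ref{lemma:nilpinUZgen} in each factor. One small point worth making explicit: your claim that $X_i$ \emph{annihilates} (not merely preserves) $L_{sc}^{(j)}$ and $L_{ad}^{(j)}$ for $j\neq i$ needs the nilpotency hypothesis on $X_i$, since the Leibniz rule alone does not rule out a constant term in $X_i \in \mathscr{U}_{\Z}^{(i)}$, which would act as the identity rather than as zero on those components.
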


\begin{proof}
For (i), suppose first that there exist $w_1$, $\ldots$, $w_t$ with $w_i \in L_{sc}^{(i)}$ such that (i)(a) or (i)(b) holds. For $w = w_1 + \cdots + w_t$, we have $$X \cdot (\delta + w) = X_1 \cdot (\delta_1 + w_1) + \cdots + X_t \cdot (\delta_t + w_t).$$ Thus $X \cdot (\delta + w) \equiv 0 \mod{2 L_{sc}}$ if (a) holds, and $X \cdot (\delta + w) \equiv 2 \delta \mod{2 L_{sc}}$ if (b) holds. Therefore $\Ker(\widetilde{X}) \not\subseteq [\g_{ad}, \g_{ad}]$ by Lemma \ref{lemma:nilpinUZgen}.

Conversely, suppose that $\Ker(\widetilde{X}) \not\subseteq [\g_{ad}, \g_{ad}]$. Then by Lemma \ref{lemma:nilpinUZgen}, there exists $w \in L_{sc}$ such that $X \cdot (\delta + w) \equiv 0 \mod{2 L_{sc}}$ or $X \cdot (\delta + w) \equiv 2 \delta \mod{2 L_{sc}}$. By~\eqref{eq:S2overZdecomp} we can write $w = w_1 + \cdots + w_t + w'$, where $w_i \in L_{sc}^{(i)}$ for all $1 \leq i \leq t$ and $w' \in \bigoplus_{i \neq i'} V_{\Z}^{(i)} V_{\Z}^{(i')}$.

Note that $L_{sc}^{(1)} \oplus \cdots \oplus L_{sc}^{(t)}$ and $\bigoplus_{i \neq i'} V_{\Z}^{(i)} V_{\Z}^{(i')}$ are $\mathscr{U}_{\Z}^{(i)}$-invariant. Thus $$X \cdot (\delta + w) = X_1 \cdot (\delta_1 + w_1) + \cdots + X_t \cdot (\delta_t + w_t) + X \cdot w'$$ with $X_i \cdot (\delta_i + w_i) \in L_{ad}^{(i)}$ for all $1 \leq i \leq t$, and $X \cdot w' \in \bigoplus_{i \neq i'} V_{\Z}^{(i)} V_{\Z}^{(i')}$. Since $2\delta \in L_{sc}^{(1)} \oplus \cdots \oplus L_{sc}^{(t)}$, it follows that $X \cdot w' = 0$. Moreover we have assumed that $X \cdot (\delta + w) \equiv 0 \mod{2 L_{sc}}$ or $X \cdot (\delta + w) \equiv 2 \delta \mod{2 L_{sc}}$, so either (i)(a) or (i)(b) holds.

Claim (ii) follows from (i) and Lemma \ref{lemma:nilpinUZgen}.
\end{proof}

For the action of unipotent elements, we have the following result.

\begin{lemma}\label{lemma:GADreduce}
Let $u = u_1 \cdots u_t \in \Sp(V)$ be unipotent with $u_{\Z} = u_{\Z,1} \cdots u_{\Z,t} \in \mathscr{U}_{\Z}$ as in Section \ref{subsection:unipotentrootsGEN}. Denote the action of $u$ on $\g_{ad}$ by $\widetilde{u}$, and denote the action of $u_i$ on $\g_{ad}^{(i)}$ by $\widetilde{u_i}$.

Let $m \geq 1$ be an integer. Then the following statements hold.

\begin{enumerate}[\normalfont (i)]
	\item $\Ker(\widetilde{u}-1)^m \not\subseteq [\g_{ad},\g_{ad}]$ if and only if there exists $w_1$, $\ldots$, $w_t$ with $w_i \in L_{sc}^{(i)}$ such that one of the following holds:
		\begin{enumerate}[\normalfont (a)]
			\item $(u_{\Z,i} - 1)^m \cdot (\delta_i + w_i) \equiv 0 \mod{2 L_{sc}^{(i)}}$ for all $1 \leq i \leq t$.
			\item $(u_{\Z,i} - 1)^m \cdot (\delta_i + w_i) \equiv 2 \delta_i \mod{2 L_{sc}^{(i)}}$ for all $1 \leq i \leq t$.
		\end{enumerate}
	\item If $\Ker(\widetilde{u}-1)^m \not\subseteq [\g_{ad},\g_{ad}]$, then $\Ker(\widetilde{u_i} -1)^m \not\subseteq [\g_{ad}^{(i)}, \g_{ad}^{(i)}]$ for all $1 \leq i \leq t$.
\end{enumerate}
\end{lemma}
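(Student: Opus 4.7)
The plan is to invoke Lemma~\ref{lemma:nilpinUZgen} with $X = (u_{\Z}-1)^m$---which acts nilpotently on $S^2(V_{\C})$ since $u_{\Z}$ acts unipotently on $V_{\C}$ by the explicit formulas of Section~\ref{subsection:unipotentroots}---and then mimic the decomposition argument from the proof of Lemma~\ref{lemma:GADreduceNIL}. The preliminary observation is that for $j \neq i$, the factor $u_{\Z,j} \in \mathscr{U}_{\Z}^{(j)}$ fixes $V_{\C}^{(i)}$ pointwise (as $\mathfrak{sp}(V_{\C}^{(j)})$ annihilates $V_{\C}^{(i)}$) and hence acts trivially on $S^2(V_{\C}^{(i)}) \supseteq L_{ad}^{(i)}$. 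Since the $u_{\Z,j}$'s commute pairwise and $u_{\Z} = u_{\Z,1} \cdots u_{\Z,t}$, this yields the crucial identity that $(u_{\Z}-1)^m$ restricted to $L_{ad}^{(i)}$ coincides with $(u_{\Z,i}-1)^m$. Moreover, $u_{\Z}$ preserves the cross-term summand $\bigoplus_{i \neq i'} V_{\Z}^{(i)} V_{\Z}^{(i')}$ of~\eqref{eq:S2overZdecomp}.

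For the $(\Leftarrow)$ direction of (i), given $w_i \in L_{sc}^{(i)}$ satisfying (a) or (b), set $w = w_1 + \cdots + w_t \in L_{sc}$. The key identity gives
\[
(u_{\Z}-1)^m(\delta + w) = \sum_{i=1}^{t} (u_{\Z,i}-1)^m(\delta_i + w_i),
\]
which reduces either to $0$ or to $\sum_i 2\delta_i = 2\delta$ modulo $2 L_{sc}$; in either case the element lies in $2 L_{ad}$, so Lemma~\ref{lemma:nilpinUZgen} concludes. For the $(\Rightarrow)$ direction, the same lemma provides $w \in L_{sc}$ with $(u_{\Z}-1)^m(\delta + w) \in 2 L_{ad}$. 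Decomposing $w = \sum_i w_i + w'$ according to~\eqref{eq:S2overZdecomp} and using the identity together with the invariance of the cross summand,
\[
(u_{\Z}-1)^m(\delta + w) = \sum_{i=1}^{t} (u_{\Z,i}-1)^m(\delta_i + w_i) + (u_{\Z}-1)^m w',
\]
where the first sum lies in $\bigoplus_i L_{ad}^{(i)}$ and the second in the cross summand. Since $2 L_{ad}$ has exactly two cosets modulo $2 L_{sc}$, namely $2 L_{sc}$ itself and $2\delta + 2 L_{sc}$, projecting onto each $L_{ad}^{(i)}$ forces every $(u_{\Z,i}-1)^m(\delta_i + w_i)$ to satisfy $\equiv 0 \pmod{2 L_{sc}^{(i)}}$ (case (a)) or $\equiv 2\delta_i \pmod{2 L_{sc}^{(i)}}$ (case (b)).

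Assertion (ii) is then immediate from (i): under either alternative, $(u_{\Z,i}-1)^m(\delta_i + w_i)$ lies in $2 L_{ad}^{(i)}$ for each individual $i$, so Lemma~\ref{lemma:nilpinUZgen} applied inside $\mathfrak{sp}(V_{\C}^{(i)})$ with $X = (u_{\Z,i}-1)^m$ delivers $\Ker(\widetilde{u_i}-1)^m \not\subseteq [\g_{ad}^{(i)}, \g_{ad}^{(i)}]$. The main technical point requiring care is the uniformity in the converse of (i): because the residue of $(u_{\Z}-1)^m(\delta + w)$ in the two-element group $2L_{ad}/2L_{sc}$ is a single global invariant, the same alternative (a) or (b) must be chosen \emph{simultaneously} at all indices $i$, rather than allowing different alternatives at different indices.
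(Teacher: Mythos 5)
Your proof is correct and follows essentially the same route as the paper's: invoke Lemma~\ref{lemma:nilpinUZgen} with $X = (u_{\Z}-1)^m$, split $\delta + w$ along the decomposition~\eqref{eq:S2overZdecomp}, use the fact that $u_{\Z,j}$ acts trivially on $V_{\Z}^{(i)}$ for $j \neq i$ to reduce $(u_{\Z}-1)^m$ on each $L_{ad}^{(i)}$ to $(u_{\Z,i}-1)^m$, and finally read off alternatives (a)/(b) from the two-element group $2L_{ad}/2L_{sc}$ applied componentwise. One small improvement you make over the paper's terse presentation: you articulate explicitly why the choice between (a) and (b) must be uniform across all $i$, namely that $2\delta = \sum_i 2\delta_i$ decomposes across the summands $L_{sc}^{(i)}$, so the single residue class of $(u_{\Z}-1)^m(\delta+w)$ in $2L_{ad}/2L_{sc}$ pins down the same alternative at every index. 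This is the subtle point the paper handles implicitly by referring to the proof of Lemma~\ref{lemma:GADreduceNIL}, and your proposal makes it clean and explicit.
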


\begin{proof}It follows from Lemma \ref{lemma:nilpinUZgen} that $\Ker(\widetilde{u}-1)^m \not\subseteq [\g_{ad},\g_{ad}]$ if and only if there exists $w \in L_{sc}$ such that $(u_{\Z} - 1) \cdot (\delta + w) \equiv 0 \mod{2 L_{ad}}$. 

By~\eqref{eq:S2overZdecomp} we can write every $w \in L_{sc}$ in the form $w = w_1 + \cdots + w_t + w'$, where $w_i \in L_{sc}^{(i)}$ for all $1 \leq i \leq t$ and $w' \in \bigoplus_{i \neq i'} V_{\Z}^{(i)} V_{\Z}^{(i')}$. Since $u_{\Z,i}$ acts trivially on $V_{\Z}^{(i')}$ for $i \neq i'$, we have $$(u_{\Z} - 1)^m \cdot (\delta + w) = (u_{\Z,1} - 1) \cdot (\delta_1 + w_1) + \cdots + (u_{\Z,t} - 1) \cdot (\delta_t + w_t) + (u_{\Z} - 1) \cdot w'.$$ Here $(u_{\Z,i} - 1) \cdot (\delta_i + w_i) \in L_{ad}^{(i)}$ for all $1 \leq i \leq t$, and $(u_{\Z} - 1) \cdot w' \in \bigoplus_{i \neq i'} V_{\Z}^{(i)} V_{\Z}^{(i')}$. Thus the result follows similarly to the proof of Lemma \ref{lemma:GADreduceNIL}.\end{proof}

\section{Lie algebras of type \texorpdfstring{$C_{\ell}$}{C\_l} as \texorpdfstring{$G_{sc}$}{G\_sc}-modules}\label{section:typeClie}

In the notation of Section \ref{subsection:chevalleySP}, let $G_{sc} = \Sp(V)$ be a simply connected simple algebraic group of type $C_{\ell}$, with Lie algebra $\g_{sc} = \mathfrak{sp}(V)$. In this section, we will make some initial observations about the structure of $\g_{sc}$ and $\g_{ad}$ as $G_{sc}$-modules.

\begin{lemma}\label{lemma:LieSpS2}
Let $G_{sc} = \Sp(V)$ with Lie algebra $\g_{sc} = \mathfrak{sp}(V)$. Then $\g_{sc} \cong S^2(V)^*$ as $G_{sc}$-modules.
\end{lemma}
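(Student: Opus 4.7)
The plan is to exhibit an explicit $G_{sc}$-equivariant isomorphism from $\mathfrak{sp}(V)$ to $S^2(V)^*$, where I identify $S^2(V)^*$ with the space $\mathrm{Sym}^2(V^*)$ of symmetric bilinear forms on $V$ in the usual way (a symmetric bilinear form $\beta$ on $V$ factors through $S^2(V)$ and conversely, since $S^2(V) = V \otimes V / \langle x \otimes y - y \otimes x\rangle$). Define
\[
\Psi : \mathfrak{sp}(V) \longrightarrow S^2(V)^*, \qquad X \longmapsto \beta_X,
\]
where $\beta_X(v,w) := b(Xv,w)$ for all $v,w \in V$.

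The first step is to verify that $\beta_X$ is genuinely symmetric. Using $X \in \mathfrak{sp}(V)$ we have $b(Xv,w) + b(v,Xw) = 0$, and since $\chr K = 2$ this becomes $b(Xv,w) = b(v,Xw) = b(Xw,v)$ (using also that $b$ is alternating, hence symmetric in characteristic two). Thus $\beta_X(v,w) = \beta_X(w,v)$, so $\Psi$ takes values in $S^2(V)^*$.

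Next I check $G_{sc}$-equivariance. The adjoint action of $g \in \Sp(V)$ on $\mathfrak{sp}(V)$ is $X \mapsto gXg^{-1}$, while the natural action on $S^2(V)^*$ is $(g \cdot \beta)(v,w) = \beta(g^{-1}v, g^{-1}w)$. A direct computation using $b(gu, w) = b(u, g^{-1}w)$ gives
\[
\beta_{gXg^{-1}}(v,w) = b(gXg^{-1}v,w) = b(Xg^{-1}v, g^{-1}w) = \beta_X(g^{-1}v, g^{-1}w) = (g \cdot \beta_X)(v,w),
\]
so $\Psi$ is $G_{sc}$-equivariant.

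Finally I show $\Psi$ is bijective. Injectivity is immediate: if $\beta_X = 0$ then $b(Xv,w) = 0$ for all $v,w$, and non-degeneracy of $b$ forces $X = 0$. Surjectivity follows either from the dimension count $\dim \mathfrak{sp}(V) = \ell(2\ell+1) = \dim S^2(V)^*$, or by constructing an inverse: given a symmetric $\beta$, non-degeneracy of $b$ produces a unique $X \in \mathfrak{gl}(V)$ with $b(Xv,\cdot) = \beta(v,\cdot)$, and then $b(v,Xw) = b(Xw,v) = \beta(w,v) = \beta(v,w) = b(Xv,w)$, so $X \in \mathfrak{sp}(V)$ (again using $\chr K = 2$). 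I do not anticipate any serious obstacle; the only point where one must be slightly careful is the use of characteristic two to conflate symmetry and alternation of $b$, which is exactly what allows $\mathfrak{sp}(V)$ to match the symmetric (rather than alternating) square of $V^*$.
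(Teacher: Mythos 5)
Your argument is correct, and it is genuinely different from the route the paper takes. The paper disposes of this lemma by citing Dowd--Sin \cite[5.4]{DowdSin}, with the remark that it also follows from the Chevalley construction carried out in Section \ref{subsection:symsquareC}: there one starts from the explicit $\mathfrak{sp}(V_{\C})$-module isomorphism $S^2(V_{\C}) \to \mathfrak{sp}(V_{\C})$, $xy \mapsto \psi_{x,y}$ over $\C$, observes that it matches the admissible lattice $L_{sc}$ with $\g_{\Z}$, and then reduces modulo $2$. Your proof instead works entirely over $K$ and is self-contained: you exhibit the $G_{sc}$-equivariant map $X \mapsto \beta_X$ with $\beta_X(v,w) = b(Xv,w)$, using $\chr K = 2$ twice (once to make $\beta_X$ symmetric rather than skew, once to conclude $X \in \mathfrak{sp}(V)$ in the inverse direction), and you correctly identify $S^2(V)^*$ with symmetric bilinear forms on $V$ rather than with $S^2(V^*)$ --- an important distinction in characteristic two, which you handle by going through the quotient presentation of $S^2(V)$. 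The upshot is that your argument is more elementary and avoids the Chevalley-basis machinery entirely, at the modest cost of being slightly longer than a citation; the paper's route, by contrast, dovetails with the $\Z$-form computations it needs elsewhere (for the adjoint lattice $L_{ad}$), which is presumably why the author set it up that way. Both are valid.
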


\begin{proof}A proof is given in \cite[5.4]{DowdSin}, alternatively this follows from the isomorphism $K \otimes_{\Z} L_{sc} \cong \g_{sc}$ given in Section \ref{subsection:symsquareC}.\end{proof}

\begin{lemma}\label{lemma:gmodZisgadgad}
We have $\g_{sc}/Z(\g_{sc}) \cong [\g_{ad}, \g_{ad}]$ as $G_{sc}$-modules.
\end{lemma}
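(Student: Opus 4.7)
The plan is to build the isomorphism directly out of the machinery set up in Section \ref{subsection:symsquareC}, using the inclusion of lattices $L_{sc} \subset L_{ad}$ to produce a $G_{sc}$-equivariant map from $\g_{sc}$ onto $[\g_{ad},\g_{ad}]$ whose kernel is exactly $Z(\g_{sc})$.

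Concretely, recall from~\eqref{eq:psiprimemorph} that we have $G_{sc}$-module isomorphisms $\psi': \g_{sc} \to K \otimes_{\Z} L_{sc}$ and $\psi'': \g_{ad} \to K \otimes_{\Z} L_{ad}$, and from the paragraph around~\eqref{eq:ladlscmod2} a $G_{sc}$-equivariant linear map
\[
\pi : K \otimes_{\Z} L_{sc} \longrightarrow K \otimes_{\Z} L_{ad}
\]
induced by the inclusion $L_{sc} \subset L_{ad}$ (equivariance follows because the inclusion is a map of $\mathscr{U}_{\Z}$-modules, so tensoring with $K$ and transporting via the Chevalley construction yields a map of $G_{sc}$-modules). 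The key facts recorded right after~\eqref{eq:ladlscmod2} are that $\Ker \pi = \langle 1 \otimes 2\delta \rangle$ and $\operatorname{Im}\pi = \langle 1 \otimes v : v \in L_{sc}\rangle$.

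I would then form the composition
\[
\Psi := (\psi'')^{-1} \circ \pi \circ \psi' : \g_{sc} \longrightarrow \g_{ad},
\]
which is a morphism of $G_{sc}$-modules. By~\eqref{eq:psiprimemorph2}, the kernel of $\pi \circ \psi'$ is $(\psi')^{-1}\langle 1 \otimes 2\delta\rangle = Z(\g_{sc})$, so $\Ker \Psi = Z(\g_{sc})$; again by~\eqref{eq:psiprimemorph2}, the image of $\pi \circ \psi'$ is $\psi''([\g_{ad},\g_{ad}])$, so $\operatorname{Im}\Psi = [\g_{ad},\g_{ad}]$. The first isomorphism theorem now gives the desired $G_{sc}$-equivariant isomorphism $\g_{sc}/Z(\g_{sc}) \cong [\g_{ad},\g_{ad}]$.

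There is no real obstacle: everything reduces to the two identifications~\eqref{eq:psiprimemorph} and~\eqref{eq:psiprimemorph2} and to the explicit description of $\Ker \pi$ and $\operatorname{Im}\pi$ already recorded before the statement. The only point worth stressing in the write-up is the $G_{sc}$-equivariance of $\pi$, which one justifies by noting that $L_{sc} \hookrightarrow L_{ad}$ is a $\mathscr{U}_{\Z}$-module map and that the $G_{sc}$-action on both $K\otimes_{\Z}L_{sc}$ and $K\otimes_{\Z}L_{ad}$ is furnished, via Lemma \ref{lemma:GSCtoGAD}(i), by the Chevalley construction applied to the respective admissible lattices.
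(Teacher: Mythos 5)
Your proof is correct. It gives a genuinely self-contained alternative to the paper's argument: the paper produces the connecting morphism $\g_{sc}\to\g_{ad}$ as the differential $\D\psi$ of the isogeny $\psi\colon G_{sc}\to G_{ad}$ and cites Hogeweij directly for $\Ker\D\psi = Z(\g_{sc})$ and $\operatorname{Im}\D\psi = [\g_{ad},\g_{ad}]$, whereas you build the morphism from the lattice inclusion $L_{sc}\subset L_{ad}$ and read off the kernel and image from the explicit identifications already recorded in~\eqref{eq:psiprimemorph2} and the discussion around~\eqref{eq:ladlscmod2}. Of course those identifications were themselves obtained using Hogeweij, so both proofs ultimately rest on the same source; and in fact, up to the identifications $\psi',\psi''$, the map $\pi$ you construct is the same map as $\D\psi$ restricted mod $p$. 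What your route buys is that it stays entirely inside the Chevalley-lattice machinery of Section~\ref{subsection:symsquareC} without re-invoking the isogeny, and it makes the kernel and image computations completely concrete. The one step worth making fully explicit, which you rightly flag, is the $G_{sc}$-equivariance of $\pi$: since $L_{sc}$ and $L_{ad}$ are both admissible lattices in the same $\mathfrak{sp}(V_\C)$-module $S^2(V_\C)$, the inclusion $L_{sc}\hookrightarrow L_{ad}$ commutes with the $\mathscr{U}_\Z$-action, and the $G_{sc}$-action on each $K\otimes_\Z L$ is by reduction mod $p$ of $\mathscr{U}_\Z$, so $\pi$ intertwines the two actions. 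With that in place the first isomorphism theorem gives the claim exactly as you say.
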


\begin{proof}Let $G_{ad}$ be a group of adjoint type $C_{\ell}$, and let $\psi: G_{sc} \rightarrow G_{ad}$ be an isogeny as in Lemma \ref{lemma:GSCtoGAD}. Then the map $\D \psi: \g_{sc} \rightarrow \g_{ad}$ is a morphism of $G_{sc}$-modules, with $\Ker \D \psi = Z(\g_{sc})$ \cite[Lemma 2.2]{Hogeweij}. The image of $\D \psi$ contains all root elements of $\g_{ad}$, so by \cite[Table 1]{Hogeweij} it is equal to $[\g_{ad}, \g_{ad}]$. Therefore $\g_{sc}/ \Ker \D \psi = \g_{sc}/Z(\g_{sc}) \cong [\g_{ad}, \g_{ad}]$ as $G_{sc}$-modules.\end{proof}

Let $\varphi$ the linear map $\varphi: S^2(V) \rightarrow K$ defined by $\varphi(xy) = b(x,y)$ for all $x,y \in V$. Since $b$ is $G_{sc}$-invariant, it follows that $\varphi$ is a surjective morphism of $G_{sc}$-modules. 

\begin{lemma}\label{lemma:kerphiisdual}
Let $G_{sc} = \Sp(V)$ with Lie algebra $\g_{sc} = \mathfrak{sp}(V)$. Then $\Ker \varphi \cong \left( \g_{sc}/Z(\g_{sc}) \right)^*$ as $G_{sc}$-modules. 
\end{lemma}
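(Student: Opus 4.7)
My plan is to dualize the short exact sequence $0 \to \Ker \varphi \to S^2(V) \xrightarrow{\varphi} K \to 0$ and identify $(\Ker\varphi)^*$ with $\g_{sc}/Z(\g_{sc})$. This boils down to pinning down a specific $G_{sc}$-module isomorphism $\g_{sc} \cong S^2(V)^*$ (Lemma \ref{lemma:LieSpS2}) in a way that lets us identify the line spanned by $\varphi$ inside $S^2(V)^*$ with the center $Z(\g_{sc})$ inside $\g_{sc}$.

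The step I expect to be the technical core is constructing and recognizing the right pairing. I would define
\[ \Psi: \g_{sc} \times S^2(V) \to K, \qquad \Psi(X, vw) := b(Xv, w), \]
and check that the right-hand side is symmetric in $v,w$: this requires both $X \in \mathfrak{sp}(V)$ and that $b$ is symmetric, the latter using $\chr K = 2$. That yields a well-defined $G_{sc}$-equivariant map $\g_{sc} \to S^2(V)^*$; non-degeneracy of $b$ makes it injective, and a dimension count upgrades it to an isomorphism, recovering Lemma \ref{lemma:LieSpS2}. The crucial feature of $\Psi$ is that $\operatorname{id}_V \in \g_{sc}$ is mapped to $\varphi$, since $\Psi(\operatorname{id}_V, vw) = b(v,w) = \varphi(vw)$. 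Note that $\operatorname{id}_V$ really does belong to $\mathfrak{sp}(V)$, thanks to $2 = 0$ in $K$.

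With that in hand, the rest is formal. Since $Z(\g_{sc})$ is one-dimensional in type $C_\ell$ in characteristic two (see \cite[Lemma 2.2]{Hogeweij}, as recalled in Section \ref{subsection:symsquareC}), and since $\operatorname{id}_V$ is manifestly central, we have $Z(\g_{sc}) = K \cdot \operatorname{id}_V$. Dualizing the defining sequence of $\Ker\varphi$ gives a short exact sequence $0 \to K \cdot \varphi \to S^2(V)^* \to (\Ker\varphi)^* \to 0$, and transporting this across $\Psi^{-1}$ produces $0 \to Z(\g_{sc}) \to \g_{sc} \to (\Ker\varphi)^* \to 0$ as $G_{sc}$-modules. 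Taking duals once more yields $\Ker\varphi \cong (\g_{sc}/Z(\g_{sc}))^*$.
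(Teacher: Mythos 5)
Your proof is correct and follows the same overall strategy as the paper: dualize the short exact sequence $0 \to \Ker\varphi \to S^2(V) \xrightarrow{\varphi} K \to 0$ and transport the result through the isomorphism $S^2(V)^* \cong \g_{sc}$. The difference lies in how the image of the resulting map $K \hookrightarrow S^2(V)^* \cong \g_{sc}$ is identified with $Z(\g_{sc})$. The paper observes that $\g_{sc}$ has a unique trivial submodule (because $Z(\g_{sc}) \cong K$ and, for the connected group $G_{sc}$, any trivial submodule lies inside $\g_{sc}^{G_{sc}} = Z(\g_{sc})$), so the copy of $K$ must be the center and the quotient is $\g_{sc}/Z(\g_{sc})$. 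You instead make the isomorphism $\g_{sc} \to S^2(V)^*$ completely explicit via the pairing $\Psi(X, vw) = b(Xv,w)$ — whose symmetry in $v,w$ indeed uses both $X \in \mathfrak{sp}(V)$ and $\chr K = 2$, and whose non-degeneracy gives injectivity — and then observe directly that $\operatorname{id}_V \mapsto \varphi$ and $Z(\g_{sc}) = K\cdot\operatorname{id}_V$. Your route is more constructive and avoids the abstract uniqueness claim; the paper's is shorter and sidesteps the explicit pairing. Both are sound, and your pairing in fact also furnishes an independent proof of Lemma \ref{lemma:LieSpS2}, which the paper instead cites from \cite{DowdSin}.
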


\begin{proof}
It is clear that $\varphi$ is surjective, so we have a short exact sequence $$0 \rightarrow \Ker \varphi \rightarrow S^2(V) \rightarrow K \rightarrow 0$$ of $G_{sc}$-modules. This induces a short exact sequence $$0 \rightarrow K \rightarrow S^2(V)^* \rightarrow \left( \Ker \varphi \right)^* \rightarrow 0$$ of $G_{sc}$-modules. Now $S^2(V)^* \cong \g_{sc}$ as $G_{sc}$-modules (Lemma \ref{lemma:LieSpS2}), and $\g_{sc}$ has a unique trivial submodule since $Z(\g_{sc}) \cong K$ \cite[Table 1]{Hogeweij}. Thus we conclude that $\left( \Ker \varphi \right)^* \cong \g_{sc}/Z(\g_{sc})$ as $G_{sc}$-modules, from which the lemma follows.
\end{proof}

\begin{lemma}\label{lemma:uniqueuniserial}
Let $G = \Sp(V)$ be simply connected and simple of type $C_{\ell}$. Then:

	\begin{enumerate}[\normalfont (i)]
		\item There exists a uniserial $G$-module $W$ with $W = L_G(0)|L_G(2\varpi_1)|L_G(0)$.
		\item A $G$-module $W$ as in (i) is unique up to isomorphism.
		\item Let $u \in G$ be a unipotent element. Then $$\dim W^u = \begin{cases} \dim V^u + 1,& \text{ if } \dim V^u \text{ is odd.}\\ \dim V^u + 2,& \text{ if } \dim V^u \text{ is even.} \end{cases}$$
	\end{enumerate}
\end{lemma}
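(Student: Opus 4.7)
For part (i), my strategy is to construct $W$ by amalgamating two $2$-layer modules along a common simple subquotient. By Lemma~\ref{lemma:LieSpS2}, $\g_{sc} \cong S^2(V)^*$ as $G_{sc}$-modules. Since $\g_{sc}$ is cyclic as a $G_{sc}$-module---generated by the highest root vector $X_{\widetilde\alpha}$ of weight $\widetilde\alpha = 2\varpi_1$---its head is $L_G(2\varpi_1)$, so $\g_{sc}$ admits a $2$-layer quotient $A$ of the form $L_G(2\varpi_1)\,|\,L_G(0)$ (of dimension $2\ell+1$), where the bottom $L_G(0)$ comes from the image of $Z(\g_{sc})$. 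Dually, $S^2(V)$ has simple socle $L_G(2\varpi_1) \cong V^{(2)}$ (spanned by the $v_i^2$) and surjects onto $L_G(0)$ via $\varphi$, hence contains a $2$-layer quotient $B$ of the form $L_G(0)\,|\,L_G(2\varpi_1)$. I will amalgamate $A$ and $B$ along their common subquotient $L_G(2\varpi_1)$ (head of $A$, socle of $B$) via a pullback construction in the category of $G_{sc}$-modules, producing $W$ of dimension $2\ell+2$ with the required uniserial structure.

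For part (ii), I will show that $\dim_K \Ext^1_G(L_G(0), L_G(2\varpi_1)) = 1$; by the self-duality of $L_G(0)$ and $L_G(2\varpi_1)$ as $G_{sc}$-modules, this yields $\dim_K \Ext^1_G(L_G(2\varpi_1), L_G(0)) = 1$ as well. Any uniserial $W$ with the prescribed composition series is then determined up to isomorphism by its pair of non-trivial extension classes, each unique up to scalar and reconciled by automorphisms of $W$, giving uniqueness.

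For part (iii), I will exploit the self-duality of $W$ forced by (ii): since the composition factors of $W$ are self-dual and arranged symmetrically, $W \cong W^*$, so $W$ carries a non-degenerate $G_{sc}$-invariant bilinear form. Restricting this form to $\langle u \rangle$ forces the Jordan blocks of $u$ on $W$ to pair up, so $\dim W^u$ is even. On the other hand, the radical filtration $0 \subset \soc(W) \subset W_2 \subset W$, combined with the long exact sequences in $\langle u \rangle$-cohomology (applied to $0 \to \soc(W) \to W_2 \to L_G(2\varpi_1) \to 0$ and $0 \to W_2 \to W \to L_G(0) \to 0$) and the fact that $V^{(2)} \cong V$ as $K[u]$-modules, yields the bounds $\dim V^u \le \dim W^u \le \dim V^u + 2$. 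The even-parity constraint combined with these bounds already narrows $\dim W^u$ to $\dim V^u + 1$ when $\dim V^u$ is odd, and to $\dim V^u$ or $\dim V^u + 2$ when $\dim V^u$ is even. A further argument using the non-triviality of both extension classes defining $W$ (which would fail globally if $\dim W^u = \dim V^u$) rules out the value $\dim V^u$ in the even case, yielding the stated formula.

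The main obstacle is part (i): explicitly constructing $W$ and verifying its uniserial structure, ensuring no extra composition factors appear and no unwanted splitting occurs in the amalgamation. This hinges on the non-triviality of both extension classes in $\Ext^1_G$, which in turn reflects the non-split center $Z(\g_{sc}) \subset \g_{sc}$ and the dual non-trivial quotient $\varphi\colon S^2(V) \to L_G(0)$. Once $W$ is in hand, parts (ii) and (iii) should follow from the duality and parity-counting arguments sketched above.
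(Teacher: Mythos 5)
Your approach is genuinely different from the paper's: the paper constructs $W$ \emph{externally}, by composing the exceptional isogeny $\tau: G \rightarrow G' = \SO(V')$ (type $C_\ell \to B_\ell$ in characteristic two) with the embedding of $G'$ into $\SO(W)$ of type $D_{\ell+1}$ as the stabilizer of a nonsingular vector; the uniserial structure of the natural $D_{\ell+1}$-module restricted to $B_\ell$ is classical, and part (iii) then follows from a cited result on fixed points in such spin-type modules. You instead try to build $W$ \emph{internally} by amalgamating two 2-layer quotients of $\g_{sc}$ and $S^2(V)$.

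For part (i), the proposal is a plan rather than a proof, as you yourself acknowledge. The ``pullback'' along the common subquotient $L_G(2\varpi_1)$ is not made precise (you have a surjection $A \twoheadrightarrow L_G(2\varpi_1)$ and an injection $L_G(2\varpi_1) \hookrightarrow B$, which do not fit the shape of a pullback diagram), and the existence of the claimed quotient $A$ of $\g_{sc}$ with exactly the composition factors $L_G(2\varpi_1), L_G(0)$ and the right socle requires a careful analysis of the submodule lattice of $\g_{sc}$ — which depends on the parity of $\ell$ and is precisely what makes a purely internal construction delicate. For part (ii), your sketch is close to the paper's, but the uniqueness argument requires $\Ext^1_G(K,K) = 0$ and $\Ext^2_G(K,K) = 0$ (to pass from uniqueness of the 2-layer module $Z = L_G(0)|L_G(2\varpi_1)$ to uniqueness of the 3-layer $W$ via $\Ext^1_G(K,Z) \cong K$); the vaguer ``reconciled by automorphisms'' claim does not by itself rule out extra, inequivalent gluings.

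The most serious gap is in part (iii). Your pivotal claim is that the non-degenerate $G$-invariant bilinear form on $W$, obtained from self-duality $W \cong W^*$, ``forces the Jordan blocks of $u$ on $W$ to pair up, so $\dim W^u$ is even.'' This is false as a general principle: a non-degenerate $u$-invariant alternating form does not force an even number of Jordan blocks. The natural module $V$ itself is a counterexample — for $u$ regular, $V \downarrow K[u] = V(2\ell)$ is a single Jordan block and $\dim V^u = 1$ is odd. In the Hesselink picture, only the odd-sized blocks are forced to come in pairs for $u \in \Sp(W)$; even-sized blocks (the $V(2k)$ summands) need not pair. The actual even-parity of $\dim W^u$ in this lemma reflects the fact that $u$ preserves a $G$-invariant \emph{quadratic} form on $W$ (i.e., lands in $\SO(W)$), which in characteristic two is a strictly stronger structure than a bilinear form and is not established by your self-duality argument. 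Without that, the parity step fails, and the subsequent claim that ``the non-triviality of both extension classes'' rules out $\dim W^u = \dim V^u$ in the even case is left completely unsubstantiated. You would need either to construct the quadratic form directly (which is easiest via the paper's $D_{\ell+1}$ realization), or to replace the parity step by an explicit analysis of the Jordan block of $u$ passing through the socle and head of $W$.
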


\begin{proof}
Let $G' = \SO(V')$ with $\dim V' = 2\ell+1$, so $G'$ is a simple algebraic group of adjoint type $B_{\ell}$. Let $\tau: G \rightarrow G'$ be an exceptional isogeny as in \cite[Theorem 28]{SteinbergNotesAMS}. We can embed $G'$ into a simple algebraic group $\SO(W)$ of type $D_{\ell+1}$ as the stabilizer of a nonsingular vector, see for example \cite[Section 6.8]{LiebeckSeitzClass}. Here $\dim W = 2\ell+2$, and as in \cite[Section 6.8]{LiebeckSeitzClass}, we identify $V' = \langle v \rangle^\perp \subset W$, where $v \in W$ is a nonsingular vector. 

It is straightforward to see that $W$ is a uniserial $G'$-module with $$W = L_{G'}(0)|L_{G'}(\varpi_1')|L_{G'}(0),$$ where $\varpi_1'$ is the first fundamental highest weight for $G'$. Then the twist of $W$ by $\tau$ is a uniserial $G$-module $W^\tau$ as in (i). For a unipotent element $u \in G$, the fixed point space of $u$ on the Frobenius twist $L_G(2\varpi_1) \cong L_G(\varpi_1)^{[1]}$ is the same as on $L_G(\varpi_1) \cong V$, because the Frobenius endomorphism preserves unipotent conjugacy classes. Therefore (iii) holds for $W$ by \cite[Lemma 3.8]{KorhonenUP}.

It remains to check that $W$ is unique. To this end, note first that $$\Ext_G^1(K, L_G(2 \varpi_1)) \cong \Hom_G(\wedge^2(V)^*, K)$$ \cite[II.2.14]{JantzenBook} and \cite[5.4]{DowdSin}. Here $\Hom_G(\wedge^2(V)^*, K) \cong \wedge^2(V)^G \cong K$, since $V$ has a unique $G$-invariant alternating bilinear form up to a scalar. Thus there exists a unique nonsplit extension $$0 \rightarrow K \rightarrow Z \rightarrow L_G(2\varpi_1) \rightarrow 0,$$ up to isomorphism of $G$-modules. 

Since $\Ext_G^1(K,K) = \Ext_G^2(K,K) = 0$ \cite[II.4.11]{JantzenBook} and $\Ext_G^1(K, L_G(2\varpi_1)) \cong K$, we have $\Ext_G^1(K,Z) \cong K$. Hence there exists a unique nonsplit extension $$0 \rightarrow Z \rightarrow W \rightarrow K \rightarrow 0,$$ up to isomorphism of $G$-modules. Every $W$ as in (i) is such an extension, so we conclude that $W$ is unique up to isomorphism.\end{proof}

\begin{lemma}\label{lemma:exactseqvarpi2gAD}
Let $G_{sc} = \Sp(V)$, so $G_{sc}$ is simply connected and simple of type $C_{\ell}$. Assume that $\ell$ is even. Then there is a short exact sequence $$0 \rightarrow L_G(\varpi_2) \rightarrow \g_{ad} \rightarrow W \rightarrow 0$$ of $G_{sc}$-modules, where $W$ is as in Lemma \ref{lemma:uniqueuniserial} (i).
\end{lemma}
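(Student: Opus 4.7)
The plan is to completely determine the $G_{sc}$-module structure of $\g_{ad}$: I will show that $\g_{ad}$ is uniserial with socle filtration
$L_G(\varpi_2) \mid L_G(0) \mid L_G(2\varpi_1) \mid L_G(0)$.
Once this is established, the socle $L_G(\varpi_2)$ provides the required embedding
$L_G(\varpi_2) \hookrightarrow \g_{ad}$,
and the quotient is the uniserial module $L_G(0) \mid L_G(2\varpi_1) \mid L_G(0)$, which is isomorphic to $W$ by Lemma~\ref{lemma:uniqueuniserial}(ii).

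The analysis begins with $S^2(V) \cong \g_{sc}^*$ (Lemma~\ref{lemma:LieSpS2}). There is a $G_{sc}$-equivariant short exact sequence $0 \to V^{[1]} \to S^2(V) \to \wedge^2(V) \to 0$, where $V^{[1]} = \langle v^2 : v \in V \rangle \cong L_G(2\varpi_1)$ (the map $g \cdot v^2 = (gv)^2$ shows that this is the Frobenius twist). For $\ell$ even, $\wedge^2(V)$ is uniserial $L_G(0) \mid L_G(\varpi_2) \mid L_G(0)$: the element $\omega = \sum_{i=1}^{\ell} v_i \wedge v_{n-i+1}$ spans the socle, the form map $v \wedge w \mapsto b(v,w)$ provides the head, these are distinct because $\widetilde{\varphi}(\omega) = \ell = 0$ in $K$ when $\ell$ is even, and the socle is simple because the $\Sp(V)$-invariant alternating form on $V$ is unique up to scalar while $\Ext^1_{G_{sc}}(L_G(0), L_G(0)) = 0$ for simply connected $G_{sc}$. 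Using that $\g_{sc}^{G_{sc}} = Z(\g_{sc}) \cong L_G(0)$, the head of $S^2(V)$ is simple $L_G(0)$; showing $S^2(V)^{G_{sc}} = 0$ then forces the socle to be $V^{[1]}$. Combined, these give that $S^2(V)$ is uniserial $L_G(2\varpi_1) \mid L_G(0) \mid L_G(\varpi_2) \mid L_G(0)$, and dually $\g_{sc}$ is uniserial $L_G(0) \mid L_G(\varpi_2) \mid L_G(0) \mid L_G(2\varpi_1)$. Quotienting by $Z(\g_{sc})$, Lemma~\ref{lemma:gmodZisgadgad} yields that $[\g_{ad}, \g_{ad}]$ is uniserial $L_G(\varpi_2) \mid L_G(0) \mid L_G(2\varpi_1)$.

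It remains to show that the extension $0 \to [\g_{ad}, \g_{ad}] \to \g_{ad} \to K \to 0$ is non-split, which completes the uniserial structure of $\g_{ad}$. In the Chevalley realization $\g_{ad} = K \otimes_\Z L_{ad}$ with $L_{ad} = L_{sc} + \Z\delta$, a splitting would require a $G_{sc}$-invariant element $1 \otimes (\delta + y)$ for some $y \in L_{sc}$, satisfying $X_\alpha(\delta + y) \in 2 L_{ad}$ for every root $\alpha$. A direct computation yields $X_{2\varepsilon_j} \delta = \tfrac{1}{2} v_j^2 = X_{2\varepsilon_j}$, the root vector in $L_{sc}$. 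Inspection of the Kostant $\Z$-form action shows that the image $X_{2\varepsilon_j}(L_{sc})$ inside $L_{sc}$ consists of integer combinations of products $v_j v_k$ with $k \neq j$, together with multiples of $v_j^2 = 2 X_{2\varepsilon_j}$; modulo $2 L_{ad}$, this image cannot cancel the residue $X_{2\varepsilon_j}$. Hence the extension is non-split, establishing the claimed uniserial structure of $\g_{ad}$ and completing the proof via Lemma~\ref{lemma:uniqueuniserial}(ii).

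The main obstacles are the socle and head computations for $S^2(V)$---in particular the vanishing of $S^2(V)^{G_{sc}}$, which also follows from a similar $X_{2\varepsilon_j}$-calculation on candidate lifts of $\omega$---and the non-splitting argument for $\g_{ad}$. Both reduce to concrete calculations in the Chevalley $\Z$-form, with careful tracking of divisibility by $2$ in $L_{ad}$ versus $L_{sc}$.
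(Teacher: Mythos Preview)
Your overall strategy is the same as the paper's: establish that $\g_{ad}$ is uniserial with socle series $L_G(\varpi_2)\mid L_G(0)\mid L_G(2\varpi_1)\mid L_G(0)$, and then invoke Lemma~\ref{lemma:uniqueuniserial}(ii) to identify the quotient by the socle with $W$. The difference is only in how the uniserial structure is obtained: the paper simply cites \cite[5.6]{DowdSin}, whereas you reconstruct it from the short exact sequence $0\to V^{[2]}\to S^2(V)\to\wedge^2(V)\to 0$, the known uniserial structure of $\wedge^2(V)$ for $\ell$ even, the vanishing of $S^2(V)^{G_{sc}}$, and an explicit $X_{2\varepsilon_j}$-calculation to show that $\g_{ad}\to K$ does not split. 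Your argument is correct; two points deserve a word of caution. First, the sentence ``the head of $S^2(V)$ is simple $L_G(0)$'' does not follow directly from $\g_{sc}^{G_{sc}}=Z(\g_{sc})$; rather, once you have shown $\soc(S^2(V))=V^{[2]}$, the fact that $V^{[2]}$ is not a direct summand (else $\soc(\wedge^2(V))=L_G(0)$ would lie in $\soc(S^2(V))$) forces $V^{[2]}\subseteq\rad(S^2(V))$ and hence $\operatorname{head}(S^2(V))=\operatorname{head}(\wedge^2(V))=L_G(0)$. Second, ``simple socle'' does not imply uniserial in general; here it works because the quotient $\wedge^2(V)$ is already uniserial, so the higher socle layers of $S^2(V)$ coincide with those of $\wedge^2(V)$. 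With these clarifications your self-contained derivation is sound and recovers exactly the fact the paper imports from \cite{DowdSin}.
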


\begin{proof}
As observed in \cite[5.6]{DowdSin}, in this case as a $G_{sc}$-module $\g_{ad}$ is uniserial with $\g_{ad} = L_G(\varpi_2)|L_G(0)|L_G(2\varpi_1)|L_G(0)$. Thus the result follows from Lemma \ref{lemma:uniqueuniserial} (ii).
\end{proof}

\begin{lemma}\label{lemma:regularl2mod4ineq}
Let $G = \Sp(V)$ be simply connected and simple of type $C_{\ell}$. Assume that $\ell \equiv 2 \mod{4}$ and let $u \in G$ be unipotent with $V \downarrow K[u] = V(2\ell)$. Then $\dim \g_{ad}^u \leq \dim \g_{sc}^u$.
\end{lemma}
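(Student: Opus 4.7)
The plan is to exploit the short exact sequence $0 \to L_G(\varpi_2) \to \g_{ad} \to W \to 0$ from Lemma \ref{lemma:exactseqvarpi2gAD}, which yields $\dim \g_{ad}^u \leq \dim L_G(\varpi_2)^u + \dim W^u$. Because $V \downarrow K[u] = V(2\ell)$ gives $\dim V^u = 1$, Lemma \ref{lemma:uniqueuniserial}(iii) forces $\dim W^u = 2$. Combining Lemma \ref{lemma:LieSpS2} with Lemma \ref{lemma:fixpdimsymwedge} gives $\dim \g_{sc}^u = \ell + 1$, so the problem reduces to the single inequality $\dim L_G(\varpi_2)^u \leq \ell - 1$.

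To bound $\dim L_G(\varpi_2)^u$, I would realise $L_G(\varpi_2)$ as a subquotient of $\wedge^2(V)$. The symplectic form induces a surjective $G$-equivariant map $\varphi \colon \wedge^2(V) \to K$, $v \wedge w \mapsto b(v,w)$, and the unique $G$-invariant $\omega = \sum_{i=1}^{\ell} v_i \wedge v_{2\ell - i + 1}$ satisfies $\varphi(\omega) = \ell \equiv 0 \pmod{2}$ since $\ell$ is even, so $\omega \in W_2 := \Ker \varphi$. Since $W_2$ has composition factors $L_G(0)$ and $L_G(\varpi_2)$ with $\langle \omega \rangle$ its unique trivial submodule, $L_G(\varpi_2) \cong W_2/\langle \omega \rangle$. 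Starting from $\dim \wedge^2(V)^u = \ell$ (Lemma \ref{lemma:fixpdimsymwedge}), I would apply Lemma \ref{jordanrestrictionNIL} with $e = u-1$ to the codimension-one inclusion $W_2 \subset \wedge^2(V)$, and then Lemma \ref{jordanquotientNIL} to the quotient $W_2 \twoheadrightarrow W_2/\langle \omega \rangle$. A direct block count shows that $\dim L_G(\varpi_2)^u$ equals $\ell$ minus the number of the following two conditions that hold: (i) $\varphi$ restricted to $\wedge^2(V)^u$ is non-zero, and (ii) $\omega \notin (u-1) W_2$. Hence the required bound follows as soon as at least one of (i) and (ii) is satisfied.

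The main obstacle is verifying that at least one of (i) and (ii) must hold in the regime $\ell \equiv 2 \pmod{4}$. I would settle this by explicit computation in the Chevalley framework of Section \ref{section:chevalley}. Taking $u_{\Z} = x_{\Z}^{(\alpha_1)} \cdots x_{\Z}^{(\alpha_{\ell})}$ acting on an admissible lattice in $\wedge^2(V_{\C})$, one searches for a preimage $y \in W_2$ of $\omega$ under $u_{\Z} - 1$ and evaluates $\varphi(y)$ modulo $2$; alternatively, if $\varphi$ is already non-zero on some element of $\wedge^2(V)^u$, condition (i) is automatic. The arithmetic condition $\nu_2(\ell) = 1$ characterising $\ell \equiv 2 \pmod{4}$ is precisely what obstructs the simultaneous failure of (i) and (ii), yielding $\dim L_G(\varpi_2)^u \leq \ell - 1$ and hence $\dim \g_{ad}^u \leq \ell + 1 = \dim \g_{sc}^u$.
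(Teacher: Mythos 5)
Your overall framework matches the paper's: both use the short exact sequence from Lemma~\ref{lemma:exactseqvarpi2gAD} to reduce to the single inequality $\dim L_G(\varpi_2)^u \leq \ell - 1$, and both obtain $\dim W^u = 2$ and $\dim \g_{sc}^u = \ell+1$ in the same way. The difference is in how that remaining inequality is settled, and this is where your proposal has a genuine gap.

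The paper disposes of the bound in one stroke: because $\ell \equiv 2 \pmod 4$, the smallest Jordan block of $u$ on $\wedge^2(V)$ has size $2$ by \cite[Lemma~4.12]{Korhonen2020Hesselink}, and then \cite[Theorem~B]{Korhonen2020Hesselink} gives the exact value $\dim L_G(\varpi_2)^u = \ell - 1$. You instead propose to re-derive the bound from scratch by realising $L_G(\varpi_2)$ as $W_2/\langle\omega\rangle$ with $W_2 = \Ker\varphi \subset \wedge^2(V)$, applying Lemma~\ref{jordanrestrictionNIL} and Lemma~\ref{jordanquotientNIL}, and reducing to showing that at least one of your conditions (i), (ii) holds. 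That reduction is a correct reading of those two lemmas, and the framework is sound. But the decisive verification --- that $\ell \equiv 2 \pmod 4$ forces at least one of (i) or (ii) --- is never carried out. You write ``I would settle this by explicit computation in the Chevalley framework'' and describe a plan, but you do not produce the vector $y$, the evaluation of $\varphi(y) \bmod 2$, or the argument showing how $\nu_2(\ell) = 1$ enters. Since this is precisely the content that the paper outsources to \cite[Theorem~B]{Korhonen2020Hesselink}, the proposal as written does not constitute a proof: the hard step is identified but not done.

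A secondary issue: you assert that $\langle\omega\rangle$ is the unique trivial submodule of $W_2$, so that $W_2/\langle\omega\rangle \cong L_G(\varpi_2)$. This is believable (and consistent with the structure of $\wedge^2(V)$ for type $C_\ell$ at $p = 2$ with $\ell$ even), but it is a nontrivial fact about the socle of $\wedge^2(V)$ and deserves a citation or a short argument; without it the identification of $L_G(\varpi_2)$ inside $\wedge^2(V)$ rests on an unproved claim.
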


\begin{proof}
It follows from Lemma \ref{lemma:fixpdimsymwedge} that $\dim \wedge^2(V)^u = \ell$. Because $\ell \equiv 2 \mod{4}$, the smallest Jordan block size of $u$ on $\wedge^2(V)$ is equal to $2$ \cite[Lemma 4.12]{Korhonen2020Hesselink}, and by \cite[Theorem B]{Korhonen2020Hesselink} we have $\dim L_G(\varpi_2)^u = \ell - 1$. 

For $W$ as in Lemma \ref{lemma:exactseqvarpi2gAD} (i), we have $\dim W^u = 2$ by Lemma \ref{lemma:exactseqvarpi2gAD} (iii). Thus it follows from Lemma \ref{lemma:exactseqvarpi2gAD} that $\dim \g_{ad}^u \leq \dim L_G(\varpi_2)^u + \dim W^u = \ell+1$. By Lemma \ref{lemma:LieSpS2} and Lemma \ref{lemma:fixpdimsymwedge} (ii) we have $\dim \g_{sc}^u = \ell+1$, so the result follows.\end{proof}

\begin{remark}
As a corollary of our results, we will see later that equality $\dim \g_{ad}^u = \dim \g_{sc}^u$ holds in Lemma \ref{lemma:regularl2mod4ineq}, without assumptions on $\ell$ (Corollary \ref{cor:regularcentralizerdim}).
\end{remark}

\section{Jordan block sizes of unipotent elements on \texorpdfstring{$\g_{sc}/Z(\g_{sc})$}{g\_sc/Z(g\_sc)}}\label{section:unipgmodZ}

Let $G_{sc} = \Sp(V)$ be simply connected of type $C_{\ell}$, and let $b$ be the $G_{sc}$-invariant alternating bilinear form defining $G_{sc}$. We have $\g_{sc} \cong S^2(V)^*$ by Lemma \ref{lemma:LieSpS2}, so for every unipotent element $u \in G$ the Jordan block sizes of $u$ on $\g_{sc}$ are known by the results described in Section \ref{section:tensor}. In this section, we will describe the Jordan block sizes of unipotent elements $u \in G$ on $\g_{sc}/Z(\g_{sc})$, in terms of Jordan block sizes of $u$ on $\g_{sc}$. 

Throughout this section, we will denote by $\varphi$ the linear map $\varphi: S^2(V) \rightarrow K$ defined by $\varphi(xy) = b(x,y)$ for all $x,y \in V$. Since $b$ is $G_{sc}$-invariant, it follows that $\varphi$ is a surjective morphism of $G_{sc}$-modules. By Lemma \ref{lemma:kerphiisdual}, the Jordan normal form of a unipotent element $u \in G$ on $\g_{sc}/Z(\g_{sc})$ is the same as on $\Ker \varphi$.

We will then describe the Jordan block sizes of unipotent elements $u \in G$ on $\Ker \varphi$. By Lemma \ref{jordanrestrictionNIL}, this amounts to finding the largest integer $m \geq 0$ such that $\Ker(\widetilde{u}-1)^m \subseteq \Ker \varphi$, where $\widetilde{u}$ is the action of $u$ on $S^2(V)$.

\begin{lemma}\label{lemma:unipS2decomp}
Let $u \in \Sp(V)$ be unipotent. Suppose that we have a orthogonal decomposition $V = U_1 \perp \cdots \perp U_t$ as $K[u]$-modules. Denote the action of $u$ on $S^2(V)$ by $\widetilde{u}$, and the action of $u$ on $S^2(U_i)$ by $\widetilde{u_i}$. Let $m \geq 0$ be an integer. 

Then $\Ker(\widetilde{u}-1)^m \subseteq \Ker \varphi$ if and only if $\Ker(\widetilde{u_i}-1)^m \subseteq \Ker \varphi$ for all $1 \leq i \leq t$.
\end{lemma}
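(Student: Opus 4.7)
The plan is to decompose $S^2(V)$ as a $K[u]$-module using the orthogonal decomposition $V = U_1 \perp \cdots \perp U_t$, and then observe that both $\varphi$ and the generalized eigenspace $\Ker(\widetilde{u}-1)^m$ split compatibly. This reduces the condition $\Ker(\widetilde{u}-1)^m \subseteq \Ker \varphi$ to its restriction to each summand $S^2(U_i)$, while the mixed tensor summands contribute trivially.

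First I would record the natural decomposition of $K[u]$-modules
\[
S^2(V) \;=\; \bigoplus_{1 \leq i \leq t} S^2(U_i) \;\oplus\; \bigoplus_{1 \leq i < j \leq t} U_i \otimes U_j,
\]
where the pieces $U_i \otimes U_j$ (for $i < j$) embed in $S^2(V)$ via $x \otimes y \mapsto xy$. Since the decomposition is orthogonal with respect to $b$, we have $b(x,y) = 0$ whenever $x \in U_i$, $y \in U_j$ with $i \neq j$; consequently $\varphi$ vanishes identically on each of the mixed summands $U_i \otimes U_j$, and its restriction to $S^2(U_i)$ coincides with the analogous map $\varphi_i: S^2(U_i) \to K$, $\varphi_i(xy) = b(x,y)$.

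Next, the operator $\widetilde{u}-1$ preserves the above decomposition, so
\[
\Ker(\widetilde{u}-1)^m \;=\; \bigoplus_{1 \leq i \leq t} \Ker(\widetilde{u_i}-1)^m \;\oplus\; \bigoplus_{1 \leq i < j \leq t} \Ker\!\left((\widetilde{u}-1)^m \big|_{U_i \otimes U_j}\right).
\]
Because $\bigoplus_{i<j} U_i \otimes U_j \subseteq \Ker \varphi$, the mixed contributions automatically lie in $\Ker \varphi$. Hence $\Ker(\widetilde{u}-1)^m \subseteq \Ker \varphi$ is equivalent to $\Ker(\widetilde{u_i}-1)^m \subseteq \Ker \varphi_i$ for every $i$, which, under the identification $S^2(U_i) \subseteq S^2(V)$, is the same as $\Ker(\widetilde{u_i}-1)^m \subseteq \Ker \varphi$.

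There is no real obstacle here; the argument is essentially bookkeeping once one checks that (i) the orthogonality of $b$ forces $\varphi$ to vanish on the cross terms, and (ii) the $K[u]$-module decomposition of $S^2(V)$ is respected by $\widetilde{u}-1$ so that kernels split summand by summand.
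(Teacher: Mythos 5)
Your proof is correct and follows essentially the same route as the paper: decompose $S^2(V)$ into the summands $S^2(U_i)$ and the cross terms $U_iU_j$, observe that the cross terms lie in $\Ker\varphi$ by orthogonality, and note that $\widetilde{u}-1$ respects the decomposition so kernels split summand by summand. The paper states this more tersely but the argument is the same.
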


\begin{proof}The orthogonal decomposition $V = U_1 \perp \cdots \perp U_t$ into $K[u]$-modules induces a decomposition $$S^2(V) = \bigoplus_{1 \leq i \leq t} S^2(U_i) \perp \bigoplus_{1 \leq i < j \leq t} U_iU_j$$ into $K[u]$-modules, where $U_iU_j = \{xy : x \in U_i, y \in U_j\}$. We have $U_iU_j \subseteq \Ker \varphi$ for all $1 \leq i < j \leq t$, from which the lemma follows.\end{proof}

With Lemma \ref{lemma:unipS2decomp}, we reduce to the case where $V \downarrow K[u]$ is orthogonally indecomposable. We first consider the case where $V \downarrow K[u] = V(2\ell)$, so $n = 2\ell$. Let $v_1$, $\ldots$, $v_n$ be a basis as in the definition of $V(2\ell)$ (Definition \ref{def:V2lUNIP}). Then $b(v_i,v_j) = 1$ if $i+j = n+1$, and $0$ otherwise. Furthermore, the action of $u$ is defined by \begin{align*}
uv_1 &= v_1, \\
uv_i &= v_i + v_{i-1} + \cdots + v_1 \text{ for all } 2 \leq i \leq \ell+1, \\
uv_i &= v_i + v_{i-1} \text{ for all } \ell+1 < i \leq n.
\end{align*} We will denote $v_j = 0$ for all $j \leq 0$ and $j > n$.

Define $\gamma = \sum_{1 \leq i \leq \ell} v_iv_{n+1-i}$. We have a short exact sequence $$0 \rightarrow V^{[2]} \rightarrow S^2(V) \rightarrow \wedge^2(V) \rightarrow 0,$$ where $V^{[2]}$ is the subspace generated by $v^2$ for all $v \in V$. As noted in \cite[Section 9]{Korhonen2020Hesselink}, the image of $\gamma$ in $\wedge^2(V)$ is fixed by the action of $\Sp(V)$. Thus $u \cdot \gamma = \gamma + x$ for some $x \in V^{[2]}$, and more precisely we have the following.

\begin{lemma}\label{lemma:unipS2delta}
Let $u$ and $\gamma$ be as above. Then $u \cdot \gamma = \gamma + \sum_{1 \leq i \leq \ell} v_i^2$, and $\gamma + v_{\ell+1}^2$ is fixed by the action of $u$.
\end{lemma}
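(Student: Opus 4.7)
\bigskip

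\noindent\textbf{Proof proposal for Lemma \ref{lemma:unipS2delta}.} The action of $u$ on $S^2(V)$ is given by $u(xy) = (ux)(uy)$ for all $x,y \in V$, so my plan is to compute $u\gamma = \sum_{i=1}^{\ell} (uv_i)(uv_{n+1-i})$ directly and simplify in characteristic two. I first record that $u v_i = \sum_{j=1}^{i} v_j$ for all $1 \leq i \leq \ell+1$, while $u v_i = v_i + v_{i-1}$ for all $\ell+1 < i \leq n$. Since $n+1-i > \ell+1$ for $1 \leq i \leq \ell-1$, this gives
\[ u(v_i v_{n+1-i}) = \Bigl(\sum_{j=1}^{i} v_j\Bigr)(v_{n+1-i} + v_{n-i}) = \sum_{j=1}^{i} v_j v_{n+1-i} + \sum_{j=1}^{i} v_j v_{n-i} \]
for $1 \leq i \leq \ell - 1$, and for $i = \ell$, $u(v_\ell v_{\ell+1}) = \bigl(\sum_{j=1}^{\ell} v_j\bigr)\bigl(\sum_{k=1}^{\ell+1} v_k\bigr)$.

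Next I would sum these contributions. For the $i = \ell$ term, expanding the product in $S^2(V)$ and using $\chr K = 2$ (so off-diagonal pairs appearing twice cancel), one gets $\sum_{j=1}^{\ell} v_j^2 + \sum_{j=1}^{\ell} v_j v_{\ell+1}$. For the $i \leq \ell-1$ terms, I would reindex the two double sums by $m = n+1-i$ and $m = n - i$ respectively; for each $m$ with $\ell+2 \leq m \leq n-1$ both sums contribute and all but the term $j = n+1-m$ cancel pairwise in characteristic two, while the endpoints $m = n$ and $m = \ell+1$ contribute $v_1 v_n$ and $\sum_{j=1}^{\ell-1} v_j v_{\ell+1}$ respectively. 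Collecting everything and combining the two $v_{\ell+1}$-sums (which leaves only $v_\ell v_{\ell+1}$), the $v_i v_{n+1-i}$ terms reassemble into $\gamma$ and one is left with $u\gamma = \gamma + \sum_{j=1}^{\ell} v_j^2$, which is the first assertion.

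For the second assertion I would use Frobenius semilinearity: since $\chr K = 2$, the map $v \mapsto v^2$ from $V$ to $S^2(V)$ is additive, so
\[ u(v_{\ell+1}^2) = (uv_{\ell+1})^2 = \Bigl(\sum_{k=1}^{\ell+1} v_k\Bigr)^2 = \sum_{k=1}^{\ell+1} v_k^2. \]
Combining this with the first assertion gives $u(\gamma + v_{\ell+1}^2) = \gamma + \sum_{j=1}^{\ell} v_j^2 + \sum_{k=1}^{\ell+1} v_k^2 = \gamma + v_{\ell+1}^2$, as required.

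The only obstacle is bookkeeping in the double-sum cancellation; a possible shortcut is to invoke the observation from \cite[Section 9]{Korhonen2020Hesselink} that the image of $\gamma$ in $\wedge^2(V)$ is $\Sp(V)$-fixed, so $u\gamma - \gamma \in V^{[2]}$ a priori, and then only the coefficient of each $v_j^2$ needs to be identified. This reduces the computation to extracting the $v_j^2$-coefficient from $(uv_i)(uv_{n+1-i})$ for each $i$ and $j$, which is very quick.
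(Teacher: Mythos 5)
Your computation is correct and follows essentially the same route as the paper: direct expansion of $u\gamma$ in $S^2(V)$ over characteristic two, splitting off the $i=\ell$ term and telescoping the $i<\ell$ contributions (the paper organizes the cancellation via auxiliary sums $s_i$ rather than reindexing by $m$, but this is purely a matter of bookkeeping), followed by the Frobenius identity $u(v_{\ell+1}^2)=(uv_{\ell+1})^2$ for the second assertion. The shortcut you mention at the end — using the $\Sp(V)$-invariance of the image of $\gamma$ in $\wedge^2(V)$ to know a priori that $u\gamma-\gamma\in V^{[2]}$ — is exactly the observation the paper records just before the lemma, though the paper also carries out the full direct calculation rather than relying on it.
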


\begin{proof}
Denote $s_i := \sum_{j \geq 0} v_{i-j} v_{n-i}$. First we note that \begin{align} \notag u \cdot \sum_{1 \leq i < \ell} v_i v_{n-i+1} &= \sum_{1 \leq i < \ell} \left( \sum_{j \geq 0} v_{i-j} \right)\left(v_{n-i+1} + v_{n-i} \right) \\ \notag &= \sum_{1 \leq i < \ell} \left(s_i + s_{i-1} + v_i v_{n-i+1}\right) \\ \label{eq:firstsum} &= s_{\ell-1} + \sum_{1 \leq i < \ell} v_i v_{n-i+1}. \end{align} By another calculation, we get \begin{equation} \label{eq:secondsum}  u \cdot v_{\ell}v_{\ell+1} = \left(\sum_{1 \leq j \leq \ell} v_j \right) \left(\sum_{1 \leq j \leq \ell+1} v_j \right) = s_{\ell-1} + v_{\ell}v_{\ell+1} +  \sum_{1 \leq j \leq \ell} v_j^2.\end{equation} Adding~\eqref{eq:firstsum} and~\eqref{eq:secondsum} together, we conclude that $u \cdot \gamma = \gamma + \sum_{1 \leq i \leq \ell} v_i^2$.

Since $u \cdot v_{\ell+1}^2 = \left(\sum_{1 \leq i \leq \ell+1} v_i\right)^2 = \sum_{1 \leq i \leq \ell+1} v_i^2$, it follows that $\gamma + v_{\ell+1}^2$ is fixed by the action of $u$.
\end{proof}

\begin{lemma}\label{lemma:S2indecompV2l}
Let $u \in \Sp(V)$ be unipotent such that $V \downarrow K[u] = V(2\ell)$. Let $\widetilde{u}$ be the action of $u$ on $S^2(V)$, and denote $\alpha = \nu_2(\ell)$. Then the following hold:
	\begin{enumerate}[\normalfont (i)]
		\item $\Ker(\widetilde{u}-1)^{2^{\alpha}-1} \subseteq \Ker \varphi$.
		\item $\Ker(\widetilde{u}-1)^{2^{\alpha}} \not\subseteq \Ker \varphi$.
	\end{enumerate}
\end{lemma}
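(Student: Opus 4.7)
The plan is to prove (i) and (ii) separately, combining a general Jordan-block reasoning with an explicit construction.

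For (i), I would argue that $\varphi\colon S^2(V)\to K$ is a $G_{sc}$-module homomorphism into the trivial module, so $\varphi\circ(\widetilde u-1)=0$ and hence $\operatorname{Im}(\widetilde u-1)\subseteq\Ker\varphi$. By Lemma~\ref{lemma:smallesblockunipS2}, every Jordan block of $\widetilde u$ on $S^2(V)$ has size at least $2^\alpha$. An elementary Jordan-basis argument then shows $\Ker(\widetilde u-1)^{2^\alpha-1}\subseteq\operatorname{Im}(\widetilde u-1)$: within each block $V_d$ (with $d\geq 2^\alpha$) and a Jordan basis $w_0,\ldots,w_{d-1}$ satisfying $(\widetilde u-1)w_i=w_{i-1}$, the subspace $\Ker(\widetilde u-1)^{2^\alpha-1}|_{V_d}=\langle w_0,\ldots,w_{2^\alpha-2}\rangle$ sits inside $\operatorname{Im}(\widetilde u-1)|_{V_d}=\langle w_0,\ldots,w_{d-2}\rangle$. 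Combining these inclusions yields (i).

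For (ii), the key identity in characteristic two is $(\widetilde u-1)^{2^\alpha}=\widetilde u^{2^\alpha}-1$, so the task reduces to constructing $x\in S^2(V)^{u^{2^\alpha}}$ with $\varphi(x)\neq 0$. In the base case $\alpha=0$ (so $\ell$ is odd), Lemma~\ref{lemma:unipS2delta} supplies the $\widetilde u$-fixed element $x=\gamma+v_{\ell+1}^2$, and $\varphi(x)=\sum_{i=1}^\ell b(v_i,v_{n+1-i})+b(v_{\ell+1},v_{\ell+1})=\ell\equiv 1\pmod 2$. For $\alpha\geq 1$, writing $\ell=2^\alpha m$ with $m$ odd, I would use that $u^{2^\alpha}-1=(u-1)^{2^\alpha}$ in characteristic two, yielding $V\downarrow K[u^{2^\alpha}]\cong V_{2m}^{2^\alpha}$; choosing $z\in\Ker(u-1)^{2^{\alpha+1}}$ and setting $c:=(u^{2^\alpha}-1)z\in V^{u^{2^\alpha}}$, the element $z(z+c)\in S^2(V)$ is $u^{2^\alpha}$-fixed with $\varphi(z(z+c))=b(z,c)$. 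In the sub-case $m=1$, taking $z=v_n$ yields $b(z,c)=1$, settling (ii).

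The main obstacle is the case $\alpha\geq 1$ with $m>1$: here $\Ker(u-1)^{2^{\alpha+1}}\subseteq\langle v_1,\ldots,v_\ell\rangle$ is totally isotropic for $b$ (because $b(v_i,v_j)=0$ whenever $i,j\leq\ell$), so the naive construction $z(z+c)$ with $z$ in the bottom two levels of a single Jordan chain gives $\varphi(z(z+c))=0$. The desired element $x$ must then be built by combining contributions from distinct $V_{2m}$-blocks of $u^{2^\alpha}$ on $V$, with $V^{[2]}$-valued corrections coming from squared terms (as governed by the short exact sequence $0\to V^{[2]}\to S^2(V)\to\wedge^2(V)\to 0$), producing a single $u^{2^\alpha}$-fixed element whose image under $\varphi$ is nonzero. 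The technical heart of the proof lies in this explicit construction, which may alternatively be verified via Lemma~\ref{lemma:imZforUZ} by exhibiting $v\in L_{sc}$ with $(u_{\Z}-1)^{2^\alpha-1}v\equiv 2\delta\pmod{2L_{sc}}$ while showing no such $w$ exists for the next power.
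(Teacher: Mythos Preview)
Your argument for (i) is correct and essentially matches the paper's: both rely on Lemma~\ref{lemma:smallesblockunipS2} to bound the smallest Jordan block of $\widetilde{u}$ from below by $2^\alpha$, after which the inclusion is a formal consequence (the paper phrases this via Lemma~\ref{jordanrestrictionNIL}; you via $\Ker(\widetilde u-1)^{2^\alpha-1}\subseteq\operatorname{Im}(\widetilde u-1)\subseteq\Ker\varphi$). Your treatment of (ii) in the case $\alpha=0$ also matches the paper exactly, using Lemma~\ref{lemma:unipS2delta}.

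For $\alpha>0$, however, there is a genuine gap. You correctly observe that the naive construction $z(z+c)$ fails when $m>1$ because $\Ker(u-1)^{2^{\alpha+1}}$ is totally isotropic, but you do not actually carry out the promised ``explicit construction'' combining several $V_{2m}$-blocks, nor the alternative via Lemma~\ref{lemma:imZforUZ}; the proposal ends with a plan rather than a proof. The paper sidesteps this difficulty entirely with a single observation you have almost made: the decomposition $V\downarrow K[u^{2^\alpha}]\cong V_{2m}^{2^\alpha}$ is not just an isomorphism of $K[u^{2^\alpha}]$-modules, but an \emph{orthogonal} decomposition $V\downarrow K[u^{2^\alpha}]=V(2m)^{2^\alpha}$ of bilinear $K[u^{2^\alpha}]$-modules (this is \cite[Lemma~6.13]{Korhonen2020Hesselink}). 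Once you know the form restricts nondegenerately to each $V_{2m}$-summand, Lemma~\ref{lemma:unipS2decomp} reduces the question $\Ker(\widetilde{u}^{2^\alpha}-1)\not\subseteq\Ker\varphi$ to the same question on a single summand $S^2(U_i)$ with $U_i\cong V(2m)$ and $m$ odd --- exactly the $\alpha=0$ case already handled. The missing ingredient is thus the compatibility of the Jordan decomposition of $u^{2^\alpha}$ with the symplectic form, which turns a delicate explicit computation into an immediate reduction.
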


\begin{proof}
By Lemma \ref{lemma:smallesblockunipS2} the smallest Jordan block size of $\widetilde{u}$ is $2^{\alpha}$, so (i) follows from Lemma \ref{jordanrestrictionNIL}. 

For (ii), we first consider the case where $\alpha = 0$, so $\ell$ is odd. In the notation used before the lemma, by Lemma \ref{lemma:unipS2delta} the vector $\gamma + v_{\ell+1}^2$ is fixed by the action of $u$. Furthermore $\varphi(\gamma + v_{\ell+1}^2) = \ell \neq 0$, so $\Ker(\widetilde{u}-1) \not\subseteq \Ker \varphi$, as claimed.

Suppose then that $\alpha > 0$. We have $V \downarrow K[u^{2^{\alpha}}] = V(\ell/2^{\alpha-1})^{2^{\alpha}}$ by \cite[Lemma 6.13]{Korhonen2020Hesselink}. Combining this with Lemma \ref{lemma:unipS2decomp} and the fact that $(\widetilde{u}-1)^{2^{\alpha}} = \widetilde{u}^{2^{\alpha}} - 1$, the claim follows from the $\alpha = 0$ case.\end{proof} 

\begin{lemma}\label{lemma:S2indecompWl}
Let $u \in \Sp(V)$ be unipotent such that $V \downarrow K[u] = W(\ell)$. Let $\widetilde{u}$ be the action of $u$ on $S^2(V)$, and denote $\alpha = \nu_2(\ell)$. Then the following hold:
	\begin{enumerate}[\normalfont (i)]
		\item $\Ker(\widetilde{u}-1)^{2^{\alpha}-1} \subseteq \Ker \varphi$.
		\item $\Ker(\widetilde{u}-1)^{2^{\alpha}} \not\subseteq \Ker \varphi$.
	\end{enumerate}
\end{lemma}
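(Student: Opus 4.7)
The overall strategy mirrors the proof of Lemma \ref{lemma:S2indecompV2l}, but with the single-block analysis of $S^2(V_{2\ell})$ replaced by working inside the ``Lagrangian'' summand $W \otimes Z$ of $S^2(V)$. First I would set up using Definition \ref{def:WlUNIP}: there is a totally singular decomposition $V = W \oplus Z$ with $W = \langle v_1, \ldots, v_\ell \rangle$ and $Z = \langle v_{\ell+1}, \ldots, v_n \rangle$, both $K[u]$-submodules isomorphic to $V_\ell$. This yields the $K[u]$-module decomposition $S^2(V) = S^2(W) \oplus S^2(Z) \oplus W \otimes Z$, and $\varphi$ vanishes on $S^2(W) \oplus S^2(Z)$ since $W$ and $Z$ are totally singular. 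Hence $\Ker \varphi = S^2(W) \oplus S^2(Z) \oplus \Ker(\varphi|_{W \otimes Z})$, so the Jordan-theoretic difference between $\Ker \varphi$ and $S^2(V)$ is captured entirely by the codimension-one inclusion $\Ker(\varphi|_{W \otimes Z}) \subset W \otimes Z$.

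For (ii) with $\alpha = 0$ (so $\ell$ is odd), I take $\gamma' := \sum_{i=1}^\ell v_i v_{n-i+1}$ and verify directly that $u\gamma' = \gamma'$; conceptually, under the $K[u]$-module isomorphism $W \otimes Z \cong W \otimes W^* \cong \End(W)$ afforded by the $u$-invariant nondegenerate pairing $b \colon W \times Z \to K$, the element $\gamma'$ corresponds to $\operatorname{id}_W$ and $\varphi|_{W \otimes Z}$ becomes the trace, giving $\varphi(\gamma') = \ell \neq 0$ in $K$. For $\alpha \geq 1$, write $\ell = 2^\alpha k$ with $k$ odd. Then $V \downarrow K[u^{2^\alpha}]$ has Jordan type $V_k^{2^{\alpha+1}}$, and the decomposition $V = W \oplus Z$ remains $K[u^{2^\alpha}]$-invariant and totally singular; no $V(2m)$-summand can contribute a block of the odd size $k$ (Theorem \ref{thm:hesselinkUNIPindecomp}), so each orthogonally indecomposable summand must be of type $W(k)$, giving $V \downarrow K[u^{2^\alpha}] \cong W(k)^{2^\alpha}$. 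Combining Lemma \ref{lemma:unipS2decomp} with the identity $(\widetilde u - 1)^{2^\alpha} = \widetilde{u^{2^\alpha}} - 1$ and applying the $\alpha = 0$ case to each $W(k)$ summand then yields (ii).

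For (i) with $\alpha = 0$ the statement $\Ker(\widetilde u - 1)^0 \subseteq \Ker \varphi$ is vacuous. For $\alpha \geq 1$ I apply Lemma \ref{jordanrestrictionNIL} (with the nilpotent endomorphism $u - 1$) to the codimension-one inclusion $\Ker(\varphi|_{W \otimes Z}) \subset W \otimes Z$, producing an integer $m'$ which by that lemma is realized as a Jordan block size of $W \otimes Z \cong V_\ell \otimes V_\ell$. By Proposition \ref{prop:uninilsim} and Theorem \ref{thm:extsymnilpotent}, the Jordan block sizes of $V_\ell \otimes V_\ell$ are $\{2^{\beta_1}, \ldots, 2^{\beta_r}\}$, where $\ell = \sum_{i=1}^r (-1)^{i+1} 2^{\beta_i}$ is the consecutive-ones binary expansion; a short check (reading off from the ordinary binary expansion of $\ell$) shows that the smallest exponent $\beta_r$ equals $\nu_2(\ell) = \alpha$. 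Therefore $m' \geq 2^\alpha$, and since $\Ker(\widetilde u - 1)^k$ is ascending in $k$, the Step 1 decomposition translates this into $\Ker(\widetilde u - 1)^{2^\alpha - 1} \subseteq \Ker \varphi$. I expect the principal technical obstacle to be the identification $\beta_r = \nu_2(\ell)$ from the consecutive-ones expansion, together with the careful verification that the single Jordan-block swap provided by Lemma \ref{jordanrestrictionNIL} on $W \otimes Z$ indeed matches the swap between $\Ker \varphi$ and $S^2(V)$.
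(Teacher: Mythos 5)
Your proof is correct and follows essentially the same route as the paper's: you use the same totally singular decomposition $V = W \oplus Z$, the same splitting $S^2(V) = S^2(W) \oplus S^2(Z) \oplus W \otimes Z$ with $\varphi$ vanishing on the first two summands, the same fixed vector $\gamma'$ for (ii) in the $\alpha=0$ case, and the same reduction to the odd case via $(\widetilde u - 1)^{2^\alpha} = \widetilde u^{2^\alpha}-1$ and Lemma \ref{lemma:unipS2decomp}. The only differences are bookkeeping: where the paper cites \cite[Lemma 4.2]{KorhonenJordanGood} for ``the smallest Jordan block size of $V_\ell\otimes V_\ell$ is $2^{\nu_2(\ell)}$,'' you rederive it from Proposition \ref{prop:uninilsim} and Theorem \ref{thm:extsymnilpotent} via the consecutive-ones expansion; and where the paper cites \cite[Lemma 6.12]{Korhonen2020Hesselink} for $V \downarrow K[u^{2^\alpha}] = W(\ell/2^\alpha)^{2^\alpha}$, you argue it inline from the Jordan type $V_k^{2^{\alpha+1}}$ and Theorem \ref{thm:hesselinkUNIPindecomp}. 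Both substitutions are valid, and the latter (no $V(2m)$ summand can produce a block of odd size $k$) is arguably a bit cleaner than invoking the totally-singular characterization, which the paper only states for the nilpotent case (Lemma \ref{lemma:nilsingularcharac}).
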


\begin{proof}
We have $V = W \oplus W^*$ as $K[u]$-modules, where $W$ and $W^*$ are totally isotropic subspaces. This induces a decomposition $$S^2(V) = S^2(W) \oplus S^2(W^*) \oplus WW^*$$ of $K[u]$-modules, where $WW^* \cong W \otimes W^*$ is the subspace $\{xy : x \in W, y \in W^*\}$. We have $S^2(W), S^2(W^*) \subseteq \Ker \varphi$ and the smallest Jordan block size in $W \otimes W^*$ is $2^{\alpha}$ \cite[Lemma 4.2]{KorhonenJordanGood}, so (i) follows from Lemma \ref{jordanrestrictionNIL}.

For (ii), we first consider the case where $\alpha = 0$, so $\ell$ is odd. Choose a basis $v_1$, $\ldots$, $v_{\ell}$ of $W$, and let $w_1$, $\ldots$, $w_{\ell}$ be the corresponding dual basis in $W^*$, so $b(v_i,w_j) = \delta_{i,j}$ for all $1 \leq i,j \leq \ell$. Then $\gamma' = \sum_{1 \leq i \leq \ell} v_iw_i$ is fixed by the action of $u$, see for example \cite[Lemma 3.7]{KorhonenJordanGood}. We have $\varphi(\gamma') = \ell \neq 0$, so $\Ker(\widetilde{u}-1) \not\subseteq \Ker \varphi$, as claimed.

Suppose then that $\alpha > 0$. We have $(\widetilde{u}-1)^{2^{\alpha}} = \widetilde{u}^{2^{\alpha}} - 1$, and furthermore $V \downarrow K[u^{2^{\alpha}}] = W(\ell/2^{\alpha})^{2^{\alpha}}$ by \cite[Lemma 6.12]{Korhonen2020Hesselink}. Thus as in the proof of Lemma \ref{lemma:S2indecompV2l}, the claim follows from Lemma \ref{lemma:unipS2decomp} and the $\alpha = 0$ case.\end{proof}

\begin{prop}\label{prop:unipKeraction}
Let $u \in \Sp(V)$ be unipotent, with orthogonal decomposition $$V \downarrow K[u] = \sum_{1 \leq i \leq t} W(m_i) \perp \sum_{1 \leq j \leq s} V(2k_j).$$ Denote by $\alpha \geq 0$ the largest integer such that $2^{\alpha} \mid m_i,k_j$ for all $i$ and $j$. Then \begin{align*}
		\g_{sc} &\cong V_{2^{\alpha}} \oplus V' \\
		\g_{sc}/Z(\g_{sc}) &\cong V_{2^{\alpha}-1} \oplus V' \end{align*} for some $K[u]$-module $V'$.
\end{prop}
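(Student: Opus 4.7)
The plan is to dualize and work inside $S^2(V)$. By Lemma \ref{lemma:LieSpS2} we have $\g_{sc}\cong S^2(V)^*$ as $G_{sc}$-modules, and by Lemma \ref{lemma:kerphiisdual} we have $\g_{sc}/Z(\g_{sc})\cong (\Ker\varphi)^*$. Since every $K[u]$-module $V_n$ is self-dual and taking $K[u]$-duals preserves Jordan block sizes, it suffices to produce a decomposition
\[
S^2(V)\cong V_{2^\alpha}\oplus V'',\qquad \Ker\varphi\cong V_{2^\alpha-1}\oplus V''
\]
as $K[u]$-modules; the required $V'$ in the statement can then be taken to be $(V'')^*$.

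To produce such a decomposition, I would apply Lemma \ref{jordanrestrictionNIL} to the nilpotent operator $e:=\widetilde{u}-1$ on $S^2(V)$, with the codimension-one $K[u]$-submodule $W:=\Ker\varphi$. The lemma gives $S^2(V)\cong W_m\oplus V''$ and $\Ker\varphi\cong W_{m-1}\oplus V''$, where $m\geq 1$ is characterized by $\Ker e^{m-1}\subseteq\Ker\varphi$ and $\Ker e^m\not\subseteq\Ker\varphi$; since $K[u]=K[\widetilde u-1]$ as subalgebras of $\End(S^2(V))$, the ``$W_m$'' in the conclusion is just ``$V_m$'' read in the unipotent convention.

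The heart of the argument is then to identify $m=2^\alpha$, which is pure bookkeeping using the lemmas already proved in this section. First, Lemma \ref{lemma:unipS2decomp} reduces the containment problem $\Ker(\widetilde u-1)^k\subseteq\Ker\varphi$ to the analogous containment on each orthogonally indecomposable summand $S^2(U_i)$ of the decomposition of $V\downarrow K[u]$. Then Lemma \ref{lemma:S2indecompV2l} tells us that on a summand $V(2k_j)$ the containment holds precisely for $k\leq 2^{\nu_2(k_j)}-1$, and Lemma \ref{lemma:S2indecompWl} tells us that on a summand $W(m_i)$ it holds precisely for $k\leq 2^{\nu_2(m_i)}-1$. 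Intersecting these constraints, the largest $k$ for which $\Ker(\widetilde u-1)^k\subseteq\Ker\varphi$ equals $2^\beta-1$, where
\[
\beta=\min\bigl(\{\nu_2(m_i):1\leq i\leq t\}\cup\{\nu_2(k_j):1\leq j\leq s\}\bigr).
\]
By the definition of $\alpha$ in the statement this minimum is exactly $\alpha$, so $m-1=2^\alpha-1$ and hence $m=2^\alpha$, finishing the proof.

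There is no real obstacle here: all serious computation has been done in Lemmas \ref{lemma:unipS2decomp}, \ref{lemma:S2indecompV2l}, and \ref{lemma:S2indecompWl}. The only things to be careful about are (i) the translation between the nilpotent-module notation $W_n$ used in Lemma \ref{jordanrestrictionNIL} and the unipotent notation $V_n$ used in the proposition, and (ii) matching the ``largest $k$ with $\Ker e^k\subseteq\Ker\varphi$'' with the value of $m$ appearing in Lemma \ref{jordanrestrictionNIL}, namely $m$ is one more than this largest $k$.
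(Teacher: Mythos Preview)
Your proof is correct and follows essentially the same approach as the paper: reduce via Lemmas \ref{lemma:LieSpS2} and \ref{lemma:kerphiisdual} to computing the $m$ in Lemma \ref{jordanrestrictionNIL} for $\Ker\varphi\subset S^2(V)$, then use Lemma \ref{lemma:unipS2decomp} together with Lemmas \ref{lemma:S2indecompV2l} and \ref{lemma:S2indecompWl} to identify $m=2^\alpha$. Your additional remarks on the dualization and the $W_n$/$V_n$ translation are accurate and make the write-up slightly more explicit than the paper's.
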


\begin{proof}
Let $\widetilde{u}$ be the action of $u$ on $S^2(V)$. By Lemma \ref{lemma:LieSpS2}, Lemma \ref{lemma:kerphiisdual}, and Lemma \ref{jordanrestrictionNIL}, the proposition is equivalent to the statement that $\Ker(\widetilde{u}-1)^{2^{\alpha}-1} \subseteq \Ker \varphi$ and $\Ker(\widetilde{u}-1)^{2^{\alpha}} \not\subseteq \Ker \varphi$. 

By Lemma \ref{lemma:unipS2decomp}, Lemma \ref{lemma:S2indecompV2l}, and Lemma \ref{lemma:S2indecompWl}, we have $\Ker(\widetilde{u}-1)^{2^{\alpha}-1} \subseteq \Ker \varphi$. Next note that either $\alpha = \nu_2(m_i)$ for some $i$, or $\alpha = \nu_2(k_j)$ for some $j$.  It follows then from Lemma \ref{lemma:unipS2decomp}, together with Lemma \ref{lemma:S2indecompWl} (if $\alpha = \nu_2(m_i)$) or Lemma \ref{lemma:S2indecompV2l} (if $\alpha = \nu_2(k_j)$) that $\Ker(\widetilde{u}-1)^{2^{\alpha}} \not\subseteq \Ker \varphi$.\end{proof}

\section{Jordan block sizes of nilpotent elements on \texorpdfstring{$\g_{sc}/Z(\g_{sc})$}{g\_sc/Z(g\_sc)}}\label{section:nilgmodZ}

We continue with the setup of the previous section. Let $G_{sc} = \Sp(V)$ be simply connected of type $C_{\ell}$ with Lie algebra $\g_{sc} = \mathfrak{sp}(V)$. In this section, we will describe the Jordan block sizes of nilpotent elements $e \in \mathfrak{sp}(V)$ on $\g_{sc}/Z(\g_{sc}) \cong (\Ker \varphi)^*$ (Lemma \ref{lemma:kerphiisdual}). As in the previous section, we describe the Jordan block sizes of $e$ on $\Ker \varphi$ in terms of Jordan block sizes of $e$ on $S^2(V)$. Note that the Jordan normal form of $e$ on $S^2(V)$ is known by the results described in Section \ref{section:tensor}. 

We begin by reducing the calculation to the orthogonally indecomposable case. After this we consider the different orthogonally indecomposable $K[e]$-modules in turn, and by combining the results we obtain Proposition \ref{prop:nilKeraction}, which is analogous to Proposition \ref{prop:unipKeraction}.

\begin{lemma}\label{lemma:NILS2decomp}
Let $e \in \mathfrak{sp}(V)$ be unipotent. Suppose that we have a orthogonal decomposition $V = U_1 \perp \cdots \perp U_t$ as $K[e]$-modules. Denote the action of $e$ on $S^2(V)$ by $\widetilde{e}$, and the action of $e$ on $S^2(U_i)$ by $\widetilde{e_i}$. Let $m \geq 0$ be an integer. 

Then $\Ker(\widetilde{e})^m \subseteq \Ker \varphi$ if and only if $\Ker(\widetilde{e_i})^m \subseteq \Ker \varphi$ for all $1 \leq i \leq t$.
\end{lemma}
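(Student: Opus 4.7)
The plan is to imitate the proof of Lemma \ref{lemma:unipS2decomp} almost verbatim, with $\widetilde{u}-1$ replaced by $\widetilde{e}$. First, I would use the orthogonal decomposition $V = U_1 \perp \cdots \perp U_t$ as $K[e]$-modules to produce the corresponding decomposition of $S^2(V)$ as $K[e]$-modules:
$$S^2(V) = \bigoplus_{1 \leq i \leq t} S^2(U_i) \,\oplus\, \bigoplus_{1 \leq i < j \leq t} U_iU_j,$$
where $U_iU_j := \{xy : x \in U_i, y \in U_j\}$. The only thing to check is that this is indeed a direct sum decomposition as $K[e]$-modules, which follows from the fact that $e$ acts on $S^2(V)$ as a derivation, $\widetilde{e}(xy) = (ex)y + x(ey)$, and hence preserves each summand: if $x,y \in U_i$ then $\widetilde{e}(xy) \in S^2(U_i)$, and if $x \in U_i, y \in U_j$ with $i < j$ then $\widetilde{e}(xy) \in U_iU_j$.

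Next, I would observe that since $U_i \perp U_j$ in $V$ for $i \neq j$, we have $\varphi(xy) = b(x,y) = 0$ for all $x \in U_i$, $y \in U_j$, so $U_iU_j \subseteq \Ker \varphi$ whenever $i \neq j$.

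Combining these two observations, the kernel of $\widetilde{e}^m$ decomposes as
$$\Ker(\widetilde{e}^m) = \bigoplus_{1 \leq i \leq t} \Ker(\widetilde{e_i}^m) \,\oplus\, \bigoplus_{1 \leq i < j \leq t} \Ker\bigl((\widetilde{e}|_{U_iU_j})^m\bigr),$$
where every mixed-term summand on the right already lies in $\Ker \varphi$. Therefore $\Ker(\widetilde{e}^m) \subseteq \Ker \varphi$ is equivalent to $\Ker(\widetilde{e_i}^m) \subseteq \Ker \varphi$ for all $1 \leq i \leq t$, which is what the lemma asserts. There is no real obstacle here; the argument is purely formal once the decomposition of $S^2(V)$ is noted, and the switch from the unipotent to the nilpotent case affects nothing since the derivation $\widetilde{e}$ respects the same orthogonal decomposition that the group element $\widetilde{u}$ did.
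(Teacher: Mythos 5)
Your proof is correct and takes essentially the same approach as the paper, which simply cites the proof of Lemma \ref{lemma:unipS2decomp} verbatim with $\widetilde{u}-1$ replaced by $\widetilde{e}$. You have spelled out the same decomposition of $S^2(V)$, the same observation that the mixed terms $U_iU_j$ lie in $\Ker\varphi$, and the same conclusion.
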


\begin{proof}Follows with the same proof as Lemma \ref{lemma:unipS2decomp}.\end{proof}

\begin{lemma}\label{lemma:S2indecompWlNIL}
Let $e \in \mathfrak{sp}(V)$ be nilpotent such that $V \downarrow K[e] = W(\ell)$. Let $\widetilde{e}$ be the action of $e$ on $S^2(V)$, and denote $\alpha = \nu_2(\ell)$. Then the following hold:
	\begin{enumerate}[\normalfont (i)]
		\item $\Ker(\widetilde{e})^{2^{\alpha}-1} \subseteq \Ker \varphi$.
		\item $\Ker(\widetilde{e})^{2^{\alpha}} \not\subseteq \Ker \varphi$.
	\end{enumerate}
\end{lemma}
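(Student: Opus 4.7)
My plan is to apply Lemma \ref{jordanrestrictionNIL} to $S^2(V)$ together with the codimension-one submodule $\Ker\varphi$: the combined effect of (i) and (ii) is exactly that the integer $m$ provided by that lemma equals $2^\alpha$. Since $V \downarrow K[e] = W(\ell)$, Lemma \ref{lemma:nilsingularcharac} gives a totally singular $K[e]$-module decomposition $V = W \oplus W^*$ with $W \cong W^* \cong W_\ell$, and this induces
$$S^2(V) = S^2(W) \oplus S^2(W^*) \oplus WW^*$$
as $K[e]$-modules. Since $W$ and $W^*$ are totally singular, $\varphi$ vanishes on $S^2(W) \oplus S^2(W^*)$. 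Moreover, $e \in \mathfrak{sp}(V)$ implies $\varphi(\widetilde{e}(xy)) = b(ex,y) + b(x,ey) = 0$, so $\varphi$ is a $K[e]$-module morphism to the trivial module and in particular $\varphi$ vanishes on $\operatorname{Im}\widetilde{e}$.

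For (i), the case $\alpha = 0$ is immediate since $\Ker\widetilde{e}^{\,0} = 0$. For $\alpha \geq 1$ I would appeal to Theorem \ref{thm:extsymnilpotent} together with Proposition \ref{prop:uninilsim} to describe the Jordan structure of $\widetilde{e}$: every Jordan block on $WW^* \cong W_\ell \otimes W_\ell$ has size of the form $2^{\beta_k}$ for some $1 \leq k \leq r$, the smallest being $2^{\beta_r} = 2^\alpha$. Hence inside each Jordan block of $WW^*$ the subspace $\Ker\widetilde{e}^{\,2^\alpha - 1}$ is contained in $\operatorname{Im}\widetilde{e}$, because the bottom $(2^\alpha - 1)$-dimensional slice of a block of size $\geq 2^\alpha$ lies in the image of the nilpotent. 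Combining this with the fact that $\varphi$ vanishes on both $\operatorname{Im}\widetilde{e}$ and on $S^2(W) \oplus S^2(W^*)$ yields (i).

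For (ii) the argument splits into a base case and a reduction by squaring. When $\alpha = 0$ (so $\ell$ is odd) I would exhibit the explicit vector
$$\eta = \sum_{i=1}^{\ell} v_i\, v_{2\ell + 1 - i}$$
in the standard basis of $W(\ell)$ from Definition \ref{def:Wl}; a direct computation using $ev_1 = ev_{\ell+1} = 0$ and cancellation modulo $2$ shows $\widetilde{e}\eta = 0$, while $\varphi(\eta) = \ell \neq 0$. For $\alpha \geq 1$ I would use the identity $\widetilde{e}^{\,2^\alpha} = \widetilde{e^{2^\alpha}}$ on $S^2(V)$, which follows by iterating the characteristic-two identity $\widetilde{e}^{\,2} = \widetilde{e^2}$ (a Jacobson-type statement that the $2$-power of a derivation is the derivation of the $2$-power). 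Setting $e' = e^{2^\alpha}$, the same decomposition $V = W \oplus W^*$ remains totally singular for $K[e']$, and since each of $W$ and $W^*$ is a single Jordan block of size $\ell$ for $e$, it splits under $e'$ into $2^\alpha$ blocks of size $k = \ell/2^\alpha$, with $k$ odd. By Lemma \ref{lemma:nilsingularcharac} this forces $V \downarrow K[e'] = W(k)^{2^\alpha}$ as bilinear $K[e']$-modules. Applying Lemma \ref{lemma:NILS2decomp} to this orthogonal decomposition with $m = 1$, together with the base case applied to each $W(k)$-summand, gives $\Ker\widetilde{e'} \not\subseteq \Ker\varphi$, which is exactly (ii).

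The main obstacle is the squaring reduction for (ii): one must identify $V \downarrow K[e^{2^\alpha}]$ not merely as $W_k^{2^{\alpha+1}}$ at the level of Jordan blocks, but precisely as the bilinear module $W(k)^{2^\alpha}$, so that the inductive base case on each orthogonally indecomposable summand applies. This step depends essentially on Lemma \ref{lemma:nilsingularcharac} to recover the bilinear structure from the existence of a totally singular decomposition, and its unipotent analogue in Lemma \ref{lemma:S2indecompWl} is precisely what allows the same inductive scheme to succeed there.
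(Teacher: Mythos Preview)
Your proposal is correct and follows essentially the same approach as the paper: the decomposition $S^2(V) = S^2(W) \oplus S^2(W^*) \oplus WW^*$, the observation that $S^2(W), S^2(W^*) \subseteq \Ker\varphi$, the reduction of (ii) to the odd case via $e^{2^\alpha}$ and Lemma~\ref{lemma:NILS2decomp}, and the explicit fixed vector with $\varphi$-value $\ell$ in the base case all match the paper's proof. The only cosmetic differences are that for (i) you argue via $\varphi\circ\widetilde{e}=0$ and $\Ker\widetilde{e}^{\,2^\alpha-1}\cap WW^*\subseteq\operatorname{Im}\widetilde{e}$, whereas the paper invokes Lemma~\ref{jordanrestrictionNIL} directly using that the smallest block of $WW^*$ is $2^\alpha$; and for the squaring step the paper cites Lemma~\ref{lemma:nilsquaredecomp} rather than Lemma~\ref{lemma:nilsingularcharac}, but these are equivalent here.
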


\begin{proof}We proceed similarly to the proof of Lemma \ref{lemma:S2indecompWl}. By definition of $W(\ell)$, we have a totally singular decomposition $V = W \oplus W^*$, where $W$ and $W^*$ are $K[e]$-modules on which $e$ acts with a single Jordan block of size $\ell$. This induces a decomposition $$S^2(V) = S^2(W) \oplus S^2(W^*) \oplus WW^*$$ of $K[e]$-modules, where $WW^* \cong W \otimes W^*$. Thus $$\Ker(\widetilde{e}) = \Ker(\widetilde{e}_{S^2(W)}) \oplus \Ker(\widetilde{e}_{S^2(W)}) \oplus \Ker(\widetilde{e}_{WW^*}).$$ We have $S^2(W), S^2(W^*) \subseteq \Ker \varphi$ since $W$ and $W^*$ are totally singular. Furthermore, the smallest Jordan block size in $W \otimes W^*$ is $2^{\alpha}$ by Proposition \ref{prop:uninilsim} and \cite[Lemma 4.2]{KorhonenJordanGood}, so (i) follows from Lemma \ref{jordanrestrictionNIL}.

Next we consider (ii). We have $V \downarrow K[e^{2^{\alpha}}] = W(\ell/2^{\alpha})^{2^{\alpha}}$ by Lemma \ref{lemma:nilsquaredecomp}. Therefore by Lemma \ref{lemma:NILS2decomp}, it suffices to consider the case where $\alpha = 0$, in which case $\ell$ is odd. In this case, choose a basis $v_1$, $\ldots$, $v_{\ell}$ of $W$, and let $w_1$, $\ldots$, $w_{\ell}$ be the corresponding dual basis in $W^*$, so $b(v_i,w_j) = \delta_{i,j}$ for all $1 \leq i,j \leq \ell$. Then $\gamma = \sum_{1 \leq i \leq \ell} v_iw_i$ is annihilated by $e$ and $\varphi(\gamma) = \ell \neq 0$, so $\Ker(\widetilde{e}) \not\subseteq \Ker \varphi$, as required.\end{proof}

\begin{lemma}\label{lemma:S2indecompWlkNIL}
Let $e \in \mathfrak{sp}(V)$ be nilpotent such that $V \downarrow K[e] = W_k(\ell)$, where $0 < k < \ell/2$. Let $\widetilde{e}$ be the action of $e$ on $S^2(V)$, and denote $\alpha = \nu_2(\ell)$. Then the following hold:
	\begin{enumerate}[\normalfont (i)]
		\item $\Ker(\widetilde{e})^{2^{\alpha}-1} \subseteq \Ker \varphi$.
		\item $\Ker(\widetilde{e})^{2^{\alpha}} \not\subseteq \Ker \varphi$.
	\end{enumerate}
\end{lemma}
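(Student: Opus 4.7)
I would mirror the proof of Lemma~\ref{lemma:S2indecompWlNIL} for the untwisted module $W(\ell)$, adapting the arguments to handle the twist $ev_{n-k+1}=v_{n-k}+v_k$ that distinguishes $W_k(\ell)$ from $W(\ell)$.

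Claim (i) with $\alpha=0$ is immediate, since $\Ker\widetilde{e}^{0}=0$. For claim (ii) with $\alpha=0$, so $\ell$ is odd, I would construct an explicit element of $\Ker\widetilde{e}\setminus\Ker\varphi$. Set $\gamma:=\sum_{i=1}^{\ell}v_iv_{n-i+1}$. A direct calculation using Definition~\ref{def:Wkl} gives $\widetilde{e}(\gamma)=v_k^{2}$: the twist contributes the extra term $v_k\cdot v_k=v_k^{2}$ that is absent in the $W(\ell)$ case. Taking the telescoping correction $c:=\sum_{j=0}^{k-1}v_{k-j}v_{k+j+1}$, a similar direct calculation (with intermediate terms cancelling in pairs in characteristic two) yields $\widetilde{e}(c)=v_k^{2}$, so $y:=\gamma+c\in\Ker\widetilde{e}$. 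Since $(k-j)+(k+j+1)=2k+1\ne n+1$ for $k<\ell/2$, each summand of $c$ is isotropic, whence $\varphi(y)=\varphi(\gamma)=\ell\not\equiv 0\pmod{2}$.

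For $\alpha\geq 1$, claim (ii) reduces to the $\alpha=0$ case via the squaring identity $\widetilde{e}^{2^\alpha}=\widetilde{e^{2^\alpha}}$, valid in characteristic two. The key input is the isomorphism $V\downarrow K[e^{2^\alpha}]\cong W(\ell/2^\alpha)^{2^\alpha}$ of bilinear $K[e^{2^\alpha}]$-modules: by Lemma~\ref{lemma:nilsquaredecomp} each orthogonally indecomposable summand is of type $W(m_i)$, and the Jordan block count of $e^{2^\alpha}$ on $V$ (namely $2^{\alpha+1}$ blocks of size $\ell/2^\alpha$) forces this decomposition. Applying Lemma~\ref{lemma:NILS2decomp} and then the $\alpha=0$ case of Lemma~\ref{lemma:S2indecompWlNIL} (since $\ell/2^\alpha$ is odd) to each orthogonal summand yields (ii).

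The main obstacle is (i) for $\alpha\geq 1$, since $2^\alpha-1$ is not a power of two. The same squaring reduction, applied with the $\alpha-1$ case of Lemma~\ref{lemma:S2indecompWlNIL} on the decomposition $V\downarrow K[e^2]\cong W(\ell/2)^{2}$, only yields $\Ker\widetilde{e}^{2^\alpha-2}\subseteq\Ker\varphi$, one step short of what is required. To close this one-step gap I would translate the statement via Lemma~\ref{lemma:LieSpS2}: under the isomorphism $\g_{sc}\cong S^2(V)^*$, the functional $\varphi$ corresponds to $\mathrm{id}_V\in\mathfrak{sp}(V)$, which spans $Z(\g_{sc})$ by \cite[Lemma 2.2]{Hogeweij}, while $\widetilde{e}^{*}$ corresponds to $\ad(e)$. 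Claim (i) then becomes $\mathrm{id}_V\in\operatorname{Im}\ad(e)^{2^\alpha-1}$ in $\mathfrak{sp}(V)$. Using Lucas's identity $\binom{2^\alpha-1}{j}\equiv 1\pmod 2$, one has $\ad(e)^{2^\alpha-1}(X)=\sum_{j=0}^{2^\alpha-1}e^{j}Xe^{2^\alpha-1-j}$, and the required $X\in\mathfrak{sp}(V)$ can be assembled from shift operators along the two Jordan blocks of $e$ on $V$, with the $\mathfrak{sp}$-compatibility verified directly.
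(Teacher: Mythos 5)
Your treatment of (ii) for all $\alpha$, and of (i) for $\alpha=0$, is correct and in substance the same as the paper's. Your construction $y=\gamma+c$ for (ii), $\alpha=0$, is in fact literally the paper's fixed vector: the paper sets $W=\langle v_1,\dots,v_\ell\rangle$, $Z=\langle e^iv_n: 0\le i<\ell\rangle$, $w_j:=e^{j-1}v_n$, and takes $\sum_j v_jw_j$; expanding the $w_j$ in the $v$-basis recovers exactly your $\gamma+c$. Your reduction of (ii) for $\alpha\ge 1$ to the odd case via Lemma~\ref{lemma:nilsquaredecomp} and the Jordan-block count is also exactly what the paper does.

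The genuine gap is in (i) for $\alpha\ge 1$. You correctly locate the difficulty — the $e\mapsto e^2$ reduction only gives $\Ker\widetilde{e}^{\,2^{\alpha}-2}\subseteq\Ker\varphi$, one step short — but the repair you propose is a plan, not a proof. You assert that, after translating via Lemma~\ref{lemma:LieSpS2} to $\mathrm{id}_V\in\operatorname{Im}(\ad e)^{2^{\alpha}-1}$ inside $\mathfrak{sp}(V)$, a suitable $X$ "can be assembled from shift operators … with the $\mathfrak{sp}$-compatibility verified directly." No formula for $X$ is given, and no verification that $\sum_{j}e^{j}Xe^{2^{\alpha}-1-j}=\mathrm{id}_V$ or that $X\in\mathfrak{sp}(V)$ is carried out; the twist $ev_{n-k+1}=v_{n-k}+v_k$ that defines $W_k(\ell)$ makes such a hands-on computation genuinely delicate, and this is precisely the content the lemma is asked to establish. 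As it stands the step is missing.

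There is a much shorter way to close the gap, and it is what the paper does. From the decomposition $V=W\oplus Z$ with $W\cong Z\cong W_\ell$ as $K[e]$-modules one gets $S^2(V)=S^2(W)\oplus S^2(Z)\oplus WZ$, and Theorem~\ref{thm:extsymnilpotent} shows that every Jordan block size of $\widetilde{e}$ on each of these three summands is a power of two. Since $\Ker\varphi$ has codimension one, Lemma~\ref{jordanrestrictionNIL} says that passing from $S^2(V)$ to $\Ker\varphi$ shortens exactly one Jordan block, of some size $m$ that is actually a Jordan block size of $\widetilde{e}$ on $S^2(V)$. Your intermediate bound $\Ker\widetilde{e}^{\,2^{\alpha}-2}\subseteq\Ker\varphi$ (the paper uses the slightly weaker $\Ker\widetilde{e}^{\,2^{\alpha-1}}\subseteq\Ker\varphi$, obtained from $V\downarrow K[e^{2^{\alpha-1}}]=W(\ell/2^{\alpha-1})^{2^{\alpha-1}}$) forces $m>2^{\alpha}-2$; since $m$ is a power of two and $2^{\alpha}-1$ is not, $m\ge 2^{\alpha}$, hence $\Ker\widetilde{e}^{\,2^{\alpha}-1}\subseteq\Ker\varphi$. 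This avoids the explicit $\ad(e)$ computation entirely.
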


\begin{proof}Let $v_1$, $\ldots$, $v_n$ be the basis used in the definition of $W_k(\ell)$. Then $b(v_i,v_j) = 1$ if $i+j = n+1$ and $0$ otherwise. Furthermore, the action of $e$ is defined by \begin{align*}
ev_1 &= 0, \\
ev_{\ell+1} &= 0 \\
ev_i &= v_{i-1} \text{ for all } i \not\in \{1,\ell+1,n-k+1\} \\
ev_{n-k+1} &= v_{n-k} + v_{k}. \end{align*} We will denote $v_j = 0$ for all $j \leq 0$ and $j > n$.

Let $W = \langle v_i : 1 \leq i \leq \ell \rangle$ and $Z = \langle e^i v_n : 0 \leq i < \ell \rangle$. Then $V = W \oplus Z$, where $W$ and $Z$ are $K[e]$-modules on which $e$ acts as a single Jordan block of size $\ell$. We take $w_1$, $\ldots$, $w_{\ell}$ as a basis of $Z$, where $w_i := e^{i-1} v_n$ for all $1 \leq i \leq \ell$.

For the claims, we will first consider the case where $\alpha = 0$, so $\ell$ is odd. In this case (i) is trivial. For (ii), note that the vector $\gamma = \sum_{1 \leq j \leq \ell} v_j w_{j}$ is annihilated by the action of $e$. For all $1 \leq j \leq \ell$, we have $w_{j} = v_{n-j+1}$ or $w_{j} = v_{n-j+1} + v_r$ for some $1 \leq r \leq k$. Therefore $\varphi(\gamma) = \ell \neq 0$, so $\Ker(\widetilde{e}) \not\subseteq \Ker \varphi$, as claimed.

Suppose then that $\alpha > 0$. For (i), we will first prove that $\Ker(\widetilde{e}) \subseteq \Ker \varphi$. To this end, note that the decomposition $V = W \oplus Z$ induces a decomposition $$S^2(V) = S^2(W) \oplus S^2(Z) \oplus WZ$$ of $K[e]$-modules, where $WZ = \{ wz : w \in W, z \in Z\}$ is isomorphic to $W \otimes Z$. It follows from \cite[Proof of Theorem 1.6]{KorhonenSymExt2021} that $S^2(W)^e = W^{[2]}$ and $S^2(Z)^e = Z^{[2]}$, so $$\Ker(\widetilde{e}) = W^{[2]} \oplus Z^{[2]} \oplus \Ker(\widetilde{e}_{WZ}).$$ It is clear that $W^{[2]}, Z^{[2]} \subseteq \Ker \varphi$, and $\Ker(\widetilde{e}_{WZ}) \subseteq \Ker \varphi$ since the smallest Jordan block size in $W \otimes Z$ is $2^{\alpha}$ (Proposition \ref{prop:uninilsim} and \cite[Lemma 4.2]{KorhonenJordanGood}).

Thus $\Ker(\widetilde{e}) \subseteq \Ker \varphi$. This also proves (i) in the case where $\alpha = 1$, so suppose next that $\alpha > 1$. We have $V \downarrow K[e^{2^{\alpha-1}}] = W(\ell/2^{\alpha-1})^{2^{\alpha-1}}$ by Lemma \ref{lemma:nilsquaredecomp}. Since $\ell/2^{\alpha-1}$ is even, it follows from Lemma \ref{lemma:S2indecompWlNIL} and Lemma \ref{lemma:NILS2decomp} that \begin{equation}\label{eq:wklalpha1}\Ker(\widetilde{e})^{2^{\alpha-1}} \subseteq \Ker \varphi.\end{equation}

In $S^2(W)$, $S^2(Z)$, and $W \otimes Z$ all Jordan block sizes of $e$ are powers of two (Theorem \ref{thm:extsymnilpotent}). In particular $\widetilde{e}$ has no Jordan blocks of size $2^{\alpha-1} < d < 2^{\alpha}$, so by~\eqref{eq:wklalpha1} and Lemma \ref{jordanrestrictionNIL} we conclude that $\Ker(\widetilde{e})^{2^{\alpha}-1} \subseteq \Ker \varphi$, as required.

It remains to prove (ii) in the case where $\alpha > 0$. To this end, note first that we have $V \downarrow K[e^{2^{\alpha}}] = W(\ell/2^{\alpha})^{2^{\alpha}}$ by Lemma \ref{lemma:nilsquaredecomp}. It follows then from Lemma \ref{lemma:S2indecompWlNIL} and Lemma \ref{lemma:NILS2decomp} that $\Ker(\widetilde{e})^{2^{\alpha}} \not\subseteq \Ker \varphi$.\end{proof}

\begin{lemma}\label{lemma:S2indecompV2lNIL}
Let $e \in \mathfrak{sp}(V)$ be nilpotent such that $V \downarrow K[e] = V(2\ell)$. Let $\widetilde{e}$ be the action of $e$ on $S^2(V)$, and denote $\alpha = \nu_2(2\ell)$. Then the following hold:
	\begin{enumerate}[\normalfont (i)]
		\item $\Ker(\widetilde{e})^{2^{\alpha}-1} \subseteq \Ker \varphi$.
		\item $\Ker(\widetilde{e})^{2^{\alpha}} \not\subseteq \Ker \varphi$.
	\end{enumerate}
\end{lemma}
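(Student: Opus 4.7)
The plan is to apply Lemma~\ref{lemma:S2indecompWlNIL} to the nilpotent element $e^2 \in \mathfrak{sp}(V)$ and then sharpen the resulting bound using the fact that every Jordan block of $\widetilde{e}$ on $S^2(V)$ has size a power of two.

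First I would observe that since $V \downarrow K[e] = V(2\ell)$, the element $e$ acts as a single Jordan block of size $2\ell$ on $V$, so $e^2$ has Jordan type $W_\ell \oplus W_\ell$. By Lemma~\ref{lemma:nilsquaredecomp}, $V \downarrow K[e^2]$ is a sum of modules of type $W(m_i)$, and the only such bilinear module with Jordan type $W_\ell \oplus W_\ell$ is $W(\ell)$; hence $V \downarrow K[e^2] = W(\ell)$. Since $\nu_2(\ell) = \alpha - 1$, applying Lemma~\ref{lemma:S2indecompWlNIL} to $e^2$ and using $\widetilde{e^2} = \widetilde{e}^2$ (valid in characteristic two because the cross-terms of $\widetilde{e}^2$ as a derivation on $S^2(V)$ cancel) yields
\begin{equation*}
\Ker \widetilde{e}^{2^{\alpha}-2} \subseteq \Ker \varphi \quad \text{and} \quad \Ker \widetilde{e}^{2^{\alpha}} \not\subseteq \Ker \varphi,
\end{equation*}
the second of which is precisely statement (ii).

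For statement (i), I would apply Lemma~\ref{jordanrestrictionNIL} to $\widetilde{e}$ on $S^2(V)$ with the codimension-one invariant subspace $\Ker \varphi$: there is an integer $m$ with $S^2(V) \cong W_m \oplus V'$ and $\Ker \varphi \cong W_{m-1} \oplus V'$. The bounds established above force $m \in \{2^{\alpha}-1, 2^{\alpha}\}$. On the other hand, Theorem~\ref{thm:extsymnilpotent} applied to $V \downarrow K[e] = W_{2\ell}$ shows that every Jordan block of $\widetilde{e}$ on $S^2(V)$ has size $1$ or $2^{\beta_k}$ for some $\beta_k > 0$, in particular a power of two. Thus $m$ itself is a power of two, and for $\alpha \geq 2$ this immediately forces $m = 2^{\alpha}$ since $2^{\alpha}-1$ is not a power of two.

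The main obstacle is the borderline case $\alpha = 1$ (equivalently, $\ell$ odd), where the power-of-two constraint only narrows $m$ down to $\{1, 2\}$ and an extra argument is needed to exclude $m = 1$, that is, to prove $\Ker \widetilde{e} \subseteq \Ker \varphi$. For this I would use that $e$ acts on $S^2(V)$ as a derivation and $\chr K = 2$, so $e(v^2) = 2v \cdot ev = 0$ for all $v \in V$, giving $V^{[2]} \subseteq S^2(V)^e$. As in the proof of Lemma~\ref{lemma:S2indecompWlkNIL}, the identification $S^2(W)^e = W^{[2]}$ from \cite[Proof of Theorem 1.6]{KorhonenSymExt2021} (or, alternatively, a direct dimension count via Theorem~\ref{thm:extsymnilpotent}) gives equality $S^2(V)^e = V^{[2]}$. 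Since $\varphi(v^2) = b(v, v) = 0$ for all $v \in V$, we conclude $\Ker \widetilde{e} = S^2(V)^e \subseteq \Ker \varphi$, completing (i).
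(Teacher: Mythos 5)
Your proof is correct. The ideas you use — reduction to a power of $e$ acting on a $W(\ell)$-type module, the power-of-two structure of Jordan blocks of $S^2(W_{2\ell})$ from Theorem~\ref{thm:extsymnilpotent}, and the identity $\Ker\widetilde{e} = V^{[2]} \subseteq \Ker\varphi$ — are exactly the ingredients the paper uses, but you arrange them differently. For part~(ii), you square once ($e^2$, giving $W(\ell)$ with $\nu_2(\ell) = \alpha-1$), whereas the paper squares $\alpha$ times ($e^{2^{\alpha}}$, giving $W(\ell/2^{\alpha-1})^{2^{\alpha-1}}$ with odd index); both invocations of Lemma~\ref{lemma:S2indecompWlNIL} deliver the same conclusion. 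For part~(i), the paper establishes $\Ker\widetilde{e} = V^{[2]} \subseteq \Ker\varphi$ directly and uniformly, then immediately jumps to $2^{\alpha}-1$ because $S^2(V)$ has no Jordan blocks of size strictly between $1$ and $2^{\alpha}$; you instead first pin $m$ into $\{2^{\alpha}-1, 2^{\alpha}\}$ via the $e^2$ bound, then split into $\alpha \geq 2$ (handled by the power-of-two constraint) and $\alpha = 1$ (handled by the $V^{[2]}$ fact). Your case split on $\alpha$ is avoidable — if you applied the $V^{[2]}$ observation from the start, the power-of-two gap would close things uniformly, which is what the paper does — but the argument as written is sound.
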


\begin{proof}
We have $\Ker(\widetilde{e}) = V^{[2]}$ by \cite[Proof of Theorem 1.6]{KorhonenSymExt2021}, so $\Ker(\widetilde{e}) \subseteq \Ker \varphi$. Since $e$ has no Jordan blocks of size $1 < d < 2^{\alpha}$ on $S^2(V)$ (Theorem \ref{thm:extsymnilpotent}), it follows from Lemma \ref{jordanrestrictionNIL} that $\Ker(\widetilde{e})^{2^{\alpha}-1} \subseteq \Ker \varphi$.

Next we consider claim (ii). Since $\alpha = \nu_2(2\ell) > 0$, we have $V \downarrow K[e^{2^{\alpha}}] = W(\ell/2^{\alpha-1})^{2^{\alpha-1}}$ by Lemma \ref{lemma:nilsquaredecomp}. Because $\ell/2^{\alpha-1}$ is odd, we conclude from Lemma \ref{lemma:S2indecompWlNIL} and Lemma \ref{lemma:NILS2decomp} that $\Ker(\widetilde{e})^{2^{\alpha}} \not\subseteq \Ker \varphi$.\end{proof}

\begin{prop}\label{prop:nilKeraction}
Let $e \in \mathfrak{sp}(V)$ be nilpotent, with orthogonal decomposition $$V \downarrow K[e] = \sum_{1 \leq i \leq t} W(m_i) \perp \sum_{1 \leq j \leq t'} W_{k_j}(\ell_j) \perp \sum_{1 \leq r \leq t''} V(2d_r).$$ Denote by $\alpha \geq 0$ the largest integer such that $2^{\alpha} \mid m_i,\ell_j,2d_r$ for all $i$, $j$, and $r$.

Then \begin{align*}
		\g_{sc} &\cong W_{2^{\alpha}} \oplus V' \\
		\g_{sc}/Z(\g_{sc}) &\cong W_{2^{\alpha}-1} \oplus V' \end{align*} for some $K[e]$-module $V'$.
\end{prop}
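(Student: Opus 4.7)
The plan is to mirror the proof of Proposition \ref{prop:unipKeraction}, translating the statement into a question about the action $\widetilde{e}$ on the symmetric square $S^2(V)$ and reducing to the orthogonally indecomposable case where Lemmas \ref{lemma:S2indecompWlNIL}, \ref{lemma:S2indecompWlkNIL}, and \ref{lemma:S2indecompV2lNIL} do the heavy lifting.

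First I would reformulate. By Lemma \ref{lemma:LieSpS2} we have $\g_{sc} \cong S^2(V)^*$ as $K[e]$-modules, and by Lemma \ref{lemma:kerphiisdual} we have $\g_{sc}/Z(\g_{sc}) \cong (\Ker \varphi)^*$. Since taking duals preserves Jordan block sizes and since $\dim S^2(V)/\Ker \varphi = 1$ with $\varphi$ being $K[e]$-equivariant, Lemma \ref{jordanrestrictionNIL} shows that the conclusion of the proposition is equivalent to the assertion that $2^{\alpha}$ is the smallest integer $m \geq 1$ such that $\Ker(\widetilde{e})^{m} \not\subseteq \Ker \varphi$. Thus the proposition reduces to establishing the two statements
\begin{align*}
\Ker(\widetilde{e})^{2^{\alpha}-1} &\subseteq \Ker \varphi, \\
\Ker(\widetilde{e})^{2^{\alpha}} &\not\subseteq \Ker \varphi.
\end{align*}

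Next I would invoke Lemma \ref{lemma:NILS2decomp} to reduce to the orthogonally indecomposable summands. Write the orthogonal decomposition as $V \downarrow K[e] = U_1 \perp \cdots \perp U_N$ where each $U_c$ is of the form $W(m_i)$, $W_{k_j}(\ell_j)$, or $V(2d_r)$, and let $\alpha_c \in \{\nu_2(m_i), \nu_2(\ell_j), \nu_2(2d_r)\}$ be the corresponding $2$-adic valuation, so that $\alpha = \min_c \alpha_c$. Each of Lemmas \ref{lemma:S2indecompWlNIL}, \ref{lemma:S2indecompWlkNIL}, and \ref{lemma:S2indecompV2lNIL} gives $\Ker(\widetilde{e}_c)^{2^{\alpha_c}-1} \subseteq \Ker \varphi$ on $S^2(U_c)$. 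Since $\alpha_c \geq \alpha$, we have $\Ker(\widetilde{e}_c)^{2^{\alpha}-1} \subseteq \Ker(\widetilde{e}_c)^{2^{\alpha_c}-1} \subseteq \Ker \varphi$ for every $c$, and Lemma \ref{lemma:NILS2decomp} then yields the first inclusion globally.

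For the non-inclusion, I would pick an index $c^*$ achieving $\alpha_{c^*} = \alpha$ and apply the relevant one of Lemmas \ref{lemma:S2indecompWlNIL}, \ref{lemma:S2indecompWlkNIL}, \ref{lemma:S2indecompV2lNIL}, which produces an element of $\Ker(\widetilde{e}_{c^*})^{2^{\alpha}}$ on $S^2(U_{c^*})$ not lying in $\Ker \varphi$; by Lemma \ref{lemma:NILS2decomp} this witnesses $\Ker(\widetilde{e})^{2^{\alpha}} \not\subseteq \Ker \varphi$. There is no real obstacle once the orthogonally indecomposable results have been established --- the argument is bookkeeping. The only mild subtlety to watch is making sure the global $\alpha$ aligns correctly with $\nu_2(2d_r)$ (not $\nu_2(d_r)$) for the $V(2d_r)$ summands, which is exactly how the valuation is set up in both the proposition statement and Lemma \ref{lemma:S2indecompV2lNIL}.
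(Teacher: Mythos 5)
Your proposal is correct and follows exactly the same route as the paper's own proof: reformulate via Lemmas \ref{lemma:LieSpS2}, \ref{lemma:kerphiisdual}, and \ref{jordanrestrictionNIL} into the two kernel conditions on $S^2(V)$, then apply Lemma \ref{lemma:NILS2decomp} together with Lemmas \ref{lemma:S2indecompWlNIL}, \ref{lemma:S2indecompWlkNIL}, and \ref{lemma:S2indecompV2lNIL}. You spell out the bookkeeping (and the $\nu_2(2d_r)$ vs.\ $\nu_2(d_r)$ point) more explicitly than the paper's terse ``similarly to Proposition \ref{prop:unipKeraction}'', but the substance is identical.
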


\begin{proof}Let $\widetilde{e}$ be the action of $e$ on $S^2(V)$. By Lemma \ref{lemma:LieSpS2}, Lemma \ref{lemma:kerphiisdual}, and Lemma \ref{jordanrestrictionNIL}, the proposition is equivalent to the statement that $\Ker(\widetilde{e})^{2^{\alpha}-1} \subseteq \Ker \varphi$ and $\Ker(\widetilde{e})^{2^{\alpha}} \not\subseteq \Ker \varphi$. 

Thus similarly to the proof of Proposition \ref{prop:unipKeraction}, the result follows from Lemma \ref{lemma:NILS2decomp}, together with Lemma \ref{lemma:S2indecompWlNIL}, Lemma \ref{lemma:S2indecompWlkNIL}, and Lemma \ref{lemma:S2indecompV2lNIL}.\end{proof}

\section{Jordan block sizes of unipotent elements on \texorpdfstring{$\g_{ad}$}{g\_ad}}\label{section:unipGAdZ}

Let $G_{sc} = \Sp(V)$ be simply connected and simple of type $C_{\ell}$. In this section, we will describe the Jordan block sizes of unipotent elements $u \in G_{sc}$ acting on $\g_{ad}$. Our approach is to describe the Jordan block sizes of $u$ on $\g_{ad}$ in terms of the Jordan block sizes on $[\g_{ad}, \g_{ad}]$, using Lemma \ref{jordanrestrictionNIL}. Since the Jordan block sizes of $u$ on $[\g_{ad}, \g_{ad}] \cong \g_{sc}/Z(\g_{sc})$ (Lemma \ref{lemma:gmodZisgadgad}) are known by the results of Section \ref{section:unipgmodZ}, we then get an explicit description of the Jordan block sizes of $u$ on $\g_{ad}$.

Throughout this section we will consider $\g_{ad}$ as constructed in Section \ref{subsection:chevalleyC} -- \ref{subsection:symsquareC}, and we use the notation established there.

\begin{lemma}\label{lemma:deltaWLunip}
Let $u \in \Sp(V)$ be the reduction modulo $p$ of $u_{\Z} = x_{\Z}^{(\alpha_1)} \cdots x_{\Z}^{(\alpha_{\ell-1})}$ as in Section \ref{subsection:unipotentroots}, so that $V \downarrow K[u] = W(\ell)$. Then the following hold:
	\begin{enumerate}[\normalfont (i)]
		\item $(u_{\Z} - 1) \cdot \delta = 0$.
		\item There exists $v \in L_{sc}$ such that $(u_{\Z} - 1) \cdot (\delta + v) \equiv 2 \delta \mod{2 L_{sc}}$ if and only if $\ell$ is even.
	\end{enumerate}
\end{lemma}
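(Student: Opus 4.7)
The plan is to prove (i) by a direct calculation using the ring-homomorphism action of $u_{\Z}$ on $S^2(V_{\C})$, and then deduce (ii) from (i) by combining Lemma~\ref{lemma:imZforUZ} with the explicit Jordan structure supplied by Proposition~\ref{prop:unipKeraction}.

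For (i), I will use the fact (recorded at the end of Section~\ref{subsection:unipotentroots}) that $u_{\Z} \cdot (vw) = (u_{\Z}v)(u_{\Z}w)$ for all $v,w \in V_{\C}$, together with the explicit formulas $u_{\Z} v_j = v_1 + \cdots + v_j$ for $1 \leq j \leq \ell$, $u_{\Z} v_{\ell+1} = v_{\ell+1}$, and $u_{\Z} v_j = v_j - v_{j-1}$ for $\ell+1 < j \leq n$. Writing $S_i := v_1 + \cdots + v_i$ and applying these to each summand of $\delta = \frac{1}{2} \sum_{i=1}^{\ell} v_i v_{n-i+1}$, one finds $u_{\Z}(v_i v_{n-i+1}) = S_i(v_{n-i+1} - v_{n-i})$ for $1 \leq i < \ell$ and $u_{\Z}(v_{\ell} v_{\ell+1}) = S_{\ell} v_{\ell+1}$. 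Reindexing the ``cross'' sum $\sum_{i=1}^{\ell-1} S_i v_{n-i}$ as $\sum_{j=2}^{\ell} S_{j-1} v_{n-j+1}$ and using $S_j - S_{j-1} = v_j$ makes this sum telescope to $\sum_{i=1}^{\ell-1} v_i v_{n-i+1} - S_{\ell-1} v_{\ell+1}$; the residual $-S_{\ell-1} v_{\ell+1}$ then combines with the $i = \ell$ contribution $S_{\ell} v_{\ell+1}$ to restore the missing term $v_{\ell} v_{\ell+1}$. The upshot is $u_{\Z} \cdot \delta = \delta$, which is (i).

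For (ii), (i) reduces the condition ``$(u_{\Z} - 1)(\delta + v) \equiv 2\delta \pmod{2 L_{sc}}$ for some $v \in L_{sc}$'' to the simpler condition ``$(u_{\Z} - 1) v \equiv 2\delta \pmod{2 L_{sc}}$ for some $v \in L_{sc}$''. By Lemma~\ref{lemma:imZforUZ} applied with $m = 1$, the latter is equivalent to $Z(\g_{sc}) \subseteq \operatorname{Im}(\widehat{u} - 1)$, where $\widehat{u}$ is the action of $u$ on $\g_{sc}$. Proposition~\ref{prop:unipKeraction} applied to $u$ (with $t = 1$, $m_1 = \ell$, $s = 0$, and $\alpha = \nu_2(\ell)$) gives $\g_{sc} \cong V_{2^{\alpha}} \oplus V'$ and $\g_{sc}/Z(\g_{sc}) \cong V_{2^{\alpha}-1} \oplus V'$ as $K[u]$-modules. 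Hence $Z(\g_{sc})$ sits as the socle of the $V_{2^{\alpha}}$ summand of $\g_{sc}$, and it lies in $\operatorname{Im}(\widehat{u} - 1)$ precisely when that Jordan block has size at least $2$, i.e.\ when $\alpha \geq 1$, which is exactly the condition that $\ell$ be even.

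The main obstacle is the bookkeeping in (i): the telescoping has to be arranged so that the cross terms $S_i v_{n-i}$ for $i < \ell$ line up correctly with the $i = \ell$ contribution. Once (i) is established, part (ii) is essentially immediate from the already-proved Proposition~\ref{prop:unipKeraction}.
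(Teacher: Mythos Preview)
Your proof is correct, and part (ii) is essentially identical to the paper's argument (the paper cites Lemma~\ref{jordanquotientNIL} rather than phrasing it as ``$Z(\g_{sc})$ sits in the socle of the $V_{2^{\alpha}}$ summand'', but these are the same observation).

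For part (i) your computation is fine, but the paper takes a shorter route that avoids the telescoping entirely: it observes that each simple root vector $X_{\alpha_i} = E_{i,i+1} - E_{n-i,n-i+1}$ with $1 \leq i \leq \ell-1$ annihilates $\delta$, so each factor $x_{\Z}^{(\alpha_i)} = 1 + X_{\alpha_i} + \tfrac{1}{2}X_{\alpha_i}^2$ fixes $\delta$, and hence so does their product $u_{\Z}$. This Lie-algebra-level check is a one-line calculation, whereas your direct expansion of $u_{\Z}\cdot\delta$ via the multiplicative action on $S^2(V_{\C})$ requires the $S_i$ bookkeeping you describe. Both arrive at the same conclusion; the paper's version is just more economical.
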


\begin{proof}
For claim (i), consider $X_{\alpha_i} = E_{i,i+1} - E_{n-i,n-i+1}$ for $1 \leq i \leq \ell-1$ from the Chevalley basis of $\mathfrak{sp}(V_{\C})$. It is clear that $X_{\alpha_i} \cdot \delta = 0$, so $x_{\Z}^{(\alpha_i)} \cdot \delta = \delta$ for all $1 \leq i \leq \ell-1$, from which (i) follows.

For (ii), we have $(u_{\Z} - 1) \cdot (\delta + v) = (u_{\Z} - 1) \cdot v$ for all $v \in L_{sc}$ by (i). By Lemma \ref{lemma:imZforUZ}, there exists $v \in L_{sc}$ such that $(u_{\Z} - 1) \cdot (v) \equiv 2 \delta \mod{2 L_{sc}}$ if and only if $\operatorname{Im}(\widehat{u} - 1) \supseteq Z(\g_{sc})$, where $\widehat{u}$ is the action of $u$ on $\g_{sc}$. It follows from Proposition \ref{prop:unipKeraction} and Lemma \ref{jordanquotientNIL} that $\operatorname{Im}(\widehat{u} - 1) \supseteq Z(\g_{sc})$ if and only if $\ell$ is even, so we conclude that (ii) holds.\end{proof}

\begin{lemma}\label{lemma:deltaV2Lunip}
Assume that $\ell$ is odd. Let $u \in \Sp(V)$ be the reduction modulo $p$ of $u_{\Z} = x_{\Z}^{(\alpha_1)} \cdots x_{\Z}^{(\alpha_{\ell})}$ as in Section \ref{subsection:unipotentroots}, so that $V \downarrow K[u] = V(2\ell)$. Then the following hold:

	\begin{enumerate}[\normalfont (i)]
		\item There exists $v \in L_{sc}$ such that $(u_{\Z} - 1) \cdot (\delta + v) \equiv 2 \delta \mod{2 L_{sc}}$.
		\item There does not exist $v \in L_{sc}$ such that $(u_{\Z} - 1) \cdot (\delta + v) \equiv 0 \mod{2 L_{sc}}$.
	\end{enumerate}

\end{lemma}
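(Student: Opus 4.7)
The plan is to prove (i) by an explicit construction and then derive (ii) from (i) using Proposition \ref{prop:unipKeraction}.

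First I compute $(u_\Z-1)\delta$. Since $u_\Z$ acts multiplicatively on $S^2(V_\C)$ (Section \ref{subsection:unipotentroots}), writing $S_j := v_1 + \cdots + v_j$ and using $u_\Z v_i = S_i$ for $i \leq \ell+1$ while $u_\Z v_i = v_i - v_{i-1}$ for $i > \ell+1$, expanding $u_\Z(2\delta) = \sum_{i=1}^{\ell}(u_\Z v_i)(u_\Z v_{n-i+1})$ and telescoping yields $u_\Z(2\delta) = 2\delta + S_\ell^2$, so $(u_\Z-1)\delta = \tfrac{1}{2}S_\ell^2 \in L_{sc}$.

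For (i), the natural first guess $v_0 = \tfrac{1}{2} v_{\ell+1}^2$ gives $(u_\Z-1)(\delta + v_0) = S_\ell S_{\ell+1} \equiv \sum_{i=1}^{\ell} v_i v_{\ell+1} \pmod{2L_{sc}}$, differing from $2\delta = \sum_{i=1}^{\ell} v_i v_{n-i+1}$ by $\sum_{i=1}^{\ell-1} v_i(v_{\ell+1} + v_{n-i+1})$. To cancel this discrepancy I set $v = v_0 + w$ with
\[
w = \sum_{k=1}^{(\ell-1)/2} v_{\ell-2k+1}\bigl(v_{\ell+2k} + v_{\ell+2k+1}\bigr) \in L_{sc},
\]
which is well-defined as $(\ell-1)/2$ is a nonnegative integer for $\ell$ odd, and note the $b$-pairing $(\ell-2k+1) + (\ell+2k) = n+1$. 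Using $(u_\Z-1)(v_iv_j) = S_i(v_j - v_{j-1}) - v_iv_j$ for $j > \ell+1$, a term-by-term expansion shows the cross-contributions in the sum over $k$ telescope, leaving only the anti-diagonal entries $v_{\ell-2k+1}v_{\ell+2k}$ which, combined with $v_\ell v_{\ell+1}$ from $(u_\Z-1)(v_0)$, reconstruct $2\delta$ modulo $2L_{sc}$; hence $(u_\Z-1)(\delta + v) \equiv 2\delta \pmod{2L_{sc}}$.

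For (ii), I argue by contradiction: if some $v' \in L_{sc}$ satisfied $(u_\Z-1)(\delta + v') \equiv 0 \pmod{2L_{sc}}$, subtracting from the congruence in (i) would give $(u_\Z-1)(v - v') \equiv 2\delta \pmod{2L_{sc}}$, so by Lemma \ref{lemma:imZforUZ}, $Z(\g_{sc}) \subseteq \operatorname{Im}(\widehat{u}-1)$. However, since $\ell$ is odd, Proposition \ref{prop:unipKeraction} applied with $\alpha = \nu_2(\ell) = 0$ gives $\g_{sc} \cong V_1 \oplus V'$ with $Z(\g_{sc}) \cong V_1$ a trivial direct summand, so $\operatorname{Im}(\widehat{u}-1) \subseteq V'$ and thus $Z(\g_{sc}) \cap \operatorname{Im}(\widehat{u}-1) = 0$, a contradiction. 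The main obstacle is the telescoping verification for (i): while each individual $(u_\Z-1)$-calculation is routine, tracking the cancellations in the sum over $k$ requires careful bookkeeping, and the odd parity of $\ell$ is essential for the indices to pair up.
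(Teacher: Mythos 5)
Your overall strategy matches the paper, and your candidate $v$ is in fact exactly the paper's choice: after the reindexing $j \mapsto (\ell+1)/2 - j$, your $w = \sum_{k=1}^{(\ell-1)/2} v_{\ell-2k+1}(v_{\ell+2k} + v_{\ell+2k+1})$ coincides with the paper's $\gamma = \sum_{1 \le j \le (\ell-1)/2} v_{2j}(v_{n-2j+1} + v_{n-2j+2})$. Part (ii) is the same contradiction argument via Lemma \ref{lemma:imZforUZ} and Proposition \ref{prop:unipKeraction}; your route through ``$Z(\g_{sc}) \cong V_1$ is a direct summand'' is a legitimate (if slightly compressed) reading of the proposition together with Lemma \ref{jordanquotientNIL}, since a one-dimensional submodule outside $\operatorname{Im}(\widehat u - 1)$ must sit in a block of size one.

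The gap is in your verification of (i). You assert that after telescoping, what survives is only $\sum_k v_{\ell-2k+1}v_{\ell+2k}$ together with the single term $v_\ell v_{\ell+1}$ from $(u_\Z - 1)v_0$, and that these reconstruct $2\delta$. This is not what happens, and the discrepancy is visible already for $\ell = 3$ ($n=6$, $m=(\ell-1)/2 = 1$). There $2\delta = v_1v_6 + v_2v_5 + v_3v_4$, but your listed survivors are only $v_2v_5$ (from $k=1$) and $v_3v_4$; the term $v_1v_6$ is missing from your account. A direct computation shows $(u_\Z-1)w \equiv v_1v_6 + v_1v_4 + v_2v_4 + v_2v_5 \pmod{2L_{sc}}$ and $(u_\Z-1)(\delta + v_0) \equiv v_1v_4 + v_2v_4 + v_3v_4 \pmod{2L_{sc}}$, so the non-antidiagonal terms $v_1v_4$, $v_2v_4$ cancel pairwise and the sum is $v_1v_6 + v_2v_5 + v_3v_4 = 2\delta$. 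In general the correct bookkeeping (as carried out in the paper) gives $(u_\Z-1)\gamma \equiv 2\delta + \sum_{j=1}^{\ell} v_j v_{\ell+1}$ and $(u_\Z-1)(\delta + v_0) \equiv \sum_{j=1}^{\ell} v_j v_{\ell+1}$ modulo $2L_{sc}$; the cancellation is between the \emph{full} sums $\sum_j v_j v_{\ell+1}$, not between an isolated $v_\ell v_{\ell+1}$ and ``the rest,'' and the full $2\delta$ comes from the antidiagonal terms $v_{2j-1}v_{n-2j+2}$ and $v_{2j}v_{n-2j+1}$ together. You flag this step as ``requiring careful bookkeeping,'' and indeed the bookkeeping you sketched does not add up; you need to carry it out, along the lines above, to complete (i).
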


\begin{proof}
For (i), first we calculate \begin{align*} u_{\Z} \cdot \delta &= x_{\Z}^{(\alpha_1)} \cdots x_{\Z}^{(\alpha_{\ell-1})} \cdot ( \delta + \frac{1}{2} v_{\ell}^2 ) & (\text{by } x_{\Z}^{(\alpha_{\ell})} \cdot \delta = \delta + \frac{1}{2} v_{\ell}^2) \\ &= \delta + \frac{1}{2} \left(v_1 + \cdots + v_{\ell} \right)^2 & (\text{by Lemma \ref{lemma:deltaWLunip} (i)}) \\ &= \delta + \sum_{1 \leq i \leq \ell} \frac{1}{2} v_i^2 + \sum_{1 \leq i < j \leq \ell} v_iv_j.\end{align*}

Furthermore $$u_{\Z} \cdot \frac{1}{2} v_{\ell+1}^2 = \frac{1}{2} \left(v_1 + \cdots + v_{\ell+1} \right)^2 = \sum_{1 \leq i \leq \ell+1} \frac{1}{2}v_i^2 + \sum_{1 \leq i < j \leq \ell+1} v_iv_j,$$ so \begin{equation}\label{eq:firsteqV2ldelta}u_{\Z} \cdot \left(\delta + \frac{1}{2} v_{\ell+1}^2\right) \equiv \delta + \frac{1}{2} v_{\ell+1}^2 + \sum_{1 \leq i \leq \ell} v_iv_{\ell+1} \mod{2L_{sc}}.\end{equation} Next define $$\gamma := \sum_{1 \leq j \leq (\ell-1)/2} v_{2j}\left(v_{n-2j+1} + v_{n-2j+2}\right).$$ For $1 \leq j \leq (\ell-1)/2$, we denote $s_j := \sum_{1 \leq t \leq 2j} v_t v_{n-2j}$. Then modulo $2L_{sc}$, we have \begin{align*} u_{\Z} \cdot \gamma &\equiv \sum_{1 \leq j \leq (\ell-1)/2} \left( \sum_{1 \leq t \leq 2j} v_t \right) \left(v_{n-2j+2} + v_{n-2j} \right) \\ &\equiv \sum_{1 \leq j \leq (\ell-1)/2}\left( s_j + s_{j-1} + v_{2j-1}v_{n-2j+2} + v_{2j}v_{n-2j+2} \right) \\ &\equiv s_{(\ell-1)/2} + \sum_{1 \leq j \leq (\ell-1)/2} v_{2j-1} v_{n-2j+2} + \sum_{1 \leq j \leq (\ell-1)/2} v_{2j} v_{n-2j+2} \\ &\equiv s_{(\ell-1)/2} + \sum_{1 \leq j \leq (\ell-1)/2} v_{2j-1} v_{n-2j+2} + \left( \gamma + \sum_{1 \leq j \leq (\ell-1)/2} v_{2j} v_{n-2j+1} \right) \\ &\equiv \gamma + s_{(\ell-1)/2} + \sum_{1 \leq j \leq \ell-1} v_j v_{n-j+1} \\ &\equiv \gamma + s_{(\ell-1)/2} + \left(2\delta + v_{\ell}v_{\ell+1} \right) \\ &\equiv \gamma + 2 \delta + \sum_{1 \leq j \leq \ell} v_jv_{\ell+1}.\end{align*} Combining this with~\eqref{eq:firsteqV2ldelta}, we get $$u_{\Z} \cdot \left(\delta + \frac{1}{2} v_{\ell+1}^2 + \gamma\right) \equiv \delta + \frac{1}{2} v_{\ell+1}^2 + \gamma + 2 \delta \mod{2L_{sc}}.$$ We conclude then that (i) holds, with $v = \frac{1}{2} v_{\ell+1}^2 + \gamma$.

For claim (ii), suppose for the sake of contradiction that there exists $v \in L_{sc}$ such that $(u_{\Z} - 1) \cdot (\delta + v) \equiv 0 \mod{2 L_{sc}}$. Then it follows from (i) that there exists $w \in L_{sc}$ such that $(u_{\Z} - 1) \cdot w \equiv 2 \delta \mod{2 L_{sc}}$. By Lemma \ref{lemma:imZforUZ}, we have $\operatorname{Im}(\widehat{u} - 1) \supseteq Z(\g_{sc})$, where $\widehat{u}$ is the action of $u$ on $\g_{sc}$. On the other hand $\operatorname{Im}(\widehat{u} - 1) \not\supseteq Z(\g_{sc})$ by Proposition \ref{prop:unipKeraction} and Lemma \ref{jordanquotientNIL}, so we have a contradiction.\end{proof}

\begin{lemma}\label{lemma:GADoddsumofWs}
Let $u \in \Sp(V)$ be unipotent such that $V \downarrow K[u] = \sum_{1 \leq i \leq t} W(\ell_i)$, where $\ell_i \geq 1$ for all $1 \leq i \leq t$. Then $\Ker(\widetilde{u}-1) \not\subseteq [\g_{ad},\g_{ad}]$.
\end{lemma}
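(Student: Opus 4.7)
The plan is to apply Lemma \ref{lemma:GADreduce} to reduce the problem to the orthogonally indecomposable summands and then invoke Lemma \ref{lemma:deltaWLunip} (i) on each summand. Specifically, write the orthogonal decomposition $V = U_1 \perp \cdots \perp U_t$ where $U_i \cong W(\ell_i)$, so that $u = u_1 \cdots u_t$ with $u_i \in \Sp(U_i)$ acting as $W(\ell_i)$, and $u_{\Z,i} = x_{\Z}^{(\beta_1)} \cdots x_{\Z}^{(\beta_{\ell_i - 1})}$ in the notation of Section \ref{subsection:unipotentrootsGEN}. Note that $u_{\Z,i}$ has exactly the form to which Lemma \ref{lemma:deltaWLunip} applies (with respect to the root system $\Phi^{(i)}$ and the element $\delta_i$ associated to the block $U_i$).

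The key observation is then immediate: by Lemma \ref{lemma:deltaWLunip} (i), applied inside each $\mathscr{U}_{\Z}^{(i)}$ on the symmetric square $S^2(V_{\C}^{(i)})$, we get $(u_{\Z,i} - 1) \cdot \delta_i = 0$ for every $1 \leq i \leq t$. Setting $w_i = 0 \in L_{sc}^{(i)}$ for each $i$, we obtain
\[
(u_{\Z,i} - 1) \cdot (\delta_i + w_i) = 0 \equiv 0 \mod{2 L_{sc}^{(i)}}
\]
for all $i$, which is precisely condition (i)(a) of Lemma \ref{lemma:GADreduce}. Hence $\Ker(\widetilde{u}-1) \not\subseteq [\g_{ad}, \g_{ad}]$, as required.

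There is essentially no obstacle here; the work was already done in establishing Lemma \ref{lemma:deltaWLunip} (i) and the reduction machinery of Lemma \ref{lemma:GADreduce}. The only point worth spelling out explicitly in the written proof is why Lemma \ref{lemma:deltaWLunip} (i) actually applies verbatim to each summand: the definition of $\delta_i$ mirrors that of $\delta$ on $V_{\C}^{(i)}$, the Chevalley basis elements $X_{\beta_j}$ for $1 \leq j \leq \ell_i - 1$ all satisfy $X_{\beta_j} \cdot \delta_i = 0$ since they correspond to the short simple roots of the $C_{\ell_i}$ subsystem and act as $E_{k,k+1} - E_{n_i-k, n_i-k+1}$ type matrices on $V_{\C}^{(i)}$. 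Thus $x_{\Z}^{(\beta_j)} \cdot \delta_i = \delta_i$ for each such $j$, giving $u_{\Z,i} \cdot \delta_i = \delta_i$.
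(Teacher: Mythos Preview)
Your proof is correct and takes essentially the same approach as the paper: the paper's one-line proof simply cites Lemma \ref{lemma:GADreduce} (i) and Lemma \ref{lemma:deltaWLunip} (i), and you have unpacked exactly this argument, taking $w_i = 0$ to verify condition (i)(a) of Lemma \ref{lemma:GADreduce}.
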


\begin{proof}
Follows from Lemma \ref{lemma:GADreduce} (i) and Lemma \ref{lemma:deltaWLunip} (i).
\end{proof}

\begin{lemma}\label{lemma:GADoddsumofVsandWs}
Let $u \in \Sp(V)$ be unipotent such that $V \downarrow K[u] = \sum_{1 \leq i \leq t} W(m_i) \perp \sum_{1 \leq j \leq s} V(2k_j)$, where $k_j \geq 1$ is odd for all $1 \leq j \leq s$. Assume that $s > 0$. 

Denote the action of $u$ on $\g_{ad}$ by $\widetilde{u}$. Then $\Ker(\widetilde{u}-1) \not\subseteq [\g_{ad},\g_{ad}]$ if and only if $m_i$ is even for all $1 \leq i \leq t$.
\end{lemma}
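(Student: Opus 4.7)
The plan is to reduce to the orthogonally indecomposable summands via Lemma \ref{lemma:GADreduce} (i) and then quote the already-established computations of Lemma \ref{lemma:deltaWLunip} and Lemma \ref{lemma:deltaV2Lunip}. Index the orthogonally indecomposable summands of $V \downarrow K[u]$ by $1, \ldots, t+s$, with the first $t$ of type $W(m_i)$ and the last $s$ of type $V(2k_j)$ (where every $k_j$ is odd). By Lemma \ref{lemma:GADreduce} (i), the condition $\Ker(\widetilde{u}-1) \not\subseteq [\g_{ad},\g_{ad}]$ holds precisely when, for some choice of witnesses $w$ on each summand, we have \emph{either} congruence (a), $(u_{\Z,\bullet}-1)\cdot(\delta_\bullet + w) \equiv 0 \pmod{2L_{sc}^{(\bullet)}}$, on every summand, \emph{or} congruence (b), $(u_{\Z,\bullet}-1)\cdot(\delta_\bullet + w) \equiv 2\delta_\bullet \pmod{2L_{sc}^{(\bullet)}}$, on every summand.

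First I would rule out option (a). Using the assumption $s > 0$, pick any $V(2k_j)$-summand. Since $k_j$ is odd, Lemma \ref{lemma:deltaV2Lunip} (ii) (with $\ell = k_j$) shows that there is no $w$ on this summand making the option (a) congruence hold. Hence option (a) is globally impossible, and the condition reduces to whether option (b) can be realized simultaneously on every summand.

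For each $V(2k_j)$-summand, option (b) is realized by Lemma \ref{lemma:deltaV2Lunip} (i). For each $W(m_i)$-summand, option (b) is realized, by Lemma \ref{lemma:deltaWLunip} (ii), if and only if $m_i$ is even. Combining, option (b) is simultaneously realizable on all summands if and only if every $m_i$ is even, which is exactly the claimed equivalence. There is no genuine obstacle: the argument is pure bookkeeping once Lemma \ref{lemma:GADreduce} permits the reduction to the orthogonally indecomposable case and the two preceding lemmas supply the summand-by-summand data.
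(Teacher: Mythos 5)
Your argument is correct and follows essentially the same route as the paper's proof: the paper likewise applies Lemma~\ref{lemma:GADreduce}~(i) to reduce to the orthogonally indecomposable summands, rules out option~(a) via Lemma~\ref{lemma:deltaV2Lunip}~(ii), and then characterizes option~(b) via Lemma~\ref{lemma:deltaWLunip}~(ii). Your write-up is merely more explicit (e.g.\ noting that Lemma~\ref{lemma:deltaV2Lunip}~(i) supplies the witnesses on the $V(2k_j)$ summands, which the paper leaves implicit), but the logic is identical.
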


\begin{proof}
In view of Lemma \ref{lemma:deltaV2Lunip} (ii) and Lemma \ref{lemma:GADreduce} (i), we have $\Ker(\widetilde{u}-1) \not\subseteq [\g_{ad},\g_{ad}]$ if and only if Lemma \ref{lemma:GADreduce} (i)(b) holds. Then by Lemma \ref{lemma:deltaWLunip} (ii), we conclude that $\Ker(\widetilde{u}-1) \not\subseteq [\g_{ad},\g_{ad}]$ if and only if $m_i$ is even for all $1 \leq i \leq t$.\end{proof}

\begin{lemma}\label{lemma:GADindecompV2l}
Let $u \in \Sp(V)$ be unipotent such that $V \downarrow K[u] = V(2\ell)$. Let $\widetilde{u}$ be the action of $u$ on $\g_{ad}$, and denote $\alpha = \nu_2(\ell)$. Then the following hold:
	\begin{enumerate}[\normalfont (i)]
		\item $\Ker(\widetilde{u}-1)^{2^{\alpha}-1} \subseteq [\g_{ad}, \g_{ad}]$.
		\item $\Ker(\widetilde{u}-1)^{2^{\alpha}} \not\subseteq [\g_{ad}, \g_{ad}]$.
	\end{enumerate}
\end{lemma}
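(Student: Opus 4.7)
The plan is to apply Lemma \ref{jordanrestrictionNIL} to the codimension-one submodule $[\g_{ad},\g_{ad}] \subset \g_{ad}$ (codimension one by Lemma \ref{lemma:gmodZisgadgad} and \cite[Table 1]{Hogeweij}), producing an integer $m \geq 1$ with $\g_{ad} \cong V_m \oplus V''$ and $[\g_{ad},\g_{ad}] \cong V_{m-1} \oplus V''$ as $K[u]$-modules; both parts of the lemma are then equivalent to the single claim $m = 2^{\alpha}$.

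For part (ii), I would use the characteristic-two identity $(\widetilde{u}-1)^{2^{\alpha}} = \widetilde{u^{2^{\alpha}}}-1$ to pass to $u' := u^{2^{\alpha}}$. If $\alpha = 0$ then $u' = u$; otherwise, by \cite[Lemma 6.13]{Korhonen2020Hesselink} (already used in the proof of Lemma \ref{lemma:S2indecompV2l}) we have $V \downarrow K[u'] = V(2\ell/2^{\alpha})^{2^{\alpha}}$ with $\ell/2^{\alpha}$ odd. In either case $u'$ has orthogonal decomposition consisting only of $V(2k_j')$-summands with every $k_j'$ odd and no $W(m_i)$-summand, so Lemma \ref{lemma:GADoddsumofVsandWs} (the hypothesis ``$m_i$ even for all $i$'' being vacuous) yields $\Ker(\widetilde{u'}-1) \not\subseteq [\g_{ad},\g_{ad}]$, which is exactly (ii).

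For part (i), the statement is trivial when $\alpha = 0$. For $\alpha \geq 1$, Proposition \ref{prop:unipKeraction} together with Lemma \ref{lemma:smallesblockunipS2} and Lemma \ref{lemma:LieSpS2} yields $\g_{sc} \cong V_{2^{\alpha}} \oplus V'$ and $[\g_{ad},\g_{ad}] \cong V_{2^{\alpha}-1} \oplus V'$, where every Jordan block of $V'$ has size $> 2^{\alpha}$. Matching $V_{m-1} \oplus V'' \cong V_{2^{\alpha}-1} \oplus V'$ as multisets of Jordan blocks and invoking $m \leq 2^{\alpha}$ from (ii) forces $m \in \{1, 2^{\alpha}\}$, so it suffices to rule out $m = 1$, i.e.\ to show $\Ker(\widetilde{u}-1) \subseteq [\g_{ad},\g_{ad}]$.

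I would establish this inclusion by induction on $\alpha \geq 1$. The base case $\alpha = 1$ follows from Lemma \ref{lemma:regularl2mod4ineq}: it gives $\dim \g_{ad}^u \leq \dim \g_{sc}^u = \dim [\g_{ad},\g_{ad}]^u$ (the equality since $V_{2^{\alpha}}$ and $V_{2^{\alpha}-1}$ each contribute a single fixed vector once $\alpha \geq 1$), and combined with the trivial inequality $\dim \g_{ad}^u \geq \dim [\g_{ad},\g_{ad}]^u$ forces $\g_{ad}^u = [\g_{ad},\g_{ad}]^u$. For the step $\alpha \geq 2$, set $u'' := u^2$; then $V \downarrow K[u''] = V(\ell)^2$ by \cite[Lemma 6.13]{Korhonen2020Hesselink}, with each summand $V(\ell) = V(2(\ell/2))$ having 2-adic parameter $\alpha - 1 \geq 1$. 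The inductive hypothesis on each summand gives $\Ker(\widetilde{u''_i}-1) \subseteq \Ker(\widetilde{u''_i}-1)^{2^{\alpha-1}-1} \subseteq [\g_{ad}^{(i)},\g_{ad}^{(i)}]$, so the contrapositive of Lemma \ref{lemma:GADreduce}(ii) together with $(\widetilde{u}-1)^2 = \widetilde{u''}-1$ yields $\Ker(\widetilde{u}-1) \subseteq \Ker(\widetilde{u''}-1) \subseteq [\g_{ad},\g_{ad}]$. The main obstacle is precisely this inductive step for $\alpha \geq 2$, where one must correctly interlock the Frobenius identity in characteristic two, the orthogonal decomposition $V \downarrow K[u^2] = V(\ell)^2$ with strictly smaller 2-adic parameter on each summand, and the descent to orthogonal summands from Lemma \ref{lemma:GADreduce}(ii).
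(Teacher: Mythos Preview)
Your proof is correct and follows essentially the same approach as the paper: both reduce part (ii) via $u^{2^{\alpha}}$ and Lemma~\ref{lemma:GADoddsumofVsandWs}, handle the base case $\alpha=1$ of (i) via Lemma~\ref{lemma:regularl2mod4ineq}, and bootstrap using the block structure of $[\g_{ad},\g_{ad}]$ from Proposition~\ref{prop:unipKeraction} and Lemma~\ref{lemma:smallesblockunipS2}. The only organizational difference is that the paper jumps directly from general $\alpha>1$ to the $\alpha=1$ case by taking $u^{2^{\alpha-1}}$ in one step (so that $V\downarrow K[u^{2^{\alpha-1}}]=V(\ell/2^{\alpha-2})^{2^{\alpha-1}}$ with each summand in the $\alpha=1$ situation), whereas you reach it by repeated squaring; both routes invoke Lemma~\ref{lemma:GADreduce}(ii) the same way.
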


\begin{proof}We begin with the proof of (i), which is trivially true when $\alpha = 0$, so suppose that $\alpha > 0$.

We first consider $\alpha = 1$, in which case $\ell \equiv 2 \mod{4}$. Then \begin{align*} \dim \g_{ad}^u &\leq \dim \g_{sc}^u &&\text{by Lemma \ref{lemma:regularl2mod4ineq}} \\ &= \dim [\g_{ad},\g_{ad}]^u &&\text{by Lemma \ref{lemma:gmodZisgadgad} and Proposition \ref{prop:unipKeraction} } \\ &\leq \dim \g_{ad}^u. &&\end{align*} Thus we conclude that $\dim \g_{ad}^u = \dim \g_{sc}^u = \dim [\g_{ad},\g_{ad}]^u.$ It follows then from Lemma \ref{jordanrestrictionNIL} that $\Ker(\widetilde{u}-1) \subseteq [\g_{ad}, \g_{ad}]$, as claimed by (i).

Suppose then that $\alpha > 1$. We have $V \downarrow K[u^{2^{\alpha-1}}] = V(\ell/2^{\alpha-2})^{2^{\alpha-1}}$ by \cite[Lemma 6.13]{Korhonen2020Hesselink}. It follows then from Lemma \ref{lemma:GADreduce} (ii) and the case $\alpha = 1$ that $$\Ker(\widetilde{u}-1)^{2^{\alpha-1}} = \Ker(\widetilde{u}^{2^{\alpha-1}} - 1) \subseteq [\g_{ad}, \g_{ad}].$$ In particular, we have \begin{equation}\label{eq:kerisin}\Ker(\widetilde{u}-1) \subseteq [\g_{ad}, \g_{ad}].\end{equation} Next we note that it follows from Lemma \ref{lemma:gmodZisgadgad}, Proposition \ref{prop:unipKeraction}, and Lemma \ref{lemma:smallesblockunipS2} that the smallest Jordan block size of $u$ on $[\g_{ad}, \g_{ad}]$ is equal to $2^{\alpha}-1$. In particular, there are no Jordan blocks of size $1 \leq d < 2^{\alpha}-1$ for $u$ on $[\g_{ad}, \g_{ad}]$, which combined with Lemma \ref{jordanrestrictionNIL} and~\eqref{eq:kerisin} implies $\Ker(\widetilde{u}-1)^{2^{\alpha}-1} \subseteq [\g_{ad}, \g_{ad}]$.

For (ii), note that $V \downarrow K[u^{2^{\alpha}}] = V(\ell/2^{\alpha-1})^{2^{\alpha}}$ by \cite[Lemma 6.13]{Korhonen2020Hesselink}. Thus it follows from Lemma \ref{lemma:GADoddsumofVsandWs} that $\Ker(\widetilde{u}-1)^{2^{\alpha}} = \Ker(\widetilde{u}^{2^{\alpha}} - 1) \not\subseteq [\g_{ad}, \g_{ad}]$.\end{proof}

\begin{prop}\label{prop:unipgadaction}
Let $u \in \Sp(V)$ be unipotent, with orthogonal decomposition $V \downarrow K[u] = \sum_{1 \leq i \leq t} W(m_i) \perp \sum_{1 \leq j \leq s} V(2k_j)$. Assume that $s > 0$, and let $\alpha = \max_{1 \leq j \leq s} \nu_2(k_j)$. Let $\widetilde{u}$ be the action of $u$ on $\g_{ad}$. Then the following statements hold:

	\begin{enumerate}[\normalfont (i)]
		\item $\Ker(\widetilde{u} - 1)^{2^{\alpha}-1} \subseteq [\g_{ad}, \g_{ad}]$. 
		\item $\Ker(\widetilde{u} - 1)^{2^{\alpha}} \not\subseteq [\g_{ad}, \g_{ad}]$ if and only if $\alpha = \nu_2(k_j)$ for all $1 \leq j \leq s$, and $\nu_2(m_i) > \alpha$ for all $1 \leq i \leq t$.
		\item $\Ker(\widetilde{u} - 1)^{2^{\alpha}+1} \not\subseteq [\g_{ad}, \g_{ad}]$. 

	\end{enumerate}
\end{prop}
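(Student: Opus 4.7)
My plan is to treat the three parts in turn, using the identity $(\widetilde{u}-1)^{2^{\alpha}} = \widetilde{u^{2^{\alpha}}} - 1$ (valid in characteristic two) to pass between level-$2^{\alpha}$ statements for $u$ and level-one statements for $u^{2^{\alpha}}$.

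For Part~(i), pick $j^{\ast}$ with $\nu_2(k_{j^{\ast}}) = \alpha$; such an index exists since $\alpha$ is the maximum. Applying Lemma~\ref{lemma:GADindecompV2l}(i) to the orthogonally indecomposable summand $V(2k_{j^{\ast}})$, with the ``$\ell$'' of that lemma equal to $k_{j^{\ast}}$, gives $\Ker(\widetilde{u_{j^{\ast}}}-1)^{2^{\alpha}-1} \subseteq [\g_{ad}^{(j^{\ast})}, \g_{ad}^{(j^{\ast})}]$, and the contrapositive of Lemma~\ref{lemma:GADreduce}(ii) yields~(i).

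For Part~(ii), I will compute the orthogonal decomposition of $V \downarrow K[u^{2^{\alpha}}]$ and invoke Lemma~\ref{lemma:GADoddsumofVsandWs}. Combining the squaring formulas from Lemma~6.12 and Lemma~6.13 of \cite{Korhonen2020Hesselink} with the identity $V(2k') \downarrow K[u^{2}] \cong W(k')$ for odd $k' \geq 1$ (readily verified by comparing Jordan structures with Theorem~\ref{thm:hesselinkUNIPindecomp}), the decomposition is as follows: each $W(m_i)$ with $\nu_2(m_i) \geq \alpha$ contributes $W(m_i/2^{\alpha})^{2^{\alpha}}$; each $V(2k_j)$ with $\nu_2(k_j) = \alpha$ contributes $V(2k_j/2^{\alpha})^{2^{\alpha}}$ whose parameter $k_j/2^{\alpha}$ is odd; and each summand with strictly smaller $2$-adic valuation produces, after iterated squaring, orthogonal pieces $W(\cdot)$ with odd parameter. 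A parity check then shows that $V \downarrow K[u^{2^{\alpha}}]$ has all $W$-parameters even and at least one $V(2n)$-summand with $n$ odd precisely when the condition in~(ii) holds. Applying Lemma~\ref{lemma:GADoddsumofVsandWs} to $u^{2^{\alpha}}$ (whose hypotheses on odd $V$-parameters and on having a $V$-summand are automatic after squaring) then yields both directions of~(ii).

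For Part~(iii), I aim to satisfy Lemma~\ref{lemma:GADreduce}(i)(a) at level $2^{\alpha}+1$ on every summand. For each $W(m_i)$, Lemma~\ref{lemma:deltaWLunip}(i) gives $(u_{\Z,i}-1)\delta_i = 0$, so $w_i = 0$ works. For each $V(2k_j)$, Lemma~\ref{lemma:GADindecompV2l}(ii) combined with $\nu_2(k_j) \leq \alpha$ gives $\Ker(\widetilde{u_j}-1)^{2^{\alpha}} \not\subseteq [\g_{ad}^{(j)}, \g_{ad}^{(j)}]$, so by Lemma~\ref{lemma:GADreduce}(i) at least one of local~(a) or~(b) at level $2^{\alpha}$ is satisfiable. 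If~(a) holds at level $2^{\alpha}$ with witness $w_j$, the same $w_j$ works at level $2^{\alpha}+1$ since $(u_{\Z,j}-1)$ preserves $2L_{sc}^{(j)}$. Otherwise~(b) holds with some witness $w_j$, so $(u_{\Z,j}-1)^{2^{\alpha}}(\delta_j + w_j) \equiv 2\delta_j \pmod{2L_{sc}^{(j)}}$; applying one more $(u_{\Z,j}-1)$ yields $(u_{\Z,j}-1)^{2^{\alpha}+1}(\delta_j + w_j) \equiv 2(u_{\Z,j}-1)\delta_j \pmod{2L_{sc}^{(j)}}$, and since $(u_{\Z,j}-1)\delta_j \in L_{sc}^{(j)}$ the right-hand side lies in $2L_{sc}^{(j)}$, giving~(a) at level $2^{\alpha}+1$. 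Combining the witnesses across summands yields~(iii) via Lemma~\ref{lemma:GADreduce}(i).

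The main obstacle is the converse direction of~(ii): one must track the bilinear structure of $V(2k_j) \downarrow K[u^{2^{\alpha}}]$ through iterated squaring and verify that the $W$-summands produced when $\nu_2(k_j) < \alpha$ have \emph{odd} parameter. This rests on the identity $V(2k') \downarrow K[u^{2}] \cong W(k')$ for odd $k'$, together with the observation that squaring $W(k')$ with $k'$ odd produces $W$-summands at least one of which retains an odd parameter.
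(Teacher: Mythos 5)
Your proof is correct and follows essentially the same route as the paper. For (i) you isolate a summand $V(2k_{j^\ast})$ with $\nu_2(k_{j^\ast}) = \alpha$, apply Lemma~\ref{lemma:GADindecompV2l}(i) to the local factor, and conclude by the contrapositive of Lemma~\ref{lemma:GADreduce}(ii); for (ii) you pass to $u^{2^\alpha}$, compute $V \downarrow K[u^{2^\alpha}]$, and apply Lemma~\ref{lemma:GADoddsumofVsandWs}, which is exactly the paper's strategy (the paper writes the level-$2^\alpha$ decomposition directly rather than via iterated squaring, but the combinatorics of the parity check are the same). One point worth recording: in (iii), the paper asserts that for every $V(2k_j)$ summand the local condition~(b) of Lemma~\ref{lemma:GADreduce}(i) holds at level $2^\alpha$, but when $\nu_2(k_j) < \alpha$ this is not what happens --- applying $(u_{\Z,j}-1)$ to the level-$2^{\nu_2(k_j)}$ witness already lands in $2L_{sc}^{(j)}$, so condition~(a) is the one that obtains at level $2^\alpha$ for those summands. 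Your argument, which takes whichever of~(a) or~(b) holds locally at level $2^\alpha$ and shows in each case that~(a) holds at level $2^\alpha+1$, is the more careful treatment and covers both situations; the paper's final conclusion is unaffected since it also only uses~(a) at level $2^\alpha+1$.
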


\begin{proof}
Claim (i) follows from Lemma \ref{lemma:GADreduce} and Lemma \ref{lemma:GADindecompV2l}.

Next we will prove (ii). In the case where $\alpha = 0$ we have $k_j$ odd for all $1 \leq j \leq s$. Then the claim of (ii) is that $\Ker(\widetilde{u} - 1) \not\subseteq [\g_{ad}, \g_{ad}]$ if and only if $m_i$ is even for all $1 \leq i \leq t$, which is precisely Lemma \ref{lemma:GADoddsumofVsandWs}.

Suppose then that $\alpha > 0$. Write $m_i = m_i' 2^{\alpha} + m_i''$ with $0 \leq m_i'' < 2^{\alpha}$ and $k_j = k_j' 2^{\alpha-1} + k_j''$ with $0 \leq k_j'' < 2^{\alpha-1}$. Then for the restrictions of the summands in $V \downarrow K[u^{2^{\alpha}}]$, we have \begin{align*}W(m_i) \downarrow K[u^{2^{\alpha}}] &= W(m_i')^{2^{\alpha}-m_i''} \perp W(m_i'+1)^{m_i''} \\ V(2k_j) \downarrow K[u^{2^{\alpha}}] &= \begin{cases} V(k_j/2^{\alpha-1})^{2^{\alpha}}, & \text{ if } \nu_2(k_j) = \alpha. \\ W(k_j')^{2^{\alpha-1}-k_j''} \perp W(k_j'+1)^{k_j''}, & \text{ if } \nu_2(k_j) < \alpha. \end{cases} \end{align*} for all $1 \leq i \leq t$ and $1 \leq j \leq s$, by \cite[Lemma 3.1, Lemma 6.12, Lemma 6.13]{Korhonen2020Hesselink}. In the above we denote $W(0) = 0$.

We have $\alpha = \nu_2(k_j)$ for some $j$, so $V(k_j/2^{\alpha-1})$ appears as a summand of $V \downarrow K[u^{2^{\alpha}}]$. Thus by Lemma \ref{lemma:GADoddsumofVsandWs}, we have $\Ker(\widetilde{u} - 1)^{2^{\alpha}} = \Ker(\widetilde{u}^{2^{\alpha}} - 1) \not\subseteq [\g_{ad}, \g_{ad}]$ if and only if all the summands of the form $W(m)$ in $V \downarrow K[u^{2^{\alpha}}]$ have $m$ even.\\

\noindent \emph{Claim 1: Assume that $\nu_2(m_i) \leq \alpha$ for some $i$. Then $\Ker(\widetilde{u} - 1)^{2^{\alpha}} \subseteq [\g_{ad}, \g_{ad}]$.} \\
\noindent It suffices to show that $W(m_i) \downarrow K[u^{2^{\alpha}}]$ has a summand $W(m)$ with $m$ odd. If $0 < m_i'' < 2^{\alpha}$, then $W(m_i) \downarrow K[u^{2^{\alpha}}]$ has $W(m_i')$ and $W(m_i'+1)$ as a summand. If $m_i'' = 0$, then $W(m_i) \downarrow K[u^{2^{\alpha}}] = W(m_i')^{2^{\alpha}}$ with $m_i'$ odd, since $\nu_2(m_i) \leq \alpha$.\\

\noindent \emph{Claim 2: Assume that $\nu_2(k_j) < \alpha$ for some $j$. Then $\Ker(\widetilde{u} - 1)^{2^{\alpha}} \subseteq [\g_{ad}, \g_{ad}]$.} \\
It suffices to show that $V(2k_j) \downarrow K[u^{2^{\alpha}}]$ has a summand $W(m)$ with $m$ odd. If $0 < k_j'' < 2^{\alpha-1}$, then $V(2k_j) \downarrow K[u^{2^{\alpha}}]$ has $W(k_j')$ and $W(k_j'+1)$ as a summand. Suppose then that $k_j'' = 0$. Then $V(2k_j) \downarrow K[u^{2^{\alpha}}] = W(k_j')^{2^{\alpha-1}}$, with $k_j'$ is odd since $\nu_2(k_j) < \alpha$.\\

The ``only if'' part of (ii) follows from Claim 1 and Claim 2. Conversely, if $\alpha = \nu_2(k_j)$ for all $1 \leq j \leq s$ and $\nu_2(m_i) > \alpha$ for all $1 \leq i \leq t$, we have $$V \downarrow K[u^{2^{\alpha}}] = \sum_{1 \leq i \leq t} W(m_i/2^{\alpha})^{2^{\alpha}} \perp \sum_{1 \leq j \leq s} V(k_j/2^{\alpha-1}).$$ Thus $\Ker(\widetilde{u} - 1)^{2^{\alpha}} = \Ker(\widetilde{u}^{2^{\alpha}} - 1) \not\subseteq [\g_{ad}, \g_{ad}]$ by Lemma \ref{lemma:GADoddsumofVsandWs}.

For (iii), let $u_{\Z} = u_{\Z,1} \cdots u_{\Z,t+s} \in \mathscr{U}_{\Z}$ be as in Section \ref{subsection:unipotentroots}, so that $u$ is the reduction modulo $p$ of the action of $u_{\Z}$ on $V_{\Z}$. Denote $\delta_1$, $\ldots$, $\delta_{t+s}$ as in Section \ref{subsection:unipotentroots}. It follows from Lemma \ref{lemma:GADreduce} (i), Lemma \ref{lemma:deltaWLunip}, and Lemma \ref{lemma:GADindecompV2l} that there exist $w_{i} \in L_{sc}^{(i)}$ such that \begin{align*}(u_{\Z,i}-1)^{2^{\alpha}} \cdot (\delta_i + w_i) &\equiv 0 \mod{2 L_{sc}^{(i)}} & \text{ if } 1 \leq i \leq t, \\ (u_{\Z,i}-1)^{2^{\alpha}} \cdot (\delta_i + w_i) &\equiv 2 \delta_{i} \mod{2 L_{sc}^{(i)}} & \text{ if } t+1 \leq i \leq t+s.\end{align*} Then $$(u_{\Z}-1)^{2^{\alpha}} \cdot (\delta + w_1 + \cdots + w_{t+s}) \equiv 2\delta_{t+1} + \cdots + 2\delta_{t+s} \mod{2L_{sc}}.$$ On the other hand, we have $u_{\Z,i} \cdot \delta_i = \delta_i + z_i$ for some $z_i \in L_{sc}^{(i)}$, as seen in the beginning of the proof of Lemma \ref{lemma:deltaV2Lunip}. Thus $(u_{\Z}-1) \cdot 2\delta_i \equiv 0 \mod{2L_{sc}^{(i)}}$ for all $t+1 \leq i \leq t+s$, and consequently $$(u_{\Z}-1)^{2^{\alpha}+1} \cdot (\delta + w_1 + \cdots + w_{t+s}) \equiv 0 \mod{2L_{sc}}.$$ It follows then from Lemma \ref{lemma:GADreduce} that $\Ker(\widetilde{u} - 1)^{2^{\alpha}+1} \not\subseteq [\g_{ad}, \g_{ad}]$, as claimed by (iii).\end{proof}

\begin{prop}\label{prop:unipGAD}
Let $u \in \Sp(V)$ be unipotent, with orthogonal decomposition $V \downarrow K[u] = \sum_{1 \leq i \leq t} W(m_i) \perp \sum_{1 \leq j \leq s} V(2k_j)$. Then the following statements hold:
	\begin{enumerate}[\normalfont (i)]
		\item Suppose that $s = 0$. Then $$\g_{ad} \cong V_1 \oplus [\g_{ad}, \g_{ad}]$$ as $K[u]$-modules.
		
		\item Suppose that $s > 0$, and let $\alpha = \max_{1 \leq j \leq s} \nu_2(k_j)$. Then:
			\begin{enumerate}[\normalfont (a)]
				\item If $\alpha = \nu_2(k_j)$ for all $1 \leq j \leq s$ and $\nu_2(m_i) > \alpha$ for all $1 \leq i \leq t$, then \begin{align*}
		\g_{ad} &\cong V_{2^{\alpha}} \oplus V' \\
		[\g_{ad},\g_{ad}] &\cong V_{2^{\alpha}-1} \oplus V' \end{align*} for some $K[u]$-module $V'$.
				
				\item If $\alpha > \nu_2(k_j)$ for some $1 \leq j \leq s$, or $\nu_2(m_i) \leq \alpha$ for some $1 \leq i \leq t$, then \begin{align*}
		\g_{ad} &\cong V_{2^{\alpha}+1} \oplus V' \\
		[\g_{ad},\g_{ad}] &\cong V_{2^{\alpha}} \oplus V' \end{align*} for some $K[u]$-module $V'$.
			\end{enumerate}
		
	\end{enumerate}

\end{prop}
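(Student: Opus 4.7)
The approach is to apply Lemma~\ref{jordanrestrictionNIL} to the nilpotent linear map $\widetilde{u} - 1$ acting on $\g_{ad}$, with the subspace $W := [\g_{ad}, \g_{ad}]$. Since $\dim \g_{ad}/[\g_{ad}, \g_{ad}] = 1$ (as noted in Section~\ref{subsection:symsquareC}), the hypothesis of Lemma~\ref{jordanrestrictionNIL} is satisfied. For a unipotent element $u$, the indecomposable $K[u]$-module $V_i$ corresponds to a single Jordan block of size $i$ for the nilpotent operator $\widetilde{u} - 1$, so Lemma~\ref{jordanrestrictionNIL} yields $\g_{ad} \cong V_m \oplus V'$ and $[\g_{ad}, \g_{ad}] \cong V_{m-1} \oplus V'$ for some $K[u]$-module $V'$, where $m \geq 1$ is the smallest integer with $\Ker(\widetilde{u} - 1)^m \not\subseteq [\g_{ad}, \g_{ad}]$.

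In case (i), where $s = 0$, Lemma~\ref{lemma:GADoddsumofWs} gives $\Ker(\widetilde{u} - 1) \not\subseteq [\g_{ad}, \g_{ad}]$, so $m = 1$. Since $V_0 = 0$ by convention, we obtain $\g_{ad} \cong V_1 \oplus [\g_{ad}, \g_{ad}]$ as claimed. In case (ii), parts (i) and (iii) of Proposition~\ref{prop:unipgadaction} pin $m$ to lie in $\{2^{\alpha}, 2^{\alpha} + 1\}$, and part (ii) distinguishes the two subcases: $m = 2^{\alpha}$ precisely when the hypothesis of subcase~(a) holds, while $m = 2^{\alpha} + 1$ precisely in subcase~(b). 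The claimed decompositions in (ii)(a) and (ii)(b) then follow immediately from Lemma~\ref{jordanrestrictionNIL}.

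Since the substantive work was already carried out in Proposition~\ref{prop:unipgadaction}, no significant obstacle remains. The only mild care needed is to translate between the unipotent formulation (using $V_i$ and $\widetilde{u} - 1$) and the nilpotent formulation of Lemma~\ref{jordanrestrictionNIL} (which is phrased with $W_i$ and $e$), but this is purely a matter of notation.
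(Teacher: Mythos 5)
Your proof is correct and takes essentially the same approach as the paper: apply Lemma~\ref{jordanrestrictionNIL} to $\widetilde{u}-1$ acting on $\g_{ad}$ with subspace $[\g_{ad},\g_{ad}]$, and determine the relevant exponent $m$ via Lemma~\ref{lemma:GADoddsumofWs} for case (i) and Proposition~\ref{prop:unipgadaction} for case (ii). The paper's own proof is just a one-line citation of these same ingredients, so your more detailed write-up is fine.
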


\begin{proof}
Claim (i) follows from Lemma \ref{lemma:GADoddsumofWs} and Lemma \ref{jordanrestrictionNIL}. Similarly (ii) follows from Proposition \ref{prop:unipgadaction} and Lemma \ref{jordanrestrictionNIL}.
\end{proof}

\section{Jordan block sizes of nilpotent elements on \texorpdfstring{$\g_{ad}$}{g\_ad}}\label{section:nilGAdZ}

Continuing with the setup of the previous section, in this section we describe the Jordan block sizes of nilpotent $e \in \g_{sc}$ on $\g_{ad}$, in terms of the Jordan block sizes on $[\g_{ad},\g_{ad}]$. The basic approach is similar to the previous section, but the proofs will be more simple due to the fact that $V \downarrow K[e^2]$ is always has an orthogonal decomposition of the form $\sum_{1 \leq i \leq t} W(m_i)$ (Lemma \ref{lemma:nilsquaredecomp}).

\begin{lemma}\label{lemma:NILgadsingular}
Let $e \in \g_{sc}$ be nilpotent such that $V \downarrow K[e] = \sum_{1 \leq i \leq t} W(m_i)$, and let $\widetilde{e}$ be the action of $e$ on $\g_{ad}$. Then $\Ker \widetilde{e} \not\subseteq [\g_{ad}, \g_{ad}]$.
\end{lemma}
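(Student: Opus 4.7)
The plan is to exhibit an explicit element of $\Ker \widetilde{e}$ that does not lie in $[\g_{ad}, \g_{ad}]$, by applying Lemma \ref{lemma:GADreduceNIL} (i)(a) with $w_1 = \cdots = w_t = 0$. Concretely, since each orthogonally indecomposable summand of $V \downarrow K[e]$ is of the form $W(m_i)$, I would use the decomposition $e = e_1 + \cdots + e_t$ from Section \ref{subsection:nilpotentroots}, where $e_i$ is the reduction modulo $p$ of
\[
e_{\Z,i} = X_{\beta_1} + \cdots + X_{\beta_{m_i - 1}} \in \mathscr{U}_{\Z}^{(i)},
\]
with $\beta_k = \varepsilon_k^{(i)} - \varepsilon_{k+1}^{(i)}$ ranging over the \emph{short} simple roots of the $C_{m_i}$ root system attached to the $i$-th summand. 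By Lemma \ref{lemma:GADreduceNIL} it then suffices to verify $e_{\Z,i} \cdot \delta_i = 0$ in $L_{ad}^{(i)}$ for every $i$.

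The key computation is therefore $X_{\beta_k} \cdot \delta_i = 0$ for every short simple root $\beta_k$. The quickest way to see this is to invoke the $\mathfrak{sp}(V_{\C}^{(i)})$-equivariant isomorphism $\psi_i : S^2(V_{\C}^{(i)}) \to \mathfrak{sp}(V_{\C}^{(i)})$ of Lemma \ref{lemma:overCisomorphism}, under which $\psi_i(\delta_i) = -H_i$ with $H_i = \diag(1/2,\ldots,1/2,-1/2,\ldots,-1/2)$, as recorded in Section \ref{subsection:chevalleyC}. Equivariance gives
\[
\psi_i(X_{\beta_k} \cdot \delta_i) \;=\; [X_{\beta_k}, -H_i] \;=\; \beta_k(H_i)\, X_{\beta_k},
\]
and since $\beta_k(H_i) = 1/2 - 1/2 = 0$, this vanishes; hence $X_{\beta_k} \cdot \delta_i = 0$. (Alternatively, one can verify this by a direct symbolic calculation on the basis $v_1^{(i)}, \ldots, v_{2m_i}^{(i)}$, entirely parallel to the proof of Lemma \ref{lemma:deltaWLunip} (i).) Summing over $k$ yields $e_{\Z,i} \cdot \delta_i = 0$.

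Putting these observations together, we have $e_{\Z,i} \cdot (\delta_i + 0) \equiv 0 \pmod{2 L_{sc}^{(i)}}$ for every $i$, so Lemma \ref{lemma:GADreduceNIL} (i)(a) immediately gives $\Ker \widetilde{e} \not\subseteq [\g_{ad}, \g_{ad}]$. There is no substantive obstacle here: once the machinery of Section \ref{section:chevalley} and Lemma \ref{lemma:GADreduceNIL} is in place, the lemma reduces to the single observation that the coroot pairing of the short simple roots with the half-sum-of-weights element $H_i$ vanishes, which is the Lie-theoretic shadow of the fact that $W(\ell)$ corresponds to a subregular, not regular, orthogonally indecomposable summand.
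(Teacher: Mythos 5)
Your proof is correct and follows essentially the same route as the paper: write $e_{\Z} = e_{\Z,1} + \cdots + e_{\Z,t}$ as a sum of root vectors for short simple roots, show that $e_{\Z}$ annihilates $\delta$, and conclude via the reduction-to-blocks lemma (the paper invokes Lemma \ref{lemma:nilpinUZgen} directly on $e_\Z \cdot \delta = 0$, whereas you route through Lemma \ref{lemma:GADreduceNIL}(i)(a), which is the same statement reorganized blockwise). The one minor difference is how the key vanishing $X_{\alpha}\cdot\delta = 0$ for short simple roots $\alpha$ is verified: the paper cites the explicit matrix computation already carried out in the proof of Lemma \ref{lemma:deltaWLunip}(i), while you re-derive it conceptually via the equivariance of $\psi$, $\psi(\delta)=-H$, and the coroot pairing $\alpha(H)=0$. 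Both derivations are valid and prove the same identity, so this is a presentational difference only.
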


\begin{proof}
Let $e_{\Z} = e_{\Z,1} + \cdots + e_{\Z,t} \in \g_{\Z}$ be as in Section \ref{subsection:nilpotentroots}, so that $e$ is the reduction modulo $p$ of $e_{\Z}$. Then $$e_{\Z,i} = X_{\alpha_{t_i+1}} + \cdots + X_{\alpha_{t_i+m_i-1}},$$ where $t_1 = 0$ and $t_i = m_1 + \cdots + m_{i-1}$ for all $1 < i \leq t$. 

As noted earlier (proof of Lemma \ref{lemma:deltaWLunip}), we have $X_{\alpha_i} \cdot \delta = 0$ for all $1 \leq i < \ell$. Therefore $e_{\Z} \cdot \delta = 0$, from which it follows that $\Ker \widetilde{e} \not\subseteq [\g_{ad}, \g_{ad}]$ (Lemma \ref{lemma:nilpinUZgen}).
\end{proof}

\begin{lemma}\label{lemma:NILgadV2lWkl}
Let $e \in \g_{sc}$ be nilpotent with $V \downarrow K[e] = V(2\ell)$ or $V \downarrow K[e] = W_k(\ell)$ for some $0 < k < \ell/2$. Let $\widetilde{e}$ be the action of $e$ on $\g_{ad}$. Then $\Ker \widetilde{e} \subseteq [\g_{ad}, \g_{ad}]$.
\end{lemma}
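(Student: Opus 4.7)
The plan is to invoke Lemma \ref{lemma:nilpinUZgen}: writing $e_{\Z} \in \g_{\Z}$ for the integral lift of $e$ constructed in Section \ref{subsection:nilpotentroots}, the containment $\Ker\widetilde{e} \subseteq [\g_{ad},\g_{ad}]$ is equivalent to showing that for every $v \in L_{sc}$, neither $e_{\Z} \cdot (\delta+v) \equiv 0 \pmod{2L_{sc}}$ nor $e_{\Z} \cdot (\delta+v) \equiv 2\delta \pmod{2L_{sc}}$ holds.

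The first step is to compute $e_{\Z} \cdot \delta$ explicitly. Since $\psi(\delta) = -H$ as in Section \ref{subsection:symsquareC} and $\psi$ is $\mathfrak{sp}(V_{\C})$-equivariant, for every root $\alpha$ we have $\psi(X_{\alpha} \cdot \delta) = [H, X_{\alpha}] = \alpha(H) X_{\alpha}$. Each simple root $\alpha_i$ with $i < \ell$ satisfies $\alpha_i(H) = 0$, so $X_{\alpha_i} \cdot \delta = 0$, and the only surviving contribution to $e_{\Z} \cdot \delta$ comes from the long-root summand of $e_{\Z}$. Setting $j = \ell$ in the $V(2\ell)$ case and $j = k$ in the $W_k(\ell)$ case, a short calculation yields $e_{\Z} \cdot \delta = \tfrac{1}{2}v_j^2$ in both situations.

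The central step is to introduce a $\Z$-linear functional $\phi : L_{sc} \to \Z$ that sends $\tfrac{1}{2}v_j^2$ to $1$ and vanishes on every other vector of the standard $\Z$-basis of $L_{sc}$ (the $v_iv_{i'}$ with $i < i'$ and the $\tfrac{1}{2}v_i^2$ with $i \neq j$). One then verifies, by running through the basis of $L_{sc}$ and applying the Leibniz rule together with the explicit action of $e_{\Z}$ on $V_{\Z}$ from Definitions \ref{def:V2l} and \ref{def:Wkl}, that $\phi(e_{\Z} \cdot X) \in 2\Z$ for every basis element $X$. The key point is that the only basis elements whose image under $e_{\Z}$ has a nonzero $v_j^2$-component are $X = v_j v_{j+1}$ (in both cases) and, in the $W_k(\ell)$ case, additionally $X = v_k v_{n-k+1}$; in each instance the image acquires a term $\pm v_j^2 = \pm 2 \cdot \tfrac{1}{2}v_j^2$, contributing $\pm 2$ to $\phi$.

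Finally, $\phi(\tfrac{1}{2}v_j^2) = 1$ and $\phi(2\delta) = 0$, the latter because $2\delta = \sum_{i} v_iv_{n-i+1}$ has no $\tfrac{1}{2}v_j^2$ component. Hence $\phi(e_{\Z} \cdot \delta) \equiv 1 \pmod{2}$ and $\phi(e_{\Z} \cdot \delta - 2\delta) \equiv 1 \pmod{2}$, whereas $\phi(e_{\Z} \cdot v) \equiv 0 \pmod{2}$ for every $v \in L_{sc}$. This rules out both congruences in Lemma \ref{lemma:nilpinUZgen}(iii), proving $\Ker\widetilde{e} \subseteq [\g_{ad},\g_{ad}]$. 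The main technical obstacle will be the bookkeeping in the $W_k(\ell)$ case, where the non-standard action $e_{\Z} v_{n-k+1} = -v_{n-k} + v_k$ introduces the second relevant basis element $v_kv_{n-k+1}$; the expansion $e_{\Z} \cdot (v_kv_{n-k+1}) = v_{k-1}v_{n-k+1} - v_kv_{n-k} + v_k^2$ confirms that this extra contribution is also an even multiple of $\tfrac{1}{2}v_k^2$.
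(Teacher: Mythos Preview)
Your proof is correct and follows essentially the same strategy as the paper: compute $e_{\Z}\cdot\delta = \tfrac{1}{2}v_r^2$ (where $r=\ell$ or $r=k$), and show this term cannot be cancelled modulo $2L_{ad}$ by anything in $e_{\Z}\cdot L_{sc}$.

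The difference is in packaging. You introduce a functional $\phi$ picking out the $\tfrac{1}{2}v_j^2$-coefficient and then verify case-by-case that $\phi(e_{\Z}\cdot X)$ is even for each basis element $X$ of $L_{sc}$. The paper instead observes the single containment $e_{\Z}\cdot L_{sc} \subseteq S^2(V_{\Z})$: since $e_{\Z}\cdot\tfrac{1}{2}v_i^2 = \pm v_iv_{i-1}$ (or a sum of two such terms in the $W_k(\ell)$ case), the images of the ``half-square'' generators land in $S^2(V_{\Z})$, and $e_{\Z}$ trivially preserves $S^2(V_{\Z})$ on the remaining generators $v_iv_{i'}$. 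Combined with $2L_{ad}\subseteq S^2(V_{\Z})$ and $\tfrac{1}{2}v_r^2\notin S^2(V_{\Z})$, this gives the contradiction without tracking individual $v_j^2$-components. Your functional $\phi$ is effectively the composite $L_{sc}\twoheadrightarrow L_{sc}/S^2(V_{\Z})\to \Z/2\Z$ projecting onto the $j$th coordinate, so the two arguments are equivalent; the paper's version just avoids the bookkeeping of identifying which basis elements $v_iv_{i'}$ produce a $v_j^2$ term.
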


\begin{proof}
Let $e_{\Z} \in \g_{\Z}$ be as in Section \ref{subsection:nilpotentroots}, so that $e$ is the reduction modulo $p$ of $e_{\Z}$. Then $$e_{\Z} = X_{\alpha_1} + \cdots + X_{\alpha_{\ell-1}} + X_{2 \varepsilon_r},$$ where $r = \ell$ if $V \downarrow K[e] = V(2\ell)$, and $r = k$ if $V \downarrow K[e] = W_k(\ell)$.

Suppose that $\Ker \widetilde{e} \not\subseteq [\g_{ad}, \g_{ad}]$. Then it follows from Lemma \ref{lemma:nilpinUZgen} that there exists $v \in L_{sc}$ such that $e_{\Z} \cdot (\delta + v) \in 2L_{ad}$. We have $e_{\Z} \cdot \delta = \frac{1}{2} v_r^2,$ so \begin{equation}\label{eq:12vr2}\frac{1}{2} v_r^2 = -e_{\Z} \cdot v + 2w\end{equation} for some $w \in L_{ad}$.

Note that $$e_{\Z} \cdot \frac{1}{2} v_i^2 = \pm v_i v_{i-1}$$ for all $1 \leq i \leq n$. Therefore $e_{\Z} L_{sc} \subseteq S^2(V_{\Z})$. Since also $2L_{ad} \subseteq S^2(V_{\Z})$, it follows from~\eqref{eq:12vr2} that $\frac{1}{2} v_r^2 \in S^2(V_{\Z})$, which is impossible. We have a contradiction, so we conclude that $\Ker \widetilde{e} \subseteq [\g_{ad}, \g_{ad}]$.\end{proof}

\begin{lemma}\label{lemma:nonsingularkernelNILGAD}
Let $e \in \g_{sc}$ be nilpotent, and let $\widetilde{e}$ be the action of $e$ on $\g_{ad}$. Then $\Ker \widetilde{e} \subseteq [\g_{ad}, \g_{ad}]$, except possibly when $V \downarrow K[e] = \sum_{1 \leq i \leq t} W(\ell_i)$ for some $\ell_1$, $\ldots$, $\ell_t$.\end{lemma}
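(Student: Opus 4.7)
The plan is to prove the contrapositive: if $\Ker \widetilde{e} \not\subseteq [\g_{ad}, \g_{ad}]$, then every orthogonally indecomposable summand of $V \downarrow K[e]$ is of the form $W(\ell)$. This should follow almost immediately from the reduction result Lemma \ref{lemma:GADreduceNIL}(ii), combined with Lemma \ref{lemma:NILgadV2lWkl}, which handles the orthogonally indecomposable summands of type $V(2\ell)$ and $W_k(\ell)$.

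More precisely, I would proceed as follows. Write the orthogonal decomposition
\[
V \downarrow K[e] = U_1 \perp \cdots \perp U_t
\]
with each $U_i$ orthogonally indecomposable, and let $e_i \in \mathfrak{sp}(U_i)$ denote the corresponding component of $e$, so that $e = e_1 + \cdots + e_t$. Let $e_{\Z,i} \in \g_{\Z} \cap \mathfrak{sp}(V_{\C}^{(i)})$ be the integral representative of $e_i$ as constructed in Section \ref{subsection:nilpotentroots}, and set $e_{\Z} = e_{\Z,1} + \cdots + e_{\Z,t}$. Denote by $\widetilde{e_i}$ the action of $e_i$ on $\g_{ad}^{(i)}$, as in Lemma \ref{lemma:GADreduceNIL}.

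Now assume for contradiction that $\Ker \widetilde{e} \not\subseteq [\g_{ad},\g_{ad}]$ and that some summand $U_j$ is of type $V(2d)$ or $W_k(\ell)$. Applying Lemma \ref{lemma:GADreduceNIL}(ii) with $X = e_{\Z}$ (which acts nilpotently on $S^2(V_{\C})$ since $e$ is nilpotent on $V$), we obtain that $\Ker \widetilde{e_i} \not\subseteq [\g_{ad}^{(i)}, \g_{ad}^{(i)}]$ for every $1 \leq i \leq t$; in particular, this holds for $i = j$. But Lemma \ref{lemma:NILgadV2lWkl} states precisely that $\Ker \widetilde{e_j} \subseteq [\g_{ad}^{(j)}, \g_{ad}^{(j)}]$, which is a contradiction.

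Consequently, if $\Ker \widetilde{e} \not\subseteq [\g_{ad},\g_{ad}]$, then every summand $U_i$ must be of the form $W(\ell_i)$, establishing the lemma. The argument is essentially bookkeeping: the substantive content sits in Lemma \ref{lemma:NILgadV2lWkl} (orthogonally indecomposable case) and Lemma \ref{lemma:GADreduceNIL} (reduction to the indecomposable case), so the only potential difficulty is checking that the hypotheses of Lemma \ref{lemma:GADreduceNIL} apply, which is immediate from the construction of $e_{\Z}$.
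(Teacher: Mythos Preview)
Your proof is correct and essentially identical to the paper's: both decompose $V$ into orthogonally indecomposable summands, set $X_i = e_{\Z,i}$ in Lemma \ref{lemma:GADreduceNIL}(ii), and invoke Lemma \ref{lemma:NILgadV2lWkl} to rule out summands of type $V(2d)$ or $W_k(\ell)$.
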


\begin{proof}
Write $V \downarrow K[e] = U_1 \perp \cdots \perp U_t$, where $U_i$ is orthogonally indecomposable for all $1 \leq i \leq t$, and $\dim U_i = 2\ell_i$ with $\ell_i > 0$. Let $e_{\Z} = e_{\Z,1} + \cdots + e_{\Z,t} \in \g_{\Z}$ be as in Section \ref{subsection:nilpotentroots}, so that $e$ is the reduction modulo $p$ of $e_{\Z}$. 

The result follows from Lemma \ref{lemma:GADreduceNIL} (ii) (with $X_i = e_{\Z,i}$) and Lemma \ref{lemma:NILgadV2lWkl}.\end{proof}

\begin{lemma}\label{lemma:nonsingularkernel2NILGAD}
Let $e \in \g_{sc}$ be nilpotent, and let $\widetilde{e}$ be the action of $e$ on $\g_{ad}$. Then $\Ker (\widetilde{e})^2 \not\subseteq [\g_{ad}, \g_{ad}]$.
\end{lemma}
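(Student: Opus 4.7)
The plan is to deduce the statement from Lemma \ref{lemma:NILgadsingular} applied to $e^2 \in \g_{sc}$ rather than to $e$ itself, after identifying $(\widetilde{e})^2$ with the action of $e^2$ on $\g_{ad}$.

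First I will check that $e^2$ is a nilpotent element of $\mathfrak{sp}(V) = \g_{sc}$. Since $e \in \mathfrak{sp}(V)$, in characteristic two we have
\[ b(e^2 v, w) + b(v, e^2 w) = 2 b(ev, ew) = 0, \]
so $e^2 \in \mathfrak{sp}(V)$, and clearly $e^2$ is nilpotent. By Lemma \ref{lemma:nilsquaredecomp} the orthogonal decomposition of $V \downarrow K[e^2]$ has the form $\sum_{1 \leq i \leq t} W(m_i)$, so Lemma \ref{lemma:NILgadsingular} applied to $e^2$ yields $\Ker \widetilde{e^2} \not\subseteq [\g_{ad}, \g_{ad}]$, where $\widetilde{e^2}$ denotes the action of $e^2 \in \g_{sc}$ on $\g_{ad}$.

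The remaining point is to identify $\widetilde{e^2}$ with $(\widetilde{e})^2$ as linear endomorphisms of $\g_{ad}$. This is where the only (though standard) subtlety of the proof lies: because $\g_{ad}$ is a rational $G_{sc}$-module, it is a restricted $\g_{sc}$-module, and the canonical $p$-mapping on $\g_{sc} = \mathfrak{sp}(V)$ is matrix squaring, so $e^{[2]} = e^2$ in $\g_{sc}$. Hence $(\widetilde{e})^2 = \widetilde{e^{[2]}} = \widetilde{e^2}$, and $\Ker(\widetilde{e})^2 = \Ker \widetilde{e^2} \not\subseteq [\g_{ad}, \g_{ad}]$ as required. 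If preferred, this identification can also be verified directly through the Chevalley description of Section \ref{section:chevalley}, by checking that $e_{\Z}^2$ acting on $L_{ad}$ agrees modulo $2$ with the action of a $\Z$-lift of $e^2$, using Lemma \ref{lemma:nilpinUZgen} to translate the kernel condition into a congruence in $L_{ad}$.
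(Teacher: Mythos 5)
Your proof is correct and follows essentially the same route as the paper: apply Lemma \ref{lemma:nilsquaredecomp} to see $V \downarrow K[e^2]$ is a sum of $W(m_i)$'s, then apply Lemma \ref{lemma:NILgadsingular} to $e^2$. The paper leaves the identification $(\widetilde{e})^2 = \widetilde{e^2}$ implicit; you spell it out carefully (via the restricted structure and the fact that the $[2]$-map on $\mathfrak{sp}(V)$ is matrix squaring), which is a reasonable addition rather than a different argument.
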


\begin{proof}
We have $V \downarrow K[e^2] = \sum_{1 \leq i \leq t} W(m_i)$ by Lemma \ref{lemma:nilsquaredecomp}, so the result follows from Lemma \ref{lemma:NILgadsingular}.
\end{proof}

\begin{prop}\label{prop:nilgadaction}
Let $e \in \g_{sc}$ be nilpotent. Then the following hold:

	\begin{enumerate}[\normalfont (i)]
		\item If $V \downarrow K[e] = \sum_{1 \leq i \leq t} W(m_i)$, then $$\g_{ad} \cong W_1 \oplus [\g_{ad}, \g_{ad}]$$ as $K[e]$-modules.

		\item If $V \downarrow K[e]$ is not of the form $\sum_{1 \leq i \leq t} W(m_i)$, then \begin{align*}
		\g_{ad} &\cong W_{2} \oplus V' \\
		[\g_{ad},\g_{ad}] &\cong W_{1} \oplus V' \end{align*} for some $K[e]$-module $V'$.
	\end{enumerate}
	
\end{prop}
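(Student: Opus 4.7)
The plan is to apply Lemma \ref{jordanrestrictionNIL} to the inclusion $[\g_{ad}, \g_{ad}] \subseteq \g_{ad}$, which is a codimension-one inclusion of $K[e]$-modules by Lemma \ref{lemma:gmodZisgadgad} (or equivalently by $\dim \g_{ad}/[\g_{ad},\g_{ad}] = 1$, as recorded in \cite[Table 1]{Hogeweij}). Letting $\widetilde{e}$ denote the action of $e$ on $\g_{ad}$, Lemma \ref{jordanrestrictionNIL} tells us that if $m \geq 1$ is the unique integer with $\Ker(\widetilde{e})^{m-1} \subseteq [\g_{ad}, \g_{ad}]$ and $\Ker(\widetilde{e})^{m} \not\subseteq [\g_{ad}, \g_{ad}]$, then
\begin{align*}
\g_{ad} &\cong W_m \oplus V', \\
[\g_{ad}, \g_{ad}] &\cong W_{m-1} \oplus V'
\end{align*}
for some $K[e]$-module $V'$. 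Thus the whole task reduces to pinning down $m$ in each of the two cases.

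For case (i), when $V \downarrow K[e] = \sum_{1 \leq i \leq t} W(m_i)$, Lemma \ref{lemma:NILgadsingular} gives $\Ker(\widetilde{e}) \not\subseteq [\g_{ad}, \g_{ad}]$, so $m = 1$. Under the convention $W_0 = 0$, Lemma \ref{jordanrestrictionNIL} then yields $[\g_{ad}, \g_{ad}] \cong V'$ and $\g_{ad} \cong W_1 \oplus V' \cong W_1 \oplus [\g_{ad}, \g_{ad}]$, as claimed.

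For case (ii), when $V \downarrow K[e]$ is not of the form $\sum_{1 \leq i \leq t} W(m_i)$, we combine Lemma \ref{lemma:nonsingularkernelNILGAD} (which gives $\Ker(\widetilde{e}) \subseteq [\g_{ad}, \g_{ad}]$) with Lemma \ref{lemma:nonsingularkernel2NILGAD} (which gives $\Ker(\widetilde{e})^2 \not\subseteq [\g_{ad}, \g_{ad}]$). Hence $m = 2$, and Lemma \ref{jordanrestrictionNIL} produces the required $V'$ with $\g_{ad} \cong W_2 \oplus V'$ and $[\g_{ad}, \g_{ad}] \cong W_1 \oplus V'$.

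Since the substantive inputs (Lemmas \ref{lemma:NILgadsingular}, \ref{lemma:nonsingularkernelNILGAD}, and \ref{lemma:nonsingularkernel2NILGAD}) have already been established, there is no genuine obstacle here; the proof is essentially a bookkeeping application of Lemma \ref{jordanrestrictionNIL}. The only conceptual subtlety worth flagging is why $[\g_{ad}, \g_{ad}]$ is precisely a codimension-one $K[e]$-submodule of $\g_{ad}$, but this is immediate from Lemma \ref{lemma:gmodZisgadgad} together with $\dim Z(\g_{sc}) = 1$.
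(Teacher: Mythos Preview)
Your proof is correct and follows exactly the same approach as the paper: apply Lemma \ref{jordanrestrictionNIL} to the codimension-one inclusion $[\g_{ad},\g_{ad}]\subseteq\g_{ad}$, using Lemma \ref{lemma:NILgadsingular} for case (i) and Lemmas \ref{lemma:nonsingularkernelNILGAD} and \ref{lemma:nonsingularkernel2NILGAD} for case (ii). The paper's proof is simply a terser statement of the same argument.
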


\begin{proof}In case (i), the claim follows from Lemma \ref{lemma:NILgadsingular} and Lemma \ref{jordanrestrictionNIL}. Similarly (ii) follows from Lemma \ref{lemma:nonsingularkernelNILGAD}, Lemma \ref{lemma:nonsingularkernel2NILGAD}, and Lemma \ref{jordanrestrictionNIL}.\end{proof}

\section{Proofs of main results}\label{section:final}

We can now prove the main results stated in the introduction; all of them are straightforward consequences of the results from previous sections.

\begin{proof}[Proof of Theorem \ref{thm:unipGSCtoGAD}]
By Proposition \ref{prop:unipKeraction}, we have  \begin{equation}\label{eq:mainthmfinaleq}\begin{aligned}
		\g_{sc} &\cong V_{2^{\alpha}} \oplus V' \\
		\g_{sc}/Z(\g_{sc}) &\cong V_{2^{\alpha}-1} \oplus V' \end{aligned}\end{equation} for some $K[u]$-module $V'$.

If $s = 0$, then by Proposition \ref{prop:unipGAD} and Lemma \ref{lemma:gmodZisgadgad} we have $\g_{ad} \cong V_1 \oplus \g_{sc}/Z(\g_{sc}) \cong V_1 \oplus V_{2^{\alpha}-1} \oplus V'$ as $K[u]$-modules. Thus (i) holds.

For (ii), suppose that $s > 0$ and let $\beta = \max_{1 \leq j \leq s} \nu_2(k_j)$. Consider first the case where (ii)(a) holds, so $\nu_2(k_j) = \beta$ for all $1 \leq j \leq s$ and $\nu_2(m_i) > \beta$ for all $1 \leq i \leq t$. Then $\alpha = \beta$, so by Proposition \ref{prop:unipGAD} and Lemma \ref{lemma:gmodZisgadgad} we have \begin{align*}
		\g_{ad} &\cong V_{2^{\alpha}} \oplus V'' \\
		\g_{sc}/Z(\g_{sc}) &\cong V_{2^{\alpha}-1} \oplus V'' \end{align*} for some $K[u]$-module $V''$. We have $V' \cong V''$ by~\eqref{eq:mainthmfinaleq}, so $\g_{ad} \cong \g_{sc}$ as $K[u]$-modules.

We consider then the case where (ii)(b) holds, so either $\nu_2(k_j) < \beta$ for some $1 \leq j \leq s$, or $\nu_2(m_i) \leq \beta$ for some $1 \leq i \leq t$. Then by Proposition \ref{prop:unipGAD} and Lemma \ref{lemma:gmodZisgadgad}, we have \begin{align*}
		\g_{ad} &\cong V_{2^{\beta}+1} \oplus V'' \\
		\g_{sc}/Z(\g_{sc}) &\cong V_{2^{\beta}} \oplus V'' \end{align*} for some $K[u]$-module $V''$. By~\eqref{eq:mainthmfinaleq} we have $V'' \cong V_{2^{\alpha}-1} \oplus V'''$ and $V' \cong V_{2^{\beta}} \oplus V'''$ for some $K[u]$-module $V'''$. Thus \begin{align*}
		\g_{sc} &\cong V_{2^{\alpha}} \oplus V_{2^{\beta}} \oplus V''', \\
		\g_{ad} &\cong V_{2^{\alpha}-1} \oplus V_{2^{\beta}+1} \oplus V''' \end{align*} so claim (ii)(b) holds.\end{proof}

\begin{proof}[Proof of Theorem \ref{thm:nilGSCtoGAD}]
Similarly to Theorem \ref{thm:unipGSCtoGAD}, the result follows from Proposition \ref{prop:nilKeraction}, Lemma \ref{lemma:gmodZisgadgad}, and Proposition \ref{prop:nilgadaction}.
\end{proof}

\begin{proof}[Proof of Corollary \ref{cor:centralizerdimUNIP} and Corollary \ref{cor:centralizerdimNIL}]
Immediate from Theorem \ref{thm:unipGSCtoGAD} and Theorem \ref{thm:nilGSCtoGAD}.
\end{proof}

\begin{proof}[Proof of Corollary \ref{cor:regularcentralizerdim}]
For a regular unipotent $u \in \Sp(V)$ we have $V \downarrow K[u] = V(2\ell)$ \cite[p. 61]{LiebeckSeitzClass}. Then $\dim \g_{sc}^u = \ell+1$ by Lemma \ref{lemma:LieSpS2} and Lemma \ref{lemma:fixpdimsymwedge}. It follows then from Corollary \ref{cor:centralizerdimUNIP} that $\dim \g_{ad}^u = \dim \g_{sc}^u = \ell+1$.

For regular nilpotent $e \in \mathfrak{sp}(V)$ we have similarly $V \downarrow K[e] = V(2\ell)$ \cite[p. 60]{LiebeckSeitzClass}. It follows from Lemma \ref{lemma:LieSpS2} and \cite[Corollary 4.2]{KorhonenSymExt2021} that $\dim \g_{sc}^e = 2\ell$. Then $\dim \g_{ad}^e = \dim \g_{sc}^e = 2\ell$ by Corollary \ref{cor:centralizerdimNIL}, as claimed.
\end{proof}

\bibliographystyle{plain}
\bibliography{bibliography}

\end{document}